\newcommand{\citep}[1]{\cite{#1}}
\renewcommand{\P}{\mathbb{P}}
\newcommand{\E}{\mathbb{E}}
\newcommand{\Var}{\text{Var}}
\newcommand{\cN}{\mathcal{N}}
\newcommand{\Z}{\mathbb{Z}}
\newcommand{\R}{\mathbb{R}}
\renewcommand{\S}{\mathbb{S}}
\newcommand{\eps}{\varepsilon} 
\def\id{{\boldsymbol I}}
\newcommand{\<}{\langle}
\renewcommand{\>}{\rangle}
\newcommand{\sign}{\text{sign}}
\newcommand{\diag}{\text{diag}}
\newcommand{\op}{{\rm op}}
\newcommand{\grad}{\nabla}
\def\sT{{\mathsf T}}
\def\bzero{{\boldsymbol 0}}
\DeclareMathOperator*{\argmin}{arg\,min}
\def\simiid{{\stackrel{i.i.d.}{\sim}}}
\theoremstyle{plain}
\newtheorem{theorem}{Theorem}
\newtheorem*{theorem*}{Theorem}
\newtheorem{lemma}{Lemma}
\newtheorem{corollary}{Corollary}
\newtheorem{proposition}{Proposition}
\theoremstyle{remark}
\newtheorem{assumption}{Assumption}
\newtheorem{definition}{Definition}
\newtheorem{example}{Example}
\newtheoremstyle{numbered}{}{}{\itshape}{}{\rm}{.}{.5em}{#1\thmnote{ #3}}
\theoremstyle{numbered}
\newtheorem*{numassumption}{Assumption}
\newtheoremstyle{myremark} 
    {\topsep}                    
    {\topsep}                    
    {\rm}                        
    {}                           
    {\rm}                        
    {.}                          
    {.5em}                       
    {}  
\theoremstyle{myremark}
\newtheorem{remark}{Remark}[section]
\DeclareSymbolFont{Greekletters}{OT1}{iwona}{m}{n}
\DeclareSymbolFont{greekletters}{OML}{iwona}{m}{it}
\DeclareMathSymbol{\salpha}{\mathord}{greekletters}{"0B}
\DeclareMathSymbol{\sbeta}{\mathord}{greekletters}{"0C}
\DeclareMathSymbol{\sgamma}{\mathord}{greekletters}{"0D}
\DeclareMathSymbol{\sOmega}{\mathord}{Greekletters}{"0A}
\DeclareMathSymbol{\smu}{\mathord}{greekletters}{"16}
\DeclareMathSymbol{\svarepsilon}{\mathord}{greekletters}{"22}
\DeclareMathSymbol{\svarrho}{\mathord}{greekletters}{"25}
\DeclareMathSymbol{\svarphi}{\mathord}{greekletters}{"27}
\newcommand{\vast}{\bBigg@{3}}
\newcommand{\Vast}{\bBigg@{4}}
\newcommand{\norm}[1]{\left\lVert#1\right\rVert}
\newcommand{\inner}[1]{\left\langle#1\right\rangle}
\newcommand{\up}[1]{{(#1)}}
\newcommand{\bigzero}{\mbox{\normalfont\large\bfseries 0}}
\def\sfk{\textsf{k}}
\def\sfa{\textsf{a}}
\def\sfb{\textsf{b}}
\def\sfC{\textsf{C}}
\def\sfK{\textsf{K}}
\def\sfk{\textsf{k}}
\def\sfR{\textsf{R}}
\def\Opt{\mathrm{Opt}}
\def\cA{\mathcal{A}}
\def\op{\mathrm{op}}
\def\stst{{\mbox{\tiny\rm test}}}
\def\snew{{\mbox{\tiny\rm new}}}
\def\Lip{{\mbox{\tiny\rm Lip}}}
\def\bT{{\boldsymbol T}}
\def\cC{{\mathcal C}}
\def\cS{{\mathcal S}}
\def\cD{{\mathcal D}}
\def\cG{{\mathcal G}}
\def\bi{{\boldsymbol i}}
\def\bmu{{\boldsymbol \mu}}
\def\bSigma{\boldsymbol{\Sigma}}
\def\obX{\overline{\boldsymbol{X}}}
\def\supp{{\rm supp}}
\def\bfzero{{\boldsymbol 0}}
\def\bsigma{\boldsymbol{\sigma}}
\def\btau{{\boldsymbol{\tau}}}
\def\one{\mathbf{1}}
\def\normal{{\mathcal{N}}}
\def\balpha{{\boldsymbol \alpha}}
\def\bepsilon{{\boldsymbol \epsilon}}
\def\bbeta{{\boldsymbol \beta}}
\def\bv{{\boldsymbol v}}
\def\bd{{\boldsymbol d}}
\def\bR{{\boldsymbol R}}
\def\bq{{\boldsymbol q}}
\def\bsigma{{\boldsymbol \sigma}}
\def\bZ{{\boldsymbol Z}}
\def\bM{{\boldsymbol M}}
\def\bN{{\boldsymbol N}}
\def\bB{{\boldsymbol B}}
\def\beps{{\boldsymbol \varepsilon}}
\def\bh{{\boldsymbol h}}
\def\bH{{\boldsymbol H}}
\def\bI{{\boldsymbol I}}
\def\bX{{\boldsymbol X}}
\def\bw{{\boldsymbol w}}
\def\de{\textrm{d}}
\def\bx{{\boldsymbol x}}
\def\by{{\boldsymbol y}}
\def\bW{{\boldsymbol W}}
\def\ba{{\boldsymbol a}}
\def\bv{{\boldsymbol v}}
\def\Unif{\textrm{Unif}}
\def\bU{{\boldsymbol U}}
\def\bV{{\boldsymbol V}}
\def\bz{{\boldsymbol z}}
\def\bt{{\boldsymbol t}}
\def\bs{{\boldsymbol s}}
\def\bxi{{\boldsymbol \xi}}
\def\bF{{\boldsymbol F}}
\def\be{{\boldsymbol e}}
\def\bu{{\boldsymbol u}}
\def\bg{{\boldsymbol g}}
\def\RF{\textrm{RF}}
\def\NT{\textrm{NT}}
\def\bA{{\boldsymbol A}}
\def\bG{{\boldsymbol G}}
\def\btheta{{\boldsymbol \theta}}
\def\bfeta{{\boldsymbol \eta}}
\def\balpha{{\boldsymbol \alpha}}
\def\bTheta{{\boldsymbol \Theta}}
\def\hbTheta{\widehat{\boldsymbol \Theta}}
\def\hbtheta{\widehat{\boldsymbol \theta}}
\def\bH{{\boldsymbol H}}
\def\cU{{\mathcal U}}
\def\cS{{\mathcal S}}
\def\bP{{\boldsymbol P}}
\def\diag{\textrm{diag}}
\def\bS{{\boldsymbol S}}
\def\bD{{\boldsymbol D}}
\def\cB{\mathcal{B}}
\def\obx{\overline{\boldsymbol x}}
\def\reals{{\mathbb R}}
\def\Prox{\text{Prox}}
\def\bphi{{\boldsymbol \phi}}
\def\hR{\widehat{R}}
\def\hK{\widehat{K}}
\def\hy{\widehat{y}}
\def\ERM{{\normalfont\textrm{ERM}}}
\DeclareMathOperator*{\plim}{p-lim}
\DeclareMathOperator*{\pliminf}{p-liminf}
\colorlet{linkequation}{blue}
\begin{document}

\title{Universality of Empirical Risk Minimization}

\author{Basil Saeed\thanks{Department of Electrical Engineering, Stanford University}  \and 
Andrea Montanari\thanks{Department of Statistics and Department of Mathematics, 
Stanford University}}

\maketitle

\begin{abstract}
We study
a general class of optimization problems with decision 
variable $\bTheta\in\reals^{p\times \sfk}$ and cost function which is the sum of $n$  terms,
each dependent on $\bTheta$ through the $\sfk$-dimensional projection 
$\bTheta^{\sT}\bx_i$, where $\bx_i$, $i\le n$ are i.i.d. random vectors.

This setting is general enough to include examples of current interest
in statistical physics, high-dimensional statistics, and statistical learning theory. 

We consider the proportional asymptotics $n,p\to\infty$ , with $n/p=\Theta(1)$,
and prove that, whenever there exists a minimizer satisfying a suitable generalization of a ``delocalization'' condition,
 the minimum value is universal. 
Namely, (for subgaussian $\bx_i$) it depends on the distribution of $\bx_i$ only through
its asymptotic mean and covariance.  This delocalization
condition is essentially necessary.
Earlier universality results for such problems were limited to strongly convex loss functions.

We derive applications of our theory  to statistical learning and 
prove general universality results
both for train and (under additional conditions) test error. 
In particular, we establish universality for vectors $\bx_i$ generated by random 1-layer
neural networks (random features models) and first-order Taylor approximations of 2-layer networks
(neural tangent models).
Finally, we establish that the delocalization property holds for 
a class of statistical learning problems under a condition that is easy to verify.
\end{abstract}

\section{Introduction and main result}

\subsection{Problem statement}
Let $(\bx_i:i \le n)$ be a collection of i.i.d. random vectors in
$\reals^p$, and $(\eps_i:i\le n)$ be i.i.d. random variables independent of
the $\bx_i$'s. Given an integer $\sfk\ge 1$, compact sets $\cC_{j,p}\subseteq \reals^p$, $j\le \sfk$,
 a non-negative continuous function $\ell:\reals^{\sf k}\times\reals\to \reals_{\ge 0}$
 and a continuous function $r:\reals^{p\times\sfk}\to \reals_{\ge 0}$, we study the following minimization  problem
\begin{align}\label{eq:OptimizationFirst}
\hR_n^{\star}(\bX,\beps) &: = \min_{\bTheta\in \cC_p^{\sfk}} \hR_n(\bTheta;\bX,\beps) \, ,\\
 \hR_n(\bTheta;\bX,\beps) &:=\frac{1}{n}
\sum_{i=1}^n\ell\big(\bTheta^{\sT}\bx_i,\eps_i\big)+r(\bTheta)\, .
\end{align}
We introduced the notations $\bX\in\reals^{n\times p}$ for the matrix with rows $\bx_i\in\R^p$, $\beps\in \reals^n$ for the vector with entries 
$\eps_i$, and $\cC_p^{\sfk}:=\cC_{1,p}\times \cdots\times \cC_{\sfk, p}$ 
for the set of matrices $\bTheta\in\reals^{p\times\sfk}$ with $j$-th column in $\cC_{j,p}$.
We write $\cC_p=\cC_{j,p}$ whenever this set is the same for all $j\le \sfk$.
We will refer to \eqref{eq:OptimizationFirst} as an \emph{Empirical Risk Minimization} (ERM) problem. 

We consider sequences of such problems indexed by $n$, whereby $p=p(n)$
is such that $p(n)\asymp n$. We want to understand
to what extent the asymptotics of $\hR_n^\star(\bX,\beps)$ are universal
with respect to the distribution of the vectors $\bx_i$
(i.e. dependent on the distribution of $\bx_i$ only via its mean and covariance.)

Random optimization problems of the form \eqref{eq:OptimizationFirst}
are of interest in
(among other fields) 
statistical physics \cite{hopfield1982neural,SpinGlass},
signal processing \cite{TanakaCDMA,Donoho1,CandesFourier}, 
and random geometry \cite{oymak2018universality,montanari2024tractability,huang2024capacity}.
Our main motivation stems from the analysis of techniques in high-dimensional statistics and machine learning, and we will devote Sections~\ref{sec:StatLearning}-\ref{sec:Deloc} to 
applications of our main results to statistical learning. 

It is instructive to begin with
two examples of Eq.~\eqref{eq:OptimizationFirst} in which universality fails.
\begin{example}
\label{ex:non_univ_0}
For each $i\in[n]$, let $\bx_i\simiid\Unif(\{\pm \sqrt{p}\be_1,\dots,\pm\sqrt{p}\be_p\})$ where $\be_j$ is the $j$-th element
of the canonical basis, while $\bg_i\simiid\normal(\bzero,\bI_p)$, so that the first two moments 
of $\bx_i$ and $\bg_i$ are matched.
Let $\cC_p = \{+1/\sqrt{p},-1/\sqrt{p}\}^p$, $\ell(x) = |x|$ and consider the optimization problem:
\begin{equation}
\hR_n^{\star}(\bX):= \min_{\btheta\in\cC_p}  \hR_n^{\star}(\btheta;\bX)
\, ,
\;\;\;\;\;\;\;  \hR_n(\btheta;\bX):=
\frac1n \sum_{i=1}^n \ell\left(\btheta^\sT\bx_i\right)\, ,\label{eq:TwoCosts}
\end{equation}
and similarly for $\hR_n^{\star}(\bG):= \min_{\btheta\in\cC_p}  \hR_n(\btheta;\bG)$
(where $\bG$ is the matrix whose $i$-th row is $\bg_i$.)
We have $\hR_n^{\star}(\bX) = 1$ (indeed   $\hR_n^{\star}(\btheta;\bX)=1$ for all $\btheta\in \cC_p$).
On the other hand, for any fixed $\btheta_0\in\cC_p$, $\hR_n^{\star}(\bG)\le \hR_n(\btheta_0;\bG)=\E_{Z\sim\normal(0,1)}|Z| +o_P(1) = \sqrt{2/\pi} + o_P(1)$,
so with high probability $\hR_n^{\star}(\bX)-\hR_n^{\star}(\bG)\ge c>0$: universality fails to hold.

\end{example}
\begin{example}
\label{ex:non_univ}
Let $\bx_i \sim \Unif(\{+1,-1\}^p)$ and $\bg_i\sim\normal(\bzero,\bI_p)$.
Consider the non-negative, Lipschitz continuous loss function
\begin{equation}
\nonumber
\ell(t) := \begin{cases}
                \big| 1 - |t| \big| & |t| \le 2 \, ,\\
                1 &  |t| >2\,,
           \end{cases}
\end{equation}
the constraint set
$\cC_p:=\{\btheta\in\reals^p:
\|\btheta\|_2=1\}$ and define $\hR_n^{\star}(\bX)$, $\hR_n^{\star}(\bG)$
as in Eq.~\eqref{eq:TwoCosts}.

We clearly have $\hR_n^{\star}(\bX)=0$ for all $n$,
since  $\hR_n^{\star}(\bX)\ge 0$, while $\hR_n^{\star}(\bX)\le 0$
follows by evaluating the cost at
$\btheta = \be_1=(1,0,\dots,0)^{\sT}$. 

On the other hand, $\E \hR(\btheta;\bG) = \E_{Z\sim\normal(0,1)}\ell(Z)>0$ and 
a standard uniform convergence computation (deferred to Section~\ref{section:deferred_proofs} of the Appendix) 
yields
$\sup_{\btheta\in\cC_p}|\hR_n(\btheta;\bG)-\E\hR_n(\btheta;\bG)|\le C\sqrt{p/n}$. This implies
the existence of constants $c>0$, $\sgamma_0>0$ such that 
\begin{equation}
\label{eq:ClaimExample}
\lim_{\substack{n,p\to\infty\\ 
n/p\to \sgamma > \sgamma_0
}}  \P\big(\hR_n^{\star}(\bG)\ge c\big) =1\, .
\end{equation}
Hence, universality fails in this setting as well.
\end{example}

In Example \ref{ex:non_univ_0} the coordinates of $\bx_1$ are highly dependent, while in Example \ref{ex:non_univ} they are not.
The crucial property that both examples share is that the minimum of $\hR_n(\btheta;\bX)$ 
over $\btheta\in \cC_p$ is only achieved at points $\btheta$ such that $\btheta^{\sT}\bx_1$
is not approximately Gaussian.
Our main technical result below
states that this is essentially \emph{the only} mechanism for failure.
While this conclusion may seem `obvious' at a superficial level, it is in fact rather surprising because  
the empirical distribution of $\big\{\hbtheta^{\sT}\bx_i:\, i\le n\big\}$  (with $\hbtheta$
the minimizer) is generally far from Gaussian, even when the $\bx_i$'s are Gaussian (see Remark \ref{rmk:NonGaussian} below).

To be more precise about this notion of approximate Gaussianity for $\btheta^\sT\bx_1$ at $\btheta$, let us introduce
the following definition which will be crucial to our exposition.
\begin{definition}
\label{def:DoG}
   For a (sequence of) random variables $\bx \sim\P_\bx$ in $\R^p$,
   we call a (sequence of) symmetric, convex sets $\cS_p \subseteq \R^p$ 
  a \emph{domain of Gaussianity} for $\P_\bx$ if the following holds for any bounded Lipschitz function $\varphi:\R\to\R$:
   \begin{equation}
   \label{eq:condition_bounded_lipschitz_single}
       \lim_{p\to\infty} \sup_{\btheta \in\cS_p} \left|\E[\varphi(\btheta^\sT \bg)] - \E[\varphi(\btheta^\sT\bx)]\right| = 0,
   \end{equation}
   where $\bg \sim\cN(\bmu_\bg,\bSigma_\bg)$ for some (sequence of) means and covariances $\bmu_\bg \in\R^p,\bSigma_\bg \in\R^{p\times p}.$
\end{definition}
Informally, the `domain of Gaussianity' is the set of directions along which the projections of $\bx$ are asymptotically Gaussian. 
For example, if $\bx\sim\P_\bx$ is a vector of independent entries (with bounded second moments), then $\cS_p := \{\btheta \in\R^p: \|\btheta\|_\infty \le \alpha_p, \|\btheta\|_2 \le C \}$
---with any $\alpha_p\to0$ and $C>0$---
is a domain of Gaussianity for $\P_\bx$ (see Section~\ref{section:linear_example} for more details.) 
Clearly, the minimizers $\hat\btheta$ of Example~\ref{ex:non_univ} are not in this (or any other) domain of Gaussianity for $\P_\bx$.

Let us emphasize that,  for other distributions of the random vectors $\bx$, 
a domain of Gaussianity can take on a different form from the vanishing $\ell_{\infty}$
norm condition in this example (see Section~\ref{section:ntk_example} for an example.)

\subsection{Main technical result}

We state our assumptions about the optimization problem \eqref{eq:OptimizationFirst}, and our main technical result.
Throughout,  we consider a sequence of problems indexed by $n$,  $p=p(n)$, 
both diverging, while $\sfk$
is fixed.
Also $r$, $\cC_p$, the law of the $\bx_i$, the law of $\eps_i$
can depend on $n,p$. The constants in the assumptions below are typically denoted by sans-serif
fonts and assumed fixed as $n,p$ diverge.
\begin{assumption}[Regime]
\label{ass:regime}
\label{ass:1}
 There exists a constant $\sfC>0$ such that
 for all $n,$
 \begin{equation}
 \nonumber
     \sfC^{-1} \le \frac{p(n)}{n} \le \sfC.
 \end{equation}
\end{assumption}

\begin{assumption}[Constraint set]
\label{ass:set}
 The sets $\cC_{j,p}$, $j\le \sfk$ appearing in the constraint in~\eqref{eq:OptimizationFirst}
 are compact with $\ell_2$ radius bounded by $\sfR$.
\end{assumption}

\begin{assumption}[Regularization]
\label{ass:regularizer}
The penalty function $r(\bTheta)$ is locally Lipschitz in Frobenius norm, uniformly in $p$. That is,
for all $p\in\Z_{>0}$, $B>0$, and
$\bTheta,\widetilde\bTheta \in\R^{p\times \sfk}$ satisfying 
$\norm{\bTheta}_F,\|{\widetilde\bTheta}\|_F \le B$, we have
for some $\sfK_r(B) > 0,$
\begin{equation}
\nonumber
\big|r(\bTheta) - r(\widetilde\bTheta) \big|  \le \sfK_r(B) \big\|{\bTheta - 
\widetilde\bTheta}\big\|_F.
\end{equation}
\end{assumption}

Recall that the vectors $\{\bx_i\}_{i\le n}$  are i.i.d., and we denote by $\bx\in\reals^p$
an i.i.d. copy of $\bx_1$.
%


\begin{assumption}[Restricted subgaussianity]
\label{ass:-1}
\label{ass:X}
The following holds for a (sequence of) Gaussian vectors $\bg \sim \cN(\bmu_\bg,\bSigma_\bg)$, and a constant $\sfK$ independent of $n,p$:
\begin{equation}
\label{eq:sg_assump}
 \sup_{\{\btheta\in\cC_{j,p}: \norm{\btheta}_2 \le 1\}} \norm{\bx^\sT \btheta}_{\psi_2} \le \sfK,\quad
 \sup_{\{\btheta\in\cC_{j,p}: \norm{\btheta}_2 \le 1\}}\big\|{\bSigma_\bg^{1/2} \btheta}\big\|_2 \le \sfK,\quad
 \norm{\bmu_\bg}_2 \le \sfK
\end{equation}
with $\cC_{j,p}$ the sets in Assumption~\ref{ass:set} and $\|\cdot\|_{\psi_2}$ denoting the subgaussian norm.
\end{assumption}

\begin{assumption}[Loss functions]
\label{ass:loss-0}
The non-negative loss function $\ell:\R^{\sfk}\times \R\to\R$ satisfies either one of these
conditions:
\begin{enumerate}
    \item Lipschitz:
    for some $\sfK>0$, we have for any
    $(\bv,\eps),(\tilde\bv,\tilde\eps)\in\R^{\sfk}\times\R,$
    \begin{equation}
    \nonumber
        |\ell(\bv,\eps) - \ell(\tilde\bv,\tilde\eps)| \le \sfK (\|\bv - \tilde \bv\|_2 + |\eps - \tilde\eps|);
    \end{equation}
    \item Locally Lipschitz:
    for any $B>0$,
    $(\bv,\eps),(\tilde\bv,\tilde\eps)\in\R^{\sfk}\times\R,$
    with $\|(\bv,\eps)\|_2,\|(\tilde\bv,\tilde\eps)\|_2\le B,$ we have for some $\sfK_\ell(B)>0$
    \begin{equation}
    \nonumber
        |\ell(\bv,\eps) - \ell(\tilde\bv,\tilde\eps)| \le \sfK_\ell(B)( \|\bv - \tilde \bv\|_2 + |\eps -\tilde\eps| );
    \end{equation}
    and has polynomial growth: for some $\sfK_0,\sfK_1,\sfK$ and $\sfb\ge \sfa > 0$, we have for all $(\bv,\eps)\in\R^{\sfk}\times\R,$ 
    \begin{equation}
    \nonumber
 -\sfK_0+\sfK_1(\|\bv\|_2^\sfa + |\eps|^\sfa) \le \ell(\bv,\eps) \le \sfK(1 + \|\bv\|_2^\sfb + |\eps|^\sfb).
    \end{equation}
\end{enumerate}
\end{assumption}
In Section~\ref{section:pf_under_general_loss}
of the Appendix, we provide a more general assumption on $\ell$ under which our results hold.

\begin{theorem}\label{thm:MainFirst}
Let Assumptions~\ref{ass:regime}-\ref{ass:loss-0} hold.
Let $\bG\in\reals^{n\times p}$ be a matrix with i.i.d. rows $\bg_i$ distributed as
in Assumption \ref{ass:X}, and
variables $(\eps_i: i\le n)$ be independent and independent of $\bX$, $\bG$, with $\max_{i\le n}\|\eps_i\|_{\psi_2}\le \sfK$\,  for some constant $\sfK.$
Finally, let $\cS_p$ be a domain of Gaussianity for $\bx$ as in Definition~\ref{def:DoG}.

If $\cC_{j,p}\subseteq \cS_p$ for all $j\le \sfk$, then  for any
bounded Lipschitz function $\psi:\R\to\R$,
\begin{equation}
\label{eq:main_thm_eq_0}
 \lim_{n\to\infty}\left|\E\left[\psi\left(\widehat 
R^\star_n\left(\bX\right)\right)\right] - 
\E\left[\psi\left(\widehat R^\star_n\left(\bG\right) \right)\right]\right| = 0.
\end{equation}
Consequently, for all $\rho \in\R$,
\begin{equation}
\widehat  R^\star_n(\bX)\stackrel{\P}{\to} 
\rho \quad\textrm{if and only if} \quad \widehat R^\star_n(\bG) \stackrel{\P}{\to}\rho.
\end{equation}
\end{theorem}

\begin{remark}\label{rmk:NonGaussian}
At first sight, Theorem \ref{thm:MainFirst}  might seem to hold for the following simple reason:
Denote by $\hbtheta^{\bX}_n$,  $\hbtheta^{\bG}_n$  the minimizers of $\hR_n(\btheta;\bX)$,
$\hR_n(\btheta;\bG)$, respectively, over $\cC_p$.
Here, we consider for simplicity $\sfk=1$,
and assume that the minimizers are unique.
  Since $\hbtheta^{\bX}_n$,  $\hbtheta^{\bG}_n\in\cC_p\subseteq\cS_p$ and 
since moreover $\<\bx_i,\btheta\>$,  
$\<\bg_i,\btheta\>$ are approximately Gaussian with variance depending on $\|\btheta\|_{\bSigma^{\bg}}$
for any fixed $\btheta\in\cS_p$, it might seem 
that $\<\hbtheta^{\bX}_n,\bx_i\>$, $\<\hbtheta^{\bG}_n,\bg_i\>$ are also approximately Gaussian 
and therefore $n^{-1}\sum_{i=1}^n
\ell(\<\hbtheta^{\bX}_n,\bx_i\>)$ is close to $n^{-1}\sum_{i=1}^n
\ell(\<\hbtheta^{\bG}_n,\bg_i\>)$ (provided that $\|\hbtheta^{\bX}_n\|_{\bSigma^{\bg}}\approx \|\hbtheta^{\bG}_n\|_{\bSigma^{\bg}}$).
Of course, this reasoning is flawed because approximate Gaussianity of $\<\btheta,\bx_i\>$ at fixed $\btheta$
does not imply approximate Gaussianity of  $\<\hbtheta^\bX_n,\bx_i\>$ at random $\hbtheta^\bX_n$
(dependent on $\bx_i$).
This flaw cannot be fixed and the very intuition at the basis of this argument is wrong in high dimension. 

Indeed, if $p/n\to\sf\sgamma$ (except in special cases), the distributions of  
$\<\hbtheta^\bX_n,\bx_i\>$, $\<\hbtheta^\bG_n,\bg_i\>$ converge to a non-Gaussian limit (see, e.g. \cite{asgari2025local}). For instance, if $\ell$ is convex and $r$ is quadratic, this limit is the distribution of $\Prox_{b\ell}(G)$, where $\Prox_{b\ell}$ is the proximal operator for a scaling $b>0$ of $\ell$
and $G$ is a Gaussian random variable. Hence the proof of Theorem \ref{thm:MainFirst} must rely on very 
different arguments.
\end{remark}

\subsection{Proof technique}

We present the proof of Theorem \ref{thm:MainFirst} in Section~\ref{sec:Proofs}, with several technical steps deferred to the appendices.  
The proof is based on an interpolation method. 
Namely we consider an ERM problem with feature matrix $\bU_t = \sin(t) \bX+\cos(t) \bG$ that
continuously interpolates between the two cases as $t$ goes from $0$ to $\pi/2$. We then bound
the change in the training error (minimum empirical risk) along this path. 

This approach is analogous to the Lindeberg method~\cite{lindeberg1922neue,chatterjee2006generalization}, 
which was used in the context of statistical learning in  \cite{korada2011applications}
and subsequently  in \cite{montanari2017universality,oymak2018universality,hu2020universality}.
A direct application of the Lindeberg procedure would require 
to swap an entire row of $\bX$ with the corresponding row of $\bG$ and bound the effect
on the minimum empirical risk (we cannot replace one entry at a time since these are 
dependent). We find the use of a continuous path more effective.

In earlier work \cite{hu2020universality}, the effect of a swapping step is controlled by 
first bounding the change in the minimizer $\hbTheta$. This is achieved by assuming strong convexity 
of the empirical risk.  The bound on the change of the minimizer immediately implies a bound on 
the change of
the minimum value. 
In the non-convex setting, we face the challenge of bounding the change of the minimum
without bounding the change of the minimizer. We achieve this by using a 
differentiable approximation of the minimum.

After this sequence of approximations, we are left with the task of bounding 
the expectation of the change in cost function along the interpolation path
with respect to a suitably defined random Gibbs measure. 
This bound is the core of our proof. The difficulty is that this
expectation depends in a rather implicit fashion 
on the random matrices $\bX$, $\bG$ (because the Gibbs measure depends on $\bX$, $\bG$).
The key  innovation is a polynomial approximation
method which simplifies this dependence and 
we believe can be of more general applicability.

\subsection{Organization of the paper}
\begin{itemize}
\item Section \ref{sec:Related} overviews recent related work on universality in 
ERM problems. 
\item In Section \ref{sec:StatLearning}, we apply Theorem \ref{thm:MainFirst}
to empirical risk minimization in statistical learning. We also derive further results that are of 
interest in that context. 
\item In Section \ref{sec:Pointwise}, we derive the domain of Gaussianity for specific distributions
of the random vectors $\bx_i$ which arise in the analysis of neural networks, and check Assumption~\ref{ass:X} in these settings.
\item In general, we can be interested in the optimization 
problem \eqref{eq:OptimizationFirst} over constraint sets $\cC_p$ that are not subsets
of the Gaussianity domain $\cS_p$. In these cases, we can still apply Theorem \ref{thm:MainFirst}
after checking that the minimum is achieved in a subset of $\cS_p$.
In Section~\ref{sec:Deloc}, we give techniques to prove this.
\item Finally, the main steps in the proofs are presented in Section \ref{sec:Proofs}.
\end{itemize}

\subsection{Definitions and notations}
We reserve the \textsf{sans-serif font} for parameters that are considered as 
fixed. 
We use $\norm{X}_{\psi_2}$ and $\norm{f}_{\Lip}$ to denote the subgaussian norm of a random variable 
$X$ and the Lipschitz modulus of a function $f$, respectively, and
$B_q^p(r)$ to denote the $\ell_q$ ball of radius $r$ in $\R^p$. 
We use $C,C_0,C_1,\dots$ or $c,c_0,c_1,\dots$ to denote constants that are independent of $n$ and $p$, but could depend on the fixed parameters such as $\sfk,\sfK,\sfK_r$ and so on.
If a constant $C$ depends \textit{additionally} on some variable, say
$\beta$, we write $C(\beta)$ when we want to emphasize this dependence.

\section{Related literature}
\label{sec:Related}

Universality in empirical risk minimization has recently attracted attention
because of its relevance to the analysis of random features models 
or neural tangent kernel models in machine learning.
This can in turn be viewed as approximations of neural networks 
under certain training scenarios \cite{bartlett2021deep}.
Universality for  random feature models
was proven for the special case  of ridge regression in
 \cite{hastie2019surprises} and \cite{mei2019generalization}. This corresponds to 
the ERM problem \eqref{eq:SecondERM} whereby $\sfk=1$, $\ell(u,y) = (u-y)^2$,
 and $r(\btheta) = \lambda\|\btheta\|_2^2$. Concurrently,
 \cite{goldt2020modeling,goldt2020gaussian} provided heuristic arguments and empirical results 
 indicating that universality holds for other ERM problems as well.

 Universality results for ERM were proven  for feature
vectors $\bx_i$ with independent entries in
\cite{korada2011applications,montanari2017universality,panahi2017universal,han2022universality}. Related results
for randomized dimension reduction were obtained in \cite{oymak2018universality}.
The case of general vectors $\bx_i$ is significantly more challenging.
To the best of our knowledge, the first and only proof of universality 
 beyond independent entries was given by Hu and Lu
 \cite{hu2020universality}.

The result of \cite{hu2020universality} is limited to strongly convex 
ERM problems. Their proof  uses a Lindeberg swapping argument, whereby the 
rows of $\bX$ are replaced one-by-one by Gaussian rows with the same mean and covariance.
This requires bounding at each step the resulting change in train error
$\min_{\bTheta}\hR_n(\bTheta;\bZ,\by)$, which  the authors achieve by bounding the change in 
the minimizer. Strong convexity is crucial in this type of proof to control the change of minimizer 
under a perturbation of the cost.  

As mentioned above, we also use an interpolation argument.
The key challenge in this type of arguments is to bound the derivative
(or finite difference) along the interpolation path. We manage to do this
without resorting to a perturbation method.

A significant line of recent work studies the asymptotic properties of
the ERM \eqref{eq:SecondERM} under the proportional asymptotics $n,p\to\infty$
with $n/p\to{\sgamma}\in (0,\infty)$. A number of phenomena have been elucidated by these 
studies \cite{BayatiMontanariLASSO,thrampoulidis2015,thrampoulidis2018precise}, including the
 design of optimal 
loss functions and regularizers \cite{Donoho2016,el2018impact,celentano2019fundamental,aubin2020generalization},
the analysis of inferential procedures \cite{sur2019likelihood,celentano2020lasso}, 
and the double descent behavior of the generalization error 
\cite{hastie2019surprises,deng2019model,montanari2019generalization,gerace2020generalisation}. 
However, these works often assume  Gaussian feature vectors or feature 
vectors with independent coordinates, and the generalization to dependent non-Gaussian 
features is an open challenge. The present work provides a needed tool for these generalizations.

After a first version of this manuscript was posted online 
(and presented at the Conference on Learning Theory, COLT, 2022),
several groups obtained follow-up results.

Among others, \cite{han2023universality} obtained an improvement of the results of  \cite{korada2011applications,montanari2017universality} (using the same proof technique). Dudeja, Sen, Lu \cite{dudeja2024spectral} proved a general universality result for structured matrices $\bX$ (not necessarily with independent rows), in the case of quadratic $\ell$, convex $r$. 
In \cite{montanari2023universality}, Ruan, Sohn and the present authors
leveraged results established here to prove universality for max-margin classifiers (which cannot be directly written
as ERM), and in \cite{verchand2024high} Verchand and one of the authors 
used universality techniques to study regression with missing entries. Finally, 
Bandeira and Maillard  \cite{maillard2023exact} used the technique developed here
to solve an approximate version of an open problem on interpolation by ellipsoids initially posed by Saunderson et al.
\cite{saunderson2012diagonal}.

\section{Universality in statistical learning}
\label{sec:StatLearning}
 In this section we apply Theorem \ref{thm:MainFirst}
to empirical risk minimization in statistical learning.

\subsection{Background}
In a classical supervised learning problem, we are given $n$ i.i.d. 
samples $\{(y_i,\bz_i)\}_{i\le n}$ where $\bz_i\in \reals^{d}$ are covariate
vectors and $y_i\in\reals$ are labels. We  would like to learn a model $f:\reals^d\to\reals$
to predict the label $y_{\snew}$ given a new input vector $\bz_{\snew}$. We consider the following general approach\footnote{A slightly more general
framework would allow $F(\, \cdot\,;\ba):\reals^{\sfk}\to\reals$ to depend
on additional parameters $\ba\in\reals^{\sfk'}$, $\sfk'=O(1)$. This can be treated
using our techniques, but we refrain from such generalizations for the sake of clarity.}:
\begin{enumerate}
\item  Process the covariates through a featurization map $\bphi:\reals^{d}\to\reals^p$
to obtain feature vectors $\bx_1=\bphi(\bz_1)$, \dots, $\bx_n=\bphi(\bz_n)$.
\item Select a class of functions that depends on $\sfk$ linear projections of the features,
with parameters $\bTheta= (\btheta_1,\dots,\btheta_{\sfk})\in\reals^{p\times \sfk}$,
$\btheta_i\in\reals^p$. Namely, for a fixed $F:\reals^{\sfk}\to \reals$, we consider
\begin{align}
f(\bz;\bTheta) = F(\bTheta^{\sT}\bphi(\bz))\, .\label{eq:FzFirst}
\end{align}
\item Fit the parameters via (regularized) empirical risk minimization (ERM):
\begin{align}
\mbox{minimize }\;\;\; \hR_n(\bTheta;\bZ,\by) := \frac{1}{n}\sum_{i=1}^n
L(f(\bz_i;\bTheta),y_i)+r(\bTheta)\, .\label{eq:FirstERM}
\end{align}
with $L:\reals\times\reals\to \reals_{\ge 0}$ a loss function,
and $r:\reals^{p\times \sfk}\to \reals$ a regularizer, where $\bZ = (\bz_1,\dots,\bz_n),\by=(y_1,\dots,y_n)$.
\end{enumerate}
 A concrete example of the featurization map $\bphi$
is given by the `random features' model of Rahimi and Recht~\cite{rahimi2007random}:
\begin{align}
 \bphi(\bz_i) = \sigma(\bW^{\sT}\bz_i).\label{eq:FirstRF}
\end{align}
Here $\bW\in \reals^{d\times p}$ is a matrix (which might be itself random
but independent of $\bz_i$) and the activation function $\sigma:\reals\to\reals$
is applied entrywise. We will formally analyze this setting in Section~\ref{section:rf_example}.

For mathematical analysis, we can let $\bx_i:=\bphi(\bz_i)$
in Eq.~\eqref{eq:FirstERM}, and therefore the role of $\bphi(\,\cdot\, )$
will be to specify the law of $\bx_i$ (as the push-forward of the law of $\bz_i$).

We will assume that the response $y_i$ depends on the feature vector $\bx_i$
through a low-dimensional projection $\bTheta^{\star\sT}\bx_i$, where
$\bTheta^\star = (\btheta^\star_1,\dots,\btheta^\star_{\sfk^\star})\in 
\R^{p\times \sfk^\star}$ is a fixed matrix of parameters. 
Namely, we let  $\beps := (\eps_1,\dots,\eps_n)$
where $\{\eps_i\}_{i\le n}$ are i.i.d. and set:
\begin{equation}
\label{eq:form_yi}
y_i = 
\eta\left(\bTheta^{\star\sT}\bx_i 
,\eps_i
\right)
\end{equation}
for $\eta: \R^{\sfk^\star+ 1}\to\R$. We write $\by(\bX)$ or $y_i(\bx_i)$
when we want to  make the functional 
dependence of $\by$ on $\bX$ explicit. 

Under the definitions of Eq.~\eqref{eq:FzFirst}-\eqref{eq:form_yi},
we can rewrite the empirical risk \eqref{eq:FirstERM} as:
\begin{align}
 \hR_n(\bTheta;\bX,\beps)&:= \frac{1}{n}\sum_{i=1}^n
\ell(\bTheta^{\sT}\bx_i,\bTheta^{\star\sT}\bx_i ;\eps_i)+r(\bTheta)\, ,\label{eq:SecondERM}\\
\ell(\bu,\bu^{\star};\eps)&:= 
L(F(\bu),\eta(\bu^\star,\eps) )\, .\label{eq:ell-def}
\end{align}
Hence, the ERM problem in the present setting
falls into the general framework of Eq.~\eqref{eq:OptimizationFirst}
whereby, $\sfk$ gets replaced by $\sfk+\sfk^{\star}$.
(In the following we will abuse notation and write either
$\hR_n(\bTheta;\bX,\beps)$ or $\hR_n(\bTheta;\bX,\by)$ depending on whether we want to emphasize dependence on the noise variables or responses.) 

Learning takes place by minimizing the
regularized empirical risk of Eq.~\eqref{eq:SecondERM}, subject to
 $\btheta_j\in \cC_p$, $j\le \sf k$ (recall that $\btheta_j$ denotes the $j$-th column of $\bTheta$).
Namely, we consider the problem
\begin{equation}
\label{eq:min_problem}
 \widehat R^\star_n(\bX,\by) := \min_{\bTheta \in \cC_p^\sfk } 
\widehat R_n(\bTheta; \bX, \by).
\end{equation}
Since $\bTheta^\star$ is not optimized over, this corresponds to
taking $\cC_{j,p}=\cC_p$ for $1\le j\le \sfk$, $\cC_{\sfk+l,p} = \{\btheta^{\star}_l\}$ for $1\le l\le \sfk^{\star}$.

\begin{figure}
\includegraphics[scale=0.46]{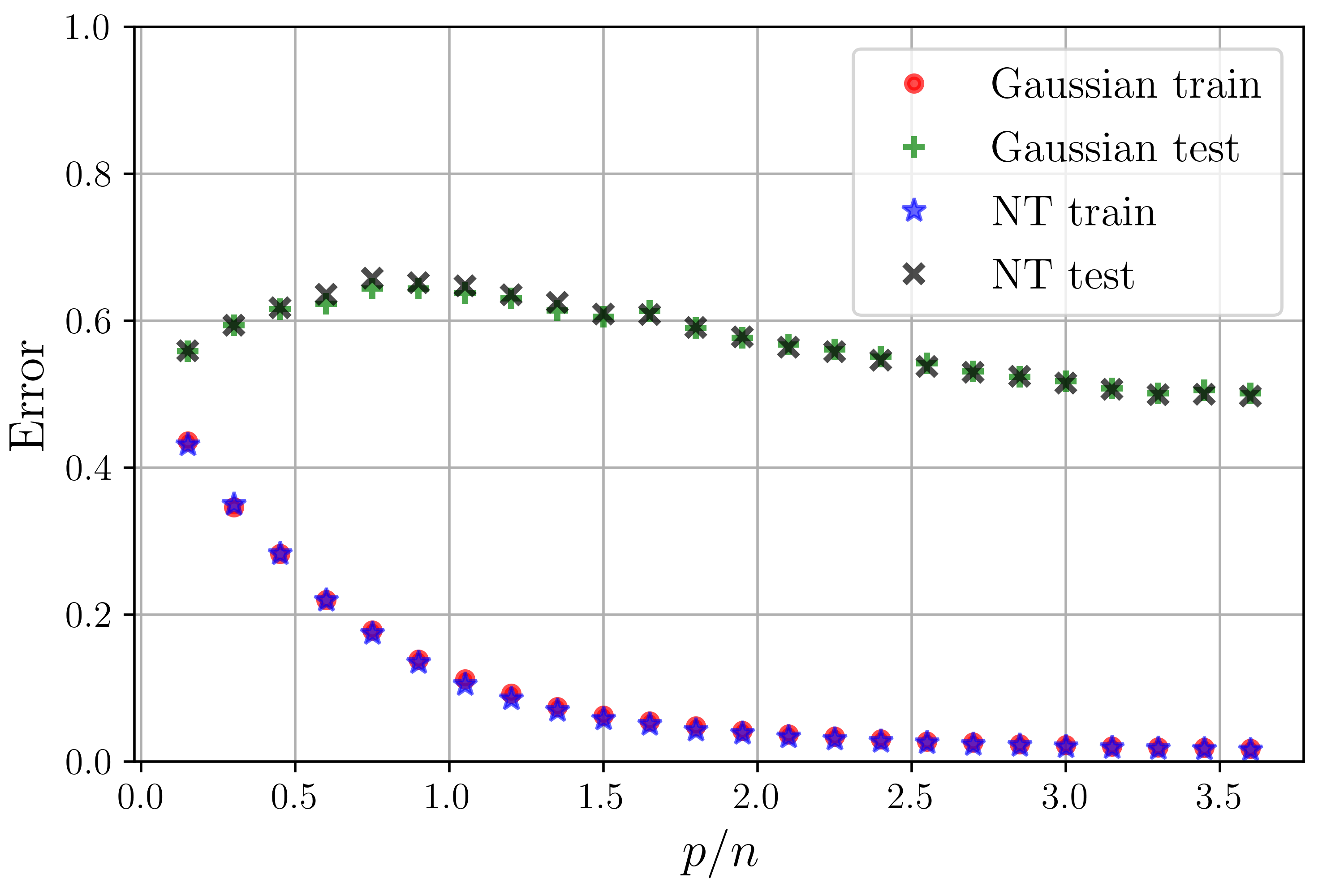}
\includegraphics[scale=0.46]{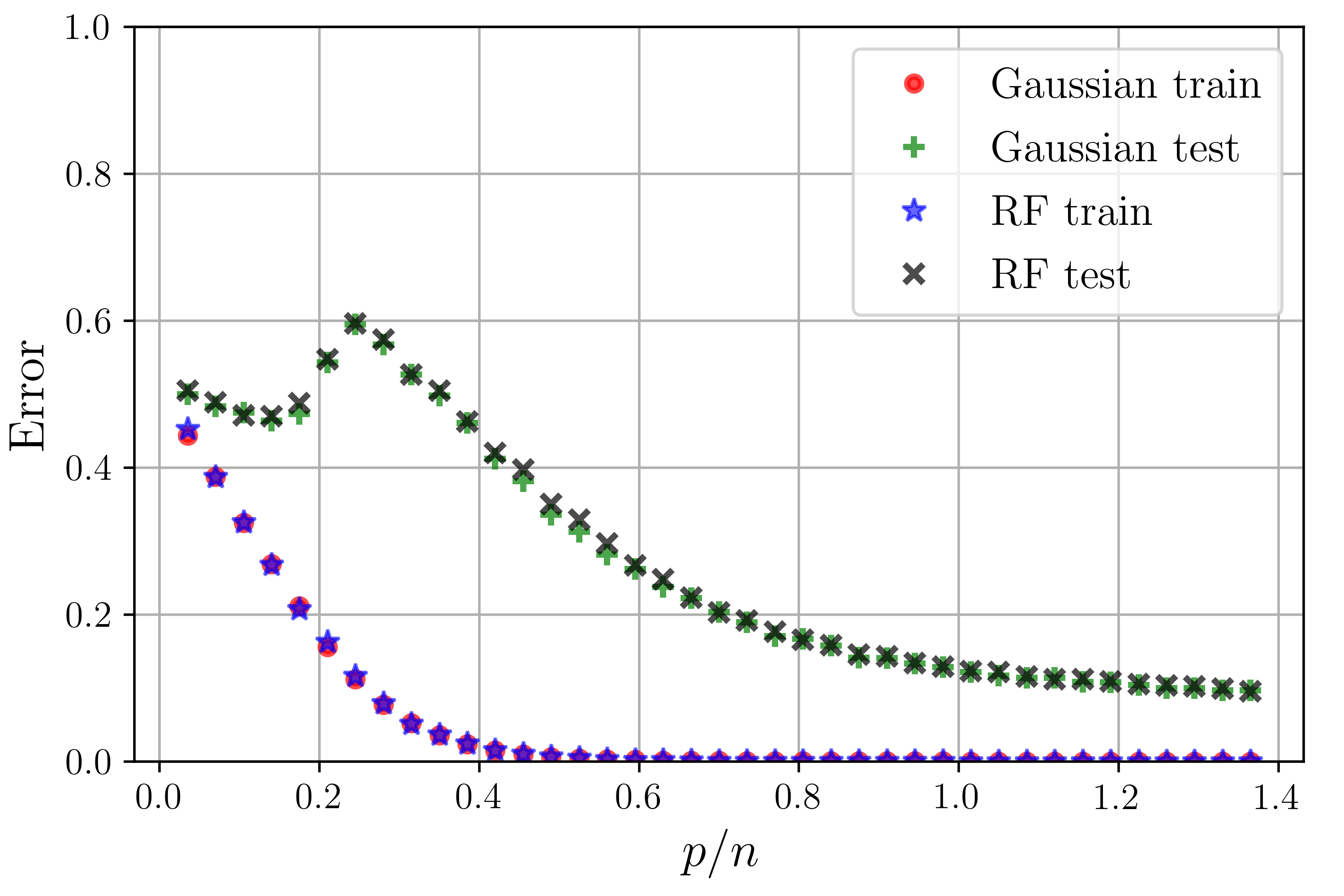}
\caption{Universality of the training and test errors in a simulation experiment:
see main text for description.
In both figures, we take sample size $n=200$ and noise standard deviation $\nu = 0.1$.
In the figure on the left, we take latent dimension $d=30$, regularization $\lambda = 0.02$ and a
neural tangent featurization map.
 In the figure on the right, we take $d=100,\lambda = 0.0002$ and a random features map. 
 We vary the number of features $p$, and at each point we report the average over 
100 realizations.}
\label{fig:univ_sim}
\end{figure}

\begin{figure}
\includegraphics[scale=0.46]{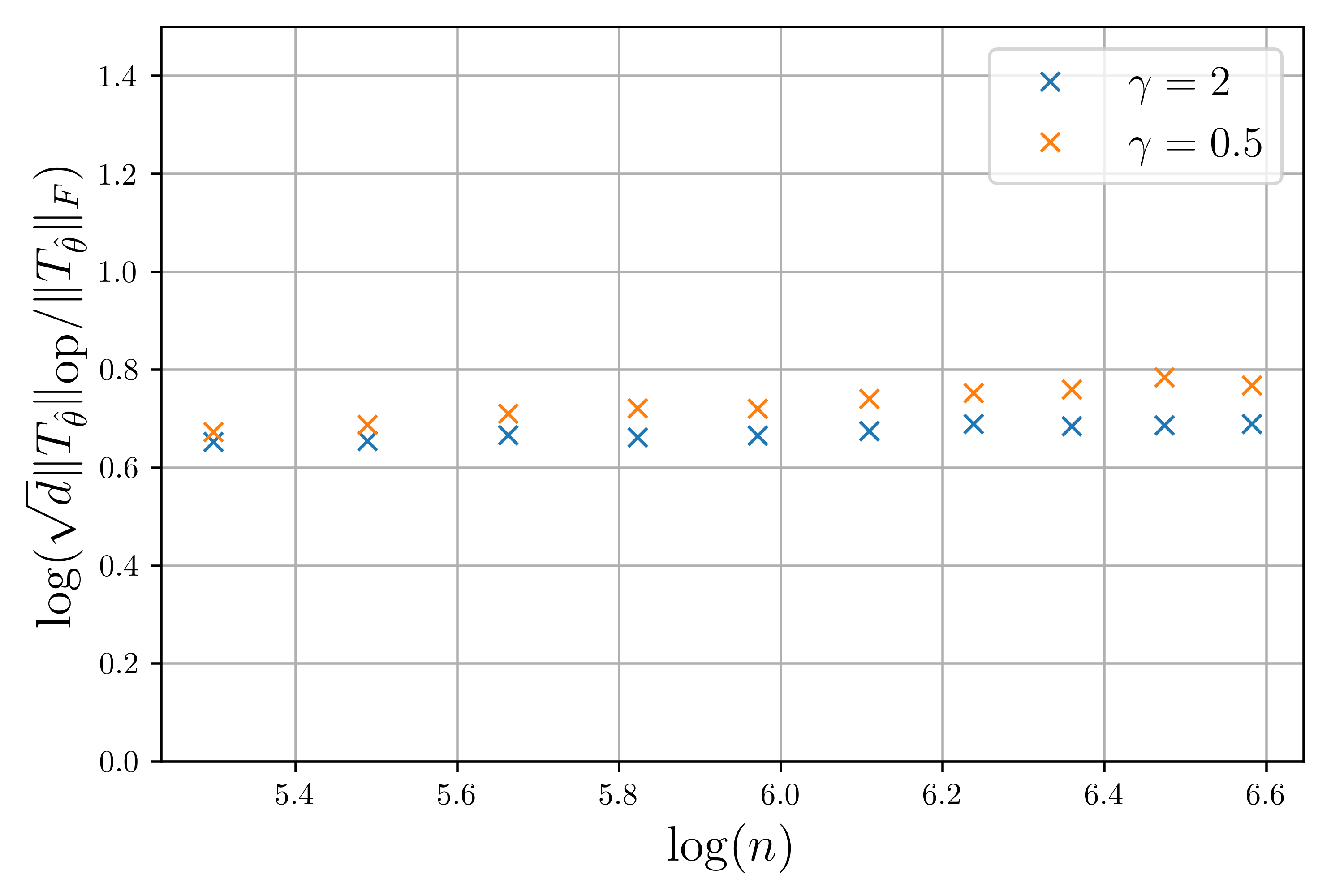}
\includegraphics[scale=0.46]{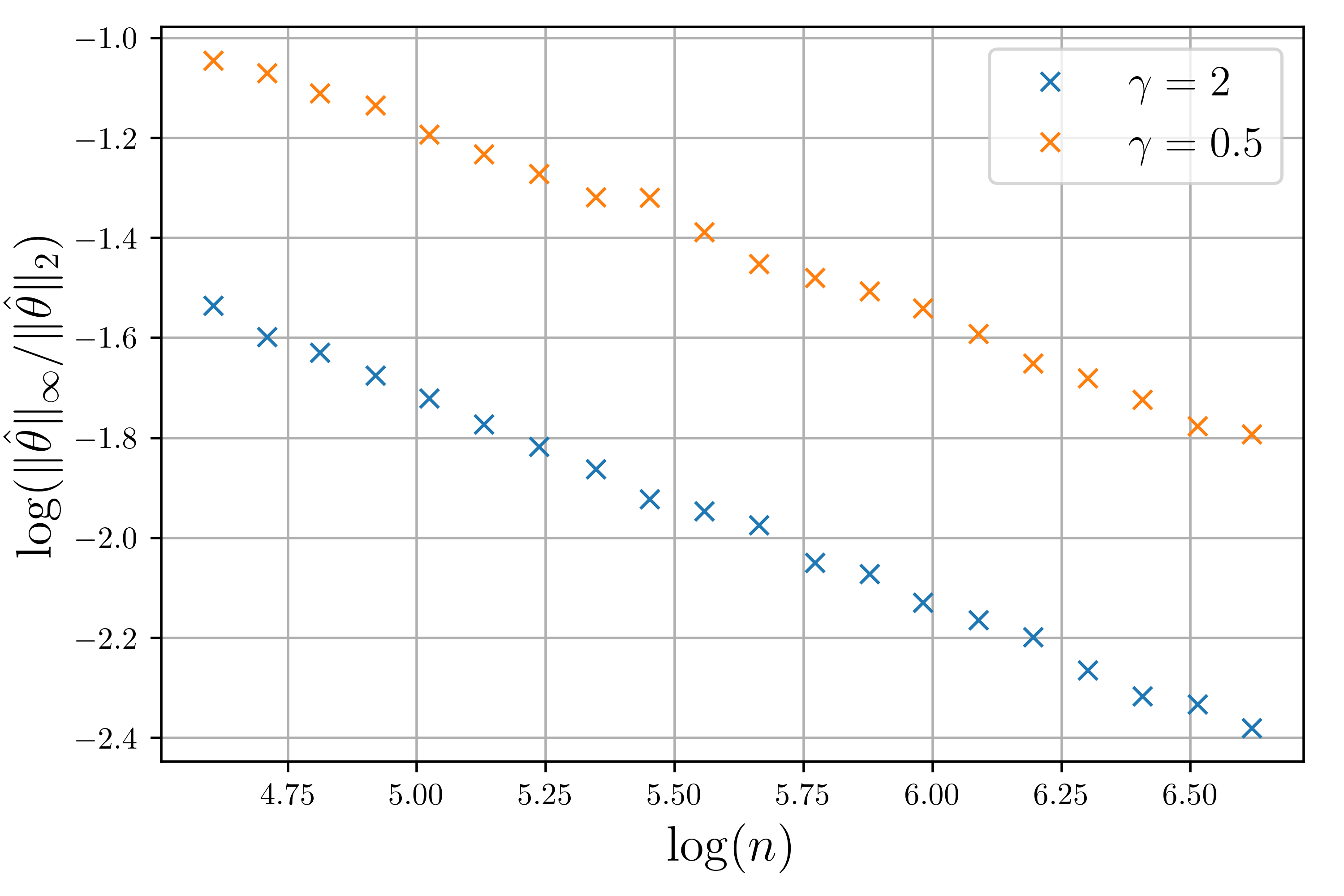}
\caption{
The value of constraints defining the domain of Gaussianity, evaluated at the minimizer:
The figure follows the setting of Example~\ref{ex:1} and Figure~\ref{fig:univ_sim}, where the risk is given in Eq.~\eqref{eq:erm_numerical}.
The noise standard deviation is  $\nu = 0.1$.
Left: 
$\bphi=\bphi_{\NT}$ defined in Section~\ref{section:ntk_example}
 with latent dimension $m = d=\sqrt{\sgamma n}$, regularization $\lambda = 0.02$.
Right: $\bphi=\bphi_{\RF}$ defined in Section~\ref{section:rf_example}, with latent dimension $d= p = \sgamma n,\lambda = 0.0002$.
 }
\label{fig:delocalization}
\end{figure}

\begin{example}[Numerical example]
\label{ex:1}
Figure~\ref{fig:univ_sim} demonstrates universality in a statistical learning
problem via numerical simulations.
We generate synthetic data $(\bz_i,y_i)$ with  $\bz_i \sim\cN(0,\bI_d)$ and 
\begin{align}
y_i = \varphi(\bbeta^{\star\sT}\bz_i + \eps_i)\,,\;\;\;\;\;
\varphi(t)= \begin{cases}
t & \mbox{if }t\in[-1,1]\, ,\\
\sign(t) & \mbox{otherwise,}
\end{cases}\label{eq:ExampleLabels}
\end{align}
for $\eps_i \sim \cN(0, \nu^2)$,
with $\eps_i$ independent of $\bz_i$. Here  $\bbeta^\star\in \reals^d$, 
$\|\bbeta^\star\|_2=1$ is an unknown 
vector of parameters.

Given $n$ data points $(\bz_i,y_i)$, $i\le n$, we generate feature vectors
$\bx_i = \bphi(\bz_i) \in\R^p$ using two different featurization maps:
$(a)$ the neural tangent map
$\bphi=\bphi_{\NT}$  defined in Section~\ref{section:ntk_example}
(with activation function $\sigma(t)=\tanh(t)$);
and $(b)$ the random features map $\bphi=\bphi_{\RF}$  defined in 
Eq.~\eqref{eq:FirstRF} and Section~\ref{section:rf_example}
(with activation function $\sigma(t)=\tanh(t)$).
We also consider Gaussian data $\bg_i$ with matched covariance.

In each case we fit the data by minimizing the empirical risk:
\begin{align}
\label{eq:erm_numerical}
\hR_n(\btheta;\bX,\by) =  \frac1n\sum_{i=1}^n(y_i - \sigma_{\ell}(\btheta^\sT\bx_i))^2  +
 \lambda \norm{\btheta}_2^2\, ,\;\;\;\; \btheta \in\R^p,
 \end{align}
where we take $\sigma_{\ell}(t) = \tanh(t)$.
Notice that the ERM problem is non-convex in the vector $\btheta$.
In each case we compute the train and test errors, and compare them with the train and test errors in a 
similar simulation within the Gaussian equivalent model, 
see  Sections \ref{section:ntk_example}, \ref{section:rf_example}.
The agreement between the Gaussian and non-Gaussian models is excellent. 

Theorem \ref{thm:MainFirst} predicts that the mechanism driving universality is that the empirical risk minimizer lies in the domain of Gaussianity $\cS_p$.
Figure~\ref{fig:delocalization} checks that this holds 
in the present example, even when the minimization is unconstrained. 
Again, we consider the neural tangent features of Section \ref{section:ntk_example} (left frame),
and the random features of Sections \ref{section:rf_example} (right frame).
The relevant
\emph{domain of Gaussianity} criterion is $\|\btheta\|_\infty/\|\btheta\|_2 = p^{-\alpha}$ for $\alpha >0$ for random features, 
and $\|\bT_\btheta\|_{\op}/(\|\bT_{\btheta}\|_{F}\sqrt{d}) =O(1)$
for neural tangent features. (See Section \ref{section:ntk_example} for the definition of $\bT_{\btheta}$.)
\end{example}

\begin{remark}
In examples such as the last one, it is more reasonable to assume
labels $y_i = \varphi(\bU^{\star\sT} \bz_i,\eps_i)$
for some $\bU^\star \in \R^{d\times \sfk^\star}$,  instead of Eq.~\eqref{eq:form_yi}
(Labels depend on the underlying covariates, not on their featurization).
 The analysis of the distribution $y_i = \varphi(\bU^{\star\sT} \bz_i,\eps_i)$
can be reduced to the setting of Eq.~\eqref{eq:form_yi}, by augmenting the features
$\bphi(\bz_i)$ via   $\bx_i := (\bphi(\bz_i),\bz_i) \in\R^{p+d}$. Empirical risk is then
minimized over $\{(\btheta,\bzero) \in \R^{p+d}, \btheta\in\cC_p\}$ in the case $\sfk=1$, and the obvious
generalization for $\sfk>1$.
\end{remark}

Let us emphasize that universality is a very different phenomenon from uniform concentration 
of the empirical risk around its expectation, which is the main 
mathematical tool in statistical learning theory \cite{van2000asymptotic,shalev2014understanding}. 
Indeed, as illustrated very clearly by Figure \ref{fig:univ_sim},
universality holds in a high-dimensional regime in which  
test error and train error 
do not match. Establishing universality requires understanding the dependence of the empirical risk minimizer $\hbTheta_n^\bX$ on the data $\bX,\by$, 
and requires a  more refined analysis than uniform concentration.

\subsection{Universality of training error}

Our main result about the training error $\hR^{\star}_n(\bX,\by)$ of Eq.~\eqref{eq:min_problem}
is a direct consequence of Theorem \ref{thm:MainFirst}. However, we state one additional assumption
(in alternative to Assumption \ref{ass:loss-0}) to cover the important case of binary labels.
\begin{numassumption}[5']
\label{ass:loss-0-bis}
In alternative to Assumption \ref{ass:loss-0}, assume that $L_F(\bv,y) :=L(F(\bv),y)$
is non-negative and satisfies for all $\bv,\widetilde\bv\in\R^{\sfk}$, $y,\widetilde y \in\R$,
\begin{align*}
 &\left|L_F(\bv, y ) - L_F(\widetilde \bv ,y)\right| \le \sfK \left(1 + |y| \right)\norm{\bv - \widetilde \bv}_2 \\
 &\left|L_F(\bv, y ) - L_F(\bv,\widetilde y )\right| \le \sfK \left(1 + \norm{\bv}_2\right) |y- \widetilde y|.
\end{align*}
Further,  labels are binary: $y_i\in\{+1,-1\}$ with
\begin{equation}
 \P\left(y_i= +1 | \bx_i\right) = g\left(\bTheta_{\star}^{\sT}\bx_i\right)\,
\end{equation}
for some $g:\reals^{\sfk^{\star}}\to [0,1]$ satisfying for $\bv,\widetilde\bv \in\R^{\sfk^\star}$
\begin{equation}
 |g(\bv) - g(\widetilde \bv)| \le \sfK( 1 + \norm{\bv}_2 + \norm{\widetilde\bv}_2) \norm{\bv - \widetilde \bv}_2.
\end{equation}
\end{numassumption}

\begin{theorem}
\label{thm:main_empirical_univ}
Let $\{(y_i(\bX),\bx_i): i\le n\}$ be i.i.d. pairs with $\bx_i\in\reals^{p}$ and $y_i = y_i(\bX)$
given by Eq.~\eqref{eq:form_yi}. Similarly, define $\{(y_i(\bG),\bg_i): i\le n\}$
with $\bg_i\sim\cN(\bmu_\bg,\bSigma_\bg)$ as per Assumption \ref{ass:X}.
Finally, let $\cS_p$ be a domain of Gaussianity for the distribution of $\bx$ as in Definition~\ref{def:DoG}.

Suppose that Assumptions~\ref{ass:regime}-\ref{ass:X} hold.
 Suppose that either
$\ell(\bv,\bv^\star;\eps)$ defined in Eq.~\eqref{eq:ell-def} satisfies 
Assumption \ref{ass:loss-0} (with the $\eps_i$ uniformly sub-Gaussian) or $L_F(\bv,y) :=L(F(\bv),y)$
satisfies Assumption~\hyperref[ass:loss-0-bis]{5'}.

If $\cC_{j,p}\subseteq \cS_p$ and $\btheta^\star_j \in \cS_p$ for each $j\le \sfk^{\star}$,
then, for any
bounded Lipschitz function $\psi:\R\to\R$,
\begin{equation}
\label{eq:main_thm_eq}
 \lim_{n\to\infty}\left|\E\left[\psi\left(\widehat 
R^\star_n\left(\bX,\by(\bX)\right)\right)\right] - 
\E\left[\psi\left(\widehat R^\star_n\left(\bG,\by(\bG)\right) \right)\right]\right| = 0.
\end{equation}
Hence, for any constant $\rho \in \R$ and $\delta>0$, we have
\begin{align}
   \limsup_{n\to\infty}\P\left(
   \widehat 
R^\star_n\left(\bX,\by(\bX)\right) \ge  \rho + \delta
   \right)  &\le 
   \limsup_{n\to\infty}\P\left( 
\widehat R^\star_n\left(\bG,\by(\bG)\right)  \ge \rho 
\right), \textrm{ and }\nonumber\\
   \limsup_{n\to\infty}\P\left(
   \widehat 
R^\star_n\left(\bX,\by(\bX)\right) \le  \rho - \delta
   \right)  &\le 
   \limsup_{n\to\infty}\P\left( 
\widehat R^\star_n\left(\bG,\by(\bG)\right)  \le \rho 
\right)
\label{eq:prob_bounds}
\end{align}
and similarly for their limit inferiors,
whence, for all $\rho \in\R$,
\begin{equation}
\label{eq:P_limit_iff}
\hR^\star_n(\bX,\by(\bX))\stackrel{\P}{\to} 
\rho \quad\textrm{if and only if} \quad \widehat R^\star_n(\bG,\by(\bG)) \stackrel{\P}{\to}\rho.
\end{equation}
\end{theorem}

\begin{remark}
As discussed above, an important motivation for our work is provided by models in which the feature
vectors are obtained by applying a featurization map to some latent covariates $\bx_i=\bphi(\bz_i)$.
However, none of Theorems \ref{thm:MainFirst}, \ref{thm:main_empirical_univ},  \ref{thm:universality_bounds}, \ref{thm:test_error} assumes such a setting, and instead assumptions are directly placed on the distribution of $\bx_i$. In Sections \ref{section:ntk_example} and \ref{section:rf_example} we will use these theorems in the context of specific featurization maps.
\end{remark}

\subsection{Universality of the test error}

 The test error is the expectation of the error that a model incurs on a fresh sample:
\begin{align*}
R_n^\bx(\bTheta) :=\E\Big[ \ell\big(\bTheta^\sT \bx, \bTheta^{\star\sT}\bx;\eps\big) \Big]\, ,\;\;\;\;\;
R_n^\bg(\bTheta) :
= \E\Big[ \ell\big(\bTheta^\sT \bg; \bTheta^{\star\sT}\bg,\eps\big) \Big].
\end{align*}
The first expectation is with respect to independent random variables $\bx\sim\P_{\bx}$ and  $\eps\sim\P_{\eps}$,
and the second with respect to independent $\bg\sim\normal(\bmu_\bg,\bSigma_{\bg})$ and  $\eps\sim\P_{\eps}$.
By definition of $\cS_p$, it is easy to see that
$\lim_{n\to\infty}|R_n^\bx(\bTheta)-R_n^\bg(\bTheta)|=0$ at a \emph{fixed} $\bTheta\in \cS_p$.
Here however we are interested in comparing the two at near minimizers
of the respective ERM problems.

We will state two  theorems that provide sufficient conditions for universality of the test
error: Theorem \ref{thm:universality_bounds}  assumes an overparameterized scenario in
which the train error is not locally convex around the minimizer;  
Theorem \ref{thm:test_error} is instead relevant for cases in which the empirical risk is locally convex.

We begin by considering the overparameterized setting because it is very different from 
earlier results, and because of its relevance to deep learning \cite{bartlett2021deep}.

We define the set of approximate empirical risk minimizers:
\begin{align}
\ERM_{t}(\bX):=\big\{\bTheta \in \cC_p^\sfk  \mbox{ s.t. }
 \hR_n(\bTheta;\bX,\by(\bX)) \le t \big\}\, .\label{eq:ERM_Def}
 \end{align}	
In statistical learning jargon, a problem is overparameterized (or `overfits' the data) 
if $\ERM_{t}(\bX)$ is non-empty (and potentially very large) for $t$ small or $t=0$, even if the 
data $(y_i,\bx_i)$ are noisy (i.e. $\Var(y_i|\bx_i)$ is bounded away from $0$).  
A model $\bTheta\in \ERM_{0}(\bX)$ is referred to as an `interpolator' because $L(f(\bz_i;\bTheta),y_i) =0$,
which implies (if $L(\hy,y)=0$ only for $\hy=y$) $f(\bz_i;\bTheta) = y_i$ 
for all $i\le n$. 
\begin{theorem}
\label{thm:universality_bounds}
Assume $\lim_{n\to\infty}\P(\ERM_0(\bG)\neq\emptyset) = 1$.
Then under the assumptions of Theorem \ref{thm:main_empirical_univ},
for all $\delta >0,\alpha>0$ and $\rho\in\R$ we have
\begin{align}
\nonumber
\limsup_{n\to\infty}\P\Big(
 \min_{\bTheta\in {\ERM}_{\alpha}(\bX)} R_n^\bx(\bTheta) \ge \rho + \delta \Big) 
&\le 
\limsup_{n\to\infty}\P\Big( \min_{\bTheta\in\ERM_0(\bG)}R_n^\bg(\bTheta) 
> \rho  \Big), \textrm{ and }\\
\limsup_{t\to 0}\limsup_{n\to\infty}\P\Big(
 \min_{\bTheta\in\ERM_t(\bX)}R_n^\bx(\bTheta) 
\le  \rho - \delta \Big) 
&\le 
\limsup_{n\to\infty}\P\Big(
 \min_{\bTheta\in\ERM_{\alpha}(\bG)}
R_n^\bg(\bTheta) 
\le \rho \Big),
\nonumber
\end{align}
and similarly for the limit inferiors in $n$.
\end{theorem}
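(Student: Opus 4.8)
The plan is to deduce Theorem~\ref{thm:universality_bounds} from the training‑error universality of Theorem~\ref{thm:main_empirical_univ} by \emph{folding the test error into the objective}. The point is that $\bTheta\mapsto R_n^\bx(\bTheta)$ is a \emph{deterministic} function of $\bTheta$ (the expectation defining it is over a fresh test point, independent of the training data), and it is Lipschitz on $\cC_p^\sfk$ with a dimension‑free constant $L_R$ by $\norm{\ell}_\Lip\le\sfK$ and the uniform subgaussianity in Assumption~\ref{ass:X}. Hence, for any Lipschitz $\chi:\R\to[0,1]$ and any $M>0$, the map $r_{\new}(\bTheta):=r(\bTheta)+M\,\chi\big(R_n^\bx(\bTheta)\big)$ is the sum of $r$ and a globally Lipschitz function with a dimension‑free Lipschitz constant, so it satisfies Assumption~\ref{ass:regularizer}, and Theorem~\ref{thm:main_empirical_univ} applies verbatim to the ERM problem with $r$ replaced by $r_{\new}$, in \emph{both} the $\bX$ and the $\bG$ model, keeping the \emph{same} penalty $M\chi(R_n^\bx(\cdot))$ in each; write $\widehat Q_n(\bX),\widehat Q_n(\bG)$ for the two optimal values. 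The one extra ingredient needed is the uniform approximation $\varepsilon_n:=\sup_{\bTheta\in\cC_p^\sfk}\big|R_n^\bx(\bTheta)-R_n^\bg(\bTheta)\big|\to 0$; the remark after Assumption~\ref{ass:X} gives the pointwise version, and one upgrades it to uniformity by truncating $(\bu,\bv)\mapsto\E_\eps[\ell(\bu;\eta(\bv,\eps))]$ at a level $T$, applying a multivariate form of~\eqref{eq:condition_bounded_lipschitz_single} to the $(\sfk+\sfk^\star)$‑dimensional projection $(\bTheta^\sT\bx,\bTheta^{\star\sT}\bx)$, and bounding the truncation error uniformly in $\bTheta$ and $n$ via the subgaussianity of $\bTheta^\sT\bx,\bTheta^{\star\sT}\bx$ and $\norm{\eps}_{\psi_2}\le\sfK$.

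For the first inequality, fix $\delta,\alpha>0,\rho\in\R$, let $\sfR_0$ satisfy $r\ge -\sfR_0$ on $\bigcup_p\cC_p^\sfk$, take $M>\alpha+\sfR_0$, and let $\chi$ be the ramp that is $0$ on $(-\infty,\rho+\delta/2]$ and $1$ on $[\rho+\delta,\infty)$. On the event $\big\{\min_{\bTheta\in\ERM_\alpha(\bX)}R_n^\bx(\bTheta)\ge\rho+\delta\big\}$, every $\bTheta$ with $R_n^\bx(\bTheta)<\rho+\delta$ lies outside $\ERM_\alpha(\bX)$ hence has $\hR_n(\bTheta;\bX,\by(\bX))>\alpha$, while every $\bTheta$ with $R_n^\bx(\bTheta)\ge\rho+\delta$ has augmented cost at least $-\sfR_0+M>\alpha$; so $\widehat Q_n(\bX)\ge\alpha$, giving $\P\big(\min_{\ERM_\alpha(\bX)}R_n^\bx\ge\rho+\delta\big)\le\P(\widehat Q_n(\bX)\ge\alpha)$. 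Applying~\eqref{eq:prob_bounds} to $\widehat Q_n$ with any $\delta'\in(0,\alpha)$ yields $\lim_n\P(\widehat Q_n(\bX)\ge\alpha)\le\lim_n\P(\widehat Q_n(\bG)\ge\alpha-\delta')$. Finally, if $\ERM_0(\bG)$ contains a point $\bTheta_0$ with $R_n^\bg(\bTheta_0)\le\rho$, then $R_n^\bx(\bTheta_0)\le\rho+\varepsilon_n<\rho+\delta/2$ for $n$ large, so $\chi(R_n^\bx(\bTheta_0))=0$ and $\widehat Q_n(\bG)\le\hR_n(\bTheta_0;\bG,\by(\bG))\le 0<\alpha-\delta'$; hence $\{\widehat Q_n(\bG)\ge\alpha-\delta'\}\subseteq\{\ERM_0(\bG)=\emptyset\}\cup\{\min_{\ERM_0(\bG)}R_n^\bg>\rho\}$ for $n$ large, and the first event has vanishing probability by hypothesis. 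Chaining the three bounds gives the first inequality.

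The second inequality is dual. Keep $\delta,\alpha,\rho$ and $\sfR_0,M$ as above, pick $\delta''\in(0,\alpha)$, restrict to $t<\alpha-\delta''$, use the ramp $\tilde\chi$ that is $0$ on $(-\infty,\rho-\delta]$ and $1$ on $[\rho-\delta/2,\infty)$, and write $\widetilde Q_n$ for the corresponding optimal values. Any $\bTheta^\dagger\in\ERM_t(\bX)$ with $R_n^\bx(\bTheta^\dagger)\le\rho-\delta$ has $\tilde\chi(R_n^\bx(\bTheta^\dagger))=0$, so $\widetilde Q_n(\bX)\le\hR_n(\bTheta^\dagger;\bX,\by(\bX))\le t$; thus $\{\min_{\ERM_t(\bX)}R_n^\bx\le\rho-\delta\}\subseteq\{\widetilde Q_n(\bX)\le t\}$. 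By~\eqref{eq:prob_bounds}, $\lim_n\P(\widetilde Q_n(\bX)\le t)\le\lim_n\P(\widetilde Q_n(\bG)\le t+\delta'')$. On $\{\widetilde Q_n(\bG)\le t+\delta''\}$ the (attained) minimizer $\bTheta^\sharp$ of the augmented cost satisfies $M\tilde\chi(R_n^\bx(\bTheta^\sharp))\le t+\delta''+\sfR_0<M$, hence $R_n^\bx(\bTheta^\sharp)<\rho-\delta/2$ and $\hR_n(\bTheta^\sharp;\bG,\by(\bG))\le t+\delta''$, so $\bTheta^\sharp\in\ERM_{t+\delta''}(\bG)$ and $R_n^\bg(\bTheta^\sharp)\le R_n^\bx(\bTheta^\sharp)+\varepsilon_n<\rho$ for $n$ large; therefore $\lim_n\P(\widetilde Q_n(\bG)\le t+\delta'')\le\lim_n\P(\min_{\ERM_{t+\delta''}(\bG)}R_n^\bg\le\rho)\le\lim_n\P(\min_{\ERM_\alpha(\bG)}R_n^\bg\le\rho)$, the last step using $\ERM_{t+\delta''}(\bG)\subseteq\ERM_\alpha(\bG)$. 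Taking $t\to0$ finishes the proof.

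\textbf{Main obstacle.} Once $\varepsilon_n\to0$ is in hand, the reduction above is essentially bookkeeping; the genuine work is that preliminary uniform‑approximation statement --- upgrading the single‑vector, bounded‑test‑function hypothesis~\eqref{eq:condition_bounded_lipschitz_single} to a statement about the $O(1)$‑dimensional projection $(\bTheta^\sT\bx,\bTheta^{\star\sT}\bx)$ tested against an \emph{unbounded} Lipschitz function, \emph{uniformly} over $\bTheta\in\cC_p^\sfk$. One should also check the routine points used above: that $r$ is uniformly bounded below on the constraint sets, that $R_n^\bx$ is Lipschitz on $\cC_p^\sfk$ with a dimension‑free constant, and that $r_{\new}$ genuinely satisfies Assumption~\ref{ass:regularizer}, so that Theorem~\ref{thm:main_empirical_univ} indeed applies to the augmented problem.
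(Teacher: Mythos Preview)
Your approach is correct and shares the paper's high--level strategy: fold the (deterministic) test error into the regularizer so that Theorem~\ref{thm:main_empirical_univ} can be applied to an augmented empirical risk, and bridge $R_n^\bx$ and $R_n^\bg$ via the uniform approximation $\varepsilon_n:=\sup_{\bTheta\in\cS_p^\sfk}|R_n^\bx(\bTheta)-R_n^\bg(\bTheta)|\to 0$. The difference is in the \emph{form} of the augmentation. The paper adds a linear penalty $s\,R_n^\bg(\bTheta)$ (a Lagrangian relaxation), chooses the slope $s=C'/\alpha$ from a uniform bound $C'$ on $R_n^\bg$, and sandwiches the constrained minimum $F_n^\bg(\alpha,\cdot)$ between the two augmented minima via the identity $\min_\bTheta\{s\hR_n+R_n^\bg\}\ge\min_{t\ge0}\{st+F_n^\bg(t,\cdot)\}$; an auxiliary parameter $\alpha_0$ is then sent to $0$, which is where the $\lim_{t\to0}$ in the second inequality arises. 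Your ramp penalty $M\chi(R_n^\bx(\cdot))$ is more direct --- it literally excludes the unwanted region --- and the resulting casework replaces the sandwiching argument; as you observe, it in fact yields the second inequality for every fixed $t<\alpha$ without the outer $\lim_{t\to0}$. On the other hand, the paper's linear penalty dovetails naturally with the differentiability condition of Theorem~\ref{thm:test_error}\,(c) (the augmented value \emph{is} the function $\rho(s)$ there), so it is the more reusable device. Your ``main obstacle'' is exactly the content of Lemma~\ref{lemma:proj_locally_lip} in the appendix, which the paper proves by a smoothing/characteristic--function argument and then invokes (as Lemma~\ref{lemma:F_diff}) precisely where you need it. The routine checks you flag --- Lipschitzness of $R_n^\bx$ on $\cS_p^\sfk$ and uniform lower--boundedness of $r$ --- are handled in the paper by the subgaussianity in Assumption~\ref{ass:X} and by the WLOG shift $r\mapsto r-\min_{\cC_p^\sfk}r$ at the start of Appendix~\ref{section:proof_univerality_bounds}; note that, as in the paper's own application of Theorem~\ref{thm:main_empirical_univ} with $R_n^\bg$ absorbed into the regularizer, Assumption~\ref{ass:regularizer} is only ever used on $\cS_p^\sfk$ in the proof, so your augmented regularizer $r+M\chi(R_n^\bx)$ is admissible for the same reason.
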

In words, Theorem \ref{thm:universality_bounds}  establishes
that the minimum test error over all near-interpolators is universal
under certain regularity conditions on the test error in the associated Gaussian model.
A specific formalization of this remark is provided by the next corollary.
\begin{corollary}\label{coro:UnivInterpolation}
Assume $\lim_{n\to\infty}\P(\ERM_0(\bG)\neq\emptyset) = 1$, and that
the assumptions of Theorem \ref{thm:main_empirical_univ} hold. 
Further assume that the following limit exists for $t\in [0,t_0]$
with $t_0$ a small enough constant (recall that $\plim$ denotes limit in probability):
\begin{align}
\plim_{n\to\infty}
 \min_{\bTheta\in\ERM_t(\bG)}R_n^\bg(\bTheta) & = \rho_{\stst}(t)\, .
 \label{eq:MinimalInterpolation-Assumption}
 \end{align}
 
 Then, for any $0\le t< t_+\le t_0$ 
  (note that $t\mapsto \rho(t)$ is non-increasing by construction),
\begin{align}
\rho_{\stst}(t_+)\le  \plim\inf_{n\to\infty}\min_{\bTheta\in\ERM_t(\bX)}R_n^\bx(\bTheta) \le 
\plim\sup_{n\to\infty}\min_{\bTheta\in\ERM_0(\bX)}R_n^\bx(\bTheta)  \le \rho_{\stst}(0)\, .
\end{align}
In particular, if $\rho$ is continuous at $t=0$, we  have
\begin{align}
\plim_{n\to\infty}\min_{\bTheta\in\ERM_0(\bX)}R_n^\bx(\bTheta)=\rho_{\stst}(0)\, . \label{eq:LimitMinError}
\end{align}
  \end{corollary}
The existence of the limit \eqref{eq:MinimalInterpolation-Assumption} and the continuity of $t\mapsto\rho_{\stst}(t)$
can be checked in specific models. For instance, the techniques of \cite{hastie2019surprises,montanari2019generalization} can be used to verify them ---under technical assumptions---
for overparameterized linear regression and binary classification. 

More generally, we note that applying this corollary only requires to control the asymptotics of the equivalent Gaussian model.

\begin{remark}
Statements analogous to Theorem \ref{thm:universality_bounds} and Corollary \ref{coro:UnivInterpolation} hold
(with identical proof) for the maximum test error over near interpolators,
and if the level $0$ is replaced with any deterministic constant $t_\star$
(i.e., $\ERM_0$ can be replaced by $\ERM_{t_{\star}}$).

\emph{In particular, the analogue of Eq.~\eqref{eq:LimitMinError} 
(i.e. $\plim_{n\to\infty}\min_{\bTheta\in\ERM_{t_{\star}}(\bX)}R_n^\bx(\bTheta)=\rho_{\stst}(t_\star)$)
can be proven if $\rho$ is continuous at $t_{\star}$.}
\end{remark}
  
The next theorem provides alternative sufficient conditions 
that guarantee the universality of the test error. We emphasize that these
are conditions on the Gaussian features model $\bG$ and it is therefore possible to check them
using existing techniques. 
\begin{theorem}
\label{thm:test_error}
Suppose one of the following holds:
\begin{enumerate}[(a)]
 \item
\label{item:a_test_error} 
 The loss $\ell(\;\cdot\;,\bv^\star, v)$ is convex for fixed $\bv^\star,v$, the regularizer $r$ is $\smu$-strongly convex for some fixed 
 constant $\smu>0$ and
 $\cC_p \subseteq\cS_p$ is given by
 $\cC_p = \{\btheta\in\R^p : h(\btheta)\le L\}$
 for some convex $h$ and $L\in \R$.
 Furthermore, we have
 for some $\rho,\widetilde\rho\in\R$
\begin{equation}
\nonumber
\widehat R_n^\star\left(\bG,\by(\bG)\right) \stackrel{\P}{\to} \rho,\quad
R_n^\bg\left(\widehat \bTheta_n^\bG\right) \stackrel{\P}{\to} \widetilde\rho\, ;
\end{equation}
\item
\label{item:b_test_error} 
For some $\rho,\widetilde \rho\in\R$, letting~ 
$$\cU_p(\widetilde\rho,\alpha):=\{\bTheta \in \cC_p^\sfk   : \;| R_n^\bg(\bTheta) - 
\widetilde \rho |\ge \alpha\},$$
we have
$\widehat R_n^\star(\bG,\by(\bG)) \stackrel{\P}{\to} \rho$,
and for all $\alpha >0$, there exists $\delta>0$ so that 
 \begin{equation}  
\nonumber
\lim_{n\to\infty}\P\Big(\min_
{\bTheta\in \cU_p(\widetilde\rho,\alpha)}
|  \widehat R_n(\bTheta;\bG,\by(\bG)) - \widehat 
R_n^\star(\bG,\by(\bG)) | \ge \delta\Big)
= 1;
\end{equation}
\item
\label{item:c_test_error} 
There exists a function $\rho(s)$ differentiable at $s=0$ such that for all $s$ in a 
neighborhood of $0$,
\begin{equation}
\label{eq:DiffCondition}
\min_{\bTheta\in\cC_p^\sfk} \Big\{\widehat R_n(\bTheta; \bG,\by(\bG)) + s R_n^\bg(\bTheta) 
\Big\}
\stackrel{\P}\to \rho(s).
\end{equation}
\end{enumerate}
Then,  under the assumptions of Theorem \ref{thm:main_empirical_univ},
\begin{equation}
\nonumber
 \left| R_n^\bx\left(\widehat\bTheta_n^\bX\right) - 
        R_n^\bg\left(\widehat\bTheta_n^\bG\right)\right| 
\stackrel{\P}{\to} 0
\end{equation}
for any minimizers 
$\widehat\bTheta_n^\bX, \widehat\bTheta_n^\bG$ 
of $\widehat R_n(\bTheta;\bX,\by(\bX))$, $\widehat R_n(\bTheta;\bG,\by(\bG))$, respectively.
\end{theorem}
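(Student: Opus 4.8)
The plan is to deduce this from the universality of the \emph{training} error (Theorem~\ref{thm:main_empirical_univ}) by perturbing the empirical risk along a one-parameter family and reading off the test error at a minimizer as a derivative of the perturbed minimum at parameter~$0$. Minimizers $\hbTheta_n^\bX,\hbTheta_n^\bG$ exist since $\cC_p^\sfk$ is compact and the objective is continuous. Next, for each fixed~$n$ the test error $R_n^\bg$ is a \emph{deterministic} Lipschitz function of $\bTheta$ (a composition of Lipschitz maps, with the relevant constant controlled by Assumptions~\ref{ass:loss_labeling} and~\ref{ass:-1}); after composing it with a Lipschitz retraction onto $\cC_p^\sfk$ --- which does not change its values on the feasible set --- the map $\bTheta\mapsto r(\bTheta)+s\,R_n^\bg(\bTheta)$ again satisfies Assumption~\ref{ass:regularizer}, uniformly in $p$ and in $s$ over a fixed compact neighbourhood of~$0$. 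Writing, for $\bullet\in\{\bX,\bG\}$,
\[
\psi_n^{\bullet}(s)\ :=\ \min_{\bTheta\in\cC_p^\sfk}\Big\{\hR_n\big(\bTheta;\bullet,\by(\bullet)\big)+s\,R_n^\bg(\bTheta)\Big\},
\]
which is concave in $s$, Theorem~\ref{thm:main_empirical_univ} applied with the regularizer $r+sR_n^\bg$ shows that for each such $s$ the laws of $\psi_n^\bX(s)$ and $\psi_n^\bG(s)$ have the same limit; in particular (by the analogue of~\eqref{eq:P_limit_iff}) one converges in probability to a constant if and only if the other does, to the same constant.

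\noindent\textbf{Reading off the test error.}
For any minimizer $\hbTheta_n^{\bullet}$ of $\hR_n(\,\cdot\,;\bullet,\by(\bullet))$ and every $s$ one has $\psi_n^{\bullet}(s)\le\psi_n^{\bullet}(0)+s\,R_n^\bg(\hbTheta_n^{\bullet})$, i.e.\ $R_n^\bg(\hbTheta_n^{\bullet})$ is a supergradient of the concave function $\psi_n^{\bullet}$ at~$0$. Using the elementary fact that the supergradients at~$0$ of a pointwise-convergent sequence of concave functions converge to the derivative of the limit whenever the limit is differentiable at~$0$ (monotonicity of secant slopes; the in-probability version follows along subsequences), it suffices to exhibit, for $s$ in a neighbourhood of~$0$, a function $\rho(s)$ differentiable at~$0$ with $\psi_n^\bG(s)\stackrel{\P}{\to}\rho(s)$: the transfer above then gives $\psi_n^\bX(s)\stackrel{\P}{\to}\rho(s)$, hence $R_n^\bg(\hbTheta_n^\bX)\stackrel{\P}{\to}\rho'(0)=:\widetilde\rho$ and likewise $R_n^\bg(\hbTheta_n^\bG)\stackrel{\P}{\to}\widetilde\rho$. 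Finally, the uniform-over-$\cC_p^\sfk$ strengthening of the pointwise identity $\lim_n|R_n^\bx(\bTheta)-R_n^\bg(\bTheta)|=0$ noted after the definition of the test errors (a consequence of a vector-valued form of Assumption~\ref{ass:X} together with the subgaussian tail control in Assumption~\ref{ass:-1}) upgrades this to $R_n^\bx(\hbTheta_n^\bX)\stackrel{\P}{\to}\widetilde\rho$, and combining the two gives $|R_n^\bx(\hbTheta_n^\bX)-R_n^\bg(\hbTheta_n^\bG)|\stackrel{\P}{\to}0$.

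\noindent\textbf{The three conditions.}
It remains to produce $\rho(s)$ in each case. Case~\eqref{item:c_test_error} is immediate: \eqref{eq:DiffCondition} \emph{is} the statement $\psi_n^\bG(s)\stackrel{\P}{\to}\rho(s)$ with $\rho$ differentiable at~$0$. In case~\eqref{item:a_test_error}, $\smu$-strong convexity of $r$ and convexity of $\ell$ make $\hR_n(\,\cdot\,;\bG,\by(\bG))$ $\smu$-strongly convex; the standard sensitivity estimate for strongly convex minimization under a Lipschitz perturbation gives $\|\hbTheta_n^{\bG,s}-\hbTheta_n^\bG\|_F=O(|s|)$ for the perturbed minimizer, so $\psi_n^\bG(s)=\hR_n^\star(\bG,\by(\bG))+s\,R_n^\bg(\hbTheta_n^\bG)+O(s^2)$ with a \emph{deterministic} $O(s^2)$ error; the two convergence hypotheses of~\eqref{item:a_test_error} then give $\psi_n^\bG(s)=\rho+s\widetilde\rho+O(s^2)+o_{\P}(1)$, which is enough for the secant-slope squeeze to conclude $R_n^\bg(\hbTheta_n^\bX)\stackrel{\P}{\to}\widetilde\rho$. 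Case~\eqref{item:b_test_error} is handled separately: the separation hypothesis forces every near-minimizer of $\hR_n(\,\cdot\,;\bG,\by(\bG))$ --- in particular $\hbTheta_n^\bG$ --- to have $R_n^\bg$ within $\alpha$ of $\widetilde\rho$ for every $\alpha>0$, so $R_n^\bg(\hbTheta_n^\bG)\stackrel{\P}{\to}\widetilde\rho$; for the $\bX$ side one argues by contradiction, adding to $r$ the bounded Lipschitz penalty $g_\lambda(\bTheta):=\lambda\big(\tfrac{\alpha}{2}-|R_n^\bg(\bTheta)-\widetilde\rho|\big)_+$ and applying Theorem~\ref{thm:main_empirical_univ} together with the separation hypothesis at two scales $\alpha'<\alpha$, to obtain (for $\lambda$ large and a fixed $\delta>0$) $\min_{\bTheta\in\cC_p^\sfk}\{\hR_n(\bTheta;\bG,\by(\bG))+g_\lambda(\bTheta)\}\ge\rho+\delta-o_{\P}(1)$; universality carries this lower bound to the $\bX$ problem, while $\hR_n^\star(\bX,\by(\bX))\stackrel{\P}{\to}\rho$ by~\eqref{eq:P_limit_iff}, so evaluating at $\hbTheta_n^\bX$ forces $|R_n^\bg(\hbTheta_n^\bX)-\widetilde\rho|<\alpha/2$ with probability tending to~$1$, contradicting $\hbTheta_n^\bX\in\cU_p(\widetilde\rho,\alpha)$.

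\noindent\textbf{Main obstacle.}
The delicate point is the convex-analytic step: unlike the strongly convex setting, here the minimizers need not be unique, so the test error at \emph{a} minimizer cannot be obtained by simply differentiating but must be squeezed between the one-sided derivatives of $\psi_n^{\bullet}$ at~$0$, and this squeeze closes only when the limit of $\psi_n^\bG(s)$ is differentiable at~$0$. This is precisely the role of the hypotheses~\eqref{item:a_test_error}--\eqref{item:c_test_error}: without them $\psi_n^\bG(s)$ may fail to converge, or may converge to a function with a corner at~$0$, and then the test error at the minimizer is genuinely not pinned down by the training error. The remaining ingredients --- that the perturbed objects $r+sR_n^\bg$ and $r+g_\lambda$ are admissible regularizers uniformly in~$p$, and the uniform comparison $\sup_{\bTheta\in\cC_p^\sfk}|R_n^\bx(\bTheta)-R_n^\bg(\bTheta)|\to0$ --- are routine consequences of Assumptions~\ref{ass:loss_labeling}--\ref{ass:-1}.
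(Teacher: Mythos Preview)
Your proposal is correct and, for cases~\eqref{item:a_test_error} and~\eqref{item:c_test_error}, essentially coincides with the paper's proof. Both define the perturbed minimum $\psi_n^{\bullet}(s)=\min_{\bTheta\in\cC_p^\sfk}\{\hR_n(\bTheta;\bullet,\by(\bullet))+sR_n^\bg(\bTheta)\}$ (the paper's $\hR_{n,s}^\star$), sandwich $R_n^\bg(\hbTheta_n^{\bullet})$ between the secant slopes $D^{\bullet}(\pm s)=(\psi_n^{\bullet}(\pm s)-\psi_n^{\bullet}(0))/(\pm s)$, transfer these slopes between $\bX$ and $\bG$ via Theorem~\ref{thm:main_empirical_univ} with $r$ replaced by $r+sR_n^\bg$, and close the squeeze by sending $s\to0$. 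For~\eqref{item:a_test_error}, both control $D^\bG(-s)-D^\bG(s)=O(s)$ through the strong-convexity sensitivity bound $\|\hbTheta_n^{\bG,s}-\hbTheta_n^\bG\|_F=O(|s|)$ (the paper's Lemma~\ref{lemma:theta_lipschitz_in_s}). The final passage from $R_n^\bg(\hbTheta_n^\bX)$ to $R_n^\bx(\hbTheta_n^\bX)$ via the uniform comparison $\sup_{\cS_p^\sfk}|R_n^\bx-R_n^\bg|\to0$ is also the same (the paper's Lemma~\ref{lemma:proj_locally_lip}).

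Case~\eqref{item:b_test_error} is where the two arguments genuinely diverge. The paper runs a direct probabilistic contradiction: it decomposes $\P(|\hR_n^\star(\bX)-\hR_n^\star(\bG)|<\delta_\alpha)$ against the separation event $\cA_{n,\delta_\alpha,\alpha}$ and the event $\{|R_n^\bg(\hbTheta_n^\bX)-\widetilde\rho|\ge\alpha\}$, arguing that the left-hand side tending to~$1$ (by universality plus $\hR_n^\star(\bG)\stackrel{\P}{\to}\rho$) forces $\P(|R_n^\bg(\hbTheta_n^\bX)-\widetilde\rho|<\alpha)\to1$. Your route instead encodes the separation into a single auxiliary regularizer $g_\lambda$, applies Theorem~\ref{thm:main_empirical_univ} once to the penalized problem to lower-bound its $\bX$-minimum, and reads off $g_\lambda(\hbTheta_n^\bX)>0$ from the resulting gap with $\hR_n^\star(\bX)$. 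This is a legitimate alternative; it avoids having to reason about $\hbTheta_n^\bX$ simultaneously under both the $\bX$- and $\bG$-empirical risks on a common probability space, at the modest cost of the two-scale $\alpha'<\alpha/2$ device and the $\lambda$-large step.
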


\paragraph*{Proof technique} 
The proofs of Theorems \ref{thm:universality_bounds} and \ref{thm:test_error} 
are given in Sections~\ref{section:outline_proof_univerality_bounds} and~\ref{section:outline_proof_test_error} respectively.
The basic technique can be gleaned from condition \eqref{eq:DiffCondition}:
We perturb the train error by a term linear in $s$, proportional to the test error 
(this is only a proof device, not an actual algorithm). Then, the test error can be related
to the derivative with respect to $s$ of the resulting minimum value.
Since the minimum value
is universal by our results in the previous section, the technical challenge is to control its derivative.

%
%
\section{Some feature distributions and their domain of Gaussianity}
\label{sec:Pointwise}

In this section we study some concrete examples for the distribution of the feature vectors 
$\bx_i$. In each case, we characterize the set of parameter vectors $\cS_p$
for which the pointwise Gaussianity condition of Eq.~\eqref{eq:condition_bounded_lipschitz_single}
holds. 
For simplicity of exposition, we use $\sfk = \sfk^\star = 1$ throughout this section.

We first consider examples of featurization maps from the deep learning literature.

\noindent\emph{Neural tangent features.}
Section~\ref{section:ntk_example} analyzes the featurization map that
is obtained by linearizing a two-layer neural network around a random initialization.
We establish
asymptotic equivalence (in distributional sense) of ERM under the neural tangent 
model, to ERM under the Gaussian model with matching covariance structure.  
Comparable universality results were not known before our work, even
for convex losses. 

\noindent\emph{Random features.} In 
Section \ref{section:rf_example}, we consider the  featurization map that is 
obtained by applying a one-layer network with
random weights. This is equivalent to the `random features' model of \cite{rahimi2007random}.
Pointwise normality (along the lines of Eq.~\eqref{eq:condition_bounded_lipschitz_single})
and universality of the expected risk at a fixed $\bTheta$ for this model was first shown 
in~\cite{goldt2020gaussian}. 
Universality of test and train error for ridge regression was established in 
\cite{mei2019generalization}, while~\cite{hu2020universality} proved universality of the ERM 
for strongly convex losses. 
Finally, \cite{loureiro2021learning} presented empirical evidence and conjectured 
that universality holds for a wide class of such featurization maps and loss functions.

Our main results (Theorems \ref{thm:MainFirst} and \ref{thm:main_empirical_univ}) 
allow us to generalize the results of
\cite{hu2020universality} to non-convex losses. 

\noindent\emph{Linear functions of vectors with independent entries.} Finally, in  Section~\ref{section:linear_example}, we consider the case in which $\bs_i=\bSigma^{1/2}\obx_i$,
where $\obx_i$
has i.i.d. entries. This is a standard model in random matrix theory
but  was studied in the past mostly for  convex losses
~\cite{montanari2017universality,panahi2017universal,oymak2018universality}.
The only exception\footnote{After the present manuscript was posted, \cite{han2022universality} 
also analyzed non-convex losses with  $\bx_i$ having i.i.d. entries, using the same technique of \cite{korada2011applications}.}  is provided by~\cite{korada2011applications} which studies certain
non-convex losses when $\bSigma=\id$.

\subsection{Neural tangent features}
\label{section:ntk_example}
Consider a two layer neural network with $m$ hidden neurons 
$f(\bz;\bu_1,\dots,\bu_m) :=\sum_{i=1}^m a_i\sigma(\<\bu_i,\bz\>)$, with input $\bz\in\R^d$. 
We assume that the second layer weights $a_i$ are not trained, since they play a secondary role.
Under suitable training conditions \cite{du2019gradient}, such a network is well approximated by a model of the form \eqref{eq:FzFirst} with 
\begin{equation}
\label{eq:ntk_covariates}
    \bphi_\NT(\bz) := \left(\sigma'(\bw_1^\sT \bz)\bz^\sT,\dots,\sigma'(\bw_m^\sT \bz)\bz^\sT\right)^\sT \in\R^p\,,
\end{equation}
where $p=md$, and
$\bw_i$ are the first layer weights at a random initialization 
of gradient descent 
$\bu_i^{0}:=\bw_i$ on ERM with the model $f(\bz;\bu_1,\dots,\bu_m)$.
As in the rest of the paper, we assume to be given training samples 
$\{(y_i,\bz_i)\}_{i\le n}$ and to compute feature vectors $\bx_i=\bphi_\NT(\bz_i)$.

Here we are not concerned with the connection between the original neural network
 and its neural tangent model, for which we refer to the 
 literature 
\cite{jacot2018neural,du2019gradient,lee2019wide,bartlett2021deep,montanari2020interpolation}.
 We will instead focus on the neural tangent model, and show that it can be 
 approximated by an equivalent Gaussian features model.  

We  assume a simple covariates distribution
$\{\bz_i\}_{i\le n}\simiid\cN(0,\bI_d)$ and a standard  network initialization:
$\{\bw_j\}_{j\le m}\simiid\Unif\left(\S^{d-1}(1)\right)$, i.e.,
$\bw_j$ are uniformly distributed on the sphere of radius $1$ in $\R^d$.
\begin{itemize}
\item[$(i)$]~The weights $\bw_j$ are fixed and do not change from sample to sample; 
\item[$(ii)$]~The distribution of  vectors $\bx_i$ 
(the push-forward of $\cN(0,\bI_d)$ via $\bz   \mapsto \bphi_\NT(\bz)$) is highly non-trivial and 
they have dependent entries. In fact, this distribution is supported 
on a $d$-dimensional nonlinear manifold in $\R^p$, with $d\ll p$. In particular, it
is mutually singular with the Gaussian distribution we will compare it to.
\end{itemize}

For $\btheta =(\btheta_{\up{1}}^\sT,\dots,\btheta_\up{m}^\sT)^\sT\in\R^p$, where $\btheta_{\up{j}} 
\in \R^d$ for $j\in[m]$, let $\bT_\btheta\in\R^{d\times m}$ be the matrix 
$\bT_\btheta = \left(\btheta_\up{1} ,\dots, \btheta_\up{m}\right)$,
so that $\btheta^\sT \bx = \bz^\sT \bT_\btheta \sigma'(\bW^\sT \bz)$, where 
    $\bW= (\bw_1,\dots,\bw_m)$
and $\sigma':\R\to\R$ is applied entrywise. We define, for $p\in\Z_{>0}$,
\begin{equation}
    \label{eq:ntk_set}
    \cS_{p} 
    := \left\{ 
    \btheta\in\R^p : \norm{\bT_\btheta}_\op \le
\frac{ \sfR}{\sqrt{d}}
    \right\}.
\end{equation}

We have the following universality result for the neural tangent model~\eqref{eq:ntk_covariates}.
\begin{theorem}
\label{cor:ntk_universality}
Let $\bW \in\R^{d\times m}$ have columns 
$$\{\bw_j\}_{j\le m}\simiid\Unif(\S^{d-1}(1)),$$
$\{\bz_i\}_{i \le n}\simiid\cN(0,\bI_d)$, 
and $\sigma$ four times differentiable satisfying
 $$\E[\sigma'(G)] = 0,\qquad \E[G\sigma'(G)] = 0
\qquad \textrm{for}\quad G\sim\cN(0,1).$$

Let $\bx_i=\bphi_\NT(\bz_i)\in\R^p$ as per Eq.~\eqref{eq:ntk_covariates} so that $p = m d$, with $m= m(p),d = d(p)\in\Z_{>0}$ so that  $\lim_{p\to\infty}m(p)/d(p) = \widetilde\sgamma_\NT$
for some fixed $\widetilde\sgamma_\NT\in(0,\infty)$.
Let 
$$\bg_i|\bW \simiid\cN(0,\bSigma_\bW)
\qquad
\textrm{for}\quad \bSigma_\bW := \E\big[\bx\bx^\sT| \bW\big].$$
Finally, let~$\cS_p$ be as defined in~\eqref{eq:ntk_set}.

Suppose Assumptions~\ref{ass:regime}-\ref{ass:regularizer} hold 
(in particular $p(n)/n\in [\sfC^{-1},\sfC]$)
and that $\cC_p \subseteq \cS_p.$ Further suppose that either
$\ell(\bv,\bv^\star;\eps)$ defined in Eq.~\eqref{eq:ell-def} satisfies 
Assumption \ref{ass:loss-0} (with the $\eps_i$ uniformly sub-Gaussian) or $L_F(\bv,y) :=L(F(\bv),y)$
satisfies 
Assumption~\hyperref[ass:loss-0-bis]{5'}.
Then the following hold:
\begin{enumerate}
\item[$(a)$] For any bounded Lipschitz function $\psi:\R\to\R$,
Eq.~\eqref{eq:main_thm_eq} holds (without conditioning on $\bW$). In particular, as a consequence, 
\begin{equation}
\label{eq:P_limit_iff_}
\widehat  R^\star_n(\bX,\by(\bX))\stackrel{\P}{\to} 
\rho \quad\textrm{if and only if} \quad \widehat R^\star_n(\bG,\by(\bG)) \stackrel{\P}{\to}\rho.
\end{equation}
\item[$(b)$] Under the additional conditions of Theorem \ref{thm:universality_bounds},
Corollary \ref{coro:UnivInterpolation} or Theorem
\ref{thm:test_error}, the universality results for the test error stated there hold. 
\end{enumerate}
\end{theorem}
The specific conditions on the activation $\sigma$ 
are chosen because they allow us to simplify some of the estimates in the proof. We 
defer relaxing them to future work.
\begin{remark}
Theorem \ref{cor:ntk_universality} does not hold if we relax the set $\cS_p$ to
$\cS_p:=B_2^p(\sfR)$.
Indeed, for $\bT_\btheta = \sfR/\sqrt{d}\left(\one_d, 0,\dots,0 \right)$, the random 
variable $\btheta^\sT\bx = \bz^\sT \bT_\btheta \sigma'(\bW^\sT\bz)$ is not asymptotically 
Gaussian. Clearly, this choice of $\btheta$ is not in the set defined in~\eqref{eq:ntk_set}.
\end{remark}

\begin{remark}
    In Theorem \ref{cor:ntk_universality}, we assume the number of parameters $p=md$
    to be of the same order as the sample size $n$, which is the setting of our general results 
    Theorems \ref{thm:MainFirst}, \ref{thm:main_empirical_univ}. Within this general assumption, we focus on the case in which the input dimension is proportional to the number of hidden neurons 
     $\lim_{p\to\infty}m(p)/d(p) = \widetilde\sgamma_{\NT}$, and hence $n\asymp d^2$. This scaling
     is theoretically interesting, and used in practice, but not necessarily the only one in which universality holds.
\end{remark}

\paragraph*{Proof technique} 
We prove Theorem~\ref{cor:ntk_universality} in Section~\ref{proof:ntk_universality}
of the Appendix using 
Theorem~\ref{thm:main_empirical_univ}.
The key technical challenge is to establish that Assumption~\ref{ass:-1} for 
the distribution of the feature vectors $\bx_i = \bphi_\NT(\bz_i)$, cf. Eq.~\eqref{eq:ntk_covariates}. 
We outline the proof of this condition in Section~\ref{section:proof_outline_ntk} where
we make use of Stein's method as done in \cite{hu2020universality} for the random features model. 
However, treating the neural tangent features of Eq.~\eqref{eq:ntk_covariates}
requires extra care due to the more complex covariance structure.
%
%
\subsection{Random features} 
\label{section:rf_example}

Consider a two layer network with $p$ hidden neurons and fixed first layer
weights. Namely $f(\bz;\ba) :=\sum_{i=1}^p a_i\sigma(\<\bw_i,\bz\>)$, where 
the first-layer weights are random
$\{\bw_j\}_{j\le p}\simiid\Unif\left(\S^{d-1}(1)\right)$
and not fit to the data, while second layer weights $a_i$ are.
This is a linear model with parameters $a_i$ with respect to the features 
\begin{equation}
    \label{eq:rf_covariates}
    \bphi_\RF(\bz):= \left(\sigma\big(\bw_1^\sT \bz\big),\dots,\sigma\big(\bw_p^\sT\bz\big)\right)^\sT.
\end{equation}
As before, we consider $\{\bz_i\}_{i\le n}\simiid\cN(0,\bI_d)$ 
and $\bx_i=\bphi_\RF(\bz_i)$.
Finally, fix $\alpha>0$ and define for $p\in\Z_{>0}$
\begin{equation}
    \label{eq:rf_set}
    \cS_p :=  \big\{ \btheta\in\R^p:\; \|\btheta\|_{\infty}\le \sfR p^{-\alpha}\, ,
    \|\btheta\|_{2}\le \sfR\big\}\, .
\end{equation} 
We have the following corollary of Theorem~\ref{thm:main_empirical_univ}.

\begin{corollary}
\label{cor:rf_universality}
Let $\bW \in\R^{p\times d}$ with columns 
$$\{\bw_j\}_{j\le p}\simiid\Unif(\S^{d-1}(1))$$, $\{\bz_{i}\}_{i\le n}\simiid \cN(\bzero,\bI_d)$, 
where $d= d(p)$ satisfies $\lim_{p\to\infty}d(p)/p = \widetilde\sgamma_\RF$
for some fixed $\widetilde\sgamma_\RF\in(0,\infty)$,
and let
$\sigma$ be
three times continuously differentiable with bounded derivatives satisfying $\E\sigma(G) = 0$
for $G\sim \cN(0,1)$.

Let $\bx_i=\bphi_\RF(\bz_i)\in\R^p$ as per Eq.~\eqref{eq:rf_covariates},
and $\bg_i|\bW \simiid\cN(0,\bSigma_\bW)$
for $\bSigma_\bW := \E\left[\bx\bx^\sT| \bW\right]$. Finally, let~$\cS_p$ be as defined in~\eqref{eq:rf_set}.

Suppose Assumptions~\ref{ass:regime}-\ref{ass:regularizer} hold 
(in particular $p = p(n)\in [\sfC^{-1}n,\sfC n]$)
and that $\cC_p \subseteq \cS_p.$ Further suppose that either
$\ell(\bv,\bv^\star;\eps)$ defined in Eq.~\eqref{eq:ell-def} satisfies 
Assumption \ref{ass:loss-0} (with the $\eps_i$ uniformly sub-Gaussian) or $L_F(\bv,y) :=L(F(\bv),y)$
satisfies 
Assumption~\hyperref[ass:loss-0-bis]{5'}.

Then for any bounded Lipschitz function $\psi:\R\to\R$,
Eq.~\eqref{eq:main_thm_eq} holds (without conditioning on $\bW$) along with its consequences: Eq.~\eqref{eq:prob_bounds} and Eq.~\eqref{eq:P_limit_iff}.
\end{corollary}
We derive this corollary in Section~\ref{proof:rf_universality} of the Appendix
as a consequence of Theorem~\ref{thm:main_empirical_univ}.
To do so, we use a result established by~\cite{hu2020universality} implying that the
feature vectors $\bx_i$ satisfy Assumption~\ref{ass:X} and~\eqref{eq:condition_bounded_lipschitz_single} with respect to $\cS_p$ for every $\bW$ in a high probability set.

\subsection{Linear functions of vectors with independent entries}
\label{section:linear_example}
Consider feature vectors $\bx_i = \bSigma^{1/2}\obx_i\in\R^p$, where the vectors $\obx_i$ have $p$ i.i.d 
subgaussian entries of subgaussian norm bounded by $\sfK$ and unit variance. We assume $\|\bSigma\|_{\op}\le\sfK$.
Fix any deterministic sequence $\alpha_p$ such that $\lim_{p\to\infty} \alpha_p = 0$.
An application of the Lindeberg Central Limit Theorem shows that 
Eq.~\eqref{eq:condition_bounded_lipschitz_single} holds for
\begin{equation}
\label{eq:indep_set}
    \cS_p :=  \big\{ \btheta\in\R^p:\; \|\bSigma^{1/2}\btheta\|_{\infty}\le \alpha_p\, ,
    \|\btheta\|_{2}\le \sfR\big\}\, .
\end{equation}

We have therefore the following corollary of Theorem~\ref{thm:main_empirical_univ}.
\begin{corollary}
\label{cor:indep_universality}
Let $\bx_i = \bSigma^{1/2}\obx_i\in\R^p$ where $\obx_i$ has i.i.d. subgaussian entries with unit variance,  $\|\bSigma\|_{\op}\le\sfK$, $\bg_i \sim\cN(0,\bSigma)$,
and~$\cS_p$ be as defined in~\eqref{eq:indep_set}.

Suppose Assumptions~\ref{ass:regime}-\ref{ass:regularizer} hold 
(in particular $p = p(n)\in [\sfC^{-1}n,\sfC n]$)
and that $\cC_p \subseteq \cS_p.$ Further suppose that either
$\ell(\bv,\bv^\star;\eps)$ defined in Eq.~\eqref{eq:ell-def} satisfies 
Assumption \ref{ass:loss-0} (with the $\eps_i$ uniformly sub-Gaussian) or $L_F(\bv,y) :=L(F(\bv),y)$
satisfies 
Assumption~\hyperref[ass:loss-0-bis]{5'}.

Then,
for any bounded Lipschitz function $\psi:\R\to\R$,
Eq.~\eqref{eq:main_thm_eq} holds along with its consequences: Eq.~\eqref{eq:prob_bounds} and Eq.~\eqref{eq:P_limit_iff}.
\end{corollary}

%
%

\section{Verifying that a minimizer is in the domain of Gaussianity}
\label{sec:Deloc}

Note that our main universality results (Theorems \ref{thm:MainFirst},
\ref{thm:main_empirical_univ}, \ref{thm:universality_bounds}, \ref{thm:test_error})
can be applied to cases in which the constraint set $\cC_p$ 
is not a subset of the domain of Gaussianity $\cS_p$, provided we can prove that 
the empirical risk minimizer lies with high probability in such a subset.

In this section we present two classes of statistical learning problems for which 
we prove universality without artificially constraining $\cC_p$ to be a subset of $\cS_p$:
\begin{itemize}
\item In Section~\ref{sec:Controlling} we show this for a broad class of overparameterized models.
\item In Section \ref{sec:M_est} we consider the case $\sfk=1$ and give sufficient conditions for this to hold
when $(i)$~the vectors $\bx_i$ have i.i.d. coordinates and 
$(ii)$~the regularizer $r(\btheta)$ is separable.
We give an example where these conditions can be checked via Gaussian comparison inequalities.
\end{itemize}

\subsection{Overparameterized empirical risk minimization}
\label{sec:Controlling}

For overparameterized models, we will prove that,
if there exists a global empirical risk minimizer with
controlled $\ell_2$ norm (a condition that is relatively easy to check),
then there exists also an empirical risk minimizer with controlled $\ell_{\infty}$ norm.
%

\begin{proposition}
\label{prop:overparameterized_iid}
Consider the ERM problem of Eq.~\eqref{eq:min_problem} with risk function defined by
Eqs.~\eqref{eq:SecondERM}, $\cC_p=\reals^p$ and $r(\bTheta)=0$.
Assume $p(n)/n \in( (1+\delta), \sfC)$ for some constants $\sfC,\delta>0$ and
that either one of the following feature distributions holds:
\begin{enumerate}
    \item $\bSigma^{-1/2}\bx_i$ have i.i.d., mean $0$, unit variance and subgaussian entries.
Furthermore, there exist a constant $K_0>0$ 
such that
\begin{align*}
&\big\|\bSigma^{-1/2}\big\|_{\infty\to\infty} :=\max_{i\le p}\|(\bSigma^{-1/2})_{i,\cdot}\|_1\le K_0,\\
&K_0^{-1} \le \sigma_{\min}(\bSigma^{-1/2}) \le \sigma_{\max}(\bSigma^{-1/2}) \le K_0.
\end{align*}
Recall that a domain of Gaussianity $\cS_p$ is given by Eq.~\eqref{eq:indep_set}.
\item $\bX$ follow the random features distributions of Section~\ref{section:rf_example} for any $\tilde\sgamma_\RF$. 
Recall that $\cS_p$ for this feature distribution was given in Eq.~\eqref{eq:rf_set}.
\end{enumerate}

If there exists $K_1>0$, such that
\begin{equation}
\label{eq:minimizer_whp}
 \lim_{n\to\infty}\P\left(
 \exists
 \widehat\bTheta_n \in\argmin_{\bTheta\in\R^{p\times k}} \hR_n(\bTheta;\bX,\by)
  : \big\|\widehat\bTheta_n\big\|_F \le K_1
 \right) = 1\, ,
\end{equation}
then for any $\alpha < 1/8$, there exists $C>0$ such that
\begin{equation}
\nonumber
\lim_{n\to\infty}\P\left(\exists \widehat \bU_n \in \argmin_{\bTheta\in\R^{p\times k}} 
 \hR_n(\bTheta;\bX,\by): \big\|\widehat\bU_n\big\|_F \le (C+1)K_1, \big\|\widehat\bU_n\big\|_\infty \le K_1 p^{-\alpha} \right) = 1.
\end{equation}
That is, with high probability, there exists a minimizer in $\cS_p^\sfk.$
In particular, under either Assumptions
 \ref{ass:loss-0} (with the $\eps_i$ uniformly sub-Gaussian) or Assumption~\hyperref[ass:loss-0-bis]{5'}, $\bTheta^\star\in\cS_p^{\sfk}$ and the subgaussian condition of Eq.~\eqref{eq:sg_assump},
\begin{equation}
\widehat  R^\star_n(\bX,\by(\bX))\stackrel{\P}{\to}
0 \quad\textrm{if and only if} \quad \widehat R^\star_n(\bG,\by(\bG)) \stackrel{\P}{\to}0,
\end{equation}
where $\widehat R_n^\star$ is the optimum of the unconstrained ERM problem.
\end{proposition}
We give the proof of this result in Section~\ref{section:outline_overparameterized_iid} of the Appendix. 
The main idea behind the proof is 
that in the overparameterized setting,
for a minimizer $\hat\bTheta$, there exists infinitely many
$\bTheta'$ satisfying $\bX\hat\bTheta = \bX\bTheta'$, and hence these must be minimizers as well. If $\hat\bTheta$ is of bounded $\ell_2$ norm, then the minimizer of minimum norm will be in the domain of Gaussianity (which corresponds to the set of delocalized vectors for both feature distributions that the proposition treats).
The following application is a corollary.
\begin{example}
\label{ex:non-convex-nn}
Consider a $2$-layer network
with two hidden layers of width $p$ and $\sfk$:
\begin{align}
f(\bz;\bTheta)= \ba^{\sT} \sigma_2\big(\bTheta^{\sT}\sigma_1(\bW^{\sT}\bz)\big)\,  .
\label{eq:3-layer-example}
\end{align}
Here we denoted by $\bW\in\reals^{d\times p}$ the first-layer weights,
by $\bTheta\in\reals^{p\times \sfk}$ the second layer weights, and by 
$\ba$ the output layer weights.
The activation functions $\sigma_1,\sigma_2$ are understood to act entrywise.

Consider a learning procedure in which the first and last layers $\ba, \bW$ are
not learnt from data, and we learn $\bTheta$ by minimizing the logistic loss for
binary labels $y_i\in\{+1,-1\}$: 
\begin{align}
\hR_n(\bTheta;\bX,\by) = \frac{1}{n}\sum_{i=1}^n
\log\Big\{1+\exp\Big[-y_i 
f(\bz_i;\bTheta)
\Big]\Big\} \, .
\label{eq:LogisticExample}
\end{align}

We note in passing that the model of Eq.~\eqref{eq:3-layer-example} (with the
first and last layer fixed) is not an unreasonable one.
If $\bW$ is random, 
for instance with i.i.d. columns $\bw_i\sim\normal(0,c_d\id_d)$, the
first layer performs a random features map in the sense of Rahimi and Recht~\cite{rahimi2007random}. 
In other words, this layer
embeds the data in the reproducing-kernel Hilbert space (RKHS)
with (finite width) kernel $H_p(\bz_1,\bz_2):= p^{-1}\sum_{i=1}^p
\sigma_1(\<\bw_i,\bz_1\>)\sigma_1(\<\bw_i,\bz_2\>)$, which approximates the kernel $H_{\infty}(\bz_1,\bz_2)=
\E_{\bw}[\sigma_1(\<\bw,\bz_1\>)\sigma_1(\<\bw,\bz_2\>)]$.
Fixing the last layer weights $\ba$ is not a significant reduction
of expressivity, since this layer only comprises $\sfk$ parameters, while we are fitting 
the $p\sfk\gg \sfk$ parameters in $\bTheta$.

Consider the overparameterized regime where $p> n(1+\delta)$ for some $\delta>0$.
Assume $\sigma_1:\R\to\R,\bW\in\R^{d \times p},\bz\in\R^{d}$ are as in Section~\ref{section:rf_example}, and let $y_i\in\{+1,-1\}$ be i.i.d. from the distribution
\begin{equation}
    \P(y_i = +1 | \bz_i) = g(\bz_i^\sT\bbeta^\star)
\end{equation}
for some $g$   as in Assumption~\hyperref[ass:loss-0-bis]{5'}, and some $\bbeta^{\star}\in\reals^d$ such that $\|\bbeta^\star\|_{\infty}\le K p^{-\alpha}$.
Finally, assume that $\sigma_2:\R\to\R$ is Lipschitz, and furthermore, that it is bounded and that the image of $(-\infty,\infty)$ under $\sigma_2$ is closed. In particular, this implies that there exists $a > 0$ such that $\sigma_2([-a,a]) = \sigma_2(\R).$

Under the above setting, we have as a corollary of Proposition~\ref{prop:overparameterized_iid}, that the unconstrained ERM problem
\begin{equation}
   \min_{\bTheta\in\R^{d\times \sfk}}  \widehat R_n(\bTheta;\bX,\by) 
\end{equation}
is universal. 
For details, see Section~\ref{proof:ex-non-convex-nn} of the Appendix.
\end{example}

\subsection{Regularized ERM with vector parameters}
\label{sec:M_est}
We consider the following form of the empirical risk function Eqs.~\eqref{eq:SecondERM}, 
\begin{align}
\hR_n(\btheta;\bX)
= \frac{1}{n}\sum_{i=1}^n\ell(\btheta^{\sT}\bx_i,\btheta^{\star\sT}\bx_i,\eps_i)+
\frac{1}{p}\sum_{i=1}^pr_0(\sqrt{p}\theta_i)\, ,
\end{align}
where $r_0 :\R\to\R$. (We suppress the dependence on $\beps$ in the notation $\hR_n(\btheta;\bX)$ for brevity).
Namely, we restrict to vector parameters, $\sfk =\sfk^{\star}=1$, and further
assume a separable regularizer.
We will further assume feature
vectors $\bx_i$ with independent and identically distributed coordinates.
We will state a general set of sufficient conditions for universality
without restricting the constraint set $\cC^p$
in the ERM problem
involving $\bX$
to explicitly lie in the domain of Gaussianity~$\cS^p$.
Instead, the conditions will be on the Gaussian equivalent problem.
We will then present a technique to check these conditions via Gaussian comparison inequalities.

It will be useful to introduce the following constrained ERM problem:
Given $S\subseteq [p]$, $\bu\in\R^{|S|}$, positive constants $\delta,D >0$ and 
$\bW,\bV \in\R^{n\times p}$, define
\begin{align}
 &\hK_{S,n}^\star(\bu; \bW, \bV, D)
 := \min_{\bbeta\in\R^{p-|S|}}  \Big\{\hK_{S,n}(\bbeta,\bu; \bW, \bV) :\;\;
 \|\bbeta\|_2^2 \le D 
 \Big\}
 +\frac1{p}\sum_{j=1}^{|S|} r_0(\sqrt{p}u_j)
 \, , \nonumber\\
 &\textrm{with}\quad\quad
  \hK_{S,n}(\bbeta,\bu; \bW, \bV)
 :=\frac{1}{n}
 \sum_{i=1}^n \ell(\bu^\sT\bw_{i,S} + \bbeta^\sT\bv_{i,S^c} , 
 \btheta^{\star\sT} \bv_i, \eps_i )
 +\frac1{p}\sum_{j=1}^{p-|S|} r_0(\sqrt{p}\beta_j)\nonumber
\end{align}
where,
for a vector $\ba \in\R^{p}$, we defined
$\ba_S := (a_j)_{j\in S}\in\R^{|S|}$.
In particular, observe that
$\widehat K_{S,n}^\star(\bu;\bX,\bG,D)$ corresponds to the minimum of an ERM problem whereby covariates in $S$ have the distribution of entries
in $\bX$, while covariates in $[p]\setminus S$ are Gaussian,
under a constraint that fixes coefficients with indices in $S$ to be given by $\bu$.

\begin{proposition}
\label{prop:univ_m_estimation}
Fix $D>0$.
Let $\bX$ have i.i.d., centered uniformly sub-gaussian entries with
unit variance and $\bG = (G_{ij}:i\le n, j\le p)\simiid \normal(0,1)$.
Let 
$\ell(\bv,\bv^\star;\eps)$ defined in Eq.~\eqref{eq:ell-def} satisfy 
Assumption \ref{ass:loss-0} (with the $\eps_i$ uniformly sub-Gaussian),
 $\|\btheta^\star\|_\infty \to 0$ as $p\to\infty$,
and
$r$ satisfy Assumption~\ref{ass:regularizer} and furthermore be separable: $r(\btheta) = p^{-1}\sum_{j=1}^p r_0(\sqrt{p}\theta_j)$ for some $r_0$ independent of $n$, $p$.
Assume further that
\begin{enumerate}
    \item There exists a Gaussian limit in probability  $R^\star_\infty$ for the Gaussian ERM problem
\begin{equation}
\label{eq:gaussian_limit}
   \plim_{n\to\infty}\min_{\|\btheta\|_2 \le D} \hR_n(\btheta;\bG)  =  R_\infty^\star\, ,
\end{equation}
\item There exists a delocalized minimizer for this Gaussian problem, i.e., there 
 exists a sequence $\alpha_n \to 0$ such that
\begin{equation}
\label{eq:gaussian_upper_bound}
   \plim_{n\to\infty} \min_{\substack{\|\btheta\|_2 \le D\\ \|\btheta\|_\infty \le \alpha_n}} \hR_n(\btheta;\bG) =
   R_\infty^\star \,;
\end{equation}
\item There exists a sequence $\{m_n\}_{n\ge 1}$ with $m_n\to\infty$, $m_n/n \to 0$ as $n\to \infty$, 
such that
\begin{align}
\label{eq:R_thry_lower_bound}
\pliminf_{n\to\infty}
\min_{\substack{|S| = m_n \\
\|\bu\|_2 \le D}
} \hK_{S,n}^\star(\bu; \bX, \bG,D)
\ge R_{\infty}^\star \, .
\end{align}
\end{enumerate}

Then universality of training error holds, namely
\begin{equation}
\label{eq:erm_problem_M_est}
 \plim_{n\to\infty}\min_{\norm{\btheta}_2 \le D} \widehat R_n(\btheta; \bX) \stackrel{\P}{\to} R^{\star}_\infty.
\end{equation}
\end{proposition}

The proof of this proposition is given in Section~\ref{proof:prop_univ_m_estimation} of the Appendix.
Note in particular, that since adding a delocalization constraint to $\hat R_n(\btheta;\bX)$ only increases the minimum value of the empirical risk, a universal upper bound will always hold (recall that the domain of Gaussianity $\cS_p$ in this i.i.d. coordinate setting is given by this delocalization constraint).
Meanwhile, if the lower bound is non-universal, then there must exist a fraction $m_n/p$ of the coordinates of $\hat\btheta$ that are not delocalized.
Eq.~\eqref{eq:R_thry_lower_bound} requires that these coordinates do not change the limiting value of the minimum risk.
The important feature of this proposition is that Eqs.~\eqref{eq:gaussian_limit},~\eqref{eq:gaussian_upper_bound} are conditions on the model with Gaussian 
covariates, for which we have better  techniques
to check delocalization of the minimizer.
Additionally, since condition \eqref{eq:R_thry_lower_bound} 
involves a sublinear number $m_n$ of non-Gaussian coordinates in the feature vectors, this can 
often be controlled by a uniform convergence argument. 

In what follows, we outline an example for which we can check the conditions
of Proposition \ref{prop:univ_m_estimation} using Gordon's Gaussian comparison inequality.
\begin{example}[Robust M-estimation]
\label{ex:robust_m_estimation}
Consider the case of a ridge regularizer $r_0(x) = \lambda x^2/2$, 
with $\ell(v,v^\star,\eps)$ is strictly convex in $v$ and $\lambda\ge 0$, 
or $\ell$  convex in $v$ and $\lambda >0$.
Assume $\|\btheta^{\star}\|_2=\kappa$ is independent of $n,p$  and
$\|\btheta^{\star}\|_{\infty}\to 0$ as $n,p\to\infty$. We claim
that the assumptions of Proposition~\ref{prop:univ_m_estimation} and hence the conclusion
of Eq.~\eqref{eq:erm_problem_M_est} hold in this case. 

In Section~\ref{sec:ex_robust_m_estimation_details} of the Appendix, we detail the exercise of checking that the assumptions indeed hold in this case.
With some additional work, it is possible to generalize 
this result to certain bowl-shaped non-convex losses. We leave this to future work. 
\end{example}

\section{Proofs}
\label{sec:Proofs}

Throughout the universality proofs, we will assume that
 $\lim_{n\to\infty} p(n)/n = \sgamma \in(0,\infty).$
This is sufficient because, given any sequence with $p/n\in [\sfC^{-1}, \sfC]$, we can always extract a converging subsequence.
\subsection{Proof outline for Theorems~\ref{thm:MainFirst} and~\ref{thm:main_empirical_univ}}
\label{section:proof_outline}

Here, we will present the important details of the proof, leaving the majority of the technical details to the Appendix.
We will present the proof of 
Eq.~\eqref{eq:main_thm_eq_0} of
Theorem~\ref{thm:MainFirst} 
under the following assumption which requires additional smoothness and decay conditions on the loss.
The extension to Assumption~\ref{ass:loss-0}, and the proof of 
Theorem~\ref{thm:main_empirical_univ} under its assumptions will be deduced in the Appendix as a consequence.

\begin{numassumption}[5'']
\label{ass:labeling_prime}
\label{ass:loss_prime}
\label{ass:loss_labeling_prime}
The non-negative loss function $\ell$ is
differentiable with locally Lipschitz gradient that satisfies
\begin{equation*}
\norm{\grad\ell(\bu) }_2 \le \sfK\left(1 + \norm{\bu}_2 \right),\quad
\end{equation*}
for all $\bu \in\R^{\sfk + 1}$.
Furthermore, for any random variables $\bv\in\R^\sfk,V\in\R$ satisfying
\begin{equation}
\label{eq:sg_generic}
\norm{\bv}_{\psi_2}\vee\norm{V}_{\psi_2} \le 2(\sfR +1) \sfK
\end{equation}
and any $\beta > 0$, we have
\begin{equation}
\label{eq:exp_integrability}
\E\left[\exp\left\{\beta\ell\big( \bv,V \big)\right\}\right] \le C(\beta,\sfR,\sfK)
\end{equation}
for some $C(\beta,\sfR,\sfK)$ dependent only on $\beta,\sfR,\sfK$.
\end{numassumption}

We begin by approximating  the ERM value $\hR^\star_n(\bX,\beps)$
by a \textit{free energy} defined by a sum over 
a finite set in $\R^{p\times\sfk}$. Namely, for $\alpha>0$, let
$\cN_\alpha$ be a minimal $\alpha-$net of $\cC_p$ and define 
\begin{equation}
\label{eq:free_energy_def}
    f_\alpha(\beta,\bX):= 
-\frac{1}{n\beta}\log\sum_{\bTheta\in\cN_\alpha^\sfk}
\exp\left\{-n\beta\widehat R_n(\bTheta;\bX,\beps) 
\right\}.
\end{equation}
\begin{lemma}[Universality of the free energy]
\label{lemma:softmin_univ}
Under the assumptions of Theorem~\ref{thm:MainFirst} with the alternative
Assumption~\hyperref[ass:loss_labeling_prime]{5''},
for 
any fixed $\alpha >0$ and any bounded differentiable function $\psi$ with bounded Lipschitz derivative we have
\begin{equation}
\nonumber
 \lim_{n\to\infty} \left|\E\left[\psi\left(f_{\alpha}(\beta,\bX)\right)\right] - 
\E\left[\psi\left(f_{\alpha}(\beta,\bG)\right)\right]\right| = 0.
\end{equation}
\end{lemma}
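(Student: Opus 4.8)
The plan is to prove the statement under the weaker Assumption~\hyperref[ass:loss_labeling_prime]{1'} by a Lindeberg-type interpolation between the two ensembles. Couple $\bX$ and $\bG$ to be independent while sharing the same noise vector $\bepsilon$, and set $\bZ_t := \sin(t)\,\bX + \cos(t)\,\bG$, $t\in[0,\pi/2]$, so that $\bZ_0=\bG$, $\bZ_{\pi/2}=\bX$, and $y_i=\eta(\bTheta^{\star\sT}(\bz_t)_i,\eps_i)$ uses the same $\eps_i$ throughout; put $\phi(t):=\E[\psi(f_\alpha(\beta,\bZ_t))]$, and aim to show $\int_0^{\pi/2}|\phi'(t)|\,dt\to 0$. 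Two preliminary regularizations are needed, with errors sent to zero at the end. First, since Assumption~\hyperref[ass:loss_labeling_prime]{1'} only gives Lipschitz \emph{gradients}, replace $\ell,\eta$ by mollifications $\ell_\epsilon,\eta_\epsilon$ at scale $\epsilon$; the $\sfK$-Lipschitz property of $\nabla\ell$ yields $\|\ell-\ell_\epsilon\|_\infty\le C\epsilon^2$, $\|\nabla^2\ell_\epsilon\|_\infty\le C$, and $\|\nabla^k\ell_\epsilon\|_\infty\le C\epsilon^{-(k-2)}$ for $k\ge3$ (similarly for $\eta$), and combining these with the exponential-integrability bound~\eqref{eq:exp_integrability} one checks $|\E[\psi(f_\alpha(\beta,\cdot))]-\E[\psi(f_{\alpha,\epsilon}(\beta,\cdot))]|\le C(\beta)\epsilon^2$, uniformly in $n$ and in the choice of $\bX$ or $\bG$. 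Likewise replace $\psi$ by a smooth $\psi_\delta$ with $\|\psi-\psi_\delta\|_\infty\le C\delta$. It therefore suffices to treat the smoothed problem for fixed $\epsilon,\delta$, then let $\epsilon=\epsilon_n\to0$, $\delta=\delta_n\to0$ slowly.

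\textbf{Row-wise expansion.} Differentiating, $\phi'(t)=\sum_{i=1}^n\E[\langle\nabla_{\bz_i}(\psi\circ f_{\alpha,\epsilon})(\bZ_t),\,\cos(t)\bx_i-\sin(t)\bg_i\rangle]$. For each $i$ I would expand $\nabla_{\bz_i}(\psi\circ f_{\alpha,\epsilon})(\bZ_t)$ in the $i$-th row of $\bZ_t$ around the configuration $\bZ_t^{\setminus i}$ in which that row is zeroed; crucially $\bZ_t^{\setminus i}$ is a function of $\{\bx_j,\bg_j\}_{j\ne i}$ and $\bepsilon$ only, hence independent of $(\bx_i,\bg_i)$. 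The zeroth-order contribution vanishes because $\E[\bx_i]=\E[\bg_i]=0$. The first-order contribution, after combining the $\cos(t)\bx_i$ and $-\sin(t)\bg_i$ pieces (the cross terms cancel by independence), equals $\sin(t)\cos(t)\,\Tr\big(\E[\nabla^2_{\bz_i}(\psi\circ f_{\alpha,\epsilon})(\bZ_t^{\setminus i})]\,(\E[\bx\bx^\sT]-\bSigma_\bg)\big)$. The key algebraic point is that this Hessian, obtained via the chain rule through $\bTheta^\sT\bz_i,\bTheta^{\star\sT}\bz_i$ and the Gibbs average over $\cN_\alpha^\sfk$, is a Gibbs-weighted combination of rank-structured matrices whose columns lie in $\cS_p$ (columns of net matrices, by Assumption~\ref{ass:set}, and the $\btheta_k^\star$, by Assumption~\ref{ass:ThetaStar}), carrying a $1/n$ prefactor inherited from $\hR_n=\tfrac1n\sum_i\ell_i+r$. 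So the trace is a Gibbs average of $O(1)$-many terms $\langle\bu,(\E[\bx\bx^\sT]-\bSigma_\bg)\bv\rangle$ with $\bu,\bv\in c\,\cS_p$; by polarization (using that $\cS_p$ is symmetric and convex, so $\tfrac12(\bu\pm\bv)\in\cS_p$) these reduce to differences $\E[(\bw^\sT\bx)^2]-\E[(\bw^\sT\bg)^2]$, $\bw\in\cS_p$, which tend to $0$ uniformly in $\bw$ by applying~\eqref{eq:condition_bounded_lipschitz_single} to truncations $u\mapsto\min(u^2,M)$ and controlling the tail by the uniform sub-Gaussianity of Assumption~\ref{ass:X}. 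Summing over the $n$ rows and integrating over $t$, the first-order contribution is $o_n(1)$.

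\textbf{Higher-order terms via polynomial approximation.} The remaining (order $\ge 2$) terms are the heart of the matter, and a naive Lagrange-remainder bound is too lossy: every directional derivative of $f_{\alpha,\epsilon}$ in the $i$-th row carries only a single factor $1/n$, so each remainder is $O(C(\beta,\epsilon)/n)$, which after summing over the $n$ rows does not vanish. To circumvent this I would, on the high-probability event $\cE_n$ that all projections $|\bTheta^\sT\bz_i|,|\bTheta^{\star\sT}\bz_i|$ are $O(\sqrt{\log n})$ (its complement has probability $n^{-\omega(1)}$ by uniform sub-Gaussianity), replace $\psi_\delta\circ f_{\alpha,\epsilon}$ by a polynomial $P$ of \emph{fixed} degree $D$ in those projections, obtained by truncating the Taylor expansion; the mollification-derived derivative bounds give $\sup_{\cE_n}|\psi_\delta\circ f_{\alpha,\epsilon}-P|\le C(\beta,\epsilon,\delta)(\log n)^{(D+1)/2}/n$, while off $\cE_n$ the difference is bounded by $\|\psi\|_\infty+\mathrm{poly}(\log n)$ times an $n^{-\omega(1)}$ probability. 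For a polynomial of bounded degree the row-wise expansion above terminates \emph{exactly}: the surviving terms of order $k\le D$ are expectations of degree-$k$ monomials in the projections of $\bx_i$ (resp. $\bg_i$) along columns of net matrices and the $\btheta_k^\star$ — i.e. along $\cS_p$ — and by polarization within $\cS_p$ these reduce to projection-moments $\E[(\bw^\sT\bx)^m]$, $m\le D$, $\bw\in c\,\cS_p$, which match their Gaussian counterparts up to $o_n(1)$ (odd moments vanish since $\bw^\sT\bg$ is centered Gaussian; even moments match by the truncation argument above). Choosing $D$ fixed and $\epsilon_n,\delta_n\to0$ slowly relative to $1/D$ makes the polynomial-approximation error and the mollification error vanish, so $\int_0^{\pi/2}|\phi'(t)|\,dt\to0$, which gives~\eqref{eq:main_thm_eq} for the free energy.

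\textbf{Main obstacle.} The crux is reconciling the two features that break the naive Lindeberg scheme: the net $\cN_\alpha^\sfk$ has cardinality exponential in $p$, so no quantity uniform over the net can be used — everything must be phrased through Gibbs-weighted averages, which stay $O(1)$ precisely because the Gibbs weights form a probability measure touched, at each differentiation in $\bz_i$, only by the single summand $\ell_i$; and the loss is merely $C^{1,1}$, so the interpolation remainders are genuinely too large to sum and must be replaced by the exact expansion of a bounded-degree polynomial. The delicate points are the quantitative polynomial approximation (choice of degree and of the scales $\epsilon_n,\delta_n$, and the size of the exceptional event) and verifying that expanding that polynomial produces only genuine low-order moments of projections along $\cS_p$, with no $p$-dependent blow-up in the number or magnitude of the monomials; this bookkeeping is where the bulk of the technical work lies.
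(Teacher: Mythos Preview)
Your interpolation path and row-wise derivative are the same as the paper's, and your treatment of the finite-order terms (moment matching of projections via Assumption~\ref{ass:X} and polarization inside $\cS_p$) is on the right track. The gap is the remainder.

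The event $\cE_n$ you invoke --- all projections $|\bTheta^\sT\bz_i|$ uniformly $O(\sqrt{\log n})$ over the net --- does not have high probability: $\cN_\alpha^\sfk$ has $C^p$ elements, so a union bound only controls $\sup_{\bTheta}|\bTheta^\sT\bz_i|$ at scale $\sqrt p$, not $\sqrt{\log n}$. More fundamentally, the Lagrange remainder involves the $(D{+}2)$-th derivative at an \emph{intermediate} configuration where the Gibbs weights are correlated with $(\bx_i,\bg_i)$; bounding it forces a supremum over net points. Even granting the most optimistic estimate (each $\bz_i$-derivative of $\psi\circ f_\alpha$ is $O_\beta(1/n)$, projections have $O(1)$ moments), the remainder per row is $O_{D,\beta,\epsilon}(1/n)$; summed over $n$ rows this is a constant, not $o(1)$. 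Raising $D$ only inflates that constant, and sending $\epsilon_n\to 0$ inflates it further through your own mollified-derivative bounds $\|\nabla^k\ell_\epsilon\|_\infty\le C\epsilon^{-(k-2)}$. No choice of $D,\epsilon_n$ makes both the matched terms and the remainder vanish.

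The paper places the polynomial approximation elsewhere. It computes $\phi'(t)$ exactly (one derivative suffices under Assumption~\hyperref[ass:loss_labeling_prime]{1'}), obtaining for each row a ratio whose denominator $\langle e^{-\beta\widehat\ell_{t,1}(\bTheta)}\rangle^\up{1}_\bTheta$ lies in $(0,1]$, and approximates the \emph{scalar} map $x\mapsto 1/x$ by the truncated geometric series $P_M(x)=\sum_{j\le M}(1-x)^j$ (Lemma~\ref{lemma:poly_approx}). Expanding $P_M$ turns the row contribution into a finite sum over $j\le M$ of leave-one-out Gibbs averages of $\E_\up{1}\big[\widetilde\bu_{t,1}^\sT\widehat\bd_{t,1}(\bTheta_0)\,e^{-\beta\sum_{l\le j}\widehat\ell_{t,1}(\bTheta_l)}\big]$, each a function of a \emph{bounded} number of $\cS_p$-projections for fixed $\bTheta_0,\dots,\bTheta_j$. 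The uniform convergence in Assumption~\ref{ass:X} then replaces these by their Gaussian versions (Lemma~\ref{lemma:gaussian_approx}), which vanish because $\widetilde\bw_{t,1}$ and $\bw_{t,1}$ are independent with $\E[\widetilde\bw_{t,1}]=0$. The scalar polynomial error is driven below any $\delta>0$ by choosing $M=M(\delta,\beta)$ uniformly in $n$, with no row-sum penalty. This scalar-versus-high-dimensional distinction is the missing idea.
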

Here, we give a proof of this lemma.
A standard estimate bounds the difference between the free energy and the minimum 
empirical risk (see Appendix): For $\beta>0$.
\begin{equation}
\nonumber
 \Big| f_\alpha(\beta,\bX) - \min_{\bTheta\in\cN_\alpha^\sfk} \widehat R_n(\bTheta;\bX,\beps)\Big| 
 \le C(\alpha)\,\beta^{-1}.
\end{equation}
Hence, Theorem~\ref{thm:MainFirst} follows from 
Lemma~\ref{lemma:softmin_univ} via approximation arguments detailed in 
Section~\ref{section:proof_of_main_results}
of the Appendix. 
\subsubsection{Universality of the free energy}
We assume, without loss of generality, that $\bX$ and $\bG$ are
defined on the same probability space and are independent, and define
the interpolating paths
\begin{equation}
\label{eq:slepian}
 \bu_{t,i}:= \sin t \left(\bx_i -\bmu_\bg\right) + \cos t \left(\bg_i -\bmu_\bg\right) + \bmu_\bg,\quad \widetilde\bu_{t,i}:= \cos t (\bx_i-\bmu_\bg) - 
\sin t (\bg_i - \bmu_\bg)
\end{equation}
for $t\in[0,\pi/2]$ and $i\in[n]$. We use $\bU_t$ to denote the matrix whose $i$th row is
$\bu_{t,i}$;
note that these rows are i.i.d.~since the rows of $\bX$ and $\bG$ are so. Noting that 
for all 
$\btheta\in\cS_p$, $\bx^\sT\btheta$ and $\bg^\sT\btheta$ are subgaussian with 
subgaussian norms bounded by $\sfR\sfK$ uniformly over $\btheta\in\cS_p$, it is easy to see that 
$\sup_{t\in[0,\pi/2], \btheta\in\cS_p}  \norm{\bu_t^\sT \btheta}_{\psi_2} \le 2\sfR\sfK.$

The goal is to control the difference $\left|\E\left[\psi(f_{\alpha}(\beta,\bX))\right] - 
\E\left[\psi(f_{\alpha}(\beta,\bG))\right] \right|$ by writing 

\begin{align}
&\lim_{n\to\infty}\left|\E\left[\psi(f_{\alpha}(\beta,\bX))\right] - 
\E\left[\psi(f_{\alpha}(\beta,\bG))\right] \right| \\
&= 
\lim_{n\to\infty}\left|\E\left[\psi(f_{\alpha}(\beta,\bU_{\pi/2}))\right] - 
\E\left[\psi(f_{\alpha}(\beta,\bU_0 ))\right] \right| 
\nonumber
\\
&{=}\left|\int_{0}^{\pi/2}\lim_{n\to\infty} \E 
\left[\frac{\partial}{\partial t}
\psi(f_\alpha(\beta,\bU_t ))\right]\right|\de t
\nonumber
\end{align}
where the technical Lemma~\ref{lemma:dct_bound} in the Appendix provides sufficient regularity for this to hold.
So it is sufficient to show that for fixed $t\in[0,\pi/2]$,
\begin{equation}
\label{suff_cond_for_prop_softmax_1}
\lim_{n\to\infty}\left|\E \left[\frac{\partial}{\partial t} 
\psi(f_\alpha(\beta,\bU_t ))\right]\right| = 0.
\end{equation}


Before computing the derivative involved, we introduce some notation to simplify exposition.
For $\bv\in\R^\sfk,v\in\R$,
we use the notation 
\begin{equation}
\nonumber
   \grad \ell(\bv;v) = \left( \frac{\partial}{\partial v_k}\ell\left(\bv;v\right)\right)_{k\in[\sfk]} \in \R^\sfk.
\end{equation}
Furthermore, we will use the shorthand
 $\widehat\ell_{t,i}(\bTheta)$ for  $\ell\left(\bTheta^\sT \bu_{t,i}; \eps_i\right)$
and define the term
\begin{equation}
\label{eq:bd_def}
 \widehat\bd_{t,i}(\bTheta) :=\left(
\bTheta \grad\widehat\ell_{t,i}(\bTheta) 
\right).
\end{equation}
It is convenient to define the probability mass function over  $\bTheta_0\in\cN_\alpha^\sfk$:
\begin{equation}
\label{eq:inner_def}
p^\up{i}(\bTheta_0;t) :=
\frac{
e^{-\beta\left(\sum_{j\neq 
i}\widehat\ell_{t,j}(\bTheta_0) +  n r(\bTheta_0)\right)
}}{\sum_{\bTheta\in\cN_\alpha^\sfk}
e^{-\beta\left(\sum_{j\neq 
i}\widehat\ell_{t,j}(\bTheta) +  n r(\bTheta)\right)
}}
\quad\textrm{and}\quad
\inner{\;\cdot\;}^\up{i}_\bTheta:=
\sum_{\bTheta\in\cN_\alpha^\sfk} (\,\cdot\,)p^\up{i}(\bTheta;t)
\end{equation}
for $i\in[n]$. 
With this notation, we can compute and bound the term involving the derivative of interest as
\begin{align}
\label{eq:derivative_explicit}
 \E\left[\frac{\partial}{\partial t} 
\psi(f_\alpha(\beta,\bU_t))\right]
&= \E\left[\frac{\psi'(f_\alpha(\beta,\bU_t))}{n}  \sum_{i=1}^n 
\frac{\<
\widetilde\bu_{t,i}^\sT
 \widehat\bd_{t,i}(\bTheta) 
e^{-\beta\widehat\ell_{t,i}(\bTheta)}\>_\bTheta^\up{i}}
{\<e^{-\beta\widehat\ell_{t,i}(\bTheta)}\>_\bTheta^\up{i}}\right]\\
&\hspace{-10mm}\le
\frac1n\sum_{i=1}^n\E\left[\left|{\psi'(f_\alpha(\beta,\bU)) - 
\psi'\left(f_\alpha\left(\beta,\bU^\up{i}\right)\right)}
\frac{
\<
\widetilde\bu_{i}^\sT
\widehat\bd_i(\bTheta)
 e^{-\beta\widehat\ell_i(\bTheta)}
 \>_\bTheta^\up{i}
 }
{\<e^{-\beta\widehat\ell_i(\bTheta)}\>_\bTheta^\up{i}} 
\right|\right]
\label{eq:psi'_softmin_bound}
\\
&+
\frac{1}n\sum_{i=1}^n 
\left|\E\left[\psi'\left(f_\alpha\left(\beta,\bU^\up{i}\right)\right)\inner{
\E_\up{i}\left[
\frac{
\widetilde\bu_{i}^\sT
\widehat\bd_i(\bTheta)
 e^{-\beta\widehat\ell_i(\bTheta)}}
{\<e^{-\beta\widehat\ell_i(\bTheta)}\>_\bTheta^\up{i}}\right]}_\bTheta^\up{i}
\right]\right|.\label{eq:psi'_leave_one_out_bound}
\end{align}
where $\bU^\up{i}$ is obtained by setting the $i$th row in $\bU$ to $0$
and $\E_\up{i}$ denotes the expectation conditional on $(\bG^\up{i},\bX^\up{i},\bepsilon^\up{i})$; 
the feature and noise vectors with the $i$th sample set to $0$. 
Note that to reach~\eqref{eq:psi'_leave_one_out_bound}, we used the independence of $p^\up{i}(\bTheta;t)$ and $(\bx_i,\bg_i,\epsilon_i)$ to swap the order of $\E_\up{i}\left[\;\cdot\;\right]$
and $\inner{\;\cdot\;}_{\bTheta}^\up{i}$.
We control~\eqref{eq:psi'_softmin_bound} and~\eqref{eq:psi'_leave_one_out_bound} 
separately.

The term in~\eqref{eq:psi'_softmin_bound} can be controlled via a simple leave-one-out argument. Indeed, since the samples are i.i.d., it is sufficient to control the term $i=1$ in the sum:
\begin{align}
\left|\psi'(f_\alpha(\beta,\bU)) - \psi'\left(f_\alpha\left(\beta,\bU^\up{1}\right)\right)\right|
&\le  \frac{\norm{\psi'}_\Lip}{n\beta} 
\left|\log\frac{\sum_{\bTheta}e^{
-\beta \left(\sum_{j\neq 1} \widehat\ell_j(\bTheta) + n 
r(\bTheta)\right)
} e^{-\beta\widehat\ell_1(\bTheta)}}{\sum_{\bTheta}e^{
-\beta \left(\sum_{j\neq 1} \widehat\ell_j(\bTheta) + n 
r(\bTheta)\right)
} } \right|\nonumber\\
&\hspace{0mm}\stackrel{(a)}{=} -\frac{\norm{\psi'}_\Lip}{n\beta} 
\log\inner{e^{-\beta\widehat\ell_1(\bTheta)}}_\bTheta^\up{1}\nonumber\\
&\hspace{0mm}\stackrel{(b)}{\le}\frac{\norm{\psi'}_\Lip}{n} 
\inner{\widehat\ell_1(\bTheta)}_\bTheta^\up{1},\nonumber
\end{align}
where $(a)$ follows from the non-negativitiy of $\ell$ and $\beta$ and $(b)$ follows by Jensen's 
inequality. The condition in Eq.~\eqref{eq:exp_integrability} of Assumption~\hyperref[ass:loss_labeling_prime]{5''} guarantees 
that $\sup_{\bTheta\in\cS_p^\sfk}\E_\up{i}\left[\widehat \ell_1(\bTheta)^2\right]$
$\le C_0$, while Lemma~\ref{lemma:dct_bound} of the Appendix bounds
\begin{equation}
\label{eq:regularity_bound_softmax} 
\sup_{\bTheta_0\in\cS_p^\sfk}
\E_\up{1}\left[\left(
\frac{
\widetilde\bu_{t,1}^\sT
\widehat\bd_{t,1}(\bTheta_0)
 e^{-\beta\widehat\ell_{t,1}(\bTheta_0)}}
{\<e^{-\beta\widehat\ell_{t,1}(\bTheta)}\>_\bTheta^\up{1}}\right)^2\right]\le C_0'(\beta)
\end{equation}
so that an application of Cauchy--Schwarz to~\eqref{eq:psi'_softmin_bound} yields
\begin{align*}
\limsup_{n\to\infty}\E\left[\left|\left({\psi'(f_\alpha(\beta,\bU)) - 
\psi'(f_\alpha(\beta,\bU^\up{1}))}\right) 
\bigg\<
\frac{
\widetilde\bu_{1}^\sT
\widehat\bd_1(\bTheta)
 e^{-\beta\widehat\ell_1(\bTheta)}}
{\<e^{-\beta\widehat\ell_1(\bTheta)}\>^\up{1}}\bigg\>_\bTheta^\up{1}
\right|\right]= 0.
\end{align*}

Meanwhile, to control the 
term~\eqref{eq:psi'_leave_one_out_bound}, it is sufficient to establish that
\begin{equation}
\label{eq:suff_cond_for_prop_softmax_2}
\lim_{n\to\infty}
\sup_{\bTheta_0}
\left|\E_\up{1}\left[  
\frac{
\widetilde\bu_{1}^\sT
\widehat\bd_{1}(\bTheta_0)
 e^{-\beta\widehat\ell_{1}(\bTheta_0)}}
{\inner{e^{-\beta\widehat\ell_1(\bTheta)}}_\bTheta^\up{1}}\right]\right| = 0  \quad \textrm{a.s.}
\end{equation}
To see that this is sufficient, note that with~\eqref{eq:suff_cond_for_prop_softmax_2}, we can control~\eqref{eq:psi'_leave_one_out_bound} as
\begin{align}
 &\limsup_{n\to\infty}  
\frac{1}n\sum_{i=1}^n 
\bigg|\E\bigg[\psi'\Big(f_\alpha\big(\beta,\bU^\up{i}\big)\Big)\bigg\<
\E_\up{i}\bigg[
\frac{
\widetilde\bu_{i}^\sT
\widehat\bd_i(\bTheta_0)
 e^{-\beta\widehat\ell_i(\bTheta_0)}}
{\<e^{-\beta\widehat\ell_i(\bTheta)}\>_{\bTheta}^\up{i}}\bigg]\bigg\>_\bTheta^\up{i}
\bigg] \bigg|
\nonumber
\\
 &\hspace{40mm}\stackrel{(a)}{\le}
\norm{\psi'}_\infty
\limsup_{n\to\infty}\E\bigg[\bigg\<\bigg|\E_\up{1}\bigg[\frac{
\widetilde\bu_{1}^\sT
\widehat \bd_1(\bTheta_0)
e^{
-\beta\widehat\ell_1(\bTheta_0)}}{\<e^{-\beta\widehat\ell_1(\bTheta)}\>_\bTheta^\up{1}}   \bigg]\bigg|\bigg\>_{\bTheta_0}^\up{1}\bigg] 
\nonumber
\\
&\hspace{40mm}
\stackrel{(b)}{\le}
\norm{\psi'}_\infty\E 
\bigg[\limsup_{n\to\infty}\sup_{\bTheta_0}\bigg|\E_\up{1}\bigg[\frac{ 
\widetilde\bu_{1}^\sT
\widehat\bd_1(\bTheta_0)
e^{
-\beta\widehat\ell_1(\bTheta_0)}}{\<e^{-\beta\widehat\ell_1(\bTheta)}\>^\up{1}_\bTheta}   
\bigg]\bigg| \bigg]
= 0
\nonumber
\end{align}
where $(a)$ follows by the i.i.d assumption on the samples and $(b)$ follows by reverse Fatou's with the bound in~\eqref{eq:regularity_bound_softmax}.

Meanwhile, the following crucial polynomial approximation lemma
allows us to establish~\eqref{eq:suff_cond_for_prop_softmax_2}.
The proof is deferred to the next subsection.

%
%
\begin{lemma}
\label{lemma:poly_approx}
Under the assumptions of Theorem~\ref{thm:MainFirst} with the alternative
Assumption~\hyperref[ass:loss_labeling_prime]{5''},
for any $\delta >0, \beta>0$, there exists a polynomial $P$ of degree and coefficients 
dependent only on $\delta,\beta$ and $\sOmega$ such that for all $\bTheta_0 \in 
\cS_p^\sfk, t\in[0,\pi/2]$ and $n \in \Z_{>0}$
\begin{align}
\nonumber
&\bigg|\E_\up{1}\bigg[  
\frac{\widetilde\bu_{t,1}^\sT\widehat\bd_{t,1}(\bTheta_0) 
e^{-\beta\widehat\ell_{t,1}(\bTheta_0)}}
{\<e^{-\beta\widehat\ell_{t,1}(\bTheta)} \>^\up{1}_\bTheta}\bigg]\bigg|\\
&\hspace{5mm}\le \left|{\E_\up{1}\left[
{\widetilde\bu_{t,1}^\sT\widehat\bd_{t,1}(\bTheta_0)}
e^{-\beta\widehat\ell_{t,1}(\bTheta_0)}
P\Big(\<e^{-\beta\widehat\ell_{t,1}(\bTheta)}\>^\up{1}_\bTheta
\Big)\right]} 
\right| + \delta.
\nonumber
\end{align}
\end{lemma}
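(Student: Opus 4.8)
The plan is to reduce the problem to a one-dimensional statement about approximating the reciprocal function $u\mapsto 1/u$ by a polynomial on a suitable interval, after carefully locating the random quantity $Z:=\inner{e^{-\beta\widehat\ell_{t,1}(\bTheta)}}^\up{1}_\bTheta$ in a compact interval bounded away from $0$ with overwhelming probability. First I would observe that $Z$ is a convex combination (with weights $p^\up{1}(\bTheta;t)$ that do not depend on the first sample) of terms $e^{-\beta\widehat\ell_{t,1}(\bTheta)}$, each of which lies in $(0,1]$ since $\ell\ge 0$. Hence $Z\in(0,1]$ deterministically, giving the upper endpoint for free. The real work is the lower bound: I would show that $Z\ge e^{-\beta A}$ with probability at least $1-\eta$ (over the conditioning variables, for $n$ large), where $A$ is a constant depending only on $\sOmega,\beta$. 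This follows because $\widehat\ell_{t,1}(\bTheta)=\ell(\bTheta^\sT\bu_{t,1};\eta(\bTheta^{\star\sT}\bu_{t,1},\eps_1))$, and under Assumption~\hyperref[ass:loss_labeling_prime]{1'} combined with the subgaussian norm bound $\norm{\bu_t^\sT\btheta}_{\psi_2}\le 2\sfR\sfK$ (noted just before the lemma), the random variable $\widehat\ell_{t,1}(\bTheta)$ has a uniformly bounded exponential moment; a union bound over the $\alpha$-net $\cN_\alpha^\sfk$ — whose cardinality is at most $C(\alpha)^p$ — together with the fact that we are averaging, not maximizing, lets me conclude that $Z$ is bounded below with high probability. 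Actually, since $Z$ is an average, it suffices that a single $\bTheta$ in the net has $\widehat\ell_{t,1}(\bTheta)\le A$ with enough weight, or more robustly, that $\inner{e^{-\beta\widehat\ell_{t,1}(\bTheta)}}^\up{1}_\bTheta$ concentrates; the cleanest route is Markov on $\inner{e^{\beta\widehat\ell_{t,1}(\bTheta)}}^\up{1}_\bTheta$, which has bounded expectation by~\eqref{eq:exp_integrability} and Jensen applied through the convex weights.

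Next, given the interval $[e^{-\beta A},1]$, Weierstrass approximation produces, for any target accuracy, a polynomial $P$ with $\sup_{u\in[e^{-\beta A},1]}|1/u-P(u)|\le\epsilon$, with degree and coefficients depending only on $\epsilon$ and $A$ — hence only on $\delta,\beta,\sOmega$ once $\epsilon$ and the high-probability threshold are fixed in terms of $\delta$. I would then split the left-hand side of the claimed inequality as
\begin{align}
\nonumber
\E_\up{1}\!\left[\widetilde\bu_{t,1}^\sT\widehat\bd_{t,1}(\bTheta_0)e^{-\beta\widehat\ell_{t,1}(\bTheta_0)}\Big(\tfrac{1}{Z}-P(Z)\Big)\right] + \E_\up{1}\!\left[\widetilde\bu_{t,1}^\sT\widehat\bd_{t,1}(\bTheta_0)e^{-\beta\widehat\ell_{t,1}(\bTheta_0)}P(Z)\right],
\end{align}
and bound the first term. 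On the high-probability event $\{Z\ge e^{-\beta A}\}$ the integrand is at most $\epsilon\cdot|\widetilde\bu_{t,1}^\sT\widehat\bd_{t,1}(\bTheta_0)|$ in absolute value (using $e^{-\beta\widehat\ell}\le 1$); on the complementary event I would bound $|1/Z - P(Z)|$ crudely and use Cauchy--Schwarz together with a tail bound on the rare event. For this I need $\widetilde\bu_{t,1}^\sT\widehat\bd_{t,1}(\bTheta_0)$ to have controlled moments: by the definition~\eqref{eq:bd_def}, $\widehat\bd_{t,1}(\bTheta_0)=\bTheta_0\grad\widehat\ell_{t,1}(\bTheta_0)+\bTheta^\star\grad^\star\widehat\ell_{t,1}(\bTheta_0)$, and since $\grad\ell,\grad\eta$ are Lipschitz with bounded modulus and $\bTheta_0,\bTheta^\star\in\cS_p^\sfk\subseteq B_2^p(\sfR)$, the vector $\widehat\bd_{t,1}(\bTheta_0)$ has polynomially-growing subgaussian-type norms; likewise $\widetilde\bu_{t,1}^\sT\btheta$ is subgaussian uniformly over $\btheta\in\cS_p$. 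Thus $\widetilde\bu_{t,1}^\sT\widehat\bd_{t,1}(\bTheta_0)$ is subexponential with a norm bounded uniformly in $p,t,\bTheta_0$, and the rare-event contribution is made $\le\delta$ by choosing the threshold $\eta$ small enough (depending on $\delta,\beta,\sOmega$), after which $\epsilon$ is chosen so the main term is also $\le\delta/2$, say.

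The main obstacle, I expect, is the lower bound on $Z$ uniform over $\bTheta_0$, $t$, and $n$: one must ensure that the constant $A$ — hence the polynomial degree — does not blow up with $p$ or $n$, despite $\cN_\alpha$ having exponentially many points. The key realization that resolves this is that $Z$ is an \emph{average} (a convex combination over the net with the $p^\up{1}$ weights), not a supremum, so only \eqref{eq:exp_integrability} at the non-random point level is needed, not a uniform control over the net; combined with the conditional independence structure (the weights $p^\up{1}$ and the net are measurable w.r.t.\ the conditioning $\sigma$-field up to the first sample, and the subgaussian norm of $\widetilde\bu_{t,1}^\sT\btheta$ is uniform in $\btheta\in\cS_p$), the bound on $\E_\up{1}[e^{\beta|\widehat\ell_{t,1}(\bTheta_0)|}]$ from Assumption~\hyperref[ass:loss_labeling_prime]{1'} transfers to a $\sOmega,\beta$-only bound on $\E_\up{1}[1/Z]$ via Jensen through the convex combination, and Markov's inequality then yields the high-probability lower bound with $A$ depending only on $\delta,\beta,\sOmega$. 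A secondary technical point is that the inequality is claimed for \emph{all} finite $n$, not asymptotically, so the union-bound / net-cardinality estimates must be done at finite $n$ with explicit (if unoptimized) constants; this is routine once the averaging observation is in place.
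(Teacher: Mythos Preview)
Your overall strategy—approximate $1/x$ by a polynomial on an interval $[s_0,1]$ and control the remainder via moment bounds and a rare-event estimate for $\{Z<s_0\}$—is correct in spirit and is what the paper does. Your Jensen argument that $\E_\up{1}[1/Z]\le\inner{\E_\up{1}[e^{\beta\widehat\ell_{t,1}(\bTheta)}]}^\up{1}_\bTheta\le C(\beta)$ is also right, and it correctly exploits that $Z$ is a convex combination so that no union bound over the net is needed.

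There is, however, a genuine gap in the rare-event step. Write $W=\widetilde\bu_{t,1}^\sT\widehat\bd_{t,1}(\bTheta_0)e^{-\beta\widehat\ell_{t,1}(\bTheta_0)}$ and $R(Z)=1/Z-P(Z)$. On $\{Z<s_0\}$ you must bound $\E_\up{1}[|W\,R(Z)|\one_{\{Z<s_0\}}]$, and a generic Weierstrass approximant gives no control on $|P(Z)|$ for $Z\in(0,s_0)$. The crude split $|R(Z)|\le 1/Z+|P(Z)|$ produces a term $C_P\cdot\P(Z<s_0)^{1/2}$ with $C_P=\sup_{[0,1]}|P|$; but $C_P$ depends on both $s_0$ and $\epsilon$, so your stated order (fix $\eta$, then $\epsilon$) is circular, and with only the first-moment Markov bound $\P(Z<s_0)\le s_0\,C(\beta)$ you cannot beat the growth of $C_P$ as $s_0\to 0$. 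The paper avoids this in two ways you are missing. First, it uses the \emph{explicit} geometric polynomial $P_M(x)=\sum_{l=0}^M(1-x)^l$, whose remainder $R_M(x)=(1-x)^{M+1}/x$ satisfies $|R_M(x)|\le 1/x$ on $(0,1]$ \emph{uniformly in $M$}; this decouples the bad-event bound from the polynomial degree. Second, it observes that $R_M(x)^2$ is convex on $(0,1]$ and applies Jensen through the weights $p^{(1)}(\bTheta;t)$ to obtain $\E_\up{1}[R_M(Z)^2]\le\inner{\E_\up{1}[R_M(e^{-\beta\widehat\ell_{t,1}(\bTheta)})^2]}^\up{1}_\bTheta$, reducing everything to a \emph{single} $\bTheta$; the good/bad split is then done per $\bTheta$ on an event where all relevant one-dimensional projections are bounded, and there the loss itself is bounded, so $e^{-\beta\widehat\ell}\in[e^{-\beta\widetilde B},1]$ and $R_M\to 0$ uniformly. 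Your route can be repaired (e.g.\ higher moments $\E_\up{1}[1/Z^k]\le C(k\beta)$ via the same Jensen trick, together with a polynomial whose sup-norm on $[0,1]$ is at most $1/s_0+O(1)$), but as written the argument does not close.
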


This polynomial is achieved by the power series approximation of $x\mapsto 1/x$, however, the  essential aspect of this approximation is that the degree and coefficients of the polynomial are independent of the dimension $n$ so that this approximation holds uniformly, and the 
term in~\eqref{eq:suff_cond_for_prop_softmax_2} can be approximated in terms of a low-dimensional projection of the interpolating feature vectors.
In turn, the term involving these projections is easier to control. 
Indeed, fixing $\delta>0$ and letting $P(s) = \sum_{j=0}^M  b_js^j$ for 
degree $M\in\Z_{>0}$ and
coefficients $\{b_j\}_{j\le [M]}$ as in the lemma, we obtain the bound on~\eqref{eq:suff_cond_for_prop_softmax_2}
\begin{align}
\nonumber
\bigg|\E_\up{1}\bigg[  
\frac{
\widetilde\bu_1^\sT
\widehat\bd_1(\bTheta_0)
e^{-\beta\widehat\ell_1(\bTheta_0)}}
{\<e^{-\beta\widehat\ell_1(\bTheta)}\>_\bTheta}\bigg]\hspace{-0.5mm}\bigg|&
\le\bigg|\E_\up{1}\bigg[
\widetilde\bu_1^\sT
\widehat\bd_1(\bTheta_0)
e^{-\beta\widehat\ell_1(\bTheta_0)}
\sum_{j=0}^M b_j 
\left(\<e^{-\beta\widehat\ell_1(\bTheta)}\>
_\bTheta^\up{1}\right)^j
\bigg]\hspace{-0.7mm}
\bigg| + \delta 
\nonumber
\\
&\hspace{-5mm}\stackrel{(a)}{=}\left|
\sum_{j=0}^M
b_j 
\inner{
\E_\up{1}\left[
\widetilde\bu_1^\sT
\widehat\bd_1(\bTheta_0)
e^{-\beta\widehat\ell_1(\bTheta_0)}
e^{-\beta\sum_{l=1}^j\widehat\ell_1(\bTheta_l)}
\right]
}^\up{1}_{\bTheta_1^j}
\right| + \delta
\nonumber
\\
&\hspace{-5mm}\le
\sum_{j=0}^M
\left| b_j \right|
\sup_{\bTheta_1,\ldots,\bTheta_j\in\cS_p^\sfk}
\left|
\E_\up{1}\left[
{
\widetilde\bu_{1}^\sT
\widehat\bd_1(\bTheta_0)
}
e^{-\beta\sum_{l=0}^j\widehat\ell_1(\bTheta_l)}
\right]
\right|+\delta,
\label{eq:poly_approx_bound}
\end{align}
where in $(a)$ we defined the 
expectation $\inner{\;\cdot\;}_{\bTheta_1^j}$
with respect to $\{\bTheta_l\}_{l\le [j]}$ seen as independent 
samples from $p^\up{1}(\bTheta;t)$. Since the term in~\eqref{eq:poly_approx_bound} now contains a number of projections bounded in $n$,
the next lemma then states that it can be controlled via its Gaussian equivalent.
\begin{lemma}
\label{lemma:gaussian_approx}
Suppose the assumptions of Theorem~\ref{thm:MainFirst} hold with the alternative 
Assumption~\hyperref[ass:loss_labeling_prime]{5''}.
Let $\widetilde\bg_1\sim\cN(\bmu_\bg,\bSigma_\bg)$ and $\widetilde \eps_1$ be an 
independent copy of $\eps_1$, both independent of $\bg_1$ and define
$\bw_{t,1} = \sin(t) (\widetilde\bg_1 - \bmu_\bg) + \cos(t)(\bg_1 - \bmu_\bg) + \bmu_\bg$ and 
$\widetilde\bw_{t,1} = \cos(t) (\widetilde\bg_{1} - \bmu_\bg) - 
\sin(t)(\bg_1 - \bmu_\bg)$. For any fixed $\beta>0$, $t\in[0,\pi/2]$ and $J\in\Z_{>0}$ we have, as $p\to\infty$,
\begin{align}
\label{eq:gaussian_approx_lemma_eq}
\hspace{2mm}
\sup_{
\mathclap{\substack{
\phantom{\bTheta^\star\in\cS_p^{\sfk^\star}}\\
\hspace{7mm}\bTheta_0,\dots,\bTheta_J\in\cS_p^\sfk
}}
}
\hspace{1.5mm}
\Bigg|
\hspace{-0.2mm}\E\hspace{-0.5mm}\left[
{\widetilde\bu_{t,1}^\sT\widehat\bd_{t,1}(\bTheta_0)}
e^{-\beta\sum_{l=0}^J\widehat\ell_{t,1}\left(\bTheta_l\right)}
\right]\hspace{-0.7mm}-\hspace{-0.5mm}
\E\hspace{-0.5mm}\left[\hspace{-0.2mm}
{\widetilde\bw_{t,1}^\sT \widehat\bq_{t,1}(\bTheta_0)}
e^{-\beta\sum_{l=0}^J\ell\left(\bTheta_l^\sT\bw_{t,1};\widetilde\eps_1\right)}
\hspace{-0.5mm}
\right]\hspace{-1.2mm}
\Bigg| \to 0
\end{align}
 where\; $\widehat\bq_{t,1}(\bTheta_0) :=
\bTheta_0 \grad\ell\left(\bTheta_0^\sT \bw_{t,1},\widetilde\eps_1\right).$
\end{lemma}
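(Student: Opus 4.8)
We sketch the strategy. The plan is to reduce \eqref{eq:gaussian_approx_lemma_eq} to a finite-dimensional central limit statement and then settle it via Assumption~\ref{ass:X} through a Cram\'er--Wold argument. \emph{Step 1 (reduction to projections).} Both expectations in \eqref{eq:gaussian_approx_lemma_eq} depend on the ambient vectors only through the $O(1)$-dimensional collection of projections
\[
\bP := \big(\bTheta_0^\sT\bu_{t,1},\dots,\bTheta_J^\sT\bu_{t,1},\ \bTheta^{\star\sT}\bu_{t,1},\ \bTheta_0^\sT\widetilde\bu_{t,1},\ \bTheta^{\star\sT}\widetilde\bu_{t,1}\big)
\]
together with $\eps_1$, and through the analogous vector $\bP^\bG$ built from $\bw_{t,1},\widetilde\bw_{t,1}$ together with $\widetilde\eps_1$: indeed $\widetilde\bu_{t,1}^\sT\widehat\bd_{t,1}(\bTheta_0)=(\bTheta_0^\sT\widetilde\bu_{t,1})^\sT\grad\widehat\ell_{t,1}(\bTheta_0)+(\bTheta^{\star\sT}\widetilde\bu_{t,1})^\sT\grad^\star\widehat\ell_{t,1}(\bTheta_0)$ and $\grad\widehat\ell_{t,1}(\bTheta_0),\grad^\star\widehat\ell_{t,1}(\bTheta_0),\widehat\ell_{t,1}(\bTheta_l)$ are fixed functions of $\bTheta_0^\sT\bu_{t,1},\dots,\bTheta_J^\sT\bu_{t,1},\bTheta^{\star\sT}\bu_{t,1},\eps_1$. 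Hence there is a single continuous map $G$ --- with degree and coefficients depending only on $\ell,\eta,\beta,J$, not on $p$ or the parameter matrices --- such that $\E[\widetilde\bu_{t,1}^\sT\widehat\bd_{t,1}(\bTheta_0)\,e^{-\beta\sum_l\widehat\ell_{t,1}(\bTheta_l)}]=\E[G(\bP,\eps_1)]$ and the Gaussian term equals $\E[G(\bP^\bG,\widetilde\eps_1)]$. Under Assumption~\hyperref[ass:loss_labeling_prime]{1'} the maps $\grad\ell,\grad\eta$ are Lipschitz, hence of at most linear growth (and $\eta$ of at most quadratic growth), while $\ell\ge 0$, $\beta>0$ give $e^{-\beta\sum_l\widehat\ell_{t,1}(\bTheta_l)}\le 1$; consequently $|G|$ is bounded by a fixed polynomial of its arguments.

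\emph{Step 2 (uniform joint CLT).} I would show that the laws of $\bP$ and of $\bP^\bG$ both converge, uniformly over $\bTheta_0,\dots,\bTheta_J\in\cS_p^\sfk$ and $\bTheta^\star\in\cS_p^{\sfk^\star}$, to one and the same centered Gaussian law. The crux is a rotation collapse: since $(\bu_{t,1},\widetilde\bu_{t,1})$ is the angle-$t$ rotation of $(\bx_1,\bg_1)$ and $(\bw_{t,1},\widetilde\bw_{t,1})$ the same rotation of $(\widetilde\bg_1,\bg_1)$, for each fixed $\bs$ there are vectors $\ba=\ba(\bs;\bTheta_0,\dots,\bTheta_J,\bTheta^\star)$ and $\widetilde\ba=\widetilde\ba(\bs;\bTheta_0,\dots,\bTheta_J,\bTheta^\star)$ in $\R^p$ with $\inner{\bs,\bP}=\ba^\sT\bx_1+\widetilde\ba^\sT\bg_1$ and $\inner{\bs,\bP^\bG}=\ba^\sT\widetilde\bg_1+\widetilde\ba^\sT\bg_1$, \emph{with the same} $\ba,\widetilde\ba$; moreover $\ba,\widetilde\ba$ are combinations of the columns of $\bTheta_0,\dots,\bTheta_J,\bTheta^\star$ whose coefficients have $\ell_1$-mass at most some $\Lambda=\Lambda(\bs)$ independent of $p$ and of the matrices, so $\ba/\Lambda,\widetilde\ba/\Lambda\in\cS_p$ because $\cS_p$ is symmetric and convex (hence contains the origin and is closed under such rescaled combinations). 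Since $\bg_1\perp\bx_1$ and $\bg_1\perp\widetilde\bg_1$,
\[
\E\big[\exp(i\inner{\bs,\bP})\big]-\E\big[\exp(i\inner{\bs,\bP^\bG})\big]
=\exp\big(-\tfrac12\widetilde\ba^\sT\bSigma_\bg\widetilde\ba\big)\Big(\E\big[\exp(i\ba^\sT\bx_1)\big]-\E\big[\exp(i\ba^\sT\bg_1)\big]\Big),
\]
and applying \eqref{eq:condition_bounded_lipschitz_single} to the bounded Lipschitz functions $u\mapsto\cos(\Lambda u)$ and $u\mapsto\sin(\Lambda u)$ (note $\cos(\ba^\sT\bx_1)=\cos(\Lambda(\ba/\Lambda)^\sT\bx_1)$, and similarly for $\sin$) shows this difference tends to $0$ as $p\to\infty$, uniformly over the parameter matrices, for each fixed $\bs$. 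Combined with the uniform subgaussianity $\sup_{t\in[0,\pi/2],\,\btheta\in\cS_p}\norm{\bu_t^\sT\btheta}_{\psi_2}\le 2\sfR\sfK$ (and its analogue for $\bg$), which makes the coordinates of $\bP,\bP^\bG$ subgaussian with uniformly bounded norm, this yields the stated uniform convergence in distribution.

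\emph{Step 3 (conclusion).} I would close by contradiction and compactness. If \eqref{eq:gaussian_approx_lemma_eq} failed there would be $\eps>0$, a subsequence $p_m\to\infty$, and choices of the matrices with $|\E[G(\bP^{(m)},\eps_1)]-\E[G(\bP^{\bG,(m)},\widetilde\eps_1)]|\ge\eps$ for all $m$. The covariance matrices of $\bP^{(m)}$ and $\bP^{\bG,(m)}$ are uniformly bounded (entries controlled by $(2\sfR\sfK)^2$), so along a further subsequence the Gaussian one converges to some $\bSigma_\infty$; then, by the characteristic-function estimate of Step~2 and L\'evy's continuity theorem, both $\bP^{(m)}$ and $\bP^{\bG,(m)}$ converge in law to $\cN(0,\bSigma_\infty)$, independent of the noise variable, whose law is the same on both sides. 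Since $G$ is continuous of polynomial growth and the coordinates are subgaussian with uniformly bounded norm, both families $\{G(\bP^{(m)},\eps_1)\}$ and $\{G(\bP^{\bG,(m)},\widetilde\eps_1)\}$ are uniformly integrable, so both expectations converge to $\E[G(\bZ,\eps_1)]$ with $\bZ\sim\cN(0,\bSigma_\infty)$, contradicting the separation. The main obstacle is Step~2: one must carry out the rotation-collapse and Cram\'er--Wold reduction \emph{uniformly} over the $p$-dependent sets $\cS_p^\sfk$, which is precisely why the bookkeeping that $\ba/\Lambda,\widetilde\ba/\Lambda$ fall back inside $\cS_p$ with a $p$- and matrix-free scale $\Lambda$ needs care; a secondary point is the upgrade from weak convergence to convergence of $\E[G(\cdot)]$ for the mildly unbounded $G$, which the subgaussian moment bounds take care of.
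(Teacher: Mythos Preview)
Your argument is correct and shares its backbone with the paper's: both reduce the two expectations to a fixed, $p$-independent test function of the finite family of projections $(\bH^\sT\bx_1,\bH^\sT\bg_1)$ versus $(\bH^\sT\widetilde\bg_1,\bH^\sT\bg_1)$ with $\bH=(\bTheta^\star,\bTheta_0,\dots,\bTheta_J)$, and both collapse the multivariate comparison to the one-dimensional Assumption~\ref{ass:X} via characteristic functions together with the symmetry/convexity of $\cS_p$ (your $\ba/\Lambda\in\cS_p$ step is exactly the paper's $\bH\bs/\|\bs\|_1\in\cS_p$ step). The closing, however, is packaged differently. The paper first proves a general lemma (Lemma~\ref{lemma:proj}): by convolving with a small Gaussian $\balpha\sim\cN(0,\delta^2\bI)$ it bounds the difference of $\E[\varphi(\bH^\sT\bx,\bH^\sT\widetilde\bg)]$ and $\E[\varphi(\bH^\sT\bg,\bH^\sT\widetilde\bg)]$ for bounded Lipschitz $\varphi$ directly by an integral of the characteristic-function discrepancy against a Gaussian weight, then sends $\delta\to 0$; a second lemma (Lemma~\ref{lemma:proj_locally_lip}) upgrades this to locally Lipschitz, uniformly square-integrable $\varphi$ by truncation, and Lemma~\ref{lemma:gaussian_approx} follows by taking $\varphi$ to be the $\eps_1$-average of the integrand and invoking the square-integrability bound of Lemma~\ref{lemma:dct_bound}. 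Your route instead runs a contradiction/compactness argument: extract a subsequence with convergent covariances, use L\'evy's continuity theorem on the pointwise characteristic-function match from Step~2, and close with uniform integrability from the uniform subgaussian bounds. Both are sound; the paper's Gaussian-smoothing proof is quantitative and yields two reusable standalone lemmas, whereas your subsequence argument is shorter but non-quantitative and tailored to the specific statement.
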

This lemma can be seen as an extension of the \emph{marginal} pointwise normality condition of Eq.~\eqref{eq:condition_bounded_lipschitz_single} to hold \emph{jointly},
and for a finite number of projections instead of only one.
In fact, in Section~\ref{section:proof_gaussian_approx} of the Appendix, we prove the following more general form, from which Lemma~\ref{lemma:gaussian_approx} will follow after a suitable truncation argument.
\begin{lemma}
\label{lemma:proj}
Suppose Assumption~\ref{ass:X} holds for $\bx$ and $\cS_p$ is as in Definition~\ref{def:DoG}.
Let $K >0$ be a fixed integer, and $\widetilde \bg \sim\cN(\bmu_\bg,\bSigma_\bg)$ an independent copy of $\bg$.
Then for any
bounded Lipschitz function $\varphi:\R^{3K} \to \R$, we have
\begin{equation}
\nonumber
\lim_{p\to\infty}\sup_{\bH = (\btheta_1,\dots,\btheta_K)\in\cS_p^K
}\left|\E\left[\varphi\big(\bH^\sT \bx,\bH^\sT \widetilde \bg, \bH^\sT\bmu_\bg\big)\right] 
- \E\left[\varphi \big(\bH^\sT \bg, \bH^\sT \widetilde \bg, \bH^\sT\bmu_\bg\big)\right]\right| = 0.
\end{equation}
\end{lemma}

%

Note that $\bw_{t,1}$ and $\widetilde \bw_{t,1}$ are jointly Gaussian, and
 $\E_\up{1}\left[\widetilde\bw_{t,1}(\bw_{t,1}-\bmu_\bg)^\sT\right] = 0 $
for all $t\in[0,\pi/2]$, and hence they are independent. Then using that $\E[\widetilde \bw_{t,1}] =0$, the expectation 
involving $\widetilde \bw_{t,1},\bw_{t,1}$ in~\eqref{eq:gaussian_approx_lemma_eq} decouples as
\begin{equation}
\nonumber
\hspace{-1mm}
\E\Big[
\widetilde\bw_{t,1}\Big]
^\sT
\hspace{-1mm} \left[\widehat\bq_{t,1}(\bTheta_0)
e^{-\beta\sum_{l=0}^J\ell\left(\bTheta_l^\sT\bw_{t,1}, \widetilde\eps_1 \right)}
\right]  = 0.
\end{equation}
Hence, for $j\in[M]$, we have
\begin{align}
\nonumber
&\limsup_{n\to\infty}
\quad\sup_{
\mathclap{\substack{
\phantom{\bTheta^\star\in\cS_p^{\sfk^\star}}
\\
\bTheta_0,\dots,\bTheta_j\in\cS_p^\sfk}}}\quad
\left|
\E_\up{1}\left[
{
\widetilde\bu_1^\sT
\widehat\bd_1(\bTheta_0)}
e^{-\beta\sum_{l=0}^j\widehat\ell_1\left(\bTheta_l
\right)}
\right]\right|\\
&\hspace{26mm}=
\limsup_{n\to\infty}
\quad\sup_{
\mathclap{\substack{
\phantom{\bTheta^\star\in\cS_p^{\sfk^\star}}
\\
\bTheta_0,\dots,\bTheta_j\in\cS_p^\sfk}}}\quad
\left|\E_\up{1}
\Big[\widetilde \bw_1\Big]^\sT
\E_\up{1}\left[{\widehat\bq_1(\bTheta_0)}
e^{-\beta\sum_{l=0}^j\ell\left(\bTheta_l^\sT\bw_1;
\widetilde\epsilon_1\right)}
\right]\right|=0,
\nonumber
\end{align}
Combining this with the bound in~\eqref{eq:poly_approx_bound}
yields, for all $\delta>0$,
\begin{align}
\nonumber
\limsup_{n\to\infty}\sup_{\bTheta_0\in\cS_p^\sfk}\left|\E_\up{1}\left[
\frac{
\widetilde\bu_{1}^\sT
\widehat\bd_{1}(\bTheta_0) 
e^{-\beta\widehat\ell_{1}(\bTheta_0)}}
{\<e^{-\beta\widehat\ell_{1}(\bTheta)}\>_\bTheta^\up{1}}\right]\right| 
\le\delta.
\end{align}
Taking $\delta \to0$ then establishes~\eqref{eq:suff_cond_for_prop_softmax_2} for any $t \in[0,\pi/2]$ and concludes the proof.

\subsubsection{Proof of Lemma~\ref{lemma:poly_approx}}
\label{section:proof_poly_approx}
Recall the definitions of $\bu_{t,i}, \widetilde \bu_{t,i}, \widehat\bd_{t,i}(\bTheta)$ and $\inner{\,\cdot\,}_\bTheta^\up{i}$ in equations~\eqref{eq:slepian},~\eqref{eq:bd_def} and~\eqref{eq:inner_def} respectively. 
Further, recall the shorthand notation 
$\widehat \ell_{t,i}(\bTheta)$ for the loss, and $\E_\up{i}$ for the conditional expectation defined in Section~\ref{section:proof_outline}. Throughout, we fix $i=1$ as in the statement of the lemma.

Define the event
\begin{align}
\label{def:G_Theta,B}
 \cG_{\bTheta,B} &:= 
 \Big\{\left|{\btheta_k^\sT\bu_1 }\right|
 \le B \textrm{ for 
all } 
k\in[\sfk]\Big\}
\cap
 \Big\{
 \left|{\btheta_k^\sT\widetilde\bu_1}\right|
 \le B \textrm{ for 
all } 
k\in[\sfk]\Big\}
\cap
\Big\{|\epsilon_1|\le B \Big\},
\end{align}
defined for $\bTheta\in\cS_p^\sfk$ and $B> 0$. 
To avoid centering $\bu$ and $\epsilon$,
we will consider $B > K$ for some $ K \ge 2\left(\sup_{\btheta\in\cS_p}\E\left[|\bx^\sT\btheta|\right]\vee \E\left[|\epsilon|\right]\right)$ depending only on $\sOmega$; the existence of such $K$ is guaranteed 
by the subgaussianity assumption. 

From standard subgaussian tail bounds, we conclude that for any $B>K$, we have constants $C,C' >0$ depending only on~$\sOmega$ such that
\begin{equation}
\label{eq:P_G_c}
\sup_{\bTheta \in\cS_p^\sfk}
\P\left(
\cG_{\bTheta,B}^c
\right)
\le 
C e^{-C' B^2}.
\end{equation}
The proof of this bound is detailed in Section~\ref{section:proof_poly_approx_details}
of the Appendix.

Now consider the power series of $x\mapsto 1/x$ centered at 1, and its associated remainder
\begin{equation}
\nonumber
    P_M(x):= \sum_{l=0}^M (1-x)^l,\quad
    R_M(x):= \frac1x -  P_M(x).
\end{equation}
Elementary calculations show that for $M > 0$,  we have
\begin{enumerate}[(i)]
    \item \label{item:P_M_1} $R_M(x) = (1-x)^{M+1}/{x}$ for $x\neq 0$;
    \item $R_M(x)^2$ is convex on $(0,1]$; \label{item:P_M_4} 
    \item For any $s\in(0,1)$ and $\delta>0$, there exists $M>0$ such that
        $\sup_{t\in[s,1]}\left|R_M(t)\right| < \delta$.
    \label{item:P_M_5}
\end{enumerate}
The proof of these properties is included in Section~\ref{section:proof_poly_approx_details} of the Appendix for the sake of completeness.
The following lemma now bounds the error in the approximation, and is the key for proving Lemma~\ref{lemma:poly_approx}.
\begin{lemma}
\label{lemma:R_M_bound}
For any $\delta>0$ and $\beta>0$, there exists some finite integer $M_{\beta,\delta}>0$, depending only on 
$\beta,\delta$ and $\sOmega$ such that
\begin{equation}
\nonumber
    \E_\up{1}\left[ R_{M_{\beta,\delta}}\left(\< e^{ 
-\beta\widehat\ell_{t,1}(\bTheta)}\>^\up{1}_\bTheta\right)^2\right] < \delta
\end{equation}
uniformly in $n$.
\end{lemma}

\begin{proof}
Recall the definition of $\cG_{\bTheta,B}$ in~\eqref{def:G_Theta,B} for $B>K$ and 
$\bTheta\in\cS_p^\sfk$ and write for arbitrary integer $M>0$,
\begin{align}
\E_\up{1} \left[R_M\left(\< e^{  -\beta\widehat\ell_{t,1}(\bTheta) } \>_\bTheta^\up{1}\right)^2\right]
&\stackrel{(a)}\le
\inner{\E_\up{1}\left[ R_M\left( e^{ -\beta\widehat\ell_{t,1}(\bTheta) } \right)^2 
\right]}^\up{1}_\bTheta\nonumber\\
&= 
\inner{\E_\up{1}\left[ R_M\left( e^{ -\beta\widehat\ell_{t,1}(\bTheta)} \right)^2 
\one_{\cG_{\bTheta,B}} 
\right]}^\up{1}_\bTheta
\label{eq:P_M_approx_bound}
\\
&\hspace{9mm}+
\inner{\E_\up{1}\left[ R_M\left( e^{ -\beta\widehat\ell_{t,1}(\bTheta)} \right)^2 
\one_{\cG^c_{\bTheta, B}}
\right]}^\up{1}_\bTheta
\label{eq:PM_approx_proof_2nd_term}
\end{align}
where $(a)$ follows from Jensen and the convexity of $R_M^2$ on $(0,1]$ from property~\ref{item:P_M_4}.
The expectation 
in the second term can be bounded uniformly over 
$\bTheta\in\cS_p^\sfk$, namely
\begin{align*}
&\E_\up{1}\left[ 
R_M\left(e^{-\beta\widehat\ell_{t,1}(\bTheta)}\right)^2 \one_{\cG_{\bTheta,B}^c}
\right]
\le
\E_\up{1}\left[ 
R_M\left(e^{-\beta\widehat\ell_{t,1}(\bTheta)}\right)^4\right]^{1/2}\P\left(\cG_{\bTheta,B}^c\right)^{1/2}
\\
&\quad\quad\stackrel{(a)}{\le} 
\E_\up{1}\left[\left(\frac{1}{e^{-\beta\widehat\ell_{t,1}(\bTheta)}}\right)^4 \right]^{1/2} C_0 e^{- C_1 B^2}
\quad
\stackrel{(b)}{\le}
\quad
{C_2(\beta) C_0e^{-C_1 B^2}}
\end{align*}
for some constant $C_2(\beta)$ depending only on $\beta$ and $\sOmega$, and $C_0,C_1$ depending only on $\sOmega$.
Here, $(a)$ follows from property~\ref{item:P_M_1} of $R_M$ along with the 
tail bound from Eq.~\eqref{eq:P_G_c}, and that $\ell$ is assumed to be non-negative, and $(b)$ follows from the integrability condition~\eqref{eq:exp_integrability} of
Assumption~\hyperref[ass:loss_labeling_prime]{5''}.

For a given 
$\delta\in(0,1)$, we can find some $B_{\beta,\delta}>0$ sufficiently large, depending only on 
$\beta,\delta$ and $\sOmega$ such  that
$C_2(\beta) C_0e^{-{C_1B_{\beta,\delta}^2}} < \delta/2$, thus bounding the term 
in~\eqref{eq:PM_approx_proof_2nd_term} by $\delta/2$. Then for this fixed $B_{\beta,\delta}$, 
by continuity of the composition of
$\ell$ in $\left(\bTheta^\sT\bu_{t,1}, \epsilon_1\right)$, there exists 
some 
$\widetilde B_{\beta,\delta}>0$, such that, for any $\bTheta\in\cS_p^\sfk$,
\begin{equation*}
\widehat \ell_{t,1}(\bTheta) = \ell\left(\bTheta^\sT\bu_{t,1}, 
\epsilon_1\right)\one_{\cG_{\bTheta,B_{\beta,\delta}}} \in \left[0,\widetilde 
B_{\beta,\delta}\right].
\end{equation*}
Therefore, for any $\bTheta\in\cS_p^\sfk$,
\begin{equation*}
e^{-\beta\widehat\ell_{t,1}\left(\bTheta\right)}\one_{\cG_{\bTheta,B_{\beta,\delta}}} \in 
\left[e^{-\beta\widetilde B_{\beta,\delta}}, 1\right].
\end{equation*}
Then, by property~\ref{item:P_M_5} of $R_M$, we can choose $M = M_{\beta,\delta}$ a sufficiently large integer so that
\begin{equation}
\nonumber
 |R_{M_{\beta,\delta}}(t)| < \sqrt{\delta/2}  \quad \textrm{ for all } t\in \left[e^{-\beta 
\widetilde B_{\beta,\delta}} , 1\right].
\end{equation}
This gives the bound on~\eqref{eq:P_M_approx_bound}:
\begin{align*}
 \inner{\E_\up{1}\left[ R_M\left(e^{-\beta\widehat\ell_{t,1}\left(\bTheta\right)}\right)^2\one_{\cG_{\bTheta,{B_{\beta,\delta}}}}\right]}^\up{1}_\bTheta 
\le \frac\delta2,
\end{align*}
which when combined with the bound on $\eqref{eq:PM_approx_proof_2nd_term}$ yields the 
claim of the lemma.
\end{proof}

Finally, let us complete the proof of Lemma~\ref{lemma:poly_approx}.
\begin{proof}[Proof of Lemma~\ref{lemma:poly_approx}]
Let $C= C(\beta)$ be the constant in Eq.~\eqref{eq:regularity_bound_softmax} guaranteeing that
\begin{equation}
\nonumber
\left|\E_\up{1}\left[\left(
\widetilde\bu_{t,1}^\sT
\widehat\bd_{t,1}(\bTheta_0)
 e^{-\beta\widehat\ell_{t,1}(\bTheta_0)}
\right)^2\right]\right| \le 
\left|\E_\up{1}\left[\left(
\frac{
\widetilde\bu_{t,1}^\sT
\widehat\bd_{t,1}(\bTheta_0)
 e^{-\beta\widehat\ell_{t,1}(\bTheta_0)}}
{\<e^{-\beta\widehat\ell_{t,1}(\bTheta)}\>_\bTheta^\up{1}}\right)^2\right]\right| \le C.
\end{equation}
Fix $\delta>0$, and let $N_{\beta,\delta} := M_{\beta,\delta^2/C}$ so that Lemma~\ref{lemma:R_M_bound} holds with $\delta$ replaced by 
$\delta^2/C.$
Then, we directly have via an application of Cauchy--Schwarz
\begin{align*}
&\bigg|\E_\up{1}\bigg[  
\frac{\widehat\bd_{t,1}(\bTheta_0)^\sT\widetilde\bu_{t,1} e^{-\beta\widehat\ell_{t,1}(\bTheta_0)}}
{\<e^{-\beta\widehat\ell_{t,1}(\bTheta)}\>^\up{1}_\bTheta}\bigg]\bigg|\\
&\le
\left|{\E_\up{1}\left[
\widehat\bd_{t,1}(\bTheta_0)^\sT\widetilde\bu_{t,1}
e^{-\beta\widehat\ell_{t,1}(\bTheta_0)
}
P_{N_{\beta,\delta}}
\left(
\<e^{-\beta\widehat\ell_{t,1} (\bTheta)
}\>^\up{1}_\bTheta\right)
\right]} 
\right| \\
&\hspace{5mm}+ 
\E \left[  
\left(\widehat\bd_{t,1}(\bTheta_0)^\sT \widetilde 
\bu_{t,1} e^{-\beta\widehat\ell_{t,1}
(\bTheta_0)
}
\right)^2
\right]^{1/2}
\E\left[ 
R_{N_{\beta,\delta}}\left(\<e^{-\beta\widehat\ell_{t,1}
(\bTheta)
}\>^\up{1}_\bTheta\right)^2
\right]^{1/2}
\\
&\le
\left|{\E\left[
\widehat\bd_{t,1}(\bTheta_0)^\sT\widetilde\bu 
e^{-\beta\widehat\ell_{t,1}
(\bTheta_0)
}
P_{N_{\beta,\delta}}\left(\<e^{-\beta\widehat\ell_{t,1}
(\bTheta)
}\>^\up{1}_\bTheta\right)
\right]} 
\right| + \delta
\end{align*}
as desired.
\end{proof}

\subsection{Proof of Theorem~\ref{thm:universality_bounds}}
\label{section:outline_proof_univerality_bounds}
Since the proof is independent of the dimension $\sfk$ as long as it is fixed and constant in $n$, we take $\sfk =1$ and further assume, without loss of generality, 
that $\widehat R_n^\star(\bG,\by(\bG)),\widehat R_n^\star(\bX,\by(\bX))$ are non-negative:
Otherwise, we can replace the regularizer $r(\btheta)$ with $\widetilde r(\btheta) := r(\btheta) - \min_{\btheta'\in\cC_p} r(\btheta')$ to obtain a new non-negative regularizer
satisfying Assumption~\ref{ass:regularizer}, and since $\ell$ is assumed to be non-negative, the minimum empirical risk will be nonegative.

Define, for $t>0$ and $n\in\Z_{>0}$ the sequence of events
\begin{equation}
\nonumber
 \cG_{n,t} := \left\{\widehat R_n^\star(\bX,\by(\bX))\le t\right\} \cap \left\{ \widehat R_n^\star(\bG,\by(\bG)) = 0\right\}.
\end{equation}
Recall the assumption that $\lim_{n\to\infty}\P\left(\widehat R_n^\star(\bG,\by(\bG)) =0 \right) =1$ and note that it implies, alongside Theorem~\ref{thm:main_empirical_univ}, that for all $t>0$ 
we have
$\lim_{n\to\infty}\P(\cG_{n,t}) = 1$. 

Working on the extended real numbers $\bar \R$, let us define
\begin{equation}
\nonumber
 F_n^\bg(t,\bX) := 
 \quad
\min_{
\mathclap{\substack{ \btheta \in \cC_p   \\ \widehat 
R_n(\btheta;\bX,\by(\bX)) \le t }}
}
R_n^\bg(\btheta),\hspace{10mm}
 F_n^\bx(t,\bX) := 
 \quad
\min_{
\mathclap{\substack{ \btheta \in \cC_p   \\ \widehat 
R_n(\btheta;\bX,\by(\bX)) \le t }}
}
R_n^\bx(\btheta),
\end{equation}
and similarly  
\begin{equation}
\nonumber
 F_n^\bg(t,\bG) := 
 \quad
\min_{
\mathclap{\substack{ \btheta \in \cC_p   \\ \widehat 
R_n(\btheta;\bG,\by(\bG)) \le t }}
}
R_n^\bg(\btheta)
\end{equation}
for all $t\ge0$, where we set the value of the minimum to $\infty$ whenever the constraints are not feasible. 
Lemma~\ref{lemma:F_diff} of the Appendix shows that 
for all $t\ge s > 0$ and any $\delta>0$, we have
\begin{equation}
\label{eq:deferred_lemma_eq_univ_bound}
\lim_{n\to\infty}\P\left(\left\{\left|  F_n^\bx(t,\bX) - F_n^\bg(t,\bX)  \right| > \delta \right\} \bigcap \cG_{n,s}\right) = 0.
\end{equation}

Now, let us fix $\alpha, \alpha_0$ such that $\alpha\ge \alpha_0 >0$.
We can bound for all $n >0$,
\begin{equation}
\label{eq:bound_on_FG_1}
\sup_{t \ge \alpha_0} F_n^\bg(t,\bX)\one_{\cG_{n,\alpha_0}} \le \sup_{n>0} \sup_{\btheta\in\cC_p } R_n^\bx(\btheta)  \stackrel{(a)}\le C' < \infty,
\end{equation}
where $(a)$ follows from the subgaussianity in Assumption~\ref{ass:X} and the growth condition on the loss,
and hence a similar bound holds for $ F_n^\bx(t,\bX)\one_{\cG_{n,\alpha_0}}$ and $F_n^\bg(t,\bG)\one_{\cG_{n,\alpha_0}}$.
Now take
$
  s := C'/\alpha
$
for the constant $C'$ in~\eqref{eq:bound_on_FG_1}. 

We first lower bound the quantity
\begin{equation}
\nonumber
\widehat R_{n,s}^\star(\bX,\by(\bX)) :=\min_{\btheta\in\cC_p}\left\{ s\widehat R_n(\btheta; \bX, \by(\bX))+
R_n^\bg(\btheta)\right\}
\end{equation}
on $\cG_{n,\alpha_0}$.
Letting
$\widehat\btheta^\bX_s$
denote a minimizer of this problem we write
\begin{align*}
&\left(s\widehat R_n\left(\widehat \btheta^\bX_s; \bX, \by(\bX)\right) +
R_n^\bg\left(\widehat \btheta^\bX_s\right)  \right)\one_{\cG_{n,\alpha_0}}\\
&\hspace{49mm}\ge\left(s\widehat R_n\left(\widehat \btheta^\bX_s; \bX ,\by(\bX)\right) +F_n^\bg\left(\widehat R_n(\widehat \btheta^\bX_s, \bX), \bX\right)\right)\one_{\cG_{n,\alpha_0}}\\
&\hspace{49mm}\ge \min_{t \ge 0} \left\{  t s+ F_n^\bg(t,\bX)\right\} \one_{\cG_{n,\alpha_0}}\\
&\hspace{49mm}\stackrel{(a)}{\ge} \min_{t\ge 0}\left\{ \frac{t C'}{\alpha} + F_n^\bg(\alpha,\bX) \one_{t \le \alpha}\right\} \one_{\cG_{n,\alpha_0}}\\
&\hspace{49mm}\stackrel{(b)}\ge F_n^\bg(\alpha,\bX)\one_{\cG_{n,\alpha_0}}\min_{t\ge 0}\left\{\frac{t}{\alpha} + \one_{t \le \alpha}\right\}\\
&\hspace{49mm}\ge F_n^\bg(\alpha,\bX)\one_{\cG_{n,\alpha_0}},
\end{align*}
where in $(a)$ we used that $F_n^\bg(t,\bX)$ is nonincreasing in $t$ and the definition of 
$s$, and in $(b)$ that $C' \ge F_n^\bg(\alpha,\bX)$ by~\eqref{eq:bound_on_FG_1}.

Meanwhile
we can obtain an upper bound for
\begin{equation*}
\widehat R_{n,s}^\star(\bG,\by(\bG)) :=\min_{\btheta\in\cC_p}\left\{ s\widehat R_n(\btheta; \bG ,\by(\bG))+ 
R_n^\bg(\btheta)\right\}
\end{equation*}
on $\cG_{n,\alpha_0}$
by
\begin{align*}
\min_{\btheta\in\cC_p} \left\{s\widehat R_n(\btheta; \bG, \by(\bG)) + 
R_n^\bg(\btheta) \right\}\one_{\cG_{n,\alpha_0}}
&\le\left( s\alpha_0 +  
\hspace{6mm}
\min_{
\mathclap{\substack{ \btheta \in \cC_p   \\ \widehat 
R_n(\btheta;\bG,\by(\bG)) \le \alpha_0 }}
}
 R_n^\bg(\btheta)\right)\one_{\cG_{n,\alpha_0}} \\
 &=\left( \frac{\alpha_0 C'}{\alpha} + F_n^\bg(\alpha_0,\bG) \right)\one_{\cG_{n,\alpha_0}}.
\end{align*}

%

Now applying Theorem~\ref{thm:main_empirical_univ} 
to deduce the universality of $\widehat R_{n,s}^\star$, we obtain the first assertion of the theorem as follows:
fix $\delta>0$ and $\rho\in\R$, then
\begin{align*}
&\limsup_{n\to\infty}\P\left(  F_n^\bx(\alpha,\bX) \ge 
 \rho + 3\delta \right) \\
&\le 
\limsup_{n\to\infty}\P\left(\left\{
F_n^\bg(\alpha,\bX) \ge
\rho + 2\delta 
\right\}\bigcap \cG_{n,\alpha_0} \right)
+ \limsup_{n\to\infty}\P\left(\cG_{n,\alpha_0}^c\right)
\\
&\hspace{5mm}+ 
\limsup_{n\to\infty}\P\left(\left\{
\left|F_n^\bg(\alpha,\bX) -
 F_n^\bx(\alpha,\bX)\right| \ge \delta 
\right\}\bigcap \cG_{n,\alpha_0} \right)\\
&\stackrel{(a)}\le 
\limsup_{n\to\infty}\P\left(\left\{
\widehat R_{n,s}^\star(\bX,\by(\bX))  \ge \rho +2\delta 
\right\}\bigcap \cG_{n,\alpha_0} \right) \\
&\stackrel{(b)}\le 
\limsup_{n\to\infty}\P\left(\left\{
\widehat R_{n,s}^\star(\bG,\by(\bG))  \ge \rho + \delta \right\}
\bigcap \cG_{n,\alpha_0}
\right) \\
&\stackrel{(c)}\le 
\limsup_{n\to\infty}\P\left(\left\{
F_n^\bg(\alpha_0,\bG) + C' \frac{\alpha_0}{\alpha} \ge \rho +\delta \right\} \bigcap \cG_{n,\alpha_0}\right)\\
&\stackrel{(e)}\le 
\limsup_{n\to\infty}\P\left(
F_n^\bg(0,\bG)  \ge \rho  \right)
+\P\left(
 C' \frac{\alpha_0}{\alpha} \ge \delta \right)
\end{align*}
where $(a)$ follows from~\eqref{eq:deferred_lemma_eq_univ_bound}, that $\lim_n\P(\cG_{n,\alpha_0}) =1$ and
 the upper bound derived above, $(b)$ follows from Theorem~\ref{thm:main_empirical_univ} by absorbing $R^\bg(\btheta)$ into the 
regularization term and the positive constant $s$ into the loss, along with $\lim_n \P(\cG_{n,\alpha_0}) = 1$, 
$(c)$ follows from the bound derived above,
and $(e)$ follows
from the monotonicity of $F_n^\bg(\cdot,\bG)$.
Since $\alpha >0$ and $\delta>0$ were arbitrary, sending $\alpha_0\to 0$ completes the proof of the first statement in the theorem.

Using a similar argument with the roles of $\bX$ and
$\bG$ exchanged gives the second statement.

\subsection{Proof of Theorem~\ref{thm:test_error}}
\label{section:outline_proof_test_error}
Again, we will use $\sfk = 1$ for simplicity, and without losing generality, since
the arguments that follow can be directly extended to the setting where $\sfk >0$ as long as it is a fixed constant.
First, we give the proof under conditions 
$\ref{item:a_test_error}$ and $\ref{item:c_test_error}$ deferring some technical lemmas to Section~\ref{section:proof_test_error} of the Appendix.

For $s\in\R$, let us define the modified empirical risks
\begin{align}
\nonumber
\widehat R_{n,s}(\btheta; \bX ,\by(\bX))&:= \widehat R_n(\btheta;\bX, \by(\bX)) + s 
R_n^\bg(\btheta) \\                     
\widehat R_{n,s}(\btheta; \bG, \by(\bG))&:= \widehat R_n(\btheta;\bG ,\by(\bG)) + s 
R_n^\bg(\btheta)
\label{eq:modified_risk}
\end{align}
(note the asymmetry),
and use $\widehat\btheta_s^\bX,\widehat\btheta_s^\bG$ to denote their unique minimizers respectively. Furthermore, we write  
$\widehat R_{n,s}^\star(\bX ,\by(\bX))$ and $\widehat R_{n,s}^\star(\bG ,\by(\bG))$ for the minima.

Further define, for $r \neq 0$, the differences
\begin{align}
\label{eq:D_def}
 D_n^\bX(r) &:= \frac{\widehat R_{n,r}^\star\left(\bX,\by(\bX)\right)  - \widehat R_{n}^\star\left(\bX,\by(\bG)\right)}{r},\hspace{1mm}\\
 D_n^\bG(r) &:= \frac{\widehat R_{n,r}^\star\left(\bG,\by(\bG)\right)  - \widehat R_{n}^\star\left(\bG,\by(\bG)\right)}{r}.
 \nonumber
\end{align}

The following lemma, which holds as a consequence of Theorem~\ref{thm:main_empirical_univ} and whose proof is deferred to Section~\ref{section:proof_test_error}
of the Appendix, allows us to deduce
the universality of these differences for fixed $r\neq 0$.

\begin{lemma}
\label{lemma:D_universality}
With
the definitions of Eq.~\eqref{eq:D_def},
under the setting of 
$\ref{item:a_test_error}$ and $\ref{item:c_test_error}$
of Theorem~\ref{thm:test_error}, we have
for any $t\in\R, s > 0$ and $\delta>0$,
\begin{align}
\label{eq:lim_P_DX}
\limsup_{n\to\infty}\P\left( D^\bX(-s)  \ge t + \delta  \right) &\le \limsup_{n\to\infty} \P\left(D^\bG(-s) \ge t  \right)\\
\limsup_{n\to\infty}\P\left( D^\bX(s)  \le t - \delta  \right) &\le \limsup_{n\to\infty} \P\left(D^\bG(s) \le t  \right).
\label{eq:lim_P_DG}
\end{align}
\end{lemma}

Meanwhile, note that for $s>0$, $R_n^\bg(\widehat \btheta_0^\bX)$ is sandwiched between
$D_n^\bX(s)$ and $D_n^\bX(-s)$. Indeed, we have
\begin{align}
\nonumber
D_n^\bX(s) &\le 
\frac{\widehat R_{n,s}\left(\widehat\btheta^\bX_0;\bX,\by(\bX)\right)  - \widehat R_n\left(\widehat\btheta^\bX_0;\bX,\by(\bX)\right)}{s}
= R_n^\bg(\widehat\btheta^\bX_0)\nonumber\\
&= \frac{\widehat R_{n,-s}\left(\widehat\btheta^\bX_0;\bX,\by(\bX)\right) - \widehat R_n(\widehat \btheta^\bX_0;\bX,\by(\bX))}{-s}
\le  D_n^\bX(-s). \label{eq:DX_upper_bound}
\end{align}
Analogously, we can derive 
\begin{equation}
\label{eq:DG_lower_bound}
D_n^\bG(s) \le R_n^\bg(\widehat \btheta_0^\bG)\le D_n^\bG(-s).
\end{equation}

So to establish the claim of Theorem~\ref{thm:test_error} under 
conditions~\ref{item:a_test_error} and~\ref{item:c_test_error}, we show that these conditions 
imply that we can make $D_n^\bX(-s)- D_n^\bX(s)$ and $D_n^\bG(-s)- D_n^\bG(s)$ arbitrarily small by taking $s>0$ arbitrarily small.

\paragraph*{Proof under condition~\ref{item:a_test_error}}

Under the strong convexity assumption, we have the following lemma whose proof is deferred to Section~\ref{section:proof_test_error}
of the Appendix.

\begin{lemma}
\label{lemma:theta_lipschitz_in_s}
In the setting of~\ref{item:a_test_error} of Theorem~\ref{thm:test_error},
for all $s\in\R$, $n\in\Z_{>0}$, we have
\begin{equation}
\label{eq:lipschitz_in_s}
 \big\|\widehat \btheta_s^\bX - \widehat \btheta_{-s}^\bX\big\|_2 \le C' |s|
\end{equation} 
for some $ C'>0$ depending only on $\sOmega$.
A similar inequality also holds for $\widehat\btheta_{s}^\bG$.
Consequently, for all $n\in \Z_{>0}$ and $s >0$, we have
\begin{equation}
\label{eq:lip_D}
   D^\bX(-s) - D^\bX(s) \le C\,s \quad\textrm{and}
   \quad
   D^\bG(-s) - D^\bG(s) \le C\,s 
\end{equation}
for $C>0$ depending only on $\sOmega$.
\end{lemma}

Now to establish the universality of the test error,
for any $\delta>0$, take $s_\delta\in(0, \delta/C)$ where $C$ is the constant appearing in Eq.~\eqref{eq:lip_D} of Lemma~\ref{lemma:theta_lipschitz_in_s} and write
\begin{align*}
\limsup_{n\to\infty}\P\left(R_n^\bg\big(\widehat \btheta_0^\bX \big) \ge \widetilde \rho + 3\delta  \right)   
&\stackrel{(a)}\le 
\limsup_{n\to\infty}\P\left( 
D^\bX(-s_\delta) \ge \widetilde \rho + 3\delta 
\right)\\
&\stackrel{(b)}\le 
\limsup_{n\to\infty}\P\left( 
D^\bG(-s_\delta) \ge \widetilde \rho + 2\delta 
\right)\\
&\stackrel{(c)}\le 
\limsup_{n\to\infty}\P\left( 
D^\bG(s_\delta) + C s_\delta \ge \widetilde \rho + 2\delta 
\right)\\
&\stackrel{(d)}\le 
\limsup_{n\to\infty}\P\left( 
R_n^\bg\big(\widehat \btheta_0^\bG\big)  \ge \widetilde \rho + \delta
\right)\\
&\stackrel{(e)}= 0,
\end{align*}
 where $(a)$ follows by~\eqref{eq:DX_upper_bound}, $(b)$ follows by Lemma~\ref{lemma:D_universality}, 
$(c)$ follows by Lemma~\ref{lemma:theta_lipschitz_in_s},
 $(d)$ follows by the lower bound in Eq.~\eqref{eq:DG_lower_bound}
 and the definition of $s_\delta$ and $(e)$ is by 
 the assumption that $R_n^\bg(\widehat \btheta_0^\bG) \stackrel{\P}{\to} \tilde\rho.$
 An analogous argument then shows
 \begin{equation*}
\limsup_{n\to\infty}\P\left(R_n^\bg\big(\widehat \btheta_0^\bX \big) \le \widetilde \rho - 3\delta  \right)   = 0.
 \end{equation*}
 Therefore,  $R^\bg_n(\widehat\btheta_0^\bX) \stackrel{\P}{\to} \widetilde\rho$. 
 To conclude the proof, note that the condition $\cC_p\subseteq\cS_p$
implies that 
\begin{equation}
 \label{eq:test_error_pointwise}
\left|R^g_n\big(\widehat \btheta_0^\bX\big) - R^\bx_n\big(\widehat \btheta_0^\bX\big) \right| \to 0
\end{equation}
 yielding the statement of the theorem under
 condition~$\ref{item:a_test_error}$. 
 Indeed, Eq.~\eqref{eq:test_error_pointwise} is justified by the technical Lemma~\ref{lemma:proj_locally_lip}
of the Appendix.

\paragraph*{Proof under condition~$\ref{item:c_test_error}$}
Under the differentiability condition on $\rho$, we define
\begin{equation}
\nonumber
    \Delta \rho (t) := \frac{\rho(t) - \rho(0)}{t},
\end{equation}
and write, using the bounds in~\eqref{eq:DX_upper_bound} and~\eqref{eq:DG_lower_bound}
\begin{align*}
    \P\left( 
    R_n^\bg\big(\widehat \btheta_0^\bX \big)   \ge R_n^\bg\big(\widehat \btheta_0^\bG\big) + 3\delta
    \right)\hspace{-0.8mm}
    &\le
    \P\left( 
    \left| \Delta \rho(-s)  - D_n^\bX(-s) \right| \ge \delta
    \right)\\
    & 
    \hspace{0mm}
    +
    \P\left( 
    \left|\Delta \rho(s) - D_n^\bG(s) \ \right|\ge \delta
    \right)
    +\P\left( 
    \Delta \rho(-s)  \ge \Delta \rho(s) + \delta
    \right)
\end{align*}
for any $\delta>0$ and $s>0$.
Now recall that by condition~\ref{item:c_test_error}, $\widehat R_{n,s}^\star(\bG,\by(\bG)) \stackrel{\P}\to \rho(s)$ for all $s$ in some neighborhood of $0$. Theorem~\ref{thm:main_empirical_univ} then implies the same for the model with $\bX$, i.e.,
$\widehat R_{n,s}^\star(\bX,\by(\bX)) \stackrel{\P}\to \rho(s)$. 
The universality of $D_n$ from Lemma~\ref{lemma:D_universality} combined with the previous display then gives
\begin{align*}
    \lim_{n\to\infty}\P\left( 
    R^\bg\big(\widehat \btheta_0^\bX \big)   \ge R^\bg\big(\widehat \btheta_0^\bG\big) + 3\delta
    \right) &= \lim_{s\to 0} \P\left(\Delta \rho(-s) \ge \Delta\rho(s) + \delta \right)\\
    &\stackrel{(a)}= 0
\end{align*}
where $(a)$ follows by differentiability of $\rho(s)$ at $s=0$. By exchanging the roles of $\bX$ and $\bG$ in the above and applying the argument once more we conclude
that $\left|R^g(\widehat\btheta_0^\bX) - R^g(\widehat\btheta_0^\bG)  \right| \stackrel{\P}\to 0$. 
The condition $\cC_p\subseteq\cS_p$ again implies the desired result as in the previous subsection.

\paragraph*{Proof under condition~$\ref{item:b_test_error}$}

Let $\cA_{n,\delta,\alpha}$ be the event in condition~$\ref{item:b_test_error}$, namely,
\begin{equation}
\nonumber
 \cA_{n,\delta,\alpha} := \left\{ \min_{\left\{\btheta\in\cC_p : \left| 
R_n^\bg(\btheta) - \widetilde\rho  
\right|\ge \alpha\right\}} \left|  \widehat R_n\left(\btheta;\bG,
\by(\bG)\right) - \widehat 
R_n^\star\left( \bG,\by(\bG)\right) \right| \ge \delta\right\},
\end{equation}
and take $\widehat\btheta_n^\bG$ and $\widehat\btheta_n^\bX$ to be any minimizers of $\widehat R_n(\btheta;\bG,\by(\bG))$ and $\widehat R_n(\btheta;\bX,\by(\bX))$ respectively.
First, note that this directly implies
\begin{equation}
\label{eq:g_test_error}
 \left| R_n^\bg\big(\widehat \btheta_n^\bG\big) - \widetilde \rho\;\right| \stackrel{\P}{\to} 0.
\end{equation}
Indeed, we have for all $\alpha >0$,
\begin{align*}
\P\left(
\big|
R_n^\bg\big(\widehat \btheta_n^\bG\big) - \widetilde\rho \;
\big| \ge \alpha
\right) 
&\le
\P\left(
\left\{\big|
R_n^\bg\big(\widehat \btheta_n^\bG\big) - \widetilde\rho \;
\big| \ge \alpha\right\} \bigcap \cA_{n,\delta,\alpha} 
\right)  + \P\left( \cA_{n,\delta,\alpha}^c\right)\\
&=  \P\left(  
\cA_{n,\delta,\alpha}^c\right)
\end{align*}
for any $\delta>0$. Now choosing $\delta >0$ so that $\lim_{n\to\infty} 
\P(\cA_{n,\delta,\alpha}^c) =0$ proves~\eqref{eq:g_test_error}. Next, we show that
\begin{equation}
\nonumber
 \big| R_n^\bg\big(\widehat \btheta_n^\bX\big) - \widetilde\rho\; \big| \stackrel{\P}{\to} 0
\end{equation}
as a consequence of Theorem~\ref{thm:main_empirical_univ} along with the assumption that 
$\widehat R_n\big(\widehat\btheta_n^\bG; \bG,\by(\bG)\big)\stackrel{\P}\to\rho.$
Indeed, assume the contrary
and choose for any $\alpha>0$, $\delta := \delta_\alpha$ so that 
$\P(\cA_{n,\delta_\alpha, \alpha}^c) \to 0$. We have
\begin{align*}
 &\P\big( \big| \widehat R_n\big(\widehat \btheta_n^\bX;\bX,\by(\bX)\big) - \widehat R_n\big(\widehat 
\btheta_n^\bG; \bG,\by(\bG)\big)\big| < \delta_\alpha\bi)\\
&\le \P\big(\big\{\big| \widehat R_n(\widehat \btheta_n^\bX;\bX,\by(\bX)) 
- \widehat R_n(\widehat\btheta_n^\bG; \bG,\by(\bG)) \big| \hspace{-1mm}<\hspace{-1mm} \delta_\alpha\big\} 
\cap \big\{\big| 
R_n^\bg(\widehat \btheta_n^\bX) -\widetilde\rho \;
\big| \ge \alpha\big\}
\cap \cA_{n,\delta_\alpha, \alpha}
\big)\\
&\hspace{5mm}+ \P\left(\cA_{n,\delta_\alpha, \alpha}^c \right)
+
\P\left(\left| 
R_n^\bg\left(\widehat \btheta_n^\bX\right) - \widetilde \rho 
\right| < \alpha
\right) 
\\
&= \P\left(\cA_{n,\delta_\alpha, \alpha}^c \right)
+
\P\left(\left| 
R_n^\bg\left(\widehat \btheta_n^\bX\right) - \widetilde \rho \;
\right| < \alpha
\right) 
\end{align*}
Sending $n\to\infty$, we have
\begin{align*}
 &\limsup_{n\to\infty} 
 \P\left( \left| \widehat R_n\left(\widehat \btheta_n^\bX;\bX,\by(\bX)\right) - \widehat R_n\left(\widehat 
\btheta_n^\bG; \bG ,\by(\bG)\right)\right| < \delta_\alpha\right) \\
&\hspace{70mm}\le 
\limsup_{n\to\infty} \P\left(\left| 
R_n^\bg\left(\widehat \btheta_n^\bX\right) -\widetilde\rho \;
\right| < \alpha
\right)  \stackrel{(a)}{<} 1,
\end{align*}
where $(a)$ follows since we assumed that 
$\left| R_n^\bg(\widehat\btheta_n^\bX) - \widetilde\rho\; \right|$  does not converge to $0$ in 
probability. This directly contradicts $\left|\widehat R_n^\star(\bX, \by(\bX)) -
\widehat R_n^\star( \bG,\by(\bG))\right| \stackrel{\P}{\to} 0$; a consequence of Theorem~\ref{thm:main_empirical_univ} and the assumption that
$\widehat R_n^\star(\bG,\by(\bG)) \stackrel{\P}{\to} \rho$ in condition~$\ref{item:b_test_error}$.
Meanwhile, note that as in the previous subsection, we have
$\left| R_n^\bg\left(\widehat\btheta_n^\bX\right) - R_n^\bx\left(\widehat\btheta_n^\bG\right) \right| \stackrel{\textrm{a.s.}}{\to} 0$,
hence, we have for all $\alpha >0$,
\begin{align}
\nonumber
\lim_{n\to\infty}\P\left(\left| R_n^\bx\left(\widehat\btheta_n^\bX\right) - \widetilde\rho\; \right|  > \alpha \right) 
\le \lim_{n\to\infty}\P\left(  \left| R_n^\bg\left( \widehat \btheta_n^\bX\right) - \widetilde\rho \;\right| > 
\frac{\alpha}{2}\right) = 0.
\end{align}

\subsection{Proof outline for Theorem~\ref{cor:ntk_universality}}
\label{section:proof_outline_ntk}
Recall the definitions and assumptions in Section~\ref{section:ntk_example}.
The main step in proving Theorem~\ref{cor:ntk_universality} is showing that the distribution of the feature vectors $\{\bx_i\}_{i\le n}$ 
satisfy, on a high probability set, Assumption~\ref{ass:X} and Eq.~\eqref{eq:condition_bounded_lipschitz_single} for the set $\cS_p$ of Eq.~\eqref{eq:ntk_set}. The statement then follows from Theorem~\ref{thm:main_empirical_univ}.
Our proof here is analogous to that of~\cite{hu2020universality} for the random features model.
Let us begin our treatment by defining the event
\begin{equation}
\nonumber
    \cB := \left\{\sup_{\{i,j \in [m] : i\neq j\}} \left|\bw_i^\sT\bw_j\right| \le 
C\left(\frac{\log m}{d}\right)^{1/2}\right\} \cap 
    \left\{
    \norm{\bW }_\op
    \le C'
    \right\}
\end{equation}
for some $C,C'$ depending only on $\widetilde\sgamma_\NT$ so that $\P(\cB^c) \to0$
as $n\to\infty$. The existence of such constants is a standard result (see, for example,~\cite{vershynin2018high}.) However, we include it as Lemma~\ref{lemma:B_tail_bound}
of Section~\ref{section:aux_lemmas_ntK} of the Appendix
for completeness.

Here, we outline the proof of the pointwise normality condition for the distribution of the features, conditional on $\bW\in\cB$. 
Most technical details, along with the deduction of Theorem~\ref{cor:ntk_universality} from this condition, are deferred 
to Section~\ref{section:aux_lemmas_ntK}
of the Appendix. 

Throughout, we will be working conditionally on $\bW\in\cB$, so let us simplify notation by using $\E[\cdot] := \E[\cdot\, \one_\cB \big| \bW]$. 
For a given $\delta>0$, let us define the set
\begin{equation}
\nonumber
\cS_{p,\delta} := \left\{\btheta\in\cS_p : \btheta^\sT \E\left[\bx\bx^\sT\right]\btheta > \delta\right\},
\end{equation}
We will control the difference above along projections of $\btheta$ in $\cS_{p,\delta}$ and $\cS_{p,\delta}^c$ separately.
The following lemma does so along $\cS_{p,\delta}$.

\begin{lemma}
\label{lemma:ntk_bl}
For all $\delta>0$ and any differentiable bounded function $\varphi:\R\to\R$ with bounded derivative, we have
\begin{equation}
    \lim_{n\to\infty} \sup_{\btheta\in \cS_{p,\delta}} 
\left|\E\left[\varphi\left(\btheta^\sT\bx\right)\one_{\cB} \Big| \bW 
\right]  
- 
\E\left[\varphi\left(\btheta^\sT\bg\right)\one_{\cB}\Big
| \bW 
\right]\right| 
= 0.
\end{equation}
\end{lemma}

Meanwhile, taking $\varphi$ to be bounded differentiable with bounded derivative, we have for $\delta >0$,
\begin{align}
&\lim_{n\to\infty} \sup_{\btheta\in \cS_{p}} 
\left|\E\left[\varphi\left(\btheta^\sT\bx\right) 
\right]  
- 
\E\left[\varphi\left(\btheta^\sT\bg\right) 
\right]\right|\nonumber\\
&\hspace{40mm}\stackrel{(a)}\le 
\lim_{n\to\infty} \sup_{\btheta\in \cS_{p,\delta}^c} 
\left|\E\left[\varphi\left(\btheta^\sT\bx\right) 
\right]  
- 
\E\left[\varphi\left(\btheta^\sT\bg\right) 
\right]\right| \nonumber\\
&\hspace{40mm} \le
\lim_{n\to\infty} \sup_{\btheta\in \cS_{p,\delta}^c} 
\norm{\varphi'}_\infty\left(\E\left[\left(\btheta^\sT\bx\right)^2 \right]^{1/2}
+
\E\left[\left(\btheta^\sT\bg\right)^2  \right]^{1/2}
\right)
\nonumber\\
&\hspace{40mm}\stackrel{(b)}\le 2\norm{\varphi'}_{\infty} \delta\label{eq:ntk_bl_delta_bound}
\end{align}
where $(a)$ follows from Lemma~\ref{lemma:ntk_bl} and $(b)$ follows from the definition of $\cS_{p,\delta}^c$.
Now sending $\delta \to 0$ proves the lemma for differentiable Lipschitz functions, which can then be extended
to Lipschitz functions via a standard uniform approximation argument.
Let us now outline the proof of Lemma~\ref{lemma:ntk_bl}.

Define, for $\btheta\in\cS_{p,\delta}$ the notation
\begin{equation}
\nonumber
    \nu^2 = \nu^2_\btheta :=  \btheta^\sT \E\left[\bx\bx^\sT\right] \btheta > \delta.
\end{equation}

For a fixed bounded Lipschitz function $\varphi$, let $\chi = \chi_\varphi$ be the 
solution to Stein's equation for $\varphi$, namely, the function $\chi$ satisfying
\begin{equation}
\nonumber
  \E\left[ \varphi\left(\frac{\btheta^\sT \bx}{\nu}\right) - \varphi\left(\frac{\btheta^\sT \bg}{\nu}\right)\right]
  =
 \E\left[\chi'\left(\frac{\btheta^\sT\bx}{\nu}\right)  - 
\frac{\btheta^\sT\bx}{\nu}\chi\left(\frac{\btheta^\sT\bx}{\nu}\right)   \right]
\end{equation}
(see~\cite{chen2011normal} for more on Stein's method and properties of the solution $\chi$.).
In order to prove Lemma~\ref{lemma:ntk_bl}, it is sufficient to show that
\begin{equation}
\label{eq:stein_equiv_1}
   \lim_{n\to\infty} \sup_{\btheta\in \cS_{p,\delta}}\left|\E\left[\chi'\left(\frac{\btheta^\sT\bx}{\nu}\right)  - 
\frac{\btheta^\sT\bx}{\nu}\chi\left(\frac{\btheta^\sT\bx}{\nu}\right)   \right]\right|  = 0.
\end{equation}

To simplify notation, define
\begin{equation}
\nonumber
    \Delta_i := \frac{\btheta^\sT\bx}{\nu} - \frac{1}{\nu}\sum_{j: j\neq i 
}\btheta_\up{j}^\sT\bP_{i}^\perp\bz \sigma'(\bw_j^\sT\bz - \rho_{i,j}\bw_i^\sT \bz),
\end{equation}
where 
\begin{equation}
\nonumber
    \bP_{i}^\perp := \bI - \bw_i\bw_i^\sT,\quad \rho_{ij}:= 
{\bw_j^\sT\bw_i}.
\end{equation}
In Section~\ref{section:ntk_actual_proof} of the Appendix, we upper bound the quantity~\eqref{eq:stein_equiv_1} as 
\begin{align}
\label{eq:decomposition_1}
    &\left|\E\left[\chi'\left(\frac{\btheta^\sT\bx}{\nu}\right)  - 
\frac{\btheta^\sT\bx}{\nu}\chi\left(\frac{\btheta^\sT\bx}{\nu}\right)   \right]\right|  \\
    &\hspace{7mm}\le \left|
    \E\left[\left(\frac{1}{\nu}\sum_{i=1}^m \btheta_\up{i}^\sT\bz 
\sigma'(\bw_i^\sT\bz)\Delta_i 
- 1 
    \right)\chi'\left(\frac{\btheta^\sT\bx}{\nu}\right)\right]
    \right|\nonumber\\
    &\hspace{15mm}+
    \left|
    \E\left[
    \frac{1}{\nu}\sum_{i=1}^m
    \btheta_\up{i}^\sT\bz\sigma'(\bw_i^\sT\bz)
    \left(\chi\left(\frac{\btheta^\sT\bx}{\nu}\right) 
    -
    \chi\left(\frac{\btheta^\sT\bx}{\nu} - \Delta_i\right) 
    - \Delta_i\chi'\left(\frac{\btheta^\sT\bx}{\nu}\right)
    \right)
    \right]
    \right|.
\nonumber
\end{align}
So we control each of the terms separately. 
\paragraph*{Bounding the first term in Eq.~\eqref{eq:decomposition_1}}
Fixing $\delta>0$ throughout and denoting
\begin{equation}
\nonumber
    U := \frac{1}{\nu}\sum_{i=1}^m \btheta_\up{i}^\sT \bz \sigma'(\bw_i^\sT \bz)\Delta_i,
\end{equation}
we compute the expectation of $U$ and control its variance. The expectation can be computed as
\begin{align*}
&\E[U] = \E\left[
\frac{1}{\nu}\sum_{i=1}^m \btheta_\up{i}^\sT \bz \sigma'(\bw_i^\sT \bz)
\left(\frac{\btheta^\sT\bx}{\nu} +  \Delta_i - \frac{\btheta^\sT\bx}{\nu} 
\right)
\right]\\
&= 
\E\left[
\frac{1}{\nu^2}
\left({\btheta^\sT\bx}\right)^2\right] +
\frac{1}{\nu}\sum_{i=1}^m \E\left[\btheta_\up{i}^\sT \bz \sigma'(\bw_i^\sT 
\bz)\left(\Delta_i - \frac{\btheta^\sT\bx}{\nu}\right)\right]\\
&\stackrel{(a)}{=}1
+\E\left[\btheta_\up{i}^\sT\bP_i^\perp\bz 
\sigma'(\bw_i^\sT\bz)
\left(\Delta_i - \frac{\btheta^\sT \bx}{\nu}\right)
\right] + 
    \btheta_\up{i}^\sT\bw_i\E\left[\bw_i^\sT\bz\sigma'(\bw_i^\sT\bz)\left(\Delta_i - \frac{\btheta^\sT \bx}{\nu}\right)\right] \\
    &\stackrel{(b)}{=}1 + 
    \E\left[\btheta_\up{i}^\sT\bP_i^\perp\bz 
\left(\Delta_i - \frac{\btheta^\sT \bx}{\nu}\right)
\right] 
\E\left[
\sigma'(\bw_i^\sT\bz)
\right]\\
&\hspace{10mm}+ 
\btheta_\up{i}^\sT\bw_i\E\left[\bw_i^\sT\bz\sigma'(\bw_i^\sT\bz)\right]\E\left[\left(\Delta_i - \frac{\btheta^\sT \bx}{\nu}\right)\right] \\
    &\stackrel{(c)}{=}1
\end{align*}
where $(a)$ follows by the definition of $\nu$, $(b)$ follows by independence of $\Delta_i - \btheta^\sT \bx/\nu$ and $\bw_i^\sT \bz$, which 
can be seen from the definition of $\Delta_i$, and $(c)$ follows by the assumption on $\sigma'$, namely, that 
$\E[\sigma'(G)] = \E[ G\sigma'(G)] = 0$ for $G$ standard normal.\\

We then control $\Var(U)$ by expressing it via a Taylor expansion as
\begin{align}
    U =& \frac{1}{\nu^2}\sum_{i=1}^m \left(\btheta_\up{i}^\sT \bz \sigma'(\bw_i^\sT 
\bz)\right)^2\label{eq:u_1_z_1}\\
    &+ \frac1{\nu^2}\sum_{i,j: j\neq i} \btheta_\up{i}^\sT\bz 
\sigma'(\bw_i^\sT\bz)\btheta_\up{j}^\sT \bw_i  \bw_i^\sT \bz 
\sigma'(\bw_j^\sT \bz)\label{eq:u_2_z_1}\\
    &+ \frac1{\nu^2} \sum_{i,j: j\neq i}\btheta_\up{i}^\sT\bz 
\sigma'(\bw_i^\sT\bz)\widetilde\btheta_{j,i}^\sT\bz 
    \left\{ \rho_{ij} \bw_i^\sT \bz \sigma''(\bw_j^\sT \bz)
    -\frac12\rho_{ij}^2(\bw_i^\sT \bz)^2 \sigma'''(\bw_j^\sT\bz)\right\}\label{eq:u_3_z_1}\\
    &+ \frac1{6\nu^2}\sum_{i,j: j\neq i}\btheta_\up{i}^\sT\bz
    \sigma'(\bw_i^\sT\bz)\widetilde\btheta_{j,i}^\sT\bz \rho_{ij}^3 (\bw_i^\sT\bz)^3 
\sigma^\up{4}(v_{ij}(\bz)).\label{eq:u_4_z_1}
\end{align}
for some $v_{ij}$
where
$\widetilde \btheta_{j,i} := \bP_{i}^\perp\btheta_\up{j}.$
Writing $u_1(\bz)$-$u_4(\bz)$ for the terms on the right-hand side of  
lines \eqref{eq:u_1_z_1}-\eqref{eq:u_4_z_1} respectively,
we observe that
 \begin{equation}
 \nonumber
\Var(U)^{1/2} \le \sum_{l=1}^4 \Var(u_l(\bz))^{1/2} \stackrel{(a)}{\le}C_0 \sum_{l=1}^3 
\left(\E\left[\norm{\grad u_l(\bz)}_2^2\right]\right)^{1/2} + C_0\Var(u_4(\bz))^{1/2}
 \end{equation}
where $(a)$ follows from the Gaussian Poincar\'e inequality. 
We control each summand directly in Section~\ref{section:ntk_actual_proof} of the Appendix to conclude that
\begin{equation}
 \nonumber
\lim_{n\to\infty} \sup_{\btheta\in\cS_{p,\delta}} \Var(U)^{1/2} = 0.
\end{equation}
Therefore, we can control the first term in~\eqref{eq:decomposition_1} as
\begin{align*}
   \lim_{n\to\infty} \sup_{\btheta\in\cS_{p,\sfk}} \left|\E\left[(U-1)\chi'\left(\frac{\btheta^\sT \bx}{\nu}\right)\right]\right|
    \le \norm{\chi'}_\infty
    \lim_{n\to\infty} \sup_{\btheta\in\cS_{p,\sfk}} \left( \Var(U)^{1/2} + |\E[U - 1]|  \right)
    = 0.
\end{align*}

\paragraph*{Bounding the second term in Eq.~\eqref{eq:decomposition_1}}
Let us define the event 
\begin{align*}
\cA&:= \left\{
\sup_{i\in[m]} \left| \frac{1}{\norm{\btheta_\up{i}}} \btheta_\up{i}^\sT\bz\right| \le \left(\log m\right)^{50} \right\}
\bigcap\left\{
\sup_{i\in[m]} \left| \bw_i^\sT\bz\right| \le \left(\log m\right)^{50} \right\}\\
&\hspace{55mm}\bigcap
\left\{\sup_{\{(i,j)\in[m]^2 : i\neq j\}}
 \left|
  \frac{1}
 {\norm{\widetilde\btheta_{j,i}}_2 }\widetilde\btheta_{j,i}^\sT\bz
 \right|
 \le \left(\log m\right)^{50} 
\right\}
\end{align*}

Using that for $v_i$, not necessarily independent, subgaussian with subgaussian norm $1$ 
\begin{equation*}
\P\left(\sup_{i\in[m]}|v_i| > \sqrt{2\log m} + t  \right)  \le\exp\left\{-\frac{t^2}{2\sfK_v^2}\right\},
\end{equation*}
we obtain
\begin{align*}
\P\left(
\cA^c
\right) \le  
3\exp\left\{-\frac{c_0(\log m)^{99}}{2}\right\}
\end{align*}
for some universal constant $c_0\in(0,\infty)$.
Hence, it is sufficient to establish the desired bound on the set $\cA$. Indeed, suppose 
\begin{equation}
    \label{eq:second_term_ntk_target_1}
    \lim_{n\to\infty}\sup_{\btheta\in\cS_{p,\delta}}
    \bigg|
    \E\bigg[
    \frac{1}{\nu}\sum_{i=1}^m
    \btheta_\up{i}^\sT\bz\sigma'(\bw_i^\sT\bz)
    \Big(\chi\big(\frac{\btheta^\sT\bx}{\nu}\Big)
    -
    \chi\Big(\frac{\btheta^\sT\bx}{\nu} - \Delta_i\Big)
    - \Delta_i\chi'\Big(\frac{\btheta^\sT\bx}{\nu}\Big)
    \bigg)
    \one_{\cA}
    \bigg]
    \bigg|= 0,
\end{equation}
then 
\begin{align*}
    &\lim_{n\to\infty}\sup_{\btheta\in\cS_{p,\delta}} 
    \Big|
    \E\bigg[
    \frac{1}{\nu}\sum_{i=1}^m
    \btheta_\up{i}^\sT\bz\sigma'(\bw_i^\sT\bz)
    \bigg(\chi\Big(\frac{\btheta^\sT\bx}{\nu}\Big)
    -
    \chi\Big(\frac{\btheta^\sT\bx}{\nu} - \Delta_i\Big)
    - \Delta_i\chi'\Big(\frac{\btheta^\sT\bx}{\nu}\Big)
    \bigg)
    \bigg]
    \bigg|  \\
    &\hspace{5mm}\stackrel{(a)}{\le }
    \lim_{n\to\infty}\sup_{\btheta\in\cS_{p,\delta}} 
    \frac{C_1 m}{\nu}
    \left(\norm{\chi}_\infty
    \vee\|\chi'\|_\infty\right)
    \sup_{i\in[m]}\|\btheta_\up{i}\|_2
    \E\Big[
    \norm{\bz}_2
    \Big(2+ \sup_{i\in[m]}|\Delta_i|\Big)
    \one_{\cA^c}
    \Big]\\
    &\hspace{5mm}\stackrel{(b)}{\le }
    \lim_{n\to\infty}\sup_{\btheta\in\cS_{p,\delta}} 
    C_2(\nu) m^2 \exp\left\{-\frac{c_0(\log m)^{99}}{2}\right\}\\
    &\hspace{5mm}=0.
\end{align*}
where $(a)$ follows by a naive bound on $\Delta_i$ and $(b)$ follows by an application of H\"older's.
Hence, throughout we work on the event $\cA$.\\

By Lemma 2.4 of~\cite{chen2011normal}, $\chi' = \chi'_\varphi$ is differentiable and
 $\norm{\chi''}_\infty \le  C_3$ since $\varphi$ is assumed to be differentiable with bounded derivative.
Hence,
\begin{align}
\label{eq:2nd_term_taylor_1}
\left|\chi\left(\frac{\btheta^\sT\bx}{\nu}\right) -\chi\left( \frac{\btheta^\sT\bx}{\nu} 
- \Delta_i\right)  - \Delta_i \chi' \left( \frac{\btheta^\sT\bx}{\nu}\right)\right| 
&\le C_3 \left|\Delta_i \right|^2.
\end{align}
Using this in~\eqref{eq:second_term_ntk_target_1} we obtain
\begin{align}
    &\bigg|
    \E\bigg[
    \frac{1}{\nu}\sum_{i=1}^m
    \btheta_\up{i}^\sT\bz\sigma'(\bw_i^\sT\bz)
    \bigg(\chi\bigg(\frac{\btheta^\sT\bx}{\nu}\bigg) 
    -
    \chi\bigg(\frac{\btheta^\sT\bx}{\nu} - \Delta_i\bigg) 
    - \Delta_i\chi'\bigg(\frac{\btheta^\sT\bx}{\nu}\bigg)
    \bigg)
    \one_{\cA}
    \bigg]
    \bigg|    
\nonumber
    \\
    &\hspace{5mm}\stackrel{(a)}{\le}
    C_3
    \E\bigg[
    \frac{1}{\nu}\sum_{i=1}^m
    \Big|\btheta_\up{i}^\sT\bz\sigma'(\bw_i^\sT\bz)\Big|
    \Delta_i^2 \one_\cA
    \bigg]
    \nonumber
    \\
    &\hspace{5mm}\stackrel{(b)}{\le}
    \frac{C_4}{\nu}
    \E\bigg[\sup_{i\in[m]}\frac{
    |\btheta_\up{i}^\sT\bz|
    }{\|\btheta_\up{i}\|_2} 
    \sum_{i=1}^m \norm{\btheta_\up{i}}_2
     \Delta_i^2\one_{\cA}
    \bigg]\nonumber\\
    &\hspace{5mm}\stackrel{(c)}{\le} \frac{C_5}{\nu} \frac{(\log m)^{50}}{m^{1/2}} 
    \sum_{i=1}^m \E\left[
     \Delta_i^2\one_{\cA}
    \right],
\end{align}
where $(a)$ follows from~\eqref{eq:2nd_term_taylor_1}, $(b)$ follows from 
boundedness of $\norm{\sigma'}_\infty$, and $(c)$ follows from $\norm{\btheta_\up{i}}_2 \le \sfR/\sqrt{d}$ and the definition of $\cA$. 
Via another Taylor expansion of $\sigma'$, we write 
\begin{align}
    \Delta_i &= \frac{1}{\nu} \btheta_\up{i}^\sT \bz \sigma'(\bw_i^\sT \bz) 
    + \frac1{\nu}\sum_{j: j\neq i} \btheta_\up{j}^\sT  \bw_i  
\bw_i^\sT \bz \sigma'(\bw_j^\sT \bz)\nonumber\\
    &\hspace{10mm}+ \frac1{\nu} \sum_{j: j\neq i}\widetilde\btheta_{j,i}^\sT\bz 
     \rho_{ij} \bw_i^\sT \bz \sigma''(\bw_j^\sT \bz)-
     \frac1{\nu}\sum_{j:j\neq i}
     \widetilde \btheta_{j,i}^\sT \bz
     \rho_{ij}^2(\bw_i^\sT \bz)^2 \sigma'''(v_{j,i}(\bz))\nonumber\\
     &=: d_{1,i} + d_{2,i} + d_{3,i} + d_{4,i}
\end{align}
for some $v_{j,i}(\bz)$ between $\bw_j^\sT \bz$ and $\bw_j^\sT\bz - \rho_{ij}\bw_i^\sT\bz$.
In Section~\ref{section:second_term_ntk}
of the Appendix, we show directly that for each $k\in [4]$,
\begin{equation}
    \lim_{n\to\infty} \sup_{\btheta\in\cS_{p,\delta}} \frac{1}{\nu}
    \frac{(\log m)^{50}}{m^{1/2}} \sum_{i=1}^m\E\left[d_{k,i}^2\one_\cA\right]=0.
\end{equation} 
This then controls the second term in~\eqref{eq:decomposition_1}.

%
%

\subsection*{Acknowledgements}
This work was supported by the NSF through award DMS-2031883, the Simons Foundation through
Award 814639 for the Collaboration on the Theoretical Foundations of Deep Learning,
the NSF grant CCF-2006489, the ONR grant N00014-18-1-2729, and 
the National Science Foundation Graduate Research Fellowship Program under Grant No.~DGE-1656518.

\bibliographystyle{amsalpha} 
\bibliography{all-bibliography}

\newpage

\appendix

\section{Proof of Theorems~\ref{thm:MainFirst} and~\ref{thm:main_empirical_univ}}
\label{section:proof_of_main_results}
In this section, we complete the proof of Theorems~\ref{thm:MainFirst} and~\ref{thm:main_empirical_univ} by deducing them from Lemma~\ref{lemma:softmin_univ} and extending the result to hold under the remaining assumptions. 

\subsection{Proof of Theorems~\ref{thm:MainFirst} and~\ref{thm:main_empirical_univ}
}
\label{section:main_thm_proof}
Recall that for $\alpha>0$, 
in Section~\ref{section:proof_outline} 
we let $\cN_\alpha$ be a minimal $\alpha-$net of
$ \cC_p \subseteq B_2^p(\sfR)$, so that 
$|\cN_\alpha| \le C(\alpha)^p$ for some $C(\alpha)$.
Let us define the discretized minimization over $\bTheta\in\cN_\alpha^\sfk$
\begin{equation}
\label{eq:opt}
 \Opt_n^\alpha(\bX,\beps) := \min_{\bTheta \in \cN_\alpha^\sfk} 
\widehat R_n(\bTheta;\bX,\beps).
\end{equation}

We have the following consequence of Lemma~\ref{lemma:softmin_univ}.
\begin{lemma}[Universality of $\Opt_n^\alpha$]
\label{prop:eps_net_univ} 
Under the assumptions of Theorem~\ref{thm:MainFirst}, along with
the alternative Assumption~\hyperref[ass:loss_labeling_prime]{5''},
we have for any bounded differentiable function $\psi$ with bounded Lipschitz derivative
\begin{equation}
\nonumber
\lim_{n\to\infty}\left|\E\left[\psi\left(\Opt_n^\alpha\left(\bX,\beps\right)\right) \right] - 
\E\left[\psi\left(\Opt_n^\alpha\left(\bG,\beps\right)\right)\right]\right| = 
0.
\end{equation}
\end{lemma}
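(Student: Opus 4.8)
The plan is to deduce Lemma~\ref{prop:eps_net_univ} from the free–energy universality of Lemma~\ref{lemma:softmin_univ} by the soft-min approximation to $\Opt_n^\alpha$, optimizing over the inverse temperature $\beta$ at the very end. The point is that $\Opt_n^\alpha$ is an \emph{uncontrolled minimum of} $\widehat R_n$ over the fixed (in $n$) but exponentially large set $\cN_\alpha^\sfk$, while $f_\alpha(\beta,\cdot)$ is its smooth log-sum-exp surrogate, and the gap between the two is purely deterministic.

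First I would invoke the standard estimate already recorded right after Lemma~\ref{lemma:softmin_univ}: for every $\beta>0$ and all large enough $n$,
\begin{equation}
\nonumber
\big| f_\alpha(\beta,\bM) - \Opt_n^\alpha(\bM,\by(\bM)) \big| \le C(\alpha)\,\beta^{-1}, \qquad \bM\in\{\bX,\bG\},
\end{equation}
which comes from the log-sum-exp sandwich together with the covering bound $|\cN_\alpha|\le C(\alpha)^p$ and the fact that $p/n$ is bounded, so that the $1/n$ normalization in $f_\alpha$ exactly absorbs the $\log|\cN_\alpha^\sfk| \asymp p\sfk\log C(\alpha)$ from the cardinality. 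Since $\psi$ has a bounded derivative it is $\norm{\psi'}_\infty$-Lipschitz, so this gives pointwise, hence in expectation,
\begin{equation}
\nonumber
\big| \E[\psi(\Opt_n^\alpha(\bM,\by(\bM)))] - \E[\psi(f_\alpha(\beta,\bM))] \big| \le \norm{\psi'}_\infty C(\alpha)\,\beta^{-1}, \qquad \bM\in\{\bX,\bG\}.
\end{equation}

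Next, $\psi$ is bounded differentiable with bounded Lipschitz derivative, so Lemma~\ref{lemma:softmin_univ} applies verbatim to it and yields, for each fixed $\beta>0$, $\lim_{n\to\infty}\big|\E[\psi(f_\alpha(\beta,\bX))] - \E[\psi(f_\alpha(\beta,\bG))]\big| = 0$. Combining this with the two displays above by the triangle inequality and taking $\limsup_{n\to\infty}$ gives
\begin{equation}
\nonumber
\limsup_{n\to\infty} \big| \E[\psi(\Opt_n^\alpha(\bX,\by(\bX)))] - \E[\psi(\Opt_n^\alpha(\bG,\by(\bG)))] \big| \le 2\norm{\psi'}_\infty C(\alpha)\,\beta^{-1}.
\end{equation}
The left-hand side does not depend on $\beta$, so letting $\beta\to\infty$ forces the $\limsup$ to be $0$, which is precisely the assertion of the lemma.

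All the analytic substance is already contained in Lemma~\ref{lemma:softmin_univ}, so I do not expect a genuine obstacle here; the only points that require care are (i) that the $1/n$ prefactor in $f_\alpha$ must absorb the $p\asymp n$ growth of $\log|\cN_\alpha^\sfk|$, which is why the net is taken at a \emph{fixed} scale $\alpha$ and why $p/n$ must be bounded; and (ii) the order of limits — one must send $n\to\infty$ first, with $\beta$ held fixed, before sending $\beta\to\infty$, since Lemma~\ref{lemma:softmin_univ} gives no control uniform in $\beta$. I would also note that the further passage from $\Opt_n^\alpha$ to $\widehat R_n^\star$ (sending the net scale $\alpha\to 0$) is a separate approximation step, carried out in the proof of Theorem~\ref{thm:main_empirical_univ}, and is not part of this lemma.
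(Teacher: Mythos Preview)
Your proposal is correct and follows essentially the same route as the paper: the paper also derives the soft-min bound $|f_\alpha(\beta,\bM)-\Opt_n^\alpha(\bM,\by(\bM))|\le C(\alpha)\,p(n)/(n\beta)$ (via an entropy computation for the derivative in $\beta$), then uses the Lipschitz property of $\psi$, applies Lemma~\ref{lemma:softmin_univ} at fixed $\beta$, and finally sends $\beta\to\infty$. Your remarks on the $p/n$ absorption and the order of limits are exactly the relevant caveats.
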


The proof of this result is deferred to Section~\ref{section:universality_eps_net_proof}.
Here, we show that Theorem~\ref{thm:MainFirst}, under the alternative Assumption~\hyperref[ass:loss_labeling_prime]{5''}
is a direct consequence of this lemma. First, we 
need a few technical lemmas.
Let us define the restricted operator norm 
\begin{equation}
\nonumber
 \norm{\bX}_{\cS_p} := \sup_{\left\{\btheta\in\cS_p : \norm{\btheta}_2 \le 1\right\}} \norm{\bX 
\btheta}_2.
\end{equation}
\begin{lemma}
\label{lemma:op_norm_x}
 Let $\bX,\bG, \cS_p, p(n)$ be as in Theorem~\ref{thm:MainFirst}, we have for some $C \in(0,\infty)$,
 \begin{equation}
\E\left[\norm{\bX}_{\cS_p}^2\right] \le  C p,\quad 
\E\left[\norm{\bG}_{\cS_p}^2\right] \le  C p.
\nonumber
 \end{equation}
\end{lemma}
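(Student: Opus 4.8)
\textbf{Proof plan for Lemma~\ref{lemma:op_norm_x}.}
The plan is to bound $\E[\norm{\bX}_{\cS_p}^2]$, and similarly $\E[\norm{\bG}_{\cS_p}^2]$, by using the subgaussian control on the one-dimensional projections $\bx^\sT\btheta$ guaranteed by Assumption~\ref{ass:X}, together with a standard $\epsilon$-net/chaining argument over the set $\cS_p \cap B_2^p(1)$. Writing $\norm{\bX}_{\cS_p}^2 = \sup_{\btheta \in \cS_p, \norm{\btheta}_2 \le 1} \sum_{i=1}^n (\bx_i^\sT\btheta)^2$, I would first observe that for each fixed such $\btheta$, the random variable $\bx_i^\sT\btheta$ is subgaussian with $\norm{\bx_i^\sT\btheta}_{\psi_2} \le \sfK$ (since $\norm{\btheta}_2 \le 1$ and $\btheta \in \cS_p$), so $(\bx_i^\sT\btheta)^2$ is subexponential with $\psi_1$-norm $O(\sfK^2)$; summing over the $n$ independent rows and using Bernstein's inequality gives $\P(\sum_i (\bx_i^\sT\btheta)^2 \ge C_1 n) \le e^{-c n}$ for suitable constants depending only on $\sfK$ (hence on $\sOmega$), using $p \asymp n$.

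The main work is then to upgrade this pointwise bound to a uniform bound over $\cS_p \cap B_2^p(1)$. Since $\cS_p \subseteq B_2^p(\sfR)$, this set is contained in the Euclidean unit ball, which admits an $\epsilon$-net $\cN_\epsilon$ of cardinality at most $(3/\epsilon)^p$. For a fixed $\epsilon$ (say $\epsilon = 1/4$) I would take a union bound over $\cN_\epsilon$: the event $\sup_{\btheta \in \cN_\epsilon}\sum_i (\bx_i^\sT\btheta)^2 \ge C_1 n$ has probability at most $(3/\epsilon)^p e^{-cn} \le e^{-c'n}$ provided $C_1$ is chosen large enough relative to $\log(3/\epsilon)$ and the ratio $\sgamma$; here is where the proportionality $p \asymp n$ is essential. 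A standard net-to-sup comparison (using $\norm{\bX\btheta}_2 \le \norm{\bX\btheta'}_2 + \norm{\bX}_{\cS_p}\norm{\btheta - \btheta'}_2$ for $\btheta' \in \cN_\epsilon$ closest to $\btheta$, and absorbing the $\epsilon\norm{\bX}_{\cS_p}$ term on the left since $\epsilon < 1$) then yields $\norm{\bX}_{\cS_p}^2 \le C_2 n$ on an event of probability $\ge 1 - e^{-c'n}$.

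To conclude I would convert this high-probability bound into a bound in expectation. On the good event, $\norm{\bX}_{\cS_p}^2 \le C_2 n \le C_2' p$; on the complementary event of probability $e^{-c'n}$, I crudely bound $\norm{\bX}_{\cS_p}^2 \le \sfR^2 \norm{\bX}_{\op}^2 \le \sfR^2 \sum_{i,j} X_{ij}^2$, whose expectation is at most $C np \le Cp^2$ by the subgaussianity of each entry (each $X_{ij}$ has bounded $\psi_2$-norm as a projection onto a scaled standard basis vector, provided these lie in $\cS_p$ up to scaling — alternatively one uses a deterministic bound from compactness of $\cC_p$, or simply notes $\norm{\bX}_{\cS_p} \le \sfR\norm{\bX}_F$ always). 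Then $\E[\norm{\bX}_{\cS_p}^2] \le C_2' p + Cp^2 e^{-c'n} \le C p$ for $n$ large, and adjusting the constant handles small $n$. The argument for $\bG$ is identical, using $\norm{\bSigma_\bg^{1/2}\btheta}_2 \le \sfK$ from Assumption~\ref{ass:X} in place of the subgaussian projection bound (Gaussian projections being automatically subgaussian with the right norm). The only mild subtlety is ensuring the tail event contributes negligibly in expectation, which is why I insist on the \emph{exponential} (not polynomial) concentration from Bernstein plus the $p \asymp n$ cardinality bound — this is the step I expect to require the most care.
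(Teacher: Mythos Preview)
Your overall strategy matches the paper's almost exactly: both arguments use the subgaussian control on $\bx_i^\sT\btheta$ from Assumption~\ref{ass:X}, apply Bernstein to $\sum_i(\bx_i^\sT\btheta)^2$ pointwise in $\btheta$, take a union bound over an $\epsilon$-net of $\cS_p\cap B_2^p(1)$ of cardinality $C^p$, and pass from a net bound to the full supremum. The paper packages this as a tail bound $\P(\norm{\bX}_{\cS_p}>C(\sqrt n+\sqrt p+t))\le 2e^{-ct^2}$ and then integrates it.

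There is one genuine gap in your final step. Your handling of the complementary (bad) event relies on $\norm{\bX}_{\cS_p}^2\le \sfR^2\sum_{i,j}X_{ij}^2$ together with $\E[X_{ij}^2]\le C$. But Assumption~\ref{ass:X} gives subgaussianity of $\bx^\sT\btheta$ \emph{only} for $\btheta\in\cS_p$ with $\norm{\btheta}_2\le 1$, and the coordinate directions need not lie in $\cS_p$ (indeed, in the neural-tangent example of Section~\ref{section:ntk_example}, $e_j\notin\cS_p$ for large $d$, and the paper explicitly stresses that the rows of $\bX$ are \emph{not} assumed subgaussian). Your proposed alternatives --- compactness of $\cC_p$, or $\norm{\bX}_{\cS_p}\le \sfR\norm{\bX}_F$ --- run into the same obstruction, since both ultimately require an entrywise second-moment bound you do not have. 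The clean fix, which is what the paper does, is to bypass the good/bad split entirely and compute $\E[\norm{\bX}_{\cS_p}^2]=\int_0^\infty\P(\norm{\bX}_{\cS_p}^2>s)\,\de s$ directly from the tail bound you have already established; the subgaussian tail in $t$ then controls all moments without any auxiliary crude bound.
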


\begin{lemma}
\label{lemma:Rn_lipschitz_bound}
Under the assumptions of Theorem~\ref{thm:MainFirst} with the alternative 
Assumption~\hyperref[ass:loss_labeling_prime]{5''}, we have for all 
$\bTheta,\widetilde\bTheta\in\cS_p^\sfk$
\begin{equation}
\left|\widehat 
R_n(\bTheta;\bX,\beps) - 
 \widehat R_n(\widetilde\bTheta;\bX,\beps)\right|
\le C \left( \frac{\norm{\bX}^2_{\cS_p}}{n} + 
\frac{\norm{\bX}_{\cS_p}\norm{\beps}_2}{n}+ 1 \right) \norm{\bTheta -\widetilde \bTheta}_F,
\nonumber
\end{equation}
for some constant $C > 0$. A similar bound also holds for the Gaussian model.
\end{lemma}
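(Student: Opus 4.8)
The plan is to expand the difference $\widehat R_n(\bTheta;\bX,\by) - \widehat R_n(\widetilde\bTheta;\bX,\by)$ into the empirical-average part and the regularizer part, and bound each separately using the Lipschitz hypotheses. Recall $\widehat R_n(\bTheta;\bX,\by) = \frac1n\sum_{i=1}^n \ell(\bTheta^\sT\bx_i;y_i) + r(\bTheta)$, where $y_i = \eta(\bTheta^{\star\sT}\bx_i,\eps_i)$ does not depend on $\bTheta$. For the regularizer term, Assumption~\ref{ass:regularizer} gives $|r(\bTheta) - r(\widetilde\bTheta)| \le \sfK_r(B)\,\|\bTheta - \widetilde\bTheta\|_F$ provided $\|\bTheta\|_F, \|\widetilde\bTheta\|_F \le B$; since $\cS_p \subseteq B_2^p(\sfR)$, every $\bTheta\in\cS_p^\sfk$ has $\|\bTheta\|_F \le \sqrt{\sfk}\,\sfR =: B$, a constant depending only on $\sOmega$, so this contributes a term $\sfK_r(B)\|\bTheta-\widetilde\bTheta\|_F$, absorbed into the ``$+1$'' summand of the claimed bound (up to the constant $C$).

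For the empirical-risk term, I would use that under Assumption~\hyperref[ass:loss_labeling_prime]{1'} the loss $\ell$ has $\grad\ell$ Lipschitz with constant $\sfK$; this does not by itself make $\ell$ globally Lipschitz, but it makes $\ell$ Lipschitz on bounded sets, with local Lipschitz constant growing linearly in the radius. Concretely, $|\ell(\bu;y) - \ell(\bu';y)| \le \big(\|\grad\ell(0;y)\| + \sfK\max(\|\bu\|,\|\bu'\|)\big)\|\bu - \bu'\|$. Applying this with $\bu = \bTheta^\sT\bx_i$, $\bu' = \widetilde\bTheta^\sT\bx_i$, $y = y_i$, I get
\begin{equation}
\nonumber
\frac1n\sum_{i=1}^n |\ell(\bTheta^\sT\bx_i;y_i) - \ell(\widetilde\bTheta^\sT\bx_i;y_i)|
\le \frac1n\sum_{i=1}^n \big(\|\grad\ell(0;y_i)\| + \sfK\|\bx_i\|_{\cS_p\text{-dual}}\cdot(\|\bTheta\|_{\mathrm{op}} \vee \|\widetilde\bTheta\|_{\mathrm{op}})\big)\,\|(\bTheta - \widetilde\bTheta)^\sT\bx_i\|,
\end{equation}
where I would use $\|(\bTheta-\widetilde\bTheta)^\sT\bx_i\| \le \|\bX\|_{\cS_p}$-type control. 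More carefully: since each column $\btheta_k, \widetilde\btheta_k \in \cS_p$, I have $\|\bX\btheta_k\|_2 \le \|\bX\|_{\cS_p}\|\btheta_k\|_2$ only if $\|\btheta_k\|_2\le 1$; in general $\|\bX\btheta_k\|_2 \le \|\bX\|_{\cS_p}\cdot \max(1,\|\btheta_k\|_2) \le \sfR\|\bX\|_{\cS_p}$ using $\cS_p\subseteq B_2^p(\sfR)$, and similarly $\|\bX(\btheta_k - \widetilde\btheta_k)\|_2$ — here one must be slightly careful since $\btheta_k - \widetilde\btheta_k$ need not lie in $\cS_p$, but $\cS_p$ is symmetric and convex so $(\btheta_k-\widetilde\btheta_k)/2 \in \cS_p$, giving $\|\bX(\btheta_k-\widetilde\btheta_k)\|_2 \le 2\sfR\,\|\bX\|_{\cS_p}$, and I also want this proportional to $\|\btheta_k - \widetilde\btheta_k\|_2$ so I interpolate: $\|\bX(\btheta_k-\widetilde\btheta_k)\|_2 \le \|\bX\|_{\cS_p}\cdot\frac{\|\btheta_k - \widetilde\btheta_k\|_2}{\text{(scale)}}$, using homogeneity together with $(\btheta_k-\widetilde\btheta_k)$ being, after scaling by any $c\le 2\sfR/\|\btheta_k-\widetilde\btheta_k\|_2$, an element of $\cS_p$ when $\|\btheta_k-\widetilde\btheta_k\|_2$ is small; for the large case the crude bound $2\sfR\|\bX\|_{\cS_p} \le \|\bX\|_{\cS_p}\|\btheta_k-\widetilde\btheta_k\|_2$ suffices. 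Combining, the $i$-th summand is $\le C\big(1 + \|\bx_i\text{-norm}\|^2 + \|\bx_i\text{-norm}\|\,|y_i|\big)\|\bTheta-\widetilde\bTheta\|_F$ after a Cauchy--Schwarz over $i$; summing and dividing by $n$ turns $\frac1n\sum_i \|\bx_i\|^2$-type quantities into $\frac1n\|\bX\|_{\cS_p}^2$ and $\frac1n\sum_i\|\bx_i\|\,|y_i|$ into $\frac1n\|\bX\|_{\cS_p}\|\by\|_2$ by Cauchy--Schwarz, and the $\|\grad\ell(0;y_i)\|$ term is bounded, using $\grad\ell$ Lipschitz, by $\|\grad\ell(0;0)\| + \sfK|y_i|$, contributing another $\frac1n\sum_i|y_i| \le \frac{1}{\sqrt n}\|\by\|_2 \le C(1 + \|\by\|_2^2/n)$, all of which fits inside the stated three-term envelope.

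The Gaussian case is identical verbatim with $\bX\mapsto\bG$, since the only properties used are $\cS_p\subseteq B_2^p(\sfR)$ symmetric convex, the definition of $\|\cdot\|_{\cS_p}$, and Assumption~\hyperref[ass:loss_labeling_prime]{1'} on $\ell,\eta$ — none of which distinguish $\bX$ from $\bG$. \textbf{The main obstacle} I anticipate is purely bookkeeping: turning ``$\grad\ell$ Lipschitz'' into a usable local-Lipschitz bound on $\ell$ with the right (linear) growth of the constant, and then carefully tracking that the increments $\bTheta - \widetilde\bTheta$ — which leave the set $\cS_p$ — are still controlled via homogeneity and the symmetry/convexity of $\cS_p$, so that the final bound is genuinely proportional to $\|\bTheta - \widetilde\bTheta\|_F$ and not merely to its diameter. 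There is no deep idea here; the lemma is a routine Lipschitz estimate and its role is to let one pass between the net $\cN_\alpha$ and the full constraint set $\cC_p$ in the proof of Theorem~\ref{thm:main_empirical_univ}.
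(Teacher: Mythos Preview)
Your approach is essentially the same as the paper's: split $\widehat R_n$ into the empirical-loss part $L_n$ and the regularizer $r$, handle $r$ via Assumption~\ref{ass:regularizer} with $B=\sqrt{\sfk}\,\sfR$, and handle $L_n$ by using that Lipschitz $\grad\ell$ gives $\|\grad\ell(\bv)\|\le C(\|\bv\|+1)$, then combining with a Cauchy--Schwarz over samples to produce the $\|\bX\|_{\cS_p}^2/n$ and $\|\bX\|_{\cS_p}\|\by\|_2/n$ terms. The paper carries this out via the mean-value inequality, writing $\grad_\bTheta L_n=\tfrac1n\bX^\sT\bD$ with $\bD=(\partial_k\ell(\bTheta'^{\sT}\bx_i;y_i))_{i,k}$, bounding $\|\bD\|_F\le C(\|\bX\|_{\cS_p}\|\bTheta'\|_F+\|\by\|_2+1)$, and then $|\langle\bD,\bX(\bTheta-\widetilde\bTheta)\rangle_F|\le\|\bD\|_F\,\|\bX\|_{\cS_p}\,\|\bTheta-\widetilde\bTheta\|_F$; at this last step it simply invokes ``$\cS_p$ symmetric and convex'' to control $\bX(\btheta_k-\widetilde\btheta_k)$, which is exactly the obstacle you flag --- so your more elaborate scaling discussion can be replaced by that one-line appeal.
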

The proofs are deferred to  
Sections~\ref{proof:op_norm_x} and~\ref{proof:Rn_lipschitz_bound} respectively. Here, we derive Theorem~\ref{thm:MainFirst}.

\subsubsection[Proof of Theorem 1 under Assumption 5'']{Proof of Theorem~\ref{thm:MainFirst} under Assumption~\hyperref[ass:loss_labeling_prime]{5''}}
\label{section:opt_to_erm}

We state the claim as a lemma for future reference.
\begin{lemma}
\label{lemma:eq_MainFirst}
Consider the setting of Theorem~\ref{thm:MainFirst}.
Let Assumptions~\ref{ass:regime}-\ref{ass:X} and \hyperref[ass:loss_labeling_prime]{5''} hold.
Let $\bG\in\reals^{n\times p}$ be a matrix with i.i.d. rows $\bg_i$ distributed as
in Assumption \ref{ass:X}, and
variables $(\eps_i: i\le n)$ be independent and independent of $\bX$, $\bG$, with $\max_{i\le n}\|\eps_i\|_{\psi_2}\le \sfK$\,  for some constant $\sfK.$

If $\cC_{j,p}\subseteq \cS_p$ for all $j\le \sfk$, then  for any
bounded Lipschitz function $\psi:\R\to\R$,
\begin{equation}
 \lim_{n\to\infty}\left|\E\left[\psi\left(\widehat 
R^\star_n\left(\bX\right)\right)\right] - 
\E\left[\psi\left(\widehat R^\star_n\left(\bG\right) \right)\right]\right| = 0.
\end{equation}    
\end{lemma}
\begin{proof}

Let $$\widehat\bTheta_\bX := \left(\widehat 
\btheta_{\bX,1},\dots,\widehat\btheta_{\bX,\sfk}\right) $$
be a minimizer of 
$\widehat R_n(\bTheta;\bX,\by(\bX))$, and then let
 $$\widetilde\bTheta_\bX := \big(\widetilde \btheta_{\bX,1},\dots,\widetilde 
\btheta_{\bX,\sfk}\big)$$
where $\widetilde\btheta_{\bX,k}$ is the closest point in $\cN_\alpha$ to $\widehat\btheta_{\bX,k}$ in 
$\ell_2$ norm.
We have
\begin{align}
   \left| \widehat R_n^\star(\bX,\beps) - \Opt_n^\alpha(\bX,\beps)\right| 
    &\stackrel{(a)}{=} \left(
    \Opt_n^\alpha(\bX,\beps) - \widehat R^\star_n(\bX,\beps)
    \right)\nonumber\\
    &\stackrel{(b)}{\le} \left( \widehat R_n(\widetilde \bTheta_{\bX};\bX, \beps ) 
    - \widehat R_n^\star(\bX,\beps)\right)\nonumber\\
&\stackrel{(c)}{=}\left|\widehat R_n (\widehat\bTheta_\bX; \bX,\beps) - \widehat R_n(\widetilde 
\bTheta_\bX;\bX,\beps)\right|\nonumber\\
&\stackrel{(d)}{\le}
  C_0 \left( \frac{\norm{\bX}^2_{\cS_p}}{n} + 
\frac{\norm{\bX}_{\cS_p}\norm{\beps}_2}{n}+ 1 \right) \sfk^{1/2} \alpha,\nonumber
\end{align}
where in $(a)$ we used that 
$\Opt_n^\alpha(\bX,\beps) 
\ge \widehat R_n^\star(\bX,\beps)$, 
in $(b)$ we used the inequality $\Opt_n^\alpha(\bX,\beps) \le \widehat R_n(\widetilde \bTheta_\bX; \bX,\beps)$, in $(c)$ we used $\widehat R_n(\widetilde \bTheta_{\bX};\bX, \beps )\ge \widehat R_n^\star(\bX,\beps)$,
and in $(d)$ we used Lemma~\ref{lemma:Rn_lipschitz_bound}.  Now letting $\psi:\R\to\R$ be a bounded differentiable function with bounded Lipschitz derivative, we have
\begin{align*}
   &\Big|\E\left[\psi\left( \widehat R_n^\star(\bX,\beps)\right)\right] - \E \Big[
\psi(\Opt^\alpha_n 
(\bX,\beps))\Big]\Big|\\
   &\hspace{60mm}\le  \E\left[\left|    \psi\left( \widehat R_n^\star(\bX,\beps)\right) -  
\psi\left(\Opt^\alpha_n (\bX,\beps)\right)\right|\right]\nonumber\\
   &\hspace{60mm}\le\norm{\psi'}_\infty \E \left| \widehat R_n^\star(\bX,\beps) - 
\Opt_n^\alpha(\bX,\beps)\right|\\
   &\hspace{60mm}\le  C_0 \norm{\psi'}_\infty \E\left[\frac{\norm{\bX}^2_{\cS_p}}{n} + 
\frac{\norm{\bX}_{\cS_p}\norm{\beps}_2}{n}+ 1 \right] \sfk^{1/2} \alpha\\
   &\hspace{60mm}\stackrel{(a)}{\le}  C_0  \norm{\psi'}_\infty  \left(C_1 + C_2^{1/2} 
\E\left[\eps_1^2\right]^{1/2}  + 1 \right)  \sfk^{1/2}\alpha\\
&\hspace{60mm}\stackrel{(b)}\le C_1 \norm{\psi'}_\infty  \alpha
\end{align*}
where in $(a)$ we used Lemma~\ref{lemma:op_norm_x} and in $(b)$ the subgaussianity condition on $\eps_1$ and $\bx$. An analogous argument then 
shows that
\begin{align}
   \left|  \E \left[ \psi\left( \widehat R_n^\star(\bG,\beps)\right)\right] 
   - \E \left[\psi\left(\Opt^\alpha_n 
(\bG,\beps)\right)\right]\right|
  \le C_1 \norm{\psi'}_\infty\alpha,
\end{align}
allowing us to write
\begin{align*}
    &\lim_{n\to\infty}\left| \E \left[\psi\left(\widehat R^\star_n(\bX,\beps)\right)\right] - 
    \E 
    \left[\psi\left(\widehat R^\star_n(\bG,\beps)\right)\right] \right|\\
    &\hspace{25mm}\le 
    \lim_{n\to\infty}\left|
    \E \psi\left(\Opt^\alpha_n (\bX,\beps) \right)  - 
    \E \psi\left(\Opt^\alpha_n (\bG,\beps) \right)
    \right|
    + 2 C_2\norm{\psi'}_\infty \alpha\\
    &\hspace{25mm}= 2C_2\norm{\psi'}_\infty\alpha,
\end{align*}
where the last equality is by Lemma~\ref{prop:eps_net_univ}. Now using that 
$\norm{\psi'}_\infty < \infty$ and sending $\alpha \to 0$ concludes the proof of 
Eq.~\eqref{eq:main_thm_eq} for $\psi$ bounded differentiable with bounded Lipschitz derivative. To extend it to $\psi$ bounded Lipschitz, it is sufficient to find a sequence of bounded differentiable functions with bounded Lipschitz derivative approximating $\psi$ uniformly (see for example the following section for a similar argument).
\end{proof}
%

%

\subsubsection{Proof of Eq.~\eqref{eq:main_thm_eq} of Theorem~\ref{thm:main_empirical_univ} under Assumption~\hyperref[ass:loss_labeling_prime]{5''}}
\begin{lemma}
\label{lemma:smooth_assumption_thm2}
Consider the setting of Theorem~\ref{thm:main_empirical_univ}. Namely,
let $\{(y_i(\bX),\bx_i): i\le n\}$ be i.i.d. pairs with $\bx_i\in\reals^d$ and $y_i = y_i(\bX)$
given by Eq.~\eqref{eq:form_yi}. Similarly, define $\{(y_i(\bG),\bg_i): i\le n\}$
with $\bg_i\sim\cN(\bmu_\bg,\bSigma_\bg)$ as per Assumption \ref{ass:X}.

Suppose that Assumptions \ref{ass:set} to \ref{ass:X} hold for
$\cC_p$, $r$, and the law of $\bx$ and the domain $\cS_p$. 

Assume that
$\ell(\bv,\bv^\star;\eps)$
defined in Eq.~\eqref{eq:ell-def} satisfies 
$\ell(\bv,\bv^\star;\eps)= \tilde \ell(\bu ;\eps)$ 
for $\bu=(\bv,\bv^\star) \in\R^{\sfk + \sfk^\star}$ and $\tilde \ell$ satisfying 
Assumption~\hyperref[ass:loss_labeling_prime]{5''} 
with the $\eps_i$ uniformly sub-Gaussian.

If $\cC_{p}\subseteq \cS_p$ and $\btheta^\star_j \in \cS_p$ for each $j\le \sfk^{\star}$,
then, for any
bounded Lipschitz function $\psi:\R\to\R$,
\begin{equation}
 \lim_{n\to\infty}\left|\E\left[\psi\left(\widehat 
R^\star_n\left(\bX,\by(\bX)\right)\right)\right] - 
\E\left[\psi\left(\widehat R^\star_n\left(\bG,\by(\bG)\right) \right)\right]\right| = 0.
\end{equation}
\end{lemma}
\begin{proof}
Apply Lemma~\ref{lemma:eq_MainFirst} with $\cC_{j,p} = \cC_p$ for $j\in[\sfk]$, and $\cC_{j,p} = \{\btheta_j^\star\}$ for $j\in\{\sfk+1,\dots,\sfk+\sfk^\star$.
\end{proof}

\subsubsection{Proof of Eq.~\eqref{eq:main_thm_eq} of Theorem~\ref{thm:main_empirical_univ} under
Assumption~\hyperref[ass:loss-0-bis]{5'}}
\label{section:proof_binary_response}
We now move on to proving Eq.~\eqref{eq:main_thm_eq} of Theorem~\ref{thm:main_empirical_univ} under Assumption~\hyperref[ass:loss-0-bis]{5'}. This is done via an approximation argument.

%
For $m\in\Z_{>0}$, $\delta>0$, define the following mollifier on $\R^m$:
\begin{equation}
\label{eq:mollifier}
 \zeta_{\delta,m}(\bv) := \begin{cases}
                    C \delta^{-m} \exp\left\{\delta^2/(\norm{\bv}_2^2 - \delta^2)\right\}&, \norm{\bx}_2 <\delta\\
                    0 &, \norm{\bx}_2\ge \delta 
                   \end{cases}
\end{equation}
where $C$ is chosen so that $\zeta_{\delta,m}$ integrates to $1$. 
For $f:\R^m\to \R$, the convolution
\begin{equation}
\nonumber
 f_\delta(\bv) := (f \ast \zeta_{\delta,m})(\bv) =  \int_{B_2^m(\delta)} \zeta_{\delta,m}(\bw) f(\bv - \bw) \de \bw
\end{equation}
is infinitely differentiable (see~\cite{evans2010partial}, Appendix C.4.).
%
%
%
Additionally, we have the following properties of $f_\delta$.
\begin{lemma}
\label{lemma:pseudo_approx}
Assume $f:\R^m \to\R$ satisfies
\begin{equation*}
|f(\bv)  - f(\widetilde\bv) | \le C(1 + \norm{\bv}_2 + \norm{\widetilde\bv}_2)\norm{\bv - \widetilde\bv}_2
\end{equation*}
for some $C>0$.
Then for $\delta\in(0,1)$, we have
\begin{equation}
\label{eq:pseudo_approx_1}
   \left\|\grad f_\delta(\bv) \right\|_2\le \widetilde C(1 + \norm{\bv}_2), 
\end{equation} 
and
 \begin{equation}
\label{eq:pseudo_approx_2}
   |f_\delta(\bv) - f(\bv)| \le \bar C(1 + \norm{\bv}_2)\delta,
 \end{equation}
for some $\bar C,\widetilde C >0$. Furthermore, if for some positive integer $l< m$, $f$ satisfies
\begin{equation*}
|f(\bv,\bu)  - f(\widetilde \bv,\bu) | \le C'(1 + \norm{\bu}_2) \norm{\bv - \widetilde \bv}_2
\end{equation*}
for $\bv\in\R^{l},\bu\in\R^{m-l}$, then $f_\delta$ satisfies a similar property for a different constant $C'>0$.
\end{lemma}
\begin{proof}
For the bound in~\eqref{eq:pseudo_approx_1}, we have
\begin{align*}
\left|f_\delta(\bv) - f_\delta(\widetilde\bv)\right|
&\le C_0\int_{B_2^m(\delta)}\zeta_{\delta,m}(\bw)\left(1 + \norm{\bv}_2 + \norm{\widetilde\bv}_2 + \norm{\bw}_2\right) \norm{\bv -\widetilde\bv}_2 \de \bw\\
&\stackrel{(a)}{\le} C_1 (1 + \norm{\bv}_2 + \norm{\widetilde\bv}_2)\norm{\bv - \widetilde\bv}_2,
\end{align*}
where in $(a)$ we used $\norm{\bw}_2 \le \delta <1$.
Hence, for any $\bs\in\R^m$ with $\norm{\bs}_2 = 1$, we have
\begin{equation*}
|\bs^\sT \grad f_\delta(\bv)|  = \lim_{t\to 0} \frac{|f_\delta(\bv + t\bs) - f_\delta(\bv)|}{|t|} \le C_2\left(1 + \norm{\bv}_2\right).
\end{equation*}
Optimizing over $\bs$ gives the claim.
Meanwhile, the bound in~\eqref{eq:pseudo_approx_2} is obtained as
\begin{align*}
 \left|f(\bv) - f_\delta(\bv)\right|
&\le C_3\int_{B_2^m(\delta)}\zeta_{\delta,m}(\bw) \left(1+ \norm{\bv}_2 + \norm{\bw}_2\right) \norm{\bw}_2  \de \bw\\
&\le C_4(1 + \norm{\bv}_2)\delta.
\end{align*}
Finally, the last property is obtained via a similar argument, namely,
\begin{align*}
\left|f_\delta(\bv,\bu) - f_\delta(\widetilde \bv,\bu)\right|
&\le C_5\int_{B_2^m(\delta)}\zeta_{\delta,m}(\bw,\bz)\left(1 + \norm{\bu}_2 + \norm{\bz}_2 \right) \norm{\bv -\widetilde \bv}_2 \de (\bw,\bz)\\
&\le C_6 (1 + \norm{\bu}_2 )\norm{\bv - \widetilde\bv}_2.
\end{align*}

\end{proof}

Recall now the conditions on the loss and labels in
Assumption~\hyperref[ass:loss-0-bis]{5'}. Define for $L_F$ satisfying this assumption
$L_\delta := L_F \ast \zeta_{\delta,\sfk+1}$. First note, that $L_\delta$ is nonnegative if $L_F$ is, and locally Lipschitz since it is infinitely differentiable.
Furthermore, we have for $\bv,\widetilde\bv\in\R^{k}, v,\widetilde v\in\R$,
\begin{align}
 \left|L_F(\bv,v) - L_F(\widetilde \bv,\widetilde v) \right| &\le  \sfK(1 + \norm{\bv}_2)|v - \widetilde v| + \sfK (1 + |\widetilde v|)\norm{\bv - \widetilde \bv}_2\\
&\le C(1 + \norm{\bv}_2 + \norm{\widetilde \bv}_2 + |v|+ |\widetilde v|) \left(\norm{\bv-\widetilde\bv}_2^2 + |v - \widetilde v|^2\right)^{1/2}.
\end{align}
Hence, by the previous lemma,
we have

\begin{equation}
\label{eq:L_delta_smooth_enough}
 \norm{\grad_{\bv,v}L_\delta(\bv,v)}_2 \le C(1+\norm{\bv}_2 + |v|).
\end{equation}

Now for the labels $y_i(\bx_i)$, note that we can write
$y_i(\bx_i) \stackrel{d}{=} \chi(g(\bTheta^{\star\sT}\bx_i) - \epsilon_i)$
where $\epsilon_i \stackrel{i.i.d.}\sim\Unif([0,1])$ for $i\in[n]$ and 
\begin{equation}
\chi(t) :=  2\cdot\one_{\left\{t \ge 0\right\}} - 1.
\end{equation}
Define the smoothed functions  $g_\delta := (g \ast \zeta_{\delta,\sfk^\star})$ and $\chi_\delta := \chi \ast \zeta_{\delta,1}$  and finally,
for $\bv \in\R^{\sfk^\star}$ and $v\in\R$,  define the labeling function
\begin{equation}
\eta_\delta(\bv, v) :=  \chi_\delta(g_\delta(\bv )- v).
\end{equation}

Once again, $\eta_\delta$ is locally Lipschitz, differentiable and has
\begin{equation}
 \norm{\grad_{\bv,v} \eta_\delta(\bv,v)}_2 \le |\chi_\delta'(g_\delta(\bv) - v)| \big( \norm{\grad_\bv g_\delta(\bv)}_2^2 + 1 \big)^{1/2} \stackrel{(a)}{\le} C(\delta) (1 + \norm{\bv}_2)
\end{equation}
where in $(a)$ we used that $\chi'_\delta$ is continuous and supported on a bounded interval, along with
Lemma~\ref{lemma:pseudo_approx} applied to $(g,g_\delta)$.
Defining
\begin{equation}
    \ell_\delta(\bv,\bv^\star,v) := L_\delta(\bv, \eta_\delta(\bv^\star, v)),
\end{equation}
finally note that
for all $\beta>0$, and random variables $\bv,\bv^\star,V$ as in Eq.~\eqref{eq:sg_generic},
\begin{equation}
\E\left[\exp\{\beta|\ell_\delta(\bv,\bv^\star, V)|\}\right] \stackrel{(a)}\le \E\left[\exp\{C(1 + \norm{\bv}_2)(1  + |\eta_\delta(\bv^\star,V)|) \}\right] \stackrel{(b)}\le C(\beta,\sfR,\sfK)
\end{equation}
where $(a)$ is by Lemma~\ref{lemma:pseudo_approx} and $(b)$ is by boundedness of $\eta_\delta$. 

Hence, we conclude that $\ell_\delta$  satisfies Assumption~\hyperref[ass:loss_labeling_prime]{5''} for fixed $\delta\in(0,1)$. 
Therefore to conclude the proof of 
Eq.~\eqref{eq:main_thm_eq} of Theorem~\ref{thm:main_empirical_univ} under Assumption~\hyperref[ass:loss-0-bis]{5'}
, we only need the following lemma.

In what follows, we use the notation $\widehat R_n(\bTheta;\bX,\by(\bX)),\widehat R_n^\delta(\bTheta;\bX,\beps)$ for the empirical risk with losses $L_F,\ell_\delta$ respectively,
while the penalty function $r$ is the same in both quantities.

\begin{lemma}
In the setting of 
Theorem~\ref{thm:main_empirical_univ} with the alternative Assumption~\hyperref[ass:loss-0-bis]{5'},
for any $\delta\in(0,1)$ and bounded Lipschitz test functions $\varphi$, there exists a constant $C>0$ 
such that
\begin{align}
\nonumber
\lim_{n\to\infty} &
\left|
\E\left[\varphi\left(\min_{\bTheta \in\cC_p}
\widehat R_n(\bTheta;\bX,\by(\bX))
\right)\right]
-
\E\left[\varphi\left(\min_{\bTheta \in \cC_p}
\widehat R_n^\delta(\bTheta;\bX,\eps)
\right)\right]
\right|\le C \delta^{1/2}.
\end{align}
\end{lemma}

\begin{proof}
Define $\bfeta_\delta(\bX;\bepsilon) := \left(\eta_\delta(\bTheta^{\star\sT}\bx_i,\epsilon_i)\right)_{i\in[n]}.$
Let $\widehat \bTheta$ and $\widehat\bTheta_\delta$ denote the minimizers of the empirical risk $\widehat R_n(\bTheta;\bX,\by(\bX))$ and
$\widehat R_n^\delta(\bTheta; \bX, \eps)$ respectively.  
Since $\varphi$ is Lipschitz, it is sufficient to bound
\begin{equation*}
 \E\left[\left|\widehat R_n(\widehat\bTheta; \bX, \by(\bX))  - \widehat R^\delta_n(\widehat\bTheta_\delta; \bX, \beps)\right|\right] \le  C \delta^{1/2}
\end{equation*}
for $C>0$.
First, let us obtain an upper bound on 
\begin{align}
 \widehat R_n(\widehat\bTheta; \bX, \by(\bX))  - \widehat R^\delta_n(\widehat\bTheta_\delta; \bX, \beps)&\le
\left|\widehat R_n(\widehat\bTheta_\delta; \bX, \by(\bX))  -
 \widehat R_n(\widehat\bTheta_\delta; \bX, \bfeta_\delta(\bX;\bepsilon))\right|
\label{eq:term_1_approx} 
 \\
 &\hspace{-5mm}+
\left|\widehat R_n(\widehat\bTheta_\delta; \bX, \bfeta_\delta(\bX;\bepsilon))
 -\widehat R^\delta_n(\widehat\bTheta_\delta; \bX, \eps)\right|.
\label{eq:term_2_approx} 
\end{align}

For the term in~\eqref{eq:term_1_approx}, letting $\{\widehat\btheta_{\delta,j}\}_{j\in[\sfk]}$ be the columns of $\widehat\bTheta_\delta$,
\begin{align*}
 &\left|\widehat R_n(\widehat\bTheta_\delta; \bX, \by(\bX))  - \widehat R_n(\widehat\bTheta_\delta; \bX, \bfeta_\delta(\bX;\bepsilon))\right|\\
 &\hspace{30mm}\le \left|\frac1n \sum_{i=1}^n L_F( \widehat \bTheta_\delta^\sT\bx_i, y_i(\bx_i)) 
 - L_F(\widehat \bTheta_\delta^\sT\bx_i, \eta_\delta(\bTheta^{\star\sT}\bx_i,\epsilon_i) )\right|\\
 &\hspace{30mm}\stackrel{(a)}\le\frac{\sfK}n \sum_{i=1}^n  \left(1 + \norm{\widehat\bTheta_\delta^\sT\bx_i}_1\right) 
 \left|y_i(\bx_i) - \eta_\delta(\bTheta^{\star\sT}\bx_i,\epsilon_i)\right|\\
 &\hspace{30mm}\le\frac{\sfK}{n}\sum_{j=1}^\sfk \norm{\bX\widehat\btheta_{\delta,j}}_2 \norm{\by(\bX) - \bfeta_\delta(\bX;\bepsilon)}_2\\
 &\hspace{30mm}\hspace{15mm}+ \frac{\sfK}{\sqrt{n}}\norm{\by(\bX) - \bfeta_\delta(\bX;\bepsilon)}_2
 \\
 &\hspace{30mm}\stackrel{(b)}
\le\frac{\sfK}{\sqrt{n}}\left(
\sfk\sfR\frac{\norm{\bX}_{\cS_p} }{\sqrt{n}}
+ 1 \right) \norm{\by(\bX) - \bfeta_\delta(\bX;\bepsilon)}_2
\end{align*}
where in $(a)$ we used the condition on $L_F$ in Assumption~\hyperref[ass:loss-0-bis]{5'}, and in $(b)$ we used the notation
 $\norm{\bX}_{\cS_p} := \sup_{\left\{\btheta\in\cS_{p} : \norm{\btheta}_2\le 1\right\}}\norm{\bX \btheta}_2$
and that $\cC_p \subseteq B_2^p(\sfR)$.
Meanwhile, for the term in~\eqref{eq:term_2_approx}, we have
\begin{align*}
\Big|\widehat R_n(\widehat\bTheta_\delta; \bX, &\bfeta_\delta(\bX;\bepsilon))
 -\widehat  R^\delta_n(\widehat\bTheta_\delta; \bX, \eps)\Big|\\
 &\le 
\frac1n \sum_{i=1}^n \left|L_F\left(\widehat\bTheta_\delta^{\sT} \bx_i; \eta_\delta\left(\bTheta^{\star\sT}\bx_i,\epsilon_i\right)\right)
 -
L_\delta\left(\widehat\bTheta_\delta^{\sT}\bx_i; \eta_\delta\left(\bTheta^{\star\sT}\bx_i, \epsilon_i\right)\right)\right|\\
&\stackrel{(a)}{\le}
\frac{C_0 \delta}n \sum_{i=1}^n  \left(1 + \norm{\widehat\bTheta_\delta^{\sT}\bx_i}_2 + \left|\eta_\delta\left(\bTheta^{\star\sT}\bx_i, \epsilon_i\right)\right| \right)\\
&\le C_1\delta \left(1 + \left(\frac{1}{n}\sum_{j=1}^\sfk \norm{\bX\widehat\btheta_{\delta,j}}_2^2\right)^{1/2} \right)\\
&\le C_1\delta \left(1 + \sfk^{1/2}\sfR\frac{\norm{\bX}_{\cS_p}}{\sqrt{n}} \right),
\end{align*}
where in $(a)$ we applied Lemma~\ref{lemma:pseudo_approx}.

By symmetry, we can obtain a similar lower bound on the left-hand side of~\eqref{eq:term_2_approx} (by replacing $\widehat\bTheta_\delta$ throughout with $\widehat\bTheta$), which allows
us to write
\begin{align}
 &\E\left[\left|\widehat R_n(\widehat\bTheta; \bX, \by(\bX))  - \widehat R^\delta_n(\widehat\bTheta_\delta; \bX, \beps)\right|\right]\nonumber\\
 &\hspace{50mm}\le 
C_2 \E\left[\left(1 + \frac{\norm{\bX}_{\cS_p} }{\sqrt{n}}\right)\left( \frac{\norm{\by(\bX) - \bfeta_\delta(\bX;\bepsilon)}_2}{\sqrt{n}} + \delta\right)\right] \\
&\hspace{50mm}\stackrel{(a)}{\le} C_3 \left(\E\left[\frac{\norm{\by(\bX) - \bfeta_\delta(\bX;\bepsilon)}^2_2}{n}\right]^{1/2}  + \delta \right)
\label{eq:final_bound_approx}
\end{align}
for large enough $n$ and $C_2,C_3>0$. Here, in $(a)$ we used Lemma~\ref{lemma:op_norm_x}.

To conclude the proof, we show that the expectation on line~\eqref{eq:final_bound_approx} is bounded by a positive constant times $\delta$. This follows via the following computation:
\begin{align*}
 \E\left[\norm{\by(\bX) -\bfeta_\delta(\bX;\bepsilon)}_2^2\right]&\le 
 2\sum_{i=1}^n
 \E\left[|\chi(g(\bTheta^{\star\sT}\bx_i) - \epsilon_i) - \chi(g_\delta(\bTheta^{\star\sT}\bx_i) - \epsilon_i)|^2\right]\\
&\hspace{10mm}+  
 2\sum_{i=1}^n\E\left[|\chi(g_\delta(\bTheta^{\star\sT}\bx_i) - \epsilon_i) - \chi_\delta(g_\delta(\bTheta^{\star\sT}\bx_i) - \epsilon_i)|^2\right]\\
 &\stackrel{(a)}\le 8 \sum_{i=1}^n \E\left[\P{\left(\epsilon_i\textrm{ between } g(\bTheta^{\star\sT}\bx_i) \textrm{ and }g_\delta(\bTheta^{\star\sT}\bx_i)\big| \bx_i \right)} \right]\\
 &\hspace{10mm}+ 8 \sum_{i=1}^n \E\left[\P{\left(\epsilon_i \in [g_\delta(\bTheta^{\star\sT}\bx_i) -\delta,g_\delta(\bTheta^{\star\sT}\bx_i) +\delta] \big| \bx_i\right)}\right]\\
 &\le 8\sum_{i=1}^n 
\E\left[\left|g(\bTheta^{\star\sT}\bx_i) - g_\delta(\bTheta^{\star\sT}\bx_i)  \right| \right]
+ 16 \sum_{i=1}^n  \delta\\
&\stackrel{(b)}{\le} C_4 \sum_{i=1}^n \delta \left( 1 +  \E\norm{\bTheta^{\star\sT}\bx_i}_2 \right)\\
&\stackrel{(c)}\le C_5 n  \delta,
\end{align*}
for some $C_4,C_5>0$.
Here, in $(a)$ we used $\chi_\delta(t) = 1$ for all $t \ge \delta$ and $-1$ for all $t\le \delta$, in $(b)$ we used
Lemma~\ref{lemma:pseudo_approx} with $(g,g_\delta)$, and in $(c)$ we used subgaussianity of $\bx_i$.

\end{proof}

\subsubsection{
Proof of Theorems~\ref{thm:MainFirst} and~\ref{thm:main_empirical_univ} under Assumption~\ref{ass:loss-0}}
\label{section:pf_under_general_loss}

We will present the proof of Eq.~\eqref{eq:main_thm_eq} 
under Assumption~\ref{ass:loss-0} on the loss
of Theorem~\ref{thm:main_empirical_univ} by deducing it from Lemma~\ref{lemma:smooth_assumption_thm2}. The proof of 
Eq.~\eqref{eq:main_thm_eq_0} of Theorem~\ref{thm:MainFirst} similarly follows from Lemma~\ref{lemma:eq_MainFirst}. In fact, we will first give the proof under the following more general assumption.

\begin{numassumption}[5''']
\label{ass:loss-general}
The following hold:
\begin{enumerate}[$(i)$]
    \item
    $0 \le \ell(\bv,\eps) \le \sfK(1 + \|\bv\|_2^\sfb + |\eps|^\sfb)$ for some constants $\sfK$, $\sfb>0$.
    \item For all $M>0$ sufficiently large, with
    $\cA_M := \{(\bv,\eps) : \ell(\bv,\eps) \le M \},$ 
    there exists $\ell_M :\R^{k  + 1}$ such that 
    \begin{itemize}
           \item[$(a)$] $\|\ell_M\|_{\Lip}\le L_M$ for some $L_M >0$ depending on $M$.
    \item[$(b)$] On $\cA_M$,
    we have $\ell_M(\bv,\eps) = \ell(\bv,\eps)$.
    \item[$(c)$] 
    On $\cA_M^c$, we have
    for some $q \ge 1$,
    \begin{align}
    M <  \ell_M(\bv,\eps)  \le \ell(\bv,\eps) \le 
    C(1  + \ell_M(\bv, \eps)^q).\label{eq:loss-0-c}
    \end{align}
    \end{itemize}
\end{enumerate}
\end{numassumption}

Let us first show that this assumption is indeed more general than Assumption~\ref{ass:loss-0}.
\begin{lemma}
If $\ell$ satisfies Assumption~\ref{ass:loss-0}, then it also satisfies Assumption~\hyperref[ass:loss-general]{5{'''}}.
 \end{lemma}
 
\begin{proof}
If $\ell$ is Lipschitz, then it clearly satisfies Assumption~\hyperref[ass:loss-general]{5'''}.
Let us assume it satisfies the local Lipschitz condition and the polynomial growth condition.

Note that $(i)$ holds trivially, and we need therefore to prove parts $(a)$, $(b)$ $(c)$ of condition $(ii)$.
Without loss of generality, we can assume that $\sfK\ge 1$ and $\sfa \in(0,1)$.

For $M>0$, let $R(M):=[(M+\sfK_0)/\sfK_1]^{1/\sfa}$.
Then 
   \begin{equation}
       \cA_M  \subseteq \{(\eps,\bv): \|\bv\|_2\vee|\eps| \le 
       R(M)\}
   \end{equation}
   so that $\cA_M$ bounded and compact by the local Lipschitz condition on $\ell$.
   Let $L_M := \|\ell|_{\cA_M}\|_{\Lip} \vee 1$, where
   $\|\ell|_{\cA_M}\|_{\Lip}$ denotes the
   Lipschitz constant of the restriction of $\ell$ to $\cA_M$
   (with $\|\ell_{\cA_M}\|_{\Lip}=0$ if $\cA_M$ is empty). 
   Since $\cA_M$ is compact and $f$ is locally Lipschitz, we have $L_M <\infty$ for all $M > 0$.
   Define
   \begin{equation}
       \ell_{M}(\bw) :=  \inf_{\bz \in \R^{k+1}} \big(\ell(\bz) + L_M \|\bz - \bw\|_2\big).
   \end{equation}
   
 \noindent\emph{Condition $(a)$.}  
    To see that $\ell_M$ is Lipschitz with constant $L_M$, note that for any 
    $\bz,\bx \in\R^{k+1}$ we have
    \begin{align*}
    \ell_M(\bw) - \left(\ell(\bz)  + L_M \|\bz - \bx\|_2 \right)&\le
    \ell(\bz)  + L_M \|\bz - \bw\|_2
    - \left(\ell(\bz)  + L_M \|\bz - \bx\|_2 \right)\\
    &\le
    L_M \|\bx - \bw\|_2.
    \end{align*}
    Evaluating the above at a sequence $\bz_j$ so that $\ell(\bz_j) + L_{M}\|\bz_j - \bw| \to  \ell_M(\bw)$, 
    we conclude that
    \begin{equation}
       \ell_M(\bw) - \ell_M(\bx)  \le L_M \|\bx - \bw\|_2\, ,
    \end{equation}
    which proves that $\ell_M$ is $L_M$-Lipschitz

   \noindent\emph{Condition $(b)$.}  To see that that $\ell_M = \ell$ on $\cA_M$, note that
   \begin{equation}
       \ell_{M}(\bw) =  \left(\min_{\bz \in \cA_M} \big(\ell(\bz) + L_M \|\bz - \bw\|_2\big) \right)\wedge \left(
       \inf_{\bz \in \cA_M^c} \big(\ell(\bz) + L_M \|\bz - \bw\|_2\big)\right).
   \end{equation}
   If $w\in\cA_M$, the first quantity is upper bounded by $M$ corresponding to the choice $\bz = \bw$, while the second quantity strictly larger than  $M$, since $L_M =\|\ell_{\cA_M}\|_\Lip>0$.
Therefore, for $\bw\in \cA_M$,  $\ell_{M}(\bw) = \min_{\bz \in \cA_M} (\ell(\bz) + L_M \|\bz - \bw\|_2)$.
Of course $\ell_M(\bw)\le \ell(\bw)$. On the other hand, for any $\bz \in \cA_M$,
$\ell(\bz) + L_M \|\bz - \bw\|_2\ge \ell(\bw)$ by the Lipschitz property, and therefore $\ell_M(\bw)=\ell(\bw)$. 

 \noindent\emph{Condition $(c)$.}    To prove $\ell_M(\bw) > M$ for $\bw \in\cA_M^c$, assume that infimum in the definition of $\ell_M$ is achieved inside $\cA_M$ at point $\bz_0$ (otherwise, the property clearly holds),
 sot that $\ell_M(\bw) = \ell(\bz_0)+L_M\|\bw-\bz_0\|$. 
 Choose a point $\bu\in S(\bw):=\argmin\{\|\tilde\bu - \bw\|_2: \tilde\bu \in\cA_M\}$, and assume that $\bz_0
 \not \in S_M(\bw)$ (otherwise, we clearly have the desired bound). By continuity and compactness of $\cA_M$, we have $\ell(\bu)= M$. Now write
   \begin{align*}
      \ell_M(\bw) - M &=
      \ell(\bz_0) - \ell(\bu)+  L_M \|\bz_0 - \bw\|_2  \\
      &\ge L_M\left(-\|\bz_0 - \bu\|+  \|\bz_0 - \bw\|_2    \right)> 0.
   \end{align*}

   Finally, to prove that the second inequality in Eq.~\eqref{eq:loss-0-c},
   we have
   \begin{align*}
       \ell_M(\bw) & \ge \min_{\bz}\Big(-\sfK_0+\sfK_1\|\bz\|^\sfa+L_M\|\bz-\bw\|\Big) = \ell_M(\|\bw\|)\, ,\\
       \ell_M(t) & = \min_{s\ge 0} \Big(-\sfK_0+\sfK_1 s^\sfa+L_M|t-s|\Big)\, .
   \end{align*}
   Simple calculus reveals that $\ell_M(t) = -\sfK_0+\sfK_1 t^\sfa$ for $t\ge (\sfK_1/L_M)^{1/(1-\sfa)}:=R_0(M)$.
   Hence, for $\bw\in\cA_M^c$, $\|\bw\|\ge R_0(M)$, we have
   $\ell_M(\bw) \ge M \vee( -\sfK_0+\sfK_1\|\bw\|^\sfa)$, whence, for a sufficiently small $c_M$,
   \begin{align*}
       \ell_M(\bw) & \ge c_M(1+\|\bw\|^\sfa)\, ,
    \end{align*}
which implies the desired upper bound.
\end{proof}


We now extend the universality results to Assumption~\hyperref[ass:loss-general]{5'''}.
\begin{proof}[
Proof of Eq.~\eqref{eq:main_thm_eq_0} and Eq.~\eqref{eq:main_thm_eq}  under Assumption~{5'''}.
]

 Take $M>0$ sufficiently large and recall $\ell_M$ stated in the assumption.
Let 
\begin{equation}
    \widehat R_M(\bTheta) := \frac1n \sum_{i=1}^n \ell_M(\bTheta^\sT\bx_i, \bTheta^{\star\sT}\bx_i, \eps_i)+ 
    r(\bTheta),
\end{equation}
and let $\widehat R(\bTheta)$ denote the empirical risk with $\ell$ instead of $\ell_M$.

We first show that 
\begin{equation*}
    \min_{\bTheta \in\cC_p} \widehat R(\bTheta) \le 
    \min_{\bTheta \in\cC_p} \widehat R_M(\bTheta) + \Delta_M
\end{equation*}
for some $\Delta_M$ such that 
\begin{equation}
\label{eq:Delta_error_limit}
    \lim_{M\to\infty } \lim_{n\to\infty} \E[\Delta_M].
\end{equation}

Let $\widehat \bTheta_M$ be the minimizer of $\widehat R_M$, and  $\widehat\bTheta_M^{\up{i}}$ the minimzer of 

$$
\widehat R_M^\up{i} := 
\frac1n \sum_{j\neq i} \ell_M(\bTheta^\sT\bx_j, \bTheta^{\star\sT}\bx_j, \eps_j)+ 
    r(\bTheta).
$$

Clearly, we have
\begin{equation*}
    \widehat R_M^\up{i}(\widehat \bTheta_M) \ge \widehat R_M^{\up{i}} (\widehat\bTheta_M^{\up{i}}),\quad\textrm{and}\quad
    \widehat R_M(\widehat\bTheta_M)  \le \widehat R_M(\widehat\bTheta_M^\up{i}).
\end{equation*}
The second relation gives 
\begin{equation*}
    \widehat R_M^\up{i}(\widehat \bTheta_M) + \ell_M(\widehat\bTheta_M^\sT\bx_i,\bTheta^{\star\sT}\bx_i, \eps_i)
    \le \widehat R_M^{\up{i}} (\widehat\bTheta_M^{\up{i}})+
    \ell_M(\widehat\bTheta_M^{\up{i}\sT}\bx_i,\bTheta^{\star\sT}\bx_i, \eps_i).
\end{equation*}
Applying the first relation then shows that
\begin{equation}
\label{eq:loo_comparison}
    \ell_M(\widehat\bTheta_M^\sT\bx_i,\bTheta^{\star\sT}\bx_i, \eps_i)
    \le
    \ell_M(\widehat\bTheta_M^{\up{i}\sT}\bx_i,\bTheta^{\star\sT}\bx_i, \eps_i).
\end{equation}
\noindent
Let us denote $\ell_M(\bTheta;i)\equiv 
\ell_M(\bTheta^\sT\bx_i,\widehat\bTheta^{\star\sT}\bx_i, \eps_i)$ and $\ell(\bTheta;i)
\equiv \ell(\bTheta^sT\bx_i,\widehat\bTheta^{\star\sT}\bx_i, \eps_i)$.
Now since $\ell\neq \ell_M$ only when $\cA_M^c= \{\ell_M > M\}$, we have
\begin{align*}
    \ell_M(\widehat\bTheta_M;i )
    &=
    \ell(\widehat\bTheta_M;i)
     -\left(
     \ell(\widehat\bTheta_M;i)
     -
     \ell_M(\widehat\bTheta_M;i)
     \right) \one_{\ell_M(\widehat\bTheta_M;i) > M}.
\end{align*}
We can upper bound
\begin{align*}
&\left(
     \ell(\widehat\bTheta_M;i)
     -
     \ell_M(\widehat\bTheta_M;i)
     \right) \one_{\left\{
\ell_M(\widehat\bTheta_M;i) > M
     \right\}}\\
     &\stackrel{(a)}{\le} \left( C( 1 + \ell_M(\widehat\bTheta_M;i)^q) - \ell_M(\widehat\bTheta_M; i)\right)
     \one_{\left\{
\ell_M(\widehat\bTheta_M;i) > M
     \right\}}\\
     &\stackrel{(b)}{\le}  C_1 \left(1 +  \ell_M(\widehat\bTheta_M^\up{i}; i)^q\right) \one_{\left\{ \ell_M(\widehat\bTheta_M^\up{i};i)> M \right\}}
\end{align*}
where $(a)$ follows by the assumption on $\ell_M$ and $(b)$ follows by
Eq.~\eqref{eq:loo_comparison}.
Defining
\begin{equation*}
    \Delta_M := \frac{C_1}n \sum_{i=1}^n 
    \left(1 +  \ell_M(\widehat\bTheta_M^\up{i}; i)^q\right) \one_{\left\{ \ell_M(\widehat\bTheta_M^\up{i};i) > M \right\}},
\end{equation*}
we have by taking $M$ sufficiently large,
\begin{align*}
    \E\left[|\Delta_M|\right] 
    &\le C_2 \E\left[\left(1 + \|\widehat\bTheta_M^\up{i}\bx_i\|_2^{2\sfb} +
    \|\bTheta^{\star\sT}\bx_i\|_2^{2\sfb}  + |\eps_i|^{2\sfb}
    \right)\right]^{1/2} \\
    &\qquad\cdot\P\left(
     \|\widehat\bTheta_M^\up{i}\bx_i\|_2^{\sfb} +
    \|\bTheta^{\star\sT}\bx_i\|_2^{\sfb}  + |\eps_i|^{\sfb} \ge c_1 M
    \right)^{1/2}\\
    &\le C_3 e^{-c_2 M}
\end{align*}
for some constants $C_3,c_2 > 0$ independent of $n$, by the subgaussian assumption and the assumption on $\cC_p.$
This implies the desired claim of Eq.~\eqref{eq:Delta_error_limit}.

Since $\ell_M \le \ell$ by assumption, we conclude then that 
\begin{equation*}
\min_{\bTheta \in\cC_p} \widehat R_M(\bTheta) \le \min_{\bTheta \in\cC_p} \widehat R(\bTheta) \le 
    \min_{\bTheta \in\cC_p} \widehat R_M(\bTheta) + \Delta_M.
\end{equation*}
so that for any Lipschitz test function $\psi$, we have
\begin{equation*}
\E\left[\psi\left(\min_{\bTheta \in\cC_p} \widehat R_M(\bTheta)\right)\right] -
\E\left[\psi\left(
\min_{\bTheta \in\cC_p} \widehat R(\bTheta) \right)\right]
\le  \|\psi\|_{\Lip}\E[|\Delta_M|].
\end{equation*}

Since for any fixed $M>0$ sufficiently large, $\ell_M$ is Lipschitz,
it satisfies the integrability condition of
Assumption~\hyperref[ass:loss_labeling_prime]{5''}.
Then 
by a smoothing argument analogous to the previous section, 
one deduces from Lemma~\ref{lemma:smooth_assumption_thm2}
that Eq.~\eqref{eq:main_thm_eq} holds,
so that taking $n\to\infty$ followed by $M\to\infty$ completes the proof.
\end{proof}

\subsubsection{Proof of the bounds in Eq.~\eqref{eq:prob_bounds} of Theorem~\ref{thm:main_empirical_univ}}
Having proved that Eq.~\eqref{eq:main_thm_eq} of Theorem~\ref{thm:main_empirical_univ} holds under both assumptions on the loss and labels, we show that the bounds in~\eqref{eq:prob_bounds} 
are a direct consequence.

Fix $\delta>0$ and $\rho\in\R$ and define
$\chi_\delta : \chi \ast \zeta_{\delta,1}$ as in the previous section, where we again have $\chi(t) = \one_{t\ge0}$.
Recall that $\chi_{\delta,\rho}$ satisfies
\begin{equation}
\nonumber
  \one_{\{t \ge \rho + \delta\}}  \le \chi_{\delta}(t - \rho) \le \one_{\{t\ge\rho - \delta \}}.
\end{equation}
 and that$\norm{\chi_{\delta,\rho}}_\Lip = C(\delta)$ for some constant depending only on $\delta$. 
 Hence, we can apply~\eqref{eq:main_thm_eq} with $\psi(t)= \chi_{\delta}(t - \rho)$ to conclude
\begin{align*}
   \limsup_{n\to\infty}\P\left( \widehat R_n^\star \left(\bX,\by\left(\bX\right)\right) \ge \rho + \delta  \right) 
   &\le \limsup_{n\to\infty}\E\left[\chi_{\delta}\left(\widehat R_n^\star\left(\bX,\by\left(\bX\right) \right) - \rho\right)\right]\\
   &= \limsup_{n\to\infty}\E\left[\chi_{\delta}\left(\widehat R_n^\star\left(\bG,\by\left(\bG\right) \right) - \rho\right)\right]\\
   &\le
   \limsup_{n\to\infty}\P\left( \widehat R_n^\star \left(\bG,\by\left(\bG\right)\right) \ge \rho - \delta  \right),
\end{align*}
which establishes the first bound in~\eqref{eq:prob_bounds}. The second bound follows via a similar argument.
\qed


\subsection{Universality of the minimum over the discretized space: Proof of Lemma~\ref{prop:eps_net_univ}}
\label{section:universality_eps_net_proof}
Recall the minimization problem over the set $\cN_\alpha^\sfk$ defined in~\eqref{eq:opt}. 
We show in this section that Lemma~\ref{prop:eps_net_univ} is a direct consequence of
Lemma~\ref{lemma:softmin_univ}.
\begin{proof}[Proof of Lemma~\ref{prop:eps_net_univ}]

Fix $\alpha>0$.
Let us first bound the derivative of the free energy.
Define the probability mass function for $\bTheta\in\cN_\alpha^\sfk$,
\begin{equation}
\nonumber
p(\bTheta;\bX, t) :=\frac{e^{-t n \widehat 
R_n(\bTheta; \bX,\beps)}}{\sum_{\bTheta\in\cN_\alpha^\sfk}
e^{- t n \widehat R_n(\bTheta;\bX,\beps)}}
\end{equation}
and define similalry $p(\bTheta;\bG,t)$ for the Gaussian model.
Recall that the Shannon entropy of a distribution $H(p(\,\cdot\,;\bX,t)) := -\sum_{\bTheta\in\cN_\alpha^\sfk} 
p(\bTheta; \bX, t) \log p(\bTheta ; \bX, t)$ satisfies
\begin{equation}
 0 \le H(p(\bTheta;\bX,t)) \le \log \left|\cN_\alpha^\sfk \right| = \log  
C_0(\alpha,\sfR)^{p\sfk}\label{eq:entropy_bound}
\end{equation}
for some $C_0>0$.
Therefore, the derivative of the free energy with respect to $t$ can be bounded as
\begin{align*}
 \frac{\partial}{\partial t} f_\alpha(t,\bX) 
 &=\frac{1}{t} \frac{\sum_{\bTheta}
 \widehat R_n(\bTheta; \bX,\beps)
 e^{ 
-tn \widehat R_n(\bTheta; \bX,\beps)} }{
\sum_{\bTheta} e^{-t n \widehat R_n(\bTheta; \bX,\beps)}
}  + \frac{1}{t^2 n} \log\sum_\bTheta e^{- t n \widehat R_n(\bTheta; \bX,\beps)}\\
 &=-\frac{1}{t^2n} \frac{\sum_{\bTheta}
 \log  p(\bTheta; \bX,t)
 e^{ 
-tn \widehat R_n(\bTheta; \bX,\beps)} }{
\sum_{\bTheta} e^{-t n \widehat R_n(\bTheta; \bX,\beps)}
}\\
&=\frac{1}{t^2 n} H\left(p(\bTheta,t)\right)\\
&\stackrel{(a)}{\le}
C_1(\alpha)\frac{p(n)}{n} \frac{1}{t^2}
\end{align*}
where $(a)$ follows by~\eqref{eq:entropy_bound}.
This bound on the derivative implies that $f_\alpha(\beta,\bX)$ approximates $\Opt_n^\alpha(\bX,\beps)$ uniformly:
\begin{align*}
    \left| 
    f_\alpha(\beta,\bX) - \Opt_n^\alpha(\bX,\beps)
    \right|
    &=
    \lim_{s\to\infty}\left| 
    f_\alpha(\beta,\bX) -
    f_\alpha(s,\bX)
    \right|\\
    &\le C_1(\alpha) \frac{p(n)}{n} \lim_{s\to\infty} \int_{\beta}^\infty \frac{1}{t^2}\de t \\
    &= C_1(\alpha) \frac{p(n)}{n} \frac1\beta.
\end{align*}
Clearly, a similar bound holds with $\bG$ replacing $\bX$. 
Hence, we have
\begin{align*}
&\lim_{n\to\infty} \left|\E\left[\psi\left(\Opt_n^\alpha(\bX,\beps)\right) - \psi\left(\Opt_n^{\alpha}(\bG,\beps)\right)\right]\right| \\
&\hspace{25mm}\le  
\lim_{n\to\infty } \left| \E\left[ \psi( f_{\alpha}(\beta,\bX)) - \psi (f_{\alpha}(\beta,\bG))\right]\right|+ 
\frac{2\norm{\psi'}_\infty C_1(\alpha)}{\beta} 
\lim_{n\to\infty}\frac{p(n)}{n}\\
&\hspace{25mm}\stackrel{(a)}{=}  \frac{\norm{\psi'}_\infty C_2(\alpha)}{\beta}
\end{align*}
where $(a)$ follows from Lemma~\ref{lemma:softmin_univ} along with the assumption that $p(n)/n \to \sgamma$.
Sending $\beta\to\infty$ completes the proof.
\end{proof}

\subsection{Deferred proofs of technical lemmas for the universality of the free energy}
\label{section:proof_softmin_univ}
Let us recall the interpolating paths 
 $\bu_{t,i}:= \sin(t) \left(\bx_i  - \bmu_\bg\right)+ \cos(t) \left(\bg_i - \bmu_\bg\right) + \bmu_\bg$ and 
 $\widetilde\bu_{t,i}:= \cos(t) \left(\bx_i  - \bmu_\bg\right)- \sin(t) \left(\bg_i - \bmu_\bg\right)$
 defined in~\eqref{eq:slepian} for $t\in[0,\pi/2]$ and $i\in[n]$, and the associated matrix $\bU_t$ whose $i$th row is
$\bu_{t,i}$. Further, recall the gradient notation introduced in Section~\ref{section:proof_outline}:
\begin{equation}
\nonumber
   \grad \ell(\bv;v) = \left( \frac{\partial}{\partial v_k}\ell\left(\bv;v\right)\right)_{k\in[\sfk]}\hspace{-3mm}
\end{equation}
for $\bv\in\R^\sfk,v\in\R$ and the shorthand
 $\widehat\ell_{t,i}(\bTheta)$ for  $\ell\left(\bTheta^\sT \bu_{t,i};\epsilon_i\right)$. Now, recall the definition in~\eqref{eq:bd_def}:
\begin{equation}
\nonumber
 \widehat\bd_{t,i}(\bTheta) :=
\bTheta \grad\widehat\ell_{t,i}(\bTheta).
\end{equation}
Finally, recall the probability mass function and its associated expectation defined in~\eqref{eq:inner_def} 
\begin{equation}
\nonumber
p^\up{i}(\bTheta_0;t) :=
\frac{
e^{-\beta\left(\sum_{j\neq 
i}\widehat\ell_{t,j}(\bTheta_0) +  n r(\bTheta_0)\right)
}}{\sum_{\bTheta}
e^{-\beta\left(\sum_{j\neq 
i}\widehat\ell_{t,j}(\bTheta) +  n r(\bTheta)\right)
}}
\quad\textrm{and}\quad
\inner{\;\cdot\;}^\up{i}_\bTheta:=
\sum_{\bTheta} (\,\cdot\,)p^\up{i}(\bTheta;t),
\end{equation}
where all sums are implicitly over $\cN_\alpha$; the minimal $\alpha-$net of $\cC_p$ introduced in Section~\ref{section:proof_outline}.

Let us use $\E_\up{i}$ to denote the expectation conditional on $(\bG^\up{i},\bX^\up{i},\bepsilon^\up{i})$; 
the feature vectors and the noise vector with the $i$th sample set to $0$ (or equivalently, since 
the samples are i.i.d, the expectation with respect to $(\bx_i,\bg_i,\epsilon_i)$.)

\begin{lemma}
\label{lemma:dct_bound}
Suppose the assumptions of Theorem~\ref{thm:MainFirst} hold, along with the alternative Assumption~\hyperref[ass:loss_labeling_prime]{5''}.
For all $n\in\Z_{>0}, t\in[0,\pi/2]$ and $\beta>0$, we have
\begin{equation}
\label{eq:square_integrability_main_lemma}
\sup_{\bTheta_0\in\cS_p^\sfk}
\E_\up{1}\left[\left(
\frac{
\widetilde\bu_{t,1}^\sT
\widehat\bd_{t,1}(\bTheta_0)
 e^{-\beta\widehat\ell_{t,1}(\bTheta_0)}}
{\inner{e^{-\beta\widehat\ell_{t,1}(\bTheta)}}_\bTheta^\up{1}}\right)^2\right]\le C(\beta),
\end{equation}
for some $C(\beta)$.
In particular, we have for any fixed $\beta>0$ and bounded differentiable function $\psi:\R\to\R$ with bounded Lipschitz derivative,
\begin{equation}
\label{eq:dct_bound_softmax}
 \int_{0}^{\pi/2} \sup_{n \in \Z_{>0} }\left|\E\left[\frac{\partial}{\partial t}\psi(f_\alpha(\beta, 
\bU_t)) \right]\right| \de t < \infty,
\end{equation}
where $f_\alpha(\beta,\cdot)$ is the free energy defined in~\eqref{eq:free_energy_def}.
\end{lemma}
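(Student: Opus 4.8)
The plan is to establish the pointwise second-moment bound \eqref{eq:square_integrability_main_lemma} first, and then to obtain \eqref{eq:dct_bound_softmax} from it as a short corollary via the derivative formula \eqref{eq:derivative_explicit}; essentially all the work sits in the former.

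To prove \eqref{eq:square_integrability_main_lemma}, write $N(\bTheta_0):=\widetilde\bu_{t,1}^\sT\widehat\bd_{t,1}(\bTheta_0)\,e^{-\beta\widehat\ell_{t,1}(\bTheta_0)}$ and $D:=\langle e^{-\beta\widehat\ell_{t,1}(\bTheta)}\rangle^\up{1}_\bTheta$. The first step handles the numerator: expanding $\widehat\bd_{t,1}$ as in \eqref{eq:bd_def} gives
\[
\widetilde\bu_{t,1}^\sT\widehat\bd_{t,1}(\bTheta_0)=(\widetilde\bu_{t,1}^\sT\bTheta_0)\,\grad\widehat\ell_{t,1}(\bTheta_0)+(\widetilde\bu_{t,1}^\sT\bTheta^\star)\,\grad^\star\widehat\ell_{t,1}(\bTheta_0),
\]
which is a scalar built only from the $O(1)$-dimensional vectors $\widetilde\bu_{t,1}^\sT\bTheta_0$, $\widetilde\bu_{t,1}^\sT\bTheta^\star$, $\bTheta_0^\sT\bu_{t,1}$, $\bTheta^{\star\sT}\bu_{t,1}$ and the scalar $\eps_1$. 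By Assumption~\ref{ass:X}, the interpolation \eqref{eq:slepian} and Assumption~\ref{ass:noise}, every coordinate of these vectors and $\eps_1$ is subgaussian with $\psi_2$-norm bounded by a constant depending only on $\sOmega$, uniformly over $\bTheta_0\in\cS_p^\sfk$, $t\in[0,\pi/2]$ and $n$ (as already recorded in Section~\ref{section:proof_outline}); since under Assumption~\hyperref[ass:loss_labeling_prime]{1'} the map $\grad\ell$ is Lipschitz and $\grad\eta$ is Lipschitz --- so $\grad\ell$ grows at most linearly and $\eta$ at most quadratically --- the vectors $\grad\widehat\ell_{t,1}(\bTheta_0)$ and $\grad^\star\widehat\ell_{t,1}(\bTheta_0)$ are fixed-degree polynomials in these subgaussian scalars, and hence $\E_\up{1}[(\widetilde\bu_{t,1}^\sT\widehat\bd_{t,1}(\bTheta_0))^4]\le C(\sOmega)$. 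The second step lower-bounds the denominator: by convexity of $x\mapsto e^{-\beta x}$ and Jensen's inequality, $D\ge e^{-\beta\langle\widehat\ell_{t,1}(\bTheta)\rangle^\up{1}_\bTheta}$, so bounding $e^{-\beta\widehat\ell_{t,1}(\bTheta_0)}\le1$ (valid as $\ell\ge0$) and applying Cauchy--Schwarz,
\[
\E_\up{1}\!\big[(N(\bTheta_0)/D)^2\big]\le\E_\up{1}\!\big[(\widetilde\bu_{t,1}^\sT\widehat\bd_{t,1}(\bTheta_0))^2\,e^{2\beta\langle\widehat\ell_{t,1}(\bTheta)\rangle^\up{1}_\bTheta}\big]\le C(\sOmega)^{1/2}\,\E_\up{1}\!\big[e^{4\beta\langle\widehat\ell_{t,1}(\bTheta)\rangle^\up{1}_\bTheta}\big]^{1/2}.
\]
The remaining term is controlled by a second use of Jensen: since the weights $p^\up{1}(\bTheta;t)$ are non-random under $\E_\up{1}$, $\E_\up{1}[e^{4\beta\langle\widehat\ell_{t,1}(\bTheta)\rangle^\up{1}_\bTheta}]\le\langle\E_\up{1}[e^{4\beta\widehat\ell_{t,1}(\bTheta)}]\rangle^\up{1}_\bTheta$, and for each fixed $\bTheta$ the factor $\E_\up{1}[e^{4\beta|\widehat\ell_{t,1}(\bTheta)|}]$ is at most $C(4\beta,\sfR,\sfK)$ by \eqref{eq:exp_integrability}, since $\bTheta^\sT\bu_{t,1}$, $\bTheta^{\star\sT}\bu_{t,1}$ and $\eps_1$ all have $\psi_2$-norm at most $2(\sfR+1)\sfK$. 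This yields \eqref{eq:square_integrability_main_lemma} with $C(\beta)=C(\sOmega)^{1/2}C(4\beta,\sfR,\sfK)^{1/2}$.

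To deduce \eqref{eq:dct_bound_softmax}, note first that, $\cN_\alpha^\sfk$ being finite and $\ell,\eta$ being $C^1$ under Assumption~\hyperref[ass:loss_labeling_prime]{1'}, the map $t\mapsto\psi(f_\alpha(\beta,\bU_t))$ is differentiable with derivative given by \eqref{eq:derivative_explicit}. Using $\norm{\psi'}_\infty<\infty$ and the triangle inequality,
\[
\Big|\E\big[\tfrac{\partial}{\partial t}\psi(f_\alpha(\beta,\bU_t))\big]\Big|\le\frac{\norm{\psi'}_\infty}{n}\sum_{i=1}^n\E\!\left[\frac{\langle|\widetilde\bu_{t,i}^\sT\widehat\bd_{t,i}(\bTheta)|\,e^{-\beta\widehat\ell_{t,i}(\bTheta)}\rangle^\up{i}_\bTheta}{\langle e^{-\beta\widehat\ell_{t,i}(\bTheta)}\rangle^\up{i}_\bTheta}\right].
\]
Writing the $i$-th summand (with $N,D$ as above but at index $i$) as $\E\big[\sum_{\bTheta_0\in\cN_\alpha^\sfk}p^\up{i}(\bTheta_0;t)\,|N(\bTheta_0)|/D\big]$ and conditioning on $(\bG^\up{i},\bX^\up{i},\bepsilon^\up{i})$ makes $p^\up{i}(\cdot;t)$ non-random; for each $\bTheta_0\in\cN_\alpha^\sfk\subseteq\cS_p^\sfk$, Cauchy--Schwarz and \eqref{eq:square_integrability_main_lemma} give $\E_\up{i}[|N(\bTheta_0)|/D]\le C(\beta)^{1/2}$, so by the tower rule the $i$-th summand is $\le C(\beta)^{1/2}$ because $\sum_{\bTheta_0}p^\up{i}(\bTheta_0;t)=1$. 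Hence $\sup_{n}\big|\E[\partial_t\psi(f_\alpha(\beta,\bU_t))]\big|\le\norm{\psi'}_\infty C(\beta)^{1/2}$ for all $t\in[0,\pi/2]$, and integrating this uniform bound over $t$ gives \eqref{eq:dct_bound_softmax}.

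The step I expect to be the main obstacle is the denominator estimate: $D=\langle e^{-\beta\widehat\ell_{t,1}(\bTheta)}\rangle^\up{1}_\bTheta$ admits no pathwise lower bound --- since $\widehat\ell_{t,1}(\bTheta)$ is unbounded, individual terms $e^{-\beta\widehat\ell_{t,1}(\bTheta)}$ can be arbitrarily small --- so $1/D$ can only be controlled in $L^2$, through the two applications of Jensen's inequality above. This is precisely where the exponential-integrability clause of Assumption~\hyperref[ass:loss_labeling_prime]{1'} enters in an essential way (a Lipschitz bound on $\ell$ would also do, but $C^{1,1}$ growth alone would not make \eqref{eq:exp_integrability} hold). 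The remaining ingredients --- uniform subgaussianity of the $O(1)$-dimensional projections, and polynomial growth of the gradients under Assumption~\hyperref[ass:loss_labeling_prime]{1'} --- are routine, but must be tracked carefully so that every constant depends only on $\sOmega$ and $\beta$.
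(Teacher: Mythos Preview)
Your proposal is correct and follows essentially the same approach as the paper: bound the fourth moment of $\widetilde\bu_{t,1}^\sT\widehat\bd_{t,1}(\bTheta_0)$ via the polynomial growth of $\grad\ell,\grad\eta$ on subgaussian inputs, drop $e^{-\beta\widehat\ell_{t,1}(\bTheta_0)}\le 1$, then control $1/D$ in $L^4$ through Jensen and the exponential-integrability clause~\eqref{eq:exp_integrability}, and deduce the second part from the derivative formula~\eqref{eq:derivative_explicit} by conditioning on the leave-one-out data. The only cosmetic difference is that the paper applies Jensen to $x\mapsto 1/x$ (obtaining $1/D\le\langle e^{\beta\widehat\ell}\rangle$) whereas you apply it to $x\mapsto e^{-\beta x}$ (obtaining $D\ge e^{-\beta\langle\widehat\ell\rangle}$); both routes land on the same bound $\E_\up{1}[1/D^4]\le\langle\E_\up{1}[e^{4\beta\widehat\ell_{t,1}(\bTheta)}]\rangle^\up{1}_\bTheta$.
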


\begin{proof}
Let us expand this via the definition of $\widehat\bd_{t,1}(\bTheta_0)$ in~\eqref{eq:bd_def}:
\begin{align}
\nonumber
\widehat\bd_{t,1}(\bTheta_0)^\sT \widetilde\bu_{t,1} &=
\sum_{k=1}^\sfk\partial_k \ell\left(\bTheta_0^\sT \bu_{t,1}; 
\epsilon_1 \right) \btheta_k^\sT \widetilde\bu_{t,1}
\label{eq:bd_expansion}
\end{align}
where we defined 
\begin{equation}
\nonumber
\partial_k \ell(\bv;v) := \frac{\partial}{\partial v_k} \ell(\bv;v),
\end{equation}
for $\bv\in\R^\sfk$ and $v\in\R$. 
Now recall the condition on $\ell$ and $\eta$ in Assumption~\hyperref[ass:loss_labeling_prime]{5''} and note that 
this allows us to bound
\begin{align*}
\left|\partial_k \ell(\bv;v)\right|
&\le C_1 \left( 
\norm{\bv}_2 + |v|  + 1
\right)
\end{align*}
for some $C_1>0$.
However, for any fixed $m>0$ and $\bTheta\in\cS_p^\sfk$ we have
\begin{align}
\nonumber
 \E\left[\norm{\bTheta^\sT\bu_{t,1}}_2^m\right]\le 
 C_2(\sfk)\sum_{k=1}^\sfk\E\left[\left(\btheta_k^\sT\bu_{t,1}\right)^m \right] \le C_3
\end{align}
 since $\sup_{t\in[0,\pi/2]}\sup_{\btheta\in\cS_p} \norm{\btheta^\sT \bu_{t,1}} \le 2\sfR\sfK$ by Assumption~\ref{ass:X}. 
A similar bound clearly holds for $\widetilde\bu_{t,1}$. 
Hence, 
using that $\sfk$ are assumed to be fixed, an application of H\"older's gives
\begin{equation}
\nonumber
\E\left[
\left(\widehat\bd_{t,1}(\bTheta_0)^\sT \widetilde\bu_{t,1}\right)^4
\right]  \le C_4
\end{equation}
for some $C_4>0$, where we also used that $\epsilon_1$ is assumed to be subgaussian.
Therefore, we have
\begin{align*}
\E_\up{1}\left[\left(
\frac{
\widehat\bd_{t,i}(\bTheta_0)
^\sT\widetilde\bu_{t,i} e^{-\beta\widehat\ell_{t,i}(\bTheta_0)}}
{\inner{e^{-\beta\widehat\ell_{t,i}(\bTheta)}}_\bTheta^\up{i}}\right)^2\right]
&\stackrel{(a)}{\le}
C_5^{1/2}\E_\up{i}\left[  
\frac{1}
{\left(\inner{e^{-\beta\widehat\ell_{t,i}(\bTheta)}}_\bTheta^\up{i}\right)^4}\right]^{1/2} \\
&\stackrel{(b)}{\le}
C_5^{1/2}
\left(\inner{\E_\up{i}\left[e^{4\beta\widehat\ell_{t,i}(\bTheta)}\right]}_\bTheta^\up{i} \right)^{1/2}\\
&\stackrel{(c)}{\le} C_5^{1/2} C(\beta)^{1/2}
\end{align*}
for $C_5>0$ and $C(\beta)>0$.
Here, in $(a)$ we used that $\ell$ and $\beta$ are nonnegative, in $(b)$ we used Jensen's and that $p^\up{i}(\bTheta;t)$ as defined in~\eqref{eq:inner_def} is independent of $(\bx_i,\bg_i,\epsilon_i)$, and in $(c)$ we
used the integrability condition of Assumption~\hyperref[ass:loss_prime]{5''}. 
Taking the supremum over $\bTheta_0 \in \cS_p^\sfk$ establishes the first inequality in the statement of the lemma.

To establish the second inequality, recall the explicit form of the derivative from~\eqref{eq:derivative_explicit} 
and note that $\bu_{t,i}$ are i.i.d. for different $i$ so that
\begin{align*}
\left| \E\left[\frac{\partial}{\partial t}\psi(f_\alpha(\beta,\bU_t)) \right]\right|
&\le 
\norm{\psi'}_\infty \E\left[\sup_{\bTheta_0 \in\cC_p^\sfk}  \E_{\up{1}}\left[
\left(
\frac{
\widehat\bd_{t,1}(\bTheta_0)
^\sT\widetilde\bu_{t,1} e^{-\beta\widehat\ell_{t,1}(\bTheta_0)}}
{\inner{e^{-\beta\widehat\ell_{t,1}(\bTheta)}}_\bTheta^\up{1}}\right)^2
\right]^{1/2} \right]\\
&\le \norm{\psi'}_\infty C_6(\beta)
\end{align*}
for $C_6(\beta)>0$, where we used that $\cC_p \subseteq \cS_p$ by Assumption~\ref{ass:set}. Hence, the bound holds uniformly in $t\in[0,\pi/2]$ and $n\in\Z_{>0}$ as desired.

\end{proof}

\section{Auxiliary lemma for Theorem~\ref{thm:universality_bounds}}
\label{section:proof_univerality_bounds}
\begin{lemma}
\label{lemma:F_diff}
In the setting of Section~\ref{section:outline_proof_univerality_bounds},
for all $t\ge s > 0$ and any $\delta>0$, we have
\begin{equation}
\lim_{n\to\infty}\P\left(\left\{\left|  F_n^\bx(t,\bX) - F_n^\bg(t,\bX)  \right| > \delta \right\} \bigcap \cG_{n,s}\right) = 0.
\end{equation}
\end{lemma}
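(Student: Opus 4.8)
For fixed $n$, $F_n^\bx(t,\bX)$ and $F_n^\bg(t,\bX)$ are infima of two \emph{deterministic} functions, $R_n^\bx$ and $R_n^\bg$, over \emph{the same} random set $\cF_{n,t}(\bX):=\{\btheta\in\cC_p:\widehat R_n(\btheta;\bX,\by(\bX))\le t\}$. The plan is to first reduce the lemma to the deterministic estimate $\varepsilon_n:=\sup_{\btheta\in\cC_p}\big|R_n^\bx(\btheta)-R_n^\bg(\btheta)\big|\to0$, and then prove it. On $\cG_{n,s}$ one has $\widehat R_n^\star(\bX,\by(\bX))\le s\le t$ (the only place $t\ge s$ is used), so $\cF_{n,t}(\bX)\ne\emptyset$; moreover, since $\ell\ge0$ while $\ell$ and $\eta$ grow at most linearly and the projections $\btheta^\sT\bx$, $\btheta^\sT\bg$, the coordinates of $\bTheta^{\star\sT}\bx$ and $\bTheta^{\star\sT}\bg$, and $\eps$ are all subgaussian with parameters controlled by $\sOmega$ (Assumptions~\ref{ass:ThetaStar},~\ref{ass:noise},~\ref{ass:X}), we have $0\le R_n^\bx(\btheta),R_n^\bg(\btheta)\le C(\sOmega)$ uniformly over $\btheta\in\cC_p$ and $n$. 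Hence on $\cG_{n,s}$ both $F_n^\bx(t,\bX)$ and $F_n^\bg(t,\bX)$ are finite, and the elementary inequality $\big|\inf_S f-\inf_S g\big|\le\sup_S|f-g|$ (for nonempty $S$ and bounded $f,g$), applied with $S=\cF_{n,t}(\bX)$, gives $\big|F_n^\bx(t,\bX)-F_n^\bg(t,\bX)\big|\le\sup_{\btheta\in\cF_{n,t}(\bX)}|R_n^\bx(\btheta)-R_n^\bg(\btheta)|\le\varepsilon_n$ on $\cG_{n,s}$. Since $\varepsilon_n$ is deterministic, $\P\big(\{|F_n^\bx(t,\bX)-F_n^\bg(t,\bX)|>\delta\}\cap\cG_{n,s}\big)\le\one\{\varepsilon_n>\delta\}\to0$ once $\varepsilon_n\to0$. (So $\cG_{n,s}$ serves only to exclude the infeasible, $\infty=\infty$, case.)

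\textbf{Reducing $\varepsilon_n\to0$ to a uniform joint CLT.}
Write $\Psi(u,v,e):=\ell(u;\eta(v,e))$, so $R_n^\bx(\btheta)=\E\,\Psi(\btheta^\sT\bx,\bTheta^{\star\sT}\bx,\eps)$ and $R_n^\bg(\btheta)=\E\,\Psi(\btheta^\sT\bg,\bTheta^{\star\sT}\bg,\eps)$, with $\norm{\Psi}_\Lip\le C(\sOmega)$. Since $\eps$ is independent of $\bx$ and of $\bg$, $\varepsilon_n\le\E_\eps[h_n(\eps)]$ where $h_n(e):=\sup_{\btheta\in\cS_p}\big|\E\,\Psi(\btheta^\sT\bx,\bTheta^{\star\sT}\bx,e)-\E\,\Psi(\btheta^\sT\bg,\bTheta^{\star\sT}\bg,e)\big|$; as $h_n(e)\le C(\sOmega)(1+|e|)$, which is $\eps$-integrable, by dominated convergence it suffices to prove $h_n(e)\to0$ for each fixed $e$. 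Fix $e$ and truncate: $\Psi(\cdot,\cdot,e)=\big(\Psi(\cdot,\cdot,e)\wedge M\big)+\big(\Psi(\cdot,\cdot,e)-M\big)_+$, where $\Psi\ge0$ so the truncation takes values in $[0,M]$ and is $C(\sOmega)$-Lipschitz in $(u,v)$. The remainder contributes at most $2\sup_{\btheta\in\cS_p}\E\big(\Psi(\btheta^\sT\bx,\bTheta^{\star\sT}\bx,e)-M\big)_+\le C(\sOmega,e)\exp\!\big(-M^2/C(\sOmega,e)\big)$ to $h_n(e)$, uniformly in $n$: indeed $\Psi(\btheta^\sT\bx,\bTheta^{\star\sT}\bx,e)$ is a Lipschitz function of the $(\sfk^\star+1)$-dimensional subgaussian vector $(\btheta^\sT\bx,\bTheta^{\star\sT}\bx)$ with parameters controlled by $\sOmega$, hence itself subgaussian with norm $\le C(\sOmega,e)$ uniformly in $\btheta,n$, and likewise for the $\bg$-version. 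So it remains to show that for every fixed bounded Lipschitz $\Phi:\R^{1+\sfk^\star}\to\R$,
\[
\sup_{\btheta\in\cS_p}\ \big|\E\,\Phi(\btheta^\sT\bx,\bTheta^{\star\sT}\bx)-\E\,\Phi(\btheta^\sT\bg,\bTheta^{\star\sT}\bg)\big|\ \longrightarrow\ 0 \qquad (p\to\infty),
\]
i.e.\ a central limit theorem for the low-dimensional projection $(\btheta^\sT\bx,\bTheta^{\star\sT}\bx)\in\R^{1+\sfk^\star}$, uniform over $\btheta\in\cS_p$.

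\textbf{Proving the uniform joint CLT.}
I would obtain the last display from Eq.~\eqref{eq:condition_bounded_lipschitz_single} of Assumption~\ref{ass:X} by a Cram\'er--Wold argument. For a frequency $\xi=(\xi_0,\dots,\xi_{\sfk^\star})$, the linear functional $\xi_0\btheta+\sum_k\xi_k\btheta^\star_k$ equals $\norm{\xi}_1\psi$ with $\psi$ in the convex hull of $\{0,\pm\btheta,\pm\btheta^\star_1,\dots,\pm\btheta^\star_{\sfk^\star}\}\subseteq\cS_p$ --- here convexity and symmetry of $\cS_p$ are essential, which is exactly why relaxing $\cS_p$ destroys universality (cf.\ the remark after Corollary~\ref{cor:ntk_universality}). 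Applying Eq.~\eqref{eq:condition_bounded_lipschitz_single} to the bounded Lipschitz functions $x\mapsto\cos(\norm{\xi}_1 x)$ and $x\mapsto\sin(\norm{\xi}_1 x)$ shows that the characteristic functions of $(\btheta^\sT\bx,\bTheta^{\star\sT}\bx)$ and of $(\btheta^\sT\bg,\bTheta^{\star\sT}\bg)$ agree in the limit, uniformly over $\btheta\in\cS_p$, at every fixed $\xi$. Together with the uniform subgaussianity of both vectors --- hence uniform tightness over $\btheta$ and $n$ --- a standard L\'evy-continuity / equicontinuity argument upgrades this to the displayed uniform convergence against bounded Lipschitz test functions. (This is the same mechanism used to prove Lemma~\ref{lemma:gaussian_approx}, which may instead be invoked directly.) Combined with the previous paragraph this yields $h_n(e)\to0$, hence $\varepsilon_n\to0$, and the first paragraph then proves the lemma.

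\textbf{Main obstacle.}
The only nonroutine ingredient is the uniform joint CLT above: Assumption~\ref{ass:X} is stated for a \emph{single} one-dimensional projection against a \emph{single} bounded Lipschitz function, and one must promote it both to the $(1+\sfk^\star)$-dimensional joint projection, uniformly over $\cS_p$ (Cram\'er--Wold, using convexity and symmetry of $\cS_p$), and to linearly growing test functions (truncation plus uniform subgaussian tail bounds). The reduction in the first paragraph --- feasibility on $\cG_{n,s}$, the $\inf$-comparison inequality, and the bookkeeping with $\eps$ --- is elementary.
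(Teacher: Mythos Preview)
Your proposal is correct and follows essentially the same route as the paper. Both reduce, via the elementary $|\inf_S f-\inf_S g|\le\sup_S|f-g|$ (the paper writes this out with explicit minimizers $\widehat\btheta_\bx,\widehat\btheta_\bg$), to showing $\sup_{\btheta\in\cS_p}|R_n^\bx(\btheta)-R_n^\bg(\btheta)|\to0$. For this last step the paper simply cites Lemma~\ref{lemma:proj_locally_lip} (applied with $\bH=(\btheta,\bTheta^\star)$ and $\varphi$ ignoring its second argument), whose proof is exactly your Cram\'er--Wold plus truncation argument; so your ``uniform joint CLT'' paragraph is an inline re-derivation of that lemma, and you could replace it by a direct citation as you yourself note parenthetically.
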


\begin{proof}
Fix $t\ge s> 0$. On $\cG_{n,s}$, let
\begin{equation}
\nonumber
\widehat \btheta_\bx \in
 \quad
\argmin_{
\mathclap{\substack{ \btheta \in \cC_p   \\ \widehat 
R_n(\btheta;\bX,\by(\bX)) \le t }}
}
R_n^\bx(\btheta),\hspace{12mm}
\widehat \btheta_\bg \in 
\argmin_{
\mathclap{\substack{ \btheta \in \cC_p   \\ \widehat 
R_n(\btheta;\bX,\by(\bX)) \le t }}
}
R_n^\bg(\btheta)
\end{equation}
be any minimizers of the respective functions so that $ F_n^\bx(t,\bX) =  R_n^\bx(\widehat\btheta_\bx)$ and $F_n^\bg(t,\bX) =  R_n^\bg(\widehat \btheta_\bg)$. 
Then note that we can upper bound
\begin{align*}
\left(R_n^\bg(\widehat \btheta_\bg) - R_n^\bx(\widehat \btheta_\bx) \right)\one_{\cG_{n,s}}
  &\stackrel{(a)}\le 
 \left|R_n^\bg(\widehat \btheta_\bx) - R_n^\bx(\widehat 
\btheta_\bx)\right| \one_{\cG_{n,s}}\\
  &\le 
 \sup_{\btheta\in\cS_p}
 \left|R_n^\bg(\btheta) - R_n^\bx(\btheta)\right|
\end{align*}
where in $(a)$ we used that $R_n^\bg(\widehat \btheta_\bg) \le R_n^\bg(\widehat 
\btheta_\bx)$ on $\cG_{n,s}$.
An analogous argument with the roles of $\bx$ and $\bg$ exchanged shows that we also have
\begin{align}
\nonumber
\left( R_n^\bx(\widehat \btheta_\bx) - R_n^\bg(\widehat \btheta_\bg) \right) \one_{\cG_{n,s}}\le
 \sup_{\btheta\in\cS_p}
 \left|R_n^\bg(\btheta) - R_n^\bx(\btheta)\right|.
\end{align}
Hence, for all $\delta>0$,
\begin{align*}
&\lim_{n\to\infty}\P\left(\bigg\{\left| F_n^\bx(t,\bX) 
- F_n^\bg(t,\bX)  \right| > \delta \bigg\} \bigcap \cG_{n,s}\right)\\
&\hspace{61mm}\le 
\lim_{n\to\infty}\P\left(\bigg\{
\sup_{\btheta\in\cS_p}
 \left|R_n^\bg(\btheta) - R_n^\bx(\btheta)\right|  > \delta \bigg\} \bigcap \cG_{n,s}\right) \\
 &\hspace{61mm}\stackrel{(a)}=0,
\end{align*}
where in $(a)$ we used $\P(\cG_{n,s}) \to 1$ for all fixed $s$ and
Lemma~\ref{lemma:proj_locally_lip} along with the assumptions
on $\ell$:
Indeed, via an approximation argument like the one outlined in
Section~\ref{section:proof_binary_response}, one can apply the statement of this lemma
to $R_n^\bg(\btheta), R_n^\bx(\btheta)$ when the response variables $\by$ are 
discrete as in Assumption~\hyperref[ass:loss-0-bis]{5'}.
\end{proof}

\section{Auxiliary lemmas for Theorem~\ref{thm:test_error}}
\label{section:proof_test_error}

\begin{proof}[Proof of Lemma~\ref{lemma:D_universality}]
The proof is a direct application of Theorem~\ref{thm:main_empirical_univ}. Indeed, we have
\begin{align*}
    \limsup_{n\to\infty}\P\left( D^\bX(-s)  \ge t + 3\delta  \right) 
    &=
    \limsup_{n\to\infty}
    \P\Big(
   \frac{\widehat R_{n,-s}^\star(\bX,\by(\bX)) - \widehat R_n^\star(\bX,\by(\bX))  }{-s}  \ge t + 3\delta
    \Big)\\
    &\le  
   \limsup_{n\to\infty} \P\Big(
   \frac{\widehat R_{n,-s}^\star(\bX,\by(\bX)) - \rho  }{-s}  \ge t + 2\delta
    \Big)\\
    &\hspace{10mm}+ \limsup_{n\to\infty}\P\Big( \Big|\widehat R_n^\star(\bX,\by(\bX)) - \rho \Big| \ge s\delta  \Big)\\
    &\stackrel{(a)}= \limsup_{n\to\infty} 
    \P\Big(
   \frac{\widehat R_{n,-s}^\star(\bX,\by(\bX)) - \rho  }{-s}  \ge t + 2\delta
    \Big)\\
    &\stackrel{(b)}\le \limsup_{n\to\infty} 
    \P\Big(
   \frac{\widehat R_{n,-s}^\star(\bG,\by(\bG)) - \rho  }{-s}  \ge t + \delta
    \Big)\\
    &\le \limsup_{n\to\infty} 
    \P\Big(
   \frac{\widehat R_{n,-s}^\star(\bG,\by(\bG)) - \widehat R^\star_n(\bG,\by(\bG))  }{-s}  \ge t 
    \Big) \\
    &\hspace{10mm}+ \limsup_{n\to\infty}\P\left( \left|\widehat R_n^\star(\bG,\by(\bG)) - \rho \right| \ge s\delta  \right)\\
    &\stackrel{(c)}= 
    \limsup_{n\to\infty} \P\left( D^\bG(-s)  \ge t   \right)
\end{align*}
where $(a)$ follows from Theorem~\ref{thm:main_empirical_univ} applied to $\widehat R_n^\star$ along with the assumption that $\widehat R_n^\star(\bG) \stackrel{\P}{\to} \rho$, $(b)$ follows from Theorem~\ref{thm:main_empirical_univ} applied to $\widehat R_{n,-s}^\star$ by absorbing the term $-s R^\bg$ into 
the regularizer,
and $(c)$ follows directly from the assumption $\widehat R_n^\star(\bG) \stackrel{\P}{\to} \rho$.
This proves~\eqref{eq:lim_P_DX}. 
A similar argument establishes the inequality~\eqref{eq:lim_P_DG}.
\end{proof}

\begin{proof}[Proof of Lemma~\ref{lemma:theta_lipschitz_in_s}]
Write $L_F(v,u) := L(F(v), u)$. 
We assume without loss of generality that $L_F(v,u)$ is differentiable in $v$ and that $r$ and $h$ are differentiable in $\btheta$. Otherwise, we can replace all derivatives with subgradients in what follows.
We prove the statement by upper and lower bounding the quantity
\begin{equation}
\nonumber
\widehat R_{n}\left(\widehat\btheta_s^\bX;\bX,\by(\bX)\right) - \widehat R_{n} \left(\widehat\btheta_0^\bX;\bX,\by(\bX)\right).
\end{equation}
For the lower bound, we have
\begin{align*}
    \widehat R_n\left(\widehat\btheta_s^\bX;\bX,\by(\bX)\right) - \widehat R_n\left(\widehat \btheta_0^\bX; \bX,\by(\bX)\right)
    &\ge \frac1n \sum_{i=1}^n \partial_1 L_F\big(\bx_i^\sT \widehat \btheta_0^{\bX};y_i \big)\bx_i^\sT 
    \left( \widehat\btheta_s^\bX -\widehat \btheta_0^\bX  \right)\\
    &\hspace{3mm}+ \grad r\big(\widehat \btheta_0^\bX\big)^\sT \left( \widehat\btheta_s^\bX - \widehat \btheta_0^\bX \right)
    + \frac{\smu}{2} \norm{\widehat\btheta_s^\bX - \widehat \btheta_0^\bX }_2^2\\
    &\stackrel{(a)}= \frac{\smu}{2} \norm{\widehat\btheta_s^\bX - \widehat\btheta_0^\bX}_2^2
\end{align*}
where $(a)$ follows from the KKT conditions for $\widehat R_n$; namely, 
for some $\lambda \ge 0$, we have
\begin{align*}
 \frac1n \sum_{i=1}^n \partial_1L_F\big(\bx_i^\sT \widehat \btheta_0^{\bX};y_i \big)\bx_i
    + 
    \grad r\big(\widehat \btheta_0^\bX\big)
    +
    \lambda \grad h\big(\widehat\btheta_0^\bX\big)  &=0\\
    \lambda \left(h \big(\widehat\btheta_0^\bX \big)  - L\right) &= 0.
\end{align*}
And hence,
\begin{align*}
 &\frac1n \sum_{i=1}^n \partial_1 L_F\big(\bx_i^\sT \widehat \btheta_0^{\bX};y_i \big)\bx_i^\sT 
    \left( \widehat\btheta_s^\bX -\widehat \btheta_0^\bX  \right)+ \grad r\big(\widehat \btheta_0^\bX\big)^\sT \left( \widehat\btheta_s^\bX - \widehat \btheta_0^\bX \right)\\
    &\hspace{-0mm}= \lambda \grad h\big(\widehat\btheta_0^\bX\big)^\sT \left( \widehat\btheta_0^\bX -\widehat \btheta_s^\bX  \right)\\
    &\hspace{-0mm}\ge \lambda h\big(\widehat\btheta_0^\bX\big) - \lambda h\big(\widehat\btheta_s^\bX\big) \\
    &\hspace{-0mm}= \lambda L -   \lambda h\big(\widehat\btheta_s^\bX\big) \\
    &\hspace{-0mm}\ge 0.
\end{align*}

Meanwhile, for the upper bound we write

\begin{align*}
 &\widehat R_{n}\left(\widehat\btheta_s^\bX;\bX,\by(\bX)\right) - \widehat R_{n} \left(\widehat\btheta_0^\bX;\bX, \by(\bX)\right)\\
&= \widehat R_{n,s} \left(\widehat\btheta_s^\bX;\bX,\by(\bX)\right) - \widehat R_{n,s}\left(\widehat\btheta_0^\bX;\bX,\by(\bX)\right)\\
&\hspace{10mm}+ s\left(R^\bg_n\left(\widehat\btheta^\bX_0\right) - R^\bg_n\left(\widehat\btheta^\bX_s\right)\right)\\
&\stackrel{(a)}{\le} \left|s\right| \left| R^\bg_n\left(\widehat\btheta^\bX_0\right) - R^\bg_n\left(\widehat \btheta^\bX_s\right)\right|\\
&\stackrel{(b)}{\le} C_1 |s| \norm{\widehat \btheta^\bX_0 - \widehat \btheta^\bX_s }_2,
\end{align*}
where $(a)$ follows by noting that $\widehat \btheta_s^\bX$ minimizes $\widehat R_{n,s}(\btheta;\bX,\by(\bX))$, and $(b)$ follows since
 $R^\bg_n(\btheta)$ is locally Lipschitz and the constraint set has a bounded radius. Indeed we have
\begin{equation}
\nonumber
 \left| R_n^\bg(\btheta) -  R_n^\bg(\btheta') \right| \le 
 C_2\E\left[\left|\left(\btheta -\btheta'\right)^\sT\bg  \right|\right] = C_2  \E\left[|G|\right] \norm{\btheta - \btheta'}_2.
\end{equation}

Combining the upper and lower bounds and rearranging gives
\begin{equation}
\nonumber
\norm{\widehat\btheta^\bX_s - \widehat \btheta^\bX_0}_2 \le C_3 |s|,
\end{equation}
and hence
\begin{equation}
\nonumber
\norm{\widehat \btheta^\bX_s - \widehat\btheta^\bX_{-s}}_2 \le  \norm{\widehat\btheta^\bX_s - 
\widehat\btheta^\bX_0}_2 + \norm{\widehat\btheta^\bX_{-s} - \widehat\btheta^\bX_0}_2 \le C_4|s|.
\end{equation}
This proves the first statement for $\bX$. A similar argument clearly holds for the Gaussian model.

Now to prove~\eqref{eq:lip_D}, we can write
\begin{align*}
D^\bX(-s) - D^\bX(s)
&= -\frac{1}{s}\left(\widehat R_n(\widehat \btheta_{-s}^\bX;\bX,\by(\bX)) - \widehat R_n(\widehat 
\btheta_0^\bX;\bX,\by(\bX))\right)
\\
&\hspace{10mm}-\frac1s \left(\widehat R_n(\widehat \btheta_s^\bX;\bX, \by(\bX)) - \widehat R_n(\widehat \btheta_0^\bX;\bX, \by(\bX)) 
\right)\\
&\hspace{10mm}+
R_n^\bg\left(\widehat 
\btheta_{-s}\left(\bX\right)\right) - R_n^\bg\left(\widehat\btheta_s\left(\bX\right)\right)
\\
&\stackrel{(a)}{\le}  \left|R_n^\bg(\widehat \btheta^\bX_{-s}) - R_n^\bg(\widehat \btheta^\bX_s )\right|\\
&\stackrel{(b)}{\le} C_5  \norm{\widehat\btheta^\bX_{-s} - \widehat\btheta^\bX_s}_2\\
&\le C_6\,s
\end{align*}
where in $(a)$ we used that 
$$\widehat R_n(\widehat \btheta^\bX_{-s};\bX,\by(\bX))\ge \widehat R_n(\widehat \btheta^\bX_0;\bX,\by(\bX))
\qquad
\textrm{and that}
\qquad
\widehat R_n(\widehat 
\btheta^\bX_{s},\bX) \ge \widehat R_n(\widehat \btheta^\bX_0,\bX)$$
and in $(b)$ we used that 
that $R_n^\bg(\btheta)$ is
is locally Lipschitz and the constraint set has a bounded radius. A similar argument then shows the same property for $D^\bG(s)$.
\end{proof}

\section{The neural tangent model: Proof of Theorem~\ref{cor:ntk_universality}}
\label{proof:ntk_universality}

Let us begin by recalling the definitions and assumptions on the model defined in Section~\ref{section:ntk_example}.
Recall the activation function $\sigma$ that is assumed to be four times differentiable with bounded derivatives and satisfying
 $\E[\sigma'(G)] = 0$, and $E[G\sigma'(G)] = 0$ for $G\sim\cN(0,1)$.
Further recall the weight matrix $\bW$ whose $m$ columns are $\bw_j\stackrel{i.i.d}{\sim}\textsf{Unif}\left(\S^{d-1}(1)\right)$, $j\in[m]$.
The feature vectors for the neural tangent model were then defined in~\eqref{eq:ntk_covariates} as
\begin{equation}
\nonumber
    \bx_i = \left(\bz_i \sigma'\left(\bw_1^\sT \bz_i\right),\dots,\bz_i \sigma'\left(\bw_m^\sT \bz_i\right)\right) \in\R^p,
\end{equation}

where $\bz_i\stackrel{\textrm{i.i.d.}}{\sim}\cN(0,\bI_d)$ for $i\in[n]$. Additionally, for the Gaussian model we defined
 $\bg | \bW \sim \cN\left(0, \E\left[\bx\bx^\sT | \bW \right] \right)$.
We assume $m(n)/d(n) \to \widetilde\sgamma_\NT$ and $p(n)/n \to \sgamma$ as $n\to\infty$. As we have done so far, we suppress the dependence of these integers on $n$.

For a given $\btheta =\left(\btheta_{\up{1}}^\sT,\dots,\btheta_\up{m}^\sT\right)^\sT\in\R^p$, where $\btheta_{\up{j}} 
\in \R^d$ for $j\in[m]$, we introduced the notation $\bT_\btheta\in\R^{d\times m}$ to denote 
the matrix $\bT_\btheta = \left(\btheta_\up{1} ,\dots, \btheta_\up{m}\right) \in\R^{d\times m}$
so that we can write $\btheta^\sT \bx = \bz^\sT \bT_\btheta \sigma'\left(\bW^\sT \bz\right)$, 
where $\sigma':\R\to\R$ is applied element-wise. Finally, recall the set 
\begin{equation}
\nonumber
    \cS_{p}  = \left\{ 
    \btheta\in\R^p : \norm{\bT_\btheta}_\op \le 
\frac{ \sfR}{\sqrt{d}}
    \right\}.
\end{equation}
Note that $\cS_p$ is symmetric, convex, and $\cS_p\subseteq B_2^p(\sfR)$. Furthermore, for all $\btheta\in\cS_p$ we have $\norm{\btheta_\up{j}}_2 \le \sfR/\sqrt{d}$ for all $j\in[m]$.

The key to proving Theorem~\ref{cor:ntk_universality} is showing that the distribution of the feature vectors $\{\bx_i\}_{i\le[n]}$ 
satisfy, on a high probability set, Assumption~\ref{ass:X} along with Eq.~\eqref{eq:condition_bounded_lipschitz_single} for the set $\cS_p$ above.
Our proof here is analogous to that of~\cite{hu2020universality} for the random features model.
Let us begin our treatment by defining the event
\begin{equation}
\nonumber
    \cB := \left\{\sup_{\{i,j \in [m] : i\neq j\}} \left|\bw_i^\sT\bw_j\right| \le 
C\left(\frac{\log m}{d}\right)^{1/2}\right\} \bigcap 
    \left\{
    \norm{\bW }_\op
    \le C'
    \right\}
\end{equation}
for some $C,C'$ depending only on $\widetilde\sgamma_\NT$ so that $\P(\cB^c) \to0$
as $n\to\infty$. The existence of such constants is a standard result (see for example~\cite{vershynin2018high}.) However, we include it as Lemma~\ref{lemma:B_tail_bound} of Section~\ref{section:aux_lemmas_ntK} for completeness.

\subsection{Asymptotic Gaussianity on a subset of $\cS_p$}
\label{subsection:bounded_diff_delta}

For a given $\delta>0$, let us define the set
\begin{equation}
\nonumber
\cS_{p,\delta} := \left\{\btheta\in\cS_p : \btheta^\sT \E\left[\bx\bx^\sT\right]\btheta > \delta\right\},
\end{equation}
We establish in this section the following lemma.

\begin{lemma}
\label{lemma:ntk_bl-1}
For all $\delta>0$ and any differentiable bounded function $\varphi:\R\to\R$ with bounded derivative, we have
\begin{equation}
    \lim_{n\to\infty} \sup_{\btheta\in \cS_{p,\delta}} 
\left|\E\left[\varphi\left(\btheta^\sT\bx\right)\one_{\cB} \Big| \bW 
\right]  
- 
\E\left[\varphi\left(\btheta^\sT\bg\right)\one_{\cB}\Big
| \bW 
\right]\right| 
= 0.
\end{equation}
\end{lemma}

Define, for $\btheta\in\cS_{p,\delta}$ the notation
\begin{equation}
\nonumber
    \nu^2 = \nu^2_\btheta :=  \btheta^\sT \E\left[\bx\bx^\sT\right] \btheta > \delta.
\end{equation}

For a fixed bounded Lipschitz function $\varphi$, let $\chi = \chi_\varphi$ be the 
solution to Stein's equation for $\varphi$, namely, the function $\chi$ satisfying
\begin{equation}
\label{eq:stein_solution}
  \E\left[ \varphi\left(\frac{\btheta^\sT \bx}{\nu}\right) - \varphi\left(\frac{\btheta^\sT \bg}{\nu}\right)\right]
  =
 \E\left[\chi'\left(\frac{\btheta^\sT\bx}{\nu}\right)  - 
\frac{\btheta^\sT\bx}{\nu}\chi\left(\frac{\btheta^\sT\bx}{\nu}\right)   \right]
\end{equation}
(see~\cite{chen2011normal} for more on Stein's method and properties of the solution $\chi$.).
In order to prove Lemma~\ref{lemma:ntk_bl-1}, it is sufficient to show that
\begin{equation}
\label{eq:stein_equiv}
   \lim_{n\to\infty} \sup_{\btheta\in \cS_{p,\delta}}\left|\E\left[\chi'\left(\frac{\btheta^\sT\bx}{\nu}\right)  - 
\frac{\btheta^\sT\bx}{\nu}\chi\left(\frac{\btheta^\sT\bx}{\nu}\right)   \right]\right|  = 0.
\end{equation}

To simplify notation, define
\begin{equation}
\label{eq:Delta_i_def}
    \Delta_i := \frac{\btheta^\sT\bx}{\nu} - \frac{1}{\nu}\sum_{j: j\neq i 
}\btheta_\up{j}^\sT\bP_{i}^\perp\bz \sigma'(\bw_j^\sT\bz - \rho_{i,j}\bw_i^\sT \bz),
\end{equation}
where 
\begin{equation}
\nonumber
    \bP_{i}^\perp := \bI - \bw_i\bw_i^\sT,\quad \rho_{ij}:= 
{\bw_j^\sT\bw_i}.
\end{equation}
In Section~\ref{section:ntk_actual_proof}, we upper bound the quantity~\eqref{eq:stein_equiv} as 
\begin{align}
\label{eq:decomposition}
    &\left|\E\left[\chi'\left(\frac{\btheta^\sT\bx}{\nu}\right)  - 
\frac{\btheta^\sT\bx}{\nu}\chi\left(\frac{\btheta^\sT\bx}{\nu}\right)   \right]\right|  \\
    &\hspace{7mm}\le \left|
    \E\left[\left(\frac{1}{\nu}\sum_{i=1}^m \btheta_\up{i}^\sT\bz 
\sigma'(\bw_i^\sT\bz)\Delta_i 
- 1 
    \right)\chi'\left(\frac{\btheta^\sT\bx}{\nu}\right)\right]
    \right|\nonumber\\
    &\hspace{15mm}+
    \left|
    \E\left[
    \frac{1}{\nu}\sum_{i=1}^m
    \btheta_\up{i}^\sT\bz\sigma'(\bw_i^\sT\bz)
    \left(\chi\left(\frac{\btheta^\sT\bx}{\nu}\right) 
    -
    \chi\left(\frac{\btheta^\sT\bx}{\nu} - \Delta_i\right) 
    - \Delta_i\chi'\left(\frac{\btheta^\sT\bx}{\nu}\right)
    \right)
    \right]
    \right|.
\nonumber
\end{align}
So first, let us control the terms on the right hand side: We do this in Sections~\ref{section:first_term_ntk} and~\ref{section:second_term_ntk}, respectively.
Before doing this, we make the following definitions which will be used throughout. 
Define $\widetilde \btheta_{j,i} := \bP_{i}^\perp\btheta_\up{j},$
along with the matrix notation
\begin{align*}
    \bD_l &:= \diag\left\{\bsigma^\up{l}(\bW^\sT\bz)\right\},\quad\quad
\bM:=\diag\left\{\bW^\sT\bz\right\},\quad\quad \widetilde \bM := \diag\left\{\bT_\btheta^\sT 
\bz\right\},\\
    \bA &:= \bW^\sT\bT_\btheta - 
    \left(\bW^\sT \bT_{\btheta}\right) \odot \bI_{m},\quad\quad
\bR := \bW^\sT\bW - \bI_m,\quad\quad
\bN:= \left(\widetilde \btheta_{j,i}^\sT \bz\right)_{i,j \in[m]},
\end{align*}
where we write $\bsigma^\up{l}(\bv)$ to denote the element-wise application of $\sigma^\up{l}:\R\to\R$, the $l$th derivative of $\sigma$ to a vector $\bv$. Additionally, here $\odot$ denotes the Hadamard product,
and $\diag\{\bv\}$ for a vector $\bv$ denotes the matrix whose elements on the main diagonal are the elements of $\bv$, and whose elements off the main diagonal are $0$.

We prove the following bounds.
\begin{lemma}
\label{lemma:op_norm_bounds}
For $\bW\in\cB$, we have for any fixed integers $k>0$ and $l\le 4 $
\begin{align*}
&\norm{\bD_l}_\op \le C_0,\hspace{4mm}
\E\left[\norm{\bM}_\op^k \right] \le  C_1(\log m)^{k/2} , \hspace{4mm}
\E\left[\norm{\widetilde\bM}^k_\op\right]\le C_2 \frac{(\log{m})^{k/2}}{m^{k/2}},\\
&\norm{\bA}_\op \le \frac{C_3}{m^{1/2}},\hspace{4mm}
\norm{\bR}_\op \le C_4,\hspace{4mm}
\E\left[\norm{\bN\odot \bR }_\op^k  \right] \le C_5\frac{(\log m)^{k/2}}{m^{k/2}},\hspace{4mm}
\norm{\bR\odot \bR}_\op \le C_6,\\
&\E\left[\norm{\bN\odot \bR \odot \bR }_\op^k \right] \le C_7\frac{(\log 
m)^{k/2}}{m^{k/2}},\hspace{2mm}
\norm{\bA \odot \bR}_\op \le C_8 \frac1{\sqrt{m}},\\
&\norm{\bA \odot \bR \odot \bR}_\op \le C_{9} \frac1{\sqrt{m}},
\end{align*}
for some constants $C_i>0.$
\end{lemma}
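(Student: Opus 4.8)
\noindent\textbf{Proof plan for Lemma~\ref{lemma:op_norm_bounds}.} The plan is to reduce every one of these bounds to three ingredients. First, $\sigma$ and its first four derivatives are uniformly bounded. Second, on the event $\cB$ one has $\norm{\bW}_\op \le C'$ and $\max_{i\ne j}|\bw_i^\sT\bw_j| \le C(\log m/d)^{1/2}$. Third, the Schur product bound, namely $\norm{\bA\odot\bB}_\op \le \big(\max_i B_{ii}\big)\norm{\bA}_\op$ whenever $\bB$ is positive semidefinite and $\bA$ is arbitrary (proved by factoring $\bB = \bC^\sT\bC$ and a Cauchy--Schwarz over the columns of $\bC$), together with the Gaussian maximal inequality $\E\big[(\max_{j\le m}|G_j|)^k\big] \le C_k(\log m)^{k/2}$ for standard, possibly dependent, Gaussians $G_1,\dots,G_m$. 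Throughout, all expectations are over $\bz$ with $\bW\in\cB$ fixed; every estimate below only uses $\norm{\bT_\btheta}_\op \le \sfR/\sqrt d$, so the resulting constants are uniform over $\btheta\in\cS_p$ and depend only on $\sOmega$.

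The ``diagonal'' and ``on-$\cB$'' bounds are immediate. Since $\norm{\bD_l}_\op = \max_{j\le m}|\sigma^\up{l}(\bw_j^\sT\bz)| \le \norm{\sigma^\up{l}}_\infty$, the bound on $\bD_l$ is clear. For $\bM$, each diagonal entry $\bw_j^\sT\bz$ is $\normal(0,1)$ because $\norm{\bw_j}_2 = 1$, so $\norm{\bM}_\op = \max_j|\bw_j^\sT\bz|$ and the maximal inequality gives $\E[\norm{\bM}_\op^k] \le C_1(\log m)^{k/2}$; for $\widetilde\bM$ the diagonal entries $\btheta_\up{j}^\sT\bz$ are Gaussian with variance $\norm{\btheta_\up{j}}_2^2 \le \sfR^2/d$, so after rescaling by $\sfR/\sqrt d$ and using $d \ge c\,m$ one gets $\E[\norm{\widetilde\bM}_\op^k] \le C_2(\log m)^{k/2}/m^{k/2}$. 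On $\cB$, $\norm{\bW^\sT\bT_\btheta}_\op \le \norm{\bW}_\op\norm{\bT_\btheta}_\op \le C'\sfR/\sqrt d$ and $\norm{\bW^\sT\bW}_\op \le C'^2$; since $\bA$ equals $\bW^\sT\bT_\btheta$ with its diagonal removed (the removed diagonal having operator norm $\max_j|\bw_j^\sT\btheta_\up{j}| \le \sfR/\sqrt d$) and $\bR = \bW^\sT\bW - \bI$, the bounds $\norm{\bA}_\op \le C_3 m^{-1/2}$ and $\norm{\bR}_\op \le C_4$ follow. Finally, because $\bA$ and $\bR$ vanish on the diagonal one has $\bA\odot\bR = \bA\odot(\bW^\sT\bW)$, $\bR\odot\bR = (\bW^\sT\bW)\odot(\bW^\sT\bW) - \bI$, and $\bA\odot\bR\odot\bR = \bA\odot\big((\bW^\sT\bW)\odot(\bW^\sT\bW)\big)$; applying the Schur bound to the positive semidefinite matrices $\bW^\sT\bW$ and $(\bW^\sT\bW)\odot(\bW^\sT\bW)$ (both with unit diagonal, the latter of operator norm at most $C'^2$) delivers $\norm{\bR\odot\bR}_\op \le C_6$, $\norm{\bA\odot\bR}_\op \le C_8 m^{-1/2}$ and $\norm{\bA\odot\bR\odot\bR}_\op \le C_9 m^{-1/2}$.

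The only bounds needing real work are those on $\bN\odot\bR$ and $\bN\odot\bR\odot\bR$, and the trick is to make the $\bz$-dependence ``diagonal-extractable''. Writing $\bP_i^\perp\btheta_\up{j} = \btheta_\up{j} - (\bw_i^\sT\btheta_\up{j})\bw_i$ and reading off entries, one verifies the matrix identities
\begin{align*}
\bN\odot\bR &= \bR\,\widetilde\bM - \bM\,(\bA\odot\bR),\\
\bN\odot\bR\odot\bR &= \big((\bW^\sT\bW)\odot(\bW^\sT\bW) - \bI\big)\widetilde\bM - \bM\,(\bA\odot\bR\odot\bR),
\end{align*}
in which the first summand collects the $\btheta_\up{j}^\sT\bz$ piece (a function of $j$ only, hence right multiplication by the diagonal matrix $\widetilde\bM$) and the second collects the $(\bw_i^\sT\btheta_\up{j})(\bw_i^\sT\bz)$ piece (whose factor $\bw_i^\sT\bz$ depends on $i$ only, hence left multiplication by $\bM$). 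Then $\norm{\bN\odot\bR}_\op \le \norm{\bR}_\op\norm{\widetilde\bM}_\op + \norm{\bM}_\op\norm{\bA\odot\bR}_\op$, and taking $k$-th moments, applying Cauchy--Schwarz, and inserting the bounds already established for $\bR,\bM,\widetilde\bM,\bA\odot\bR$ yields $\E[\norm{\bN\odot\bR}_\op^k] \le C_5(\log m)^{k/2}/m^{k/2}$; the term $\bN\odot\bR\odot\bR$ is treated identically, now invoking the bounds for $(\bW^\sT\bW)\odot(\bW^\sT\bW)$ and $\bA\odot\bR\odot\bR$. The main obstacle is exactly spotting these identities: a crude Frobenius-norm estimate on $\bN\odot\bR$ overshoots the target by a factor of $m$, so one genuinely has to exploit both the product structure and the Schur bound to recover that factor.
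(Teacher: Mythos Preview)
Your proposal is correct and follows essentially the same approach as the paper. The only cosmetic differences are that the paper states the decomposition for $\bN\odot\bB$ with a general matrix $\bB$ (writing the second term as $(\bW^\sT\bT_\btheta)\odot(\bM\bB)$ and bounding it via the Hadamard inequality $\norm{\bP\odot\bQ}_\op \le (\norm{\bI\odot\bP^\sT\bP}_\op\norm{\bI\odot\bQ^\sT\bQ}_\op)^{1/2}$ from \cite{johnson1990matrix}), whereas you specialize to $\bB=\bR$ and $\bB=\bR\odot\bR$ up front, pull the diagonal $\bM$ outside the Hadamard product, and reuse your already-established bounds on $\bA\odot\bR$ and $\bA\odot\bR\odot\bR$; both routes yield the same estimates.
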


\begin{proof}
Using Lemma~\ref{lemma:subgaussian_max},
the first five inequalities are direct. Indeed, recalling that $m/d \to\widetilde\sgamma_\NT$, we have
\begin{align*}
&\norm{\bD_l}_\op =  \sup_{i\in[m]}\left|\sigma^\up{l}(\bw_i^\sT \bz)\right| \le 
\norm{\sigma^\up{l}}_\infty \le C_0,\\
&\E\left[\norm{\bM}_\op^k  \right]= \E\left[\sup_{i\in[m]} \left|\bw_i^\sT \bz\right|^k\right] 
\le C_1\left(\log m\right)^{k/2},\\
&\E\left[\norm{\widetilde\bM}^k_\op\right] = \E \left[ \sup_{i\in[m]} \left| \btheta_\up{i}^\sT 
\bz\right|^k\right] \le C_2 \frac{\left(\log{m}\right)^{k/2}}{m^{k/2}},\\
&\norm{\bA}_\op \le \norm{\bW}_\op\norm{\bT_\btheta}_\op  + 
\sup_{i}\left|\bw_i^\sT \btheta_\up{i}\right|
\le C_3 \frac{1}{m^{1/2}},\\
&\norm{\bR}_\op \le \norm{\bW}_\op^2 + 
\norm{\bI}_\op \le C_4.
\end{align*}
For the remaining inequalities, let $\bB\in\R^{m\times m}$ be an arbitrary fixed matrix and note 
that we have
\begin{align}
    \bN \odot \bB &= \left(\btheta_\up{j}^\sT \bz\right)_{i,j\in[m]} \odot \bB \quad-\quad \left({\btheta_\up{j}^\sT 
\bw_i \bw_i^\sT \bz} \right)_{i,j \in[m]}\odot \bB\nonumber\\
    &= \bB \widetilde \bM - (\bM \bW^\sT \bT_\btheta ) \odot \bB \nonumber\\
    &= \bB \widetilde \bM - 
\left(\bW^\sT\bT_\btheta\right) \odot (\bM \bB),
\label{eq:hadamard_eq}
\end{align}
where the last equality holds because $\bM$ is a diagonal matrix. Now recall 
that for the two square matrices $\bW^\sT\bT_\btheta$ and $\bM\bB$, we have (see for example~\cite{johnson1990matrix}, (3.7.9))
\begin{equation}
\label{eq:hadamard_ineq}
   \norm{\left(\bW^\sT\bT_\btheta\right)\odot \left(\bM\bB\right) }_\op \le \left(\norm{\bI 
\odot \bT_\btheta^\sT\bT_\btheta}_\op \norm{\bI \odot \bW^\sT\bW}_\op\right)^{1/2} \norm{\bM\bB}_\op.
\end{equation}

\noindent Combining~\eqref{eq:hadamard_eq} with~\eqref{eq:hadamard_ineq} we can write
\begin{align}
\E\left[\norm{\bN \odot \bB}_\op^k\right] &\le
\E\left[\left(\norm{\bB\widetilde \bM}_\op + 
\norm{\left(\bW^\sT\bT_\btheta\right)\odot \left(\bM\bB\right) }_\op
\right)^k\right]
\nonumber\\
&\le
\E\left[\left(\norm{\bB\widetilde\bM}_\op + 
\sup_{i\in[m]} \norm{\btheta_\up{i}}_2
\sup_{i\in[m]} \norm{\bw_i}_2
\norm{\bM\bB}_\op
\right)^k\right]\nonumber\\
&\le 
C_{10} \norm{\bB}_\op^k
\E\left[ \norm{\widetilde\bM}_\op^k\right] 
+
C_{10}\sup_{i\in[m]} \norm{\btheta_\up{i}}_2^k
\norm{\bB}_\op^k
\E\left[
\norm{\bM}_\op^k
\right]\nonumber\\
&\le 
C_{11}\norm{\bB}^k_\op
\left(
\frac{ (\log m)^{k/2} }{m^{k/2}} + 
\frac{ (\log m)^{k/2}}{d^{k/2}}
\right)\label{eq:hadamard_ineq_2}.
\end{align}
Hence, using $\norm{\bR}_\op\le C_4$ and $m/d\to\widetilde \sgamma_\NT$, we have
$$\E\left[
\norm{\bN \odot \bR}_\op^k
\right] = C_5 (\log m)^{k/2}/{m^{k/2}}$$
which establishes the sixth bound.

Now, note that 
\begin{align*}
\norm{\bR \odot \bR}_\op &= \norm{\bW^\sT \bW \odot \bR}_\op\\
&\stackrel{(a)}{\le} \norm{\bI \odot\ \bW^\sT \bW }_\op  \norm{\bR}_\op\\
&=  \sup_{i\in [m]} \left|\bw_i^\sT \bw_i\right| \norm{\bR}_\op\\
&\le C_6,
\end{align*}
where $(a)$ follows using the same bound we
applied to~\eqref{eq:hadamard_ineq}. This establishes the seventh bound in the lemma.

Now, using~\eqref{eq:hadamard_ineq_2} and the bound applied to~\eqref{eq:hadamard_ineq} again gives
$$\E\left[
\norm{\bN \odot \bR \odot \bR}_\op^k\right] \le C_7 {(\log m)^{k/2}}/{m^{k/2}}
,$$
establishing the eighth bound.

For the ninth bound, we first note that by definition of $\bA$ and $\bR$, 
$\bA \odot \bR  =\bW^\sT  
\bT_\btheta \odot \bR$ so that
\begin{align*}
 \norm{\bA \odot \bR}_\op &\le  \left(\norm{\bI \odot \bW^\sT 
\bW }_\op \norm{\bI \odot 
\bT_\btheta^\sT\bT_\btheta}_\op\right)^{1/2} \norm{\bR}_\op\\
&\le 
\sup_{i\in[m]} \norm{\btheta_\up{i}}_2  \norm{\bR}_\op\\
&\stackrel{(a)}{\le} C_8 \frac{1}{\sqrt{m}}
\end{align*}
where in $(a)$ we used that $\norm{\bR}_\op \le C_4$ and $\norm{\btheta_\up{i}}_2\le\sfR/\sqrt{d}$ along with $m/d\to\widetilde\sgamma_\NT$. 

Finally, using $\norm{\bR \odot \bR}_\op \le C_6$, a similar argument shows 
that $\norm{\bA \odot \bR \odot \bR}_\op \le C_{9}/\sqrt{m}$, yielding the final bound of the lemma.
\end{proof}

\subsubsection{Bounding the first term in Eq.~\eqref{eq:decomposition}}
\label{section:first_term_ntk}
\begin{lemma}
\label{lemma:first_term_ntk}
For any $\delta>0$, we have
\begin{equation}
\label{eq:first_term_ntk}
 \lim_{n\to\infty}\sup_{\btheta\in\cS_{p,\delta}}\left|
    \E\left[\left(\frac{1}{\nu}\sum_{i=1}^m \btheta_\up{i}^\sT\bz 
\sigma'(\bw_i^\sT\bz)\Delta_i 
- 1 
    \right)\chi'\left(\frac{\btheta^\sT\bx}{\nu}\right)\right]
    \right|  = 0 
\end{equation}
\end{lemma}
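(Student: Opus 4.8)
The plan is to treat Lemma~\ref{lemma:first_term_ntk} as a Stein-type error estimate, handled by Gaussian integration by parts together with the vanishing-moment conditions on $\sigma'$. Write $a_i:=\btheta_\up{i}^\sT\bz\,\sigma'(\bw_i^\sT\bz)$, so that $\btheta^\sT\bx=\sum_i a_i$; using the identity $\bw_j^\sT\bz-\rho_{ij}\bw_i^\sT\bz=\bw_j^\sT\bP_i^\perp\bz$ one sees from \eqref{eq:Delta_i_def} that $\Delta_i=\nu^{-1}(\btheta^\sT\bx-S^{(i)})$, where $S^{(i)}:=\sum_{j\neq i}(\bP_i^\perp\btheta_\up{j})^\sT\bz\,\sigma'(\bw_j^\sT\bP_i^\perp\bz)$ depends on $\bz$ only through $\bP_i^\perp\bz$. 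Since $\nu^2=\btheta^\sT\E[\bx\bx^\sT]\btheta=\E[(\btheta^\sT\bx)^2]$ and, conditioning on $\bP_i^\perp\bz$ and decomposing $\bz=(\bw_i^\sT\bz)\bw_i+\bP_i^\perp\bz$, one has $\E[a_i\mid\bP_i^\perp\bz]=(\btheta_\up{i}^\sT\bw_i)\E[G\sigma'(G)]+(\bP_i^\perp\btheta_\up{i})^\sT\bz\,\E[\sigma'(G)]=0$, it follows that $\E[\nu^{-1}\sum_i a_i\Delta_i]=\nu^{-2}(\E[(\btheta^\sT\bx)^2]-\sum_i\E[a_iS^{(i)}])=1$ exactly. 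Hence the claim is equivalent to showing that the covariance of $\nu^{-2}\sum_i a_i(\btheta^\sT\bx-S^{(i)})$ with the bounded function $\chi'(\btheta^\sT\bx/\nu)$ vanishes as $n\to\infty$, uniformly over $\btheta\in\cS_{p,\delta}$. (Here $\chi$ is the Stein solution of \eqref{eq:stein_solution}, and $\|\chi'\|_\infty,\|\chi''\|_\infty$ are finite by the standard estimates \citep{chen2011normal} since $\varphi$ is bounded with bounded derivative.)

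The core step is to decompose $\bz$ in the direction $\bw_i$ and integrate by parts: conditionally on $\bP_i^\perp\bz$, the coordinate $g_i:=\bw_i^\sT\bz$ is a standard Gaussian independent of $\bP_i^\perp\bz$, and $a_i$, $\btheta^\sT\bx$ and $\chi'(\btheta^\sT\bx/\nu)$ are smooth functions of $g_i$ (using that $\sigma$ has bounded derivatives of order up to four). Applying the one-dimensional Stein identity $\E[g_i h(g_i)]=\E[h'(g_i)]$ to each summand $\E[a_i(\btheta^\sT\bx-S^{(i)})\chi'(\btheta^\sT\bx/\nu)\mid\bP_i^\perp\bz]$, and writing $a_i=[g_i\,\btheta_\up{i}^\sT\bw_i+(\bP_i^\perp\btheta_\up{i})^\sT\bz]\sigma'(g_i)$, produces a finite sum of terms; the dangerous ones are annihilated by $\E[\sigma'(G)]=\E[G\sigma'(G)]=0$, and the survivors---after summing over $i$---reassemble into traces and quadratic forms built from the matrices $\bD_l=\diag\{\bsigma^\up{l}(\bW^\sT\bz)\}$, $\bM=\diag\{\bW^\sT\bz\}$, $\widetilde\bM=\diag\{\bT_\btheta^\sT\bz\}$, $\bA=\bW^\sT\bT_\btheta-(\bW^\sT\bT_\btheta)\odot\bI_m$, $\bR=\bW^\sT\bW-\bI_m$, and $\bN$. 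One then identifies a single ``main'' contribution, a quadratic form in $\bz$ whose expectation equals $\nu^2$ (again by $\E[\sigma'(G)]=\E[G\sigma'(G)]=0$ and $\E[(\btheta^\sT\bx)^2]=\nu^2$); divided by $\nu^2$ this yields the constant $1$, and since it is a sum of $m=\Theta(d)$ weakly dependent terms each of size $O(1/d)$, its fluctuation has variance $o(1)$, so its covariance with $\chi'(\btheta^\sT\bx/\nu)$ tends to $0$. Every remaining term is an error carrying at least one factor among $\|\bA\|_\op$, $\|\bA\odot\bR\|_\op$, $\|\bN\odot\bR\|_\op$, $\|\widetilde\bM\|_\op$, $\max_{i\neq j}|\rho_{ij}|$---each $O(\mathrm{poly}(\log m)/\sqrt m)$ on $\cB$ by Lemma~\ref{lemma:op_norm_bounds}---times $\|\chi'\|_\infty$ or $\|\chi''\|_\infty$ and a factor $\nu^{-2}\le\delta^{-1}$; Cauchy--Schwarz with the moment bounds of Lemma~\ref{lemma:op_norm_bounds} then makes each $o(1)$, uniformly over $\btheta\in\cS_{p,\delta}$ since all bounds depend on $\btheta$ only through $\|\bT_\btheta\|_\op\le\sfR/\sqrt{d}$ and $\nu^2>\delta$.

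The hard part will be the bookkeeping in the second step rather than any individual estimate. Individual summands of the raw post-integration-by-parts expansion are only $O(1/d)$ in size but there are $\Theta(d^2)$ of them, so a term-by-term bound diverges; the argument closes only after the expansion is reorganized so that the exact identities $\E[(\btheta^\sT\bx)^2]=\nu^2$ and $\sum_i\E[a_iS^{(i)}]=0$ (and their refined versions tested against $\chi'$) are used to collapse the $O(1)$ pieces and expose genuine $o(1)$ remainders. Casting the whole post-expansion quantity as a sum of traces of products of the matrices above, so that the cancellations are visible and each of the bounds in Lemma~\ref{lemma:op_norm_bounds} applies termwise and uniformly over $\cS_{p,\delta}$, is the crux.
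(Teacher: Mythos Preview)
Your first paragraph---writing $U:=\nu^{-1}\sum_i a_i\Delta_i$ and showing $\E[U]=1$ via the independence of $S^{(i)}$ from $g_i=\bw_i^\sT\bz$ together with $\E[\sigma'(G)]=\E[G\sigma'(G)]=0$---is correct and matches the paper exactly.

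Where you diverge is the second step. The paper does \emph{not} integrate by parts in $g_i$ against $\chi'$. Instead it uses the crude bound
\[
\Big|\E\big[(U-1)\chi'(\btheta^\sT\bx/\nu)\big]\Big|\le \|\chi'\|_\infty\big(\Var(U)^{1/2}+|\E[U]-1|\big)=\|\chi'\|_\infty\Var(U)^{1/2},
\]
so the whole lemma reduces to showing $\sup_{\btheta\in\cS_{p,\delta}}\Var(U)\to0$. To do that, the paper Taylor-expands $\sigma'(\bw_j^\sT\bz-\rho_{ij}g_i)$ to third order, splits $U=u_1+u_2+u_3+u_4$, and applies the Gaussian Poincar\'e inequality $\Var(u_l(\bz))\le C\,\E\|\nabla_{\bz}u_l\|_2^2$ to $u_1,u_2,u_3$ (the fourth piece $u_4$, the Taylor remainder, is bounded naively). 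Each gradient $\nabla_{\bz}u_l$ is then written as a product of the matrices $\bT_\btheta,\bW,\bD_l,\bM,\widetilde\bM,\bA,\bR,\bN$ acting on $\bz$ or on $\bsigma^{(k)}(\bW^\sT\bz)$, and the operator-norm estimates of Lemma~\ref{lemma:op_norm_bounds} give $\E\|\nabla u_l\|_2^2=O(\mathrm{poly}(\log m)/m)$ uniformly over $\cS_{p,\delta}$.

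This sidesteps precisely the difficulty you flag in your last paragraph. Poincar\'e converts the variance into a single gradient norm, which is intrinsically a sum over $i\in[m]$ of vectors in $\R^d$; there is no $\Theta(d^2)$ cross-term explosion and no delicate ``reorganization'' needed to expose cancellations. Your proposed route---apply Stein in the $g_i$-direction to $\E[a_i\Delta_i\chi'(\cdot)]$, then reassemble and find a main term whose \emph{variance} you still have to bound---is not obviously wrong, but it adds a layer of term-tracking (differentiating $\chi'(\btheta^\sT\bx/\nu)$ in $g_i$ brings in $\partial_{g_i}a_j$ for all $j$) while not avoiding the variance estimate you need anyway. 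The paper's Poincar\'e argument is strictly simpler and closes mechanically once Lemma~\ref{lemma:op_norm_bounds} is in hand.
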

\begin{proof}
Fix $\delta>0$ throughout. Define for convenience
\begin{equation}
\nonumber
    U := \frac{1}{\nu}\sum_{i=1}^m \btheta_\up{i}^\sT \bz \sigma'(\bw_i^\sT \bz)\Delta_i.
\end{equation}
Let us compute the expectation of $U$ and control its variance.\\

The expectation can be computed as
\begin{align*}
&\E[U] = \E\left[
\frac{1}{\nu}\sum_{i=1}^m \btheta_\up{i}^\sT \bz \sigma'(\bw_i^\sT \bz)
\left(\frac{\btheta^\sT\bx}{\nu} +  \Delta_i - \frac{\btheta^\sT\bx}{\nu} 
\right)
\right]\\
&= 
\E\left[
\frac{1}{\nu^2}
\left({\btheta^\sT\bx}\right)^2\right] +
\frac{1}{\nu}\sum_{i=1}^m \E\left[\btheta_\up{i}^\sT \bz \sigma'(\bw_i^\sT 
\bz)\left(\Delta_i - \frac{\btheta^\sT\bx}{\nu}\right)\right]\\
&\stackrel{(a)}{=}1
+\E\left[\btheta_\up{i}^\sT\bP_i^\perp\bz 
\sigma'(\bw_i^\sT\bz)
\left(\Delta_i - \frac{\btheta^\sT \bx}{\nu}\right)
\right] + 
    \btheta_\up{i}^\sT\bw_i\E\left[\bw_i^\sT\bz\sigma'(\bw_i^\sT\bz)\left(\Delta_i - \frac{\btheta^\sT \bx}{\nu}\right)\right] \\
    &\stackrel{(b)}{=}1 + 
    \E\left[\btheta_\up{i}^\sT\bP_i^\perp\bz 
\left(\Delta_i - \frac{\btheta^\sT \bx}{\nu}\right)
\right] 
\E\left[
\sigma'(\bw_i^\sT\bz)
\right]\\
&\hspace{10mm}+ 
\btheta_\up{i}^\sT\bw_i\E\left[\bw_i^\sT\bz\sigma'(\bw_i^\sT\bz)\right]\E\left[\left(\Delta_i - \frac{\btheta^\sT \bx}{\nu}\right)\right] \\
    &\stackrel{(c)}{=}1
\end{align*}
where $(a)$ follows by the definition of $\nu$, $(b)$ follows by independence of the difference $\Delta_i - \btheta^\sT \bx/\nu$ and $\bw_i^\sT \bz$, which 
can be seen from the definition of $\Delta_i$, and $(c)$ follows by the assumption on $\sigma'$, namely, that 
$\E[\sigma'(G)] = \E[ G\sigma'(G)] = 0$ for $G$ standard normal.\\

Now, we control $\Var(U)$. First, note that we can write $\Delta_i$ as 
\begin{equation}
\label{eq:Delta_i_new_form}
    \Delta_i = \frac{\btheta_\up{i}^\sT\bz \sigma'(\bw_i^\sT\bz)}{\nu} 
    +\frac{1}{\nu}\sum_{j:j\neq i}
    \left\{\btheta_\up{j}^\sT\bz \sigma'(\bw_j^\sT\bz) - \btheta_\up{j}^\sT \bP_i^\perp \bz 
\sigma'(\bw_j^\sT \bz - \rho_{ij}\bw_i^\sT\bz)\right\}.
\end{equation}
Taylor expanding $\sigma'$ to the third order gives
\begin{align}
\nonumber
    \sigma'(\bw_j^\sT\bz - \rho_{ij}\bw_i^\sT\bz) &= \sigma'(\bw_j^\sT\bz) - 
\rho_{ij}\bw_i^\sT\bz \sigma''(\bw_j^\sT\bz) + \frac12 \rho_{ij}^2(\bw_i^\sT\bz)^2 
\sigma'''(\bw_i^\sT\bz)\\
    &\hspace{10mm}-\frac16\rho_{ij}^3(\bw_i^\sT\bz)^3 \sigma^\up{4}(v_{ij}(\bz))\nonumber
\end{align}
for some $v_{ij}(\bz)$ between $\bw_j^\sT \bz - \rho_{ij}\bw_i^\sT\bz$ and $\bw_j^\sT\bz$. Using this expansion and the notation $\widetilde \btheta_{j,i}$ defined earlier,
$\Delta_i$ can be re-written as
\begin{align}
\label{eq:delta_i_expansion}
    \Delta_i = \frac{1}{\nu} \btheta_\up{i}^\sT \bz \sigma'(\bw_i^\sT \bz) 
    &+ \frac1{\nu}\sum_{j: j\neq i} \btheta_\up{j}^\sT \bw_i  \bw_i^\sT 
\bz \sigma'(\bw_j^\sT \bz)\\
    &+ \frac1{\nu} \sum_{j: j\neq i}\widetilde\btheta_{j,i}^\sT\bz 
    \left( \rho_{ij} \bw_i^\sT \bz \sigma''(\bw_j^\sT \bz)
    -\frac12\rho_{ij}^2(\bw_i^\sT \bz)^2 \sigma'''(\bw_j^\sT\bz)\right)\nonumber\\
    &+ \frac1{6\nu}\sum_{j: j\neq i}\widetilde\btheta_{j,i}^\sT\bz \rho_{ij}^3 
(\bw_i^\sT\bz)^3 \sigma^\up{4}(v_{ij}(\bz)).\nonumber
\end{align}
\noindent Using the expansion~\eqref{eq:delta_i_expansion} in the expression for $U$ 
gives
\begin{align}
    U =& \frac{1}{\nu^2}\sum_{i=1}^m \left(\btheta_\up{i}^\sT \bz \sigma'(\bw_i^\sT 
\bz)\right)^2\label{eq:u_1_z}\\
    &+ \frac1{\nu^2}\sum_{i,j: j\neq i} \btheta_\up{i}^\sT\bz 
\sigma'(\bw_i^\sT\bz)\btheta_\up{j}^\sT \bw_i  \bw_i^\sT \bz 
\sigma'(\bw_j^\sT \bz)\label{eq:u_2_z}\\
    &+ \frac1{\nu^2} \sum_{i,j: j\neq i}\btheta_\up{i}^\sT\bz 
\sigma'(\bw_i^\sT\bz)\widetilde\btheta_{j,i}^\sT\bz 
    \left\{ \rho_{ij} \bw_i^\sT \bz \sigma''(\bw_j^\sT \bz)
    -\frac12\rho_{ij}^2(\bw_i^\sT \bz)^2 \sigma'''(\bw_j^\sT\bz)\right\}\label{eq:u_3_z}\\
    &+ \frac1{6\nu^2}\sum_{i,j: j\neq i}\btheta_\up{i}^\sT\bz
    \sigma'(\bw_i^\sT\bz)\widetilde\btheta_{j,i}^\sT\bz \rho_{ij}^3 (\bw_i^\sT\bz)^3 
\sigma^\up{4}(v_{ij}(\bz)).\label{eq:u_4_z}
\end{align}
Let us write $u_1(\bz)-u_4(\bz)$ for the terms on the right-hand on 
lines \eqref{eq:u_1_z}-\eqref{eq:u_4_z} respectively.
Observe that

 \begin{equation}
 \label{eq:var_U_bound_in_terms_of_u}
\Var(U)^{1/2} \le \sum_{l=1}^4 \Var(u_l(\bz))^{1/2} \stackrel{(a)}{\le}C_0 \sum_{l=1}^3 
\left(\E\left[\norm{\grad u_l(\bz)}_2^2\right]\right)^{1/2} + C_0\Var(u_4(\bz))^{1/2}
 \end{equation}
where $(a)$ follows from the Gaussian Poincar\'e inequality. 
We control each summand directly. In doing so, we will make heavy use of the bounds in 
Lemma~\ref{lemma:op_norm_bounds} and hence we will often do so without reference.
First let us bound the expected norm of the gradients in the above display.

For $\E\left[\norm{\grad u_1(\bz)}_2^2\right]$ we have the bound
\begin{align*}
    &\E\left[\big\|{\grad u_1(\bz)\|}_2^2\right]\\
    &=
    \E\Big[\Big\|\frac2{\nu^2} \sum_{i=1}^m 
\btheta_\up{i}^\sT\bz \sigma'(\bw_i^\sT\bz)^2 
\btheta_\up{i} 
    +
    \frac2{\nu^2} \sum_{i=1}^m (\btheta_\up{i}^\sT\bz)^2 \sigma''(\bw_i^\sT 
\bz)\sigma'(\bw_i^\sT\bz)\bw_i\Big\|_2^2\Big]\nonumber\\
&\le 
    \frac8{\nu^4}\E\Big[\Big\|{  \sum_{i=1}^m 
\btheta_\up{i}^\sT\bz \sigma'(\bw_i^\sT\bz)^2 
\btheta_\up{i}}\Big\|_2^2\Big]\\
    &\hspace{10mm}+
    \frac8{\nu^4}\E\Big[\Big\| \sum_{i=1}^m (\btheta_\up{i}^\sT\bz)^2 \sigma''(\bw_i^\sT 
\bz)\sigma'(\bw_i^\sT\bz)\bw_i\Big\|_2^2\Big]\nonumber\\
&\le 
\frac{8}{\nu^4}\E\left[\norm{\bT_\btheta\bD_1^2\bT_\btheta^\sT\bz}_2^2\right] + 
\frac{8}{\nu^4}
    \E\left[\norm{\bW\bD_1\bD_2\left((\btheta_\up{i}^\sT \bz)^2\right)_{i\in[m]}}_2^2\right]\nonumber\\
&\le \frac{8}{\nu^4}  \norm{\bT_\btheta}_\op^4 \E\left[ \norm{\bD_1}^4_\op 
\norm{\bz}_2^2\right] 
+ \frac{8}{\nu^4}\norm{\bW}^2_\op \E\Big[ \norm{\bD_1\bD_2}^2_\op 
\Big\|\left( (\btheta_\up{i}^\sT \bz)^2 \right)_{i\in[m]} \Big\|_2^2\Big]\nonumber\\
&\stackrel{(a)}{\le} \frac{C_1}{\nu^4} \frac{1}{d} + \frac{C_2}{\nu^4} 
\sum_{i=1}^m \E\left[(\btheta_\up{i}^\sT \bz)^4 \right]\nonumber\\
&= \frac{C_1}{\nu^4} \frac{1}{d}  + \frac{C_2}{\nu^4}\E[G^4] 
\sum_{i=1}^m\norm{\btheta_\up{i}}_2^4\\
&\stackrel{(b)}{\le} C_3(\delta) \left(\frac{1}{d}+ 
\frac{m}{d^2}\right)
\end{align*}
for all $\btheta\in\cS_{p,\delta}$, where $C_1,C_2,C_3(\delta) >0$, and $G$ is a standard normal variable.
Here, $(a)$ follows from the bound $\norm{\bT_\btheta}_\op\le \sfR /\sqrt{d}$ for $\btheta\in\cS_p$ along with 
the bounds in Lemma~\ref{lemma:op_norm_bounds}, and $(b)$ 
follows from $\nu = \nu_{\btheta} > \delta$ for $\btheta\in\cS_{p,\delta}$, and the bound $\norm{\btheta_\up{i}}_2 \le \sfR/\sqrt{d}$
Taking the supremum over $\cS_{p,\delta}$ then sending $n\to\infty$ shows that
\begin{equation}
\label{eq:u_1_var_asymp}
\lim_{n\to\infty} \sup_{\btheta\in\cS_{p,\delta}} \E\left[\norm{\grad u_1(\bz)}_2^2 \right] = 0.
\end{equation}

Now, the gradient of $u_2(\bz)$ can be computed as
\begin{align}
    \grad u_2(\bz) &=\frac{1}{\nu^2} \sum_{i,j:i\neq j}
    \sigma'(\bw_i^\sT\bz) \btheta_\up{j}^\sT\bw_i \bw_i^\sT\bz 
\sigma'(\bw_j^\sT \bz) \btheta_\up{i}\nonumber\\
    &\hspace{10mm}+
    \frac{1}{\nu^2} \sum_{i,j:i\neq j}
    \btheta_\up{i}^\sT\bz
    \sigma''(\bw_i^\sT\bz) \btheta_\up{j}^\sT\bw_i
     \bw_i^\sT\bz \sigma'(\bw_j^\sT \bz) 
     \bw_i\nonumber\\
    &\hspace{10mm}+\frac{1}{\nu^2} \sum_{i,j:i\neq j}
    \btheta_\up{i}^\sT\bz
    \sigma'(\bw_i^\sT\bz) \btheta_\up{j}^\sT\bw_i
    \sigma'(\bw_j^\sT \bz)\bw_i\nonumber\\
    &\hspace{10mm}+  
    \frac{1}{\nu^2} \sum_{i,j:i\neq j}
    \btheta_\up{i}^\sT\bz
    \sigma'(\bw_i^\sT\bz) 
    \btheta_\up{j}^\sT\bw_i
     \bw_i^\sT \bz
    \sigma''(\bw_j^\sT \bz)\bw_j\nonumber\\ 
    &=\frac1{\nu^2} \bT_\btheta \bD_1 \bM \bA \bsigma'(\bW^\sT \bz)
    + \frac1{\nu^2} \bW \bD_2 \widetilde \bM \bM \bA \bsigma'(\bW^\sT \bz) \nonumber\\
    &\hspace{10mm}+ \frac1{\nu^2} \bW \bD_1 \widetilde\bM \bA \bsigma'(\bW^\sT \bz) 
    + \frac1{\nu^2} \bW \bD_2 \bA \bD_1 \bM \bT_\btheta^\sT \bz \label{eq:grad_u_2},
\end{align}
where we recall that $\bsigma(\bv)$ denotes the vector whose $i$th entry is $\sigma(v_i)$. We 
have the following bounds on the expected norm squared of each term 
in~\eqref{eq:grad_u_2}: for the first of these terms,
\begin{align}
\frac{1}{\nu^4} 
\E\left[ 
\norm{
\bT_\btheta
\bD_1 \bM \bA \bsigma'(\bW^\sT \bz)
}_2^2
\right] &\le \frac{1}{\nu^4}\norm{\bT_\btheta}_\op^2 
\norm{\bA}_\op^2
\E\left[
\norm{\bD_1}_\op^2
\norm{\bM}_\op^2 
\norm{\bsigma'(\bW^\sT\bz)}^2_2
\right]\nonumber\\
&\le\frac{C_4}{\nu^4 d m}\E\left[
\norm{\bM}_\op^4\right]^{1/2}\E\left[\norm{\bsigma'(\bW^\sT\bz)}^4_2
\right]^{1/2}\nonumber\\
&\le\frac{C_4 \log m}{\nu^4 d m} \E\left[ \left(\sum_{i=1}^m 
\sigma'(\bw_i^\sT \bz)^2 \right)^2\right]^{1/2} \nonumber\\
&\stackrel{(a)}{\le}C_5(\delta)\left(\frac{ \log m  }{d}\right),\nonumber
\end{align}
for all $\btheta\in\cS_{p,\delta}$, where $C_4,C_5(\delta)>0$ are constants.
Note that in $(a)$ we used $\norm{\sigma'}_\infty$ is finite.

Moving on to bound the norm squared of the second term in~\eqref{eq:grad_u_2}, we have
\begin{align}
&\frac{1}{\nu^4 }
\E\left[
\norm{
\bW \bD_2\widetilde\bM\bM \bA
\bsigma'(\bW^\sT\bz)
}_2^2
\right]\nonumber\\
&\hspace{25mm}\le \frac{1}{\nu^4}
\norm{\bW}^2_\op
\norm{\bA}_\op^2
\E\left[ 
\norm{\bD_2}_\op^2 \norm{\widetilde \bM}_\op^2\norm{\bM}_\op^2  
\norm{\bsigma'(\bW^\sT\bz)}_2^2
\right]\nonumber\\
&\hspace{25mm}\le C_6  \frac{ 1}{m} \E\left[\norm{\widetilde \bM}_\op^6 
\right]^{1/3} \E\left[ \norm{\bM}_\op^6 \right]^{1/3} \E\left[ 
\norm{\bsigma'(\bW^\sT\bz)}_2^6\right]^{1/3}\nonumber\\
&\hspace{25mm}= C_7(\delta)\frac{(\log m)^2}{m}.\nonumber
\end{align}
Similarly, the expected norm squared of the third term in~\eqref{eq:grad_u_2} is bounded 
as
\begin{align}
\frac{1}{\nu^4}\E\big[
\big\|\bW \bD_1 \widetilde\bM \bA \bsigma'(\bW^\sT\bz)\big\|_2^2
\big]
&\le
\frac{C_8}{\nu^4} 
\norm{\bW}_\op^2 \norm{\bA}_\op^2 
\E\big[ 
\big\|\widetilde \bM\big\|_\op^4\big]^{\frac12}
\E\big[\big\|{\bsigma'(\bW^\sT \bz)}\big\|_2^4\big]^{\frac12}\nonumber\\
&=C_9(\delta)\frac{\log m}{m},\nonumber
\end{align}
and finally, for the fourth term in~\eqref{eq:grad_u_2} we have
\begin{align}
&\frac{1}{\nu^4}\E\big[
\big\|\bW \bD_2 \bA \bD_1 \bM \bT_\btheta^\sT \bz \big\|_2^2
\big]\\
& \le \frac{1}{\nu^4}
\norm{\bW}_\op^2 
\norm{\bA}_\op^2
\norm{\bT_\btheta}_\op^2
\E\left[
\norm{\bD_2}_\op^2  \norm{\bD_1}_\op^2 \norm{\bM}_\op^2  \norm{\bz}_2^2
\right]\nonumber\\
&=C_{10}(\delta)\frac{\log m}{m}\nonumber
\end{align}
Hence, we similarly conclude that 
\begin{equation}
\label{eq:u_2_var_asymp}
\lim_{n\to\infty} \sup_{\btheta\in\cS_{p,\delta}} \E\left[\norm{\grad u_2(\bz)}_2^2 \right] = 0.
\end{equation}

Now moving on to $u_3(\bz)$, we can write
\begin{align}
   \grad u_3(\bz)=&\frac1{\nu^2 }\sum_{i,j: i\neq j} \Bigg(
   \sigma'(\bw_i^\sT \bz) \widetilde \btheta_{j,i}^\sT \bz \left(\rho_{ij}\bw_i^\sT\bz 
\sigma''(\bw_j^\sT \bz) - \frac12\rho_{ij}^2 (\bw_i^\sT \bz)^2 \sigma'''(\bw_j^\sT \bz) 
\right)\btheta_\up{i}\nonumber\\
   &\hspace{0mm}+
   \btheta_\up{i}^\sT\bz\sigma''(\bw_i^\sT \bz) \widetilde \btheta_{j,i}^\sT \bz 
\left(\rho_{ij}\bw_i^\sT\bz \sigma''(\bw_j^\sT \bz) - \frac12\rho_{ij}^2 (\bw_i^\sT \bz)^2 
\sigma'''(\bw_j^\sT \bz) \right)\bw_i\nonumber\\
   &\hspace{0mm}+
   \btheta_\up{i}^\sT\bz\sigma'(\bw_i^\sT \bz) \left(\rho_{ij}\bw_i^\sT\bz \sigma''(\bw_j^\sT 
\bz) - \frac12\rho_{ij}^2 (\bw_i^\sT \bz)^2 \sigma'''(\bw_j^\sT \bz) \right)\widetilde 
\btheta_{j,i}\nonumber\\
   &\hspace{0mm}+\btheta_\up{i}^\sT\bz\sigma'(\bw_i^\sT \bz) \widetilde \btheta_{j,i}^\sT \bz 
\left(\rho_{ij} \sigma''(\bw_j^\sT \bz) - \rho_{ij}^2 \bw_i^\sT \bz \sigma'''(\bw_j^\sT 
\bz) \right)\bw_i\nonumber\\
   &\hspace{0mm}+
   \btheta_\up{i}^\sT\bz\sigma'(\bw_i^\sT \bz) \widetilde \btheta_{j,i}^\sT \bz 
\left(\rho_{ij}\bw_i^\sT\bz \sigma'''(\bw_j^\sT \bz) - \frac12\rho_{ij}^2 (\bw_i^\sT \bz)^2 
\sigma^\up{4}(\bw_j^\sT \bz) \right)\bw_j
   \Bigg)\nonumber.
   \end{align}
This can be rewritten as
   \begin{align}
  \grad u_3(\bz) =&\frac1{\nu^2}
   \Bigg(
    \bT_\btheta \bD_1 \bM \left( (\bN \odot \bR) \bsigma''(\bW^\sT\bz) - \frac12\bM (\bN \odot \bR 
\odot \bR)\sigma'''(\bW^\sT \bz)  \right)
   \label{eq:big_eq_1}\\ 
   &\hspace{0mm}+
   \bW\widetilde\bM \bD_2 \bM \left( (\bN \odot \bR) \sigma''(\bW^\sT\bz)  - \frac12\bM(\bN \odot \bR 
\odot \bR )\sigma'''(\bW^\sT \bz) \right)
   \label{eq:big_eq_2}\\ 
   &\hspace{0mm}+
    \bT_\btheta \left(  \bD_2 \bR  - \frac12\bD_3 (\bR \odot \bR)\bM \right)\bD_1 \bM \bT_\btheta^\sT 
\bz \label{eq:big_eq_3}\\
    &\hspace{0mm}+  \bW\widetilde \bM \bD_1 \bM \bF \left( (\bA \odot 
\bR)\sigma''(\bW^\sT \bz) - \frac12\bM (\bA \odot \bR \odot \bR) \sigma'''(\bW^\sT \bz)\right) \label{eq:big_eq_4}\\
    &\hspace{0mm}+ 
    \bW \bD_1 \widetilde \bM \left( 
    (\bN \odot \bR) \sigma''(\bW^\sT \bz) -  \bM (\bN \odot \bR \odot \bR) 
\sigma'''(\bW^\sT \bz)
    \right) \label{eq:big_eq_5}\\
    &\hspace{0mm}+ \bW \left( \bD_3 (\bN \odot \bR) - \frac12\bD_4 (\bN \odot\bR\odot \bR) \bM 
\right) \bM \bD_1 \bT_\btheta^\sT \bz
    \Bigg). \label{eq:big_eq_6}
\end{align}
Let us again bound the expected norm squared of each of the terms in the previous display.

For the terms on lines~\eqref{eq:big_eq_1} and~\eqref{eq:big_eq_2}
we have
\begin{align*}
&\E\left[ \norm{ \left(\bT_\btheta\bD_1 \bM  \right)\left( (\bN 
\odot \bR) \bsigma''(\bW^\sT \bz) - \frac12\bM (\bN \odot \bR \odot \bR) \bsigma'''(\bW^\sT \bz) 
\right)}_2^2\right]\\
&\hspace{10mm}\le
\norm{\bT_\btheta}_\op^2
\Bigg(
\E\left[
\norm{\bD_1 \bM \left(\bN \odot \bR \right) \bsigma''\left(\bW^\sT \bz\right)}_\op^2
\right]\\
&\hspace{20mm}+
\frac12\E\left[
\norm{\bD_1 \bM^2 \left(\bN \odot \bR \odot\bR \right) \bsigma'''\left(\bW^\sT 
\bz\right)}_2^2
\right]
\Bigg)\\
&\hspace{10mm}\le C \norm{\bT_\btheta}_2^2 \bigg( 
\E \left[  \norm{\bM}^6_\op\right]^{1/3} \E\left[ \norm{\bN \odot \bR}_\op^6\right]^{1/3}
\E\left[\norm{\bsigma''(\bW^\sT \bz)}_2^6 \right]^{1/3}\\
&\hspace{20mm}+ 
\E \left[  \norm{\bM}^{12}_\op\right]^{1/3} \E\left[ \norm{\bN \odot \bR 
\odot \bR }_\op^6\right]^{1/3}
\E\left[\norm{\bsigma'''(\bW^\sT \bz)}_2^6 \right]^{1/3}
\bigg)\\
&\hspace{10mm}\stackrel{(a)}{\le} C_{11} \frac{(\log m)^3}{d},
\end{align*}
where in $(a)$ we used that $\norm{\sigma^\up{l}}_\infty < \infty$.
A similar calculation shows that
\begin{align*}
&\E\left[ \norm{ \left(\bW\widetilde \bM \bD_2\bM  \right)\left( (\bN 
\odot \bR) \bsigma''(\bW^\sT \bz) - \frac12\bM (\bN \odot \bR \odot \bR) \bsigma'''(\bW^\sT \bz) 
\right)}_2^2\right]\\
&\hspace{0mm}\le
C_{12}\norm{\bW}_\op^2
\bigg(
\E\left[
\norm{\widetilde \bM \bD_2\bM \left(\bN \odot \bR \right) \bsigma''\left(\bW^\sT 
\bz\right)}_2^2
\right]\\
&\hspace{5mm}+
\E\left[
\norm{\widetilde\bM \bD_2 \bM^2 \left(\bN \odot \bR \odot\bR \right) 
\bsigma'''\left(\bW^\sT 
\bz\right)}_2^2
\right]
\bigg)\\
&\hspace{0mm}\le C_{12} \norm{\bW}_\op^2 \Big( 
\E \big[ \|\widetilde\bM\|^8_\op\big]^{1/4}
\E \big[  \norm{\bM}^8_\op\big]^{1/4}
\E\big[ \norm{\bN \odot 
\bR}_\op^8\big]^{1/4}
\E\big[\|\bsigma''(\bW^\sT \bz)\|_2^8 \big]^{1/4}\\
&\hspace{5mm}+ 
\E \left[  \|{\bM}\|^{16}_\op\right]^{1/4}
\E \left[  \|\widetilde\bM\|^8_\op\right]^{1/4} 
\E\left[ \norm{\bN \odot \bR 
\odot \bR }_\op^8\right]^{1/4}
\E\left[\|\bsigma'''(\bW^\sT \bz)\|_2^8 \right]^{1/4}
\Big)\\
&\hspace{0mm}\le C_{13} \frac{(\log m)^4}{m}.
\end{align*}\\

For the term on line~\eqref{eq:big_eq_3},
\begin{align*}
 &\E\left[\norm{\bT_\btheta\left( 
\bD_2 \bR -\frac12 \bD_3 \left(\bR \odot \bR\right)\bM \right) \bD_1 \bM \bT_\btheta^\sT \bz
 }_2^2 \right]
 \\
&\hspace{20mm}\le C \norm{\bT_\btheta}_\op^4
\Big( \norm{\bR}_\op^2\E\left[ 
\norm{\bD_2}_\op^2 \norm{\bD_1}_\op^2 \norm{\bM}_\op^2\norm{\bz}_2^2
\right]\\
&\hspace{30mm}+
\norm{\bR\odot \bR}_\op^2\E\left[
\norm{\bD_3}_\op^2 \norm{\bM}_\op^2 \norm{\bD_1}_\op^2 \norm{\bM}_\op^2 
\norm{\bz}_2^2
\right]
\Big)\\
&\hspace{20mm}\stackrel{(a)}{\le}
C_{14} \frac{(\log m)^2}{d}.
\end{align*}

For the term on line~\eqref{eq:big_eq_4}, an analogous calculation shows that
\begin{align*}
 &\E\Big[
 \Big\|\bW\widetilde \bM \bD_1 \bM \bF\hspace{-0.6mm} \Big(\hspace{-1mm}(\bA \odot 
\bR)\sigma''(\bW^\sT \bz) - \frac12\bM (\bA \odot \bR \odot \bR) \sigma'''(\bW^\sT 
\bz)\Big) \Big\|_2^2
 \Big]\\
 &\le C_{15}\frac{(\log m)^3}{m},
\end{align*}
and then similarly for~\eqref{eq:big_eq_5}, and~\eqref{eq:big_eq_6} we have

\begin{align*}
\E\left[ 
\norm{
    \bW \bD_1 \widetilde \bM \left( 
    (\bN \odot \bR) \sigma''(\bW^\sT \bz) -  \bM (\bN \odot \bR \odot \bR) 
\sigma'''(\bW^\sT \bz)
    \right) 
 }_2^2\right]
\le C_{16}\frac{(\log m)^3}{m},
\end{align*}
and
\begin{align*}
\E\left[ 
\norm{\bW \left( \bD_3 (\bN \odot \bR) - \frac12\bD_4 (\bN \odot\bR\odot \bR) \bM 
\right) \bM \bD_1 \bT_\btheta^\sT \bz}_2^2 
\right] 
\le C_{17}\frac{(\log m)^3}{m},
\end{align*}
respectively.

These bounds then give
\begin{equation}
\label{eq:u_3_var_asymp}
\lim_{n\to\infty} \sup_{\btheta\in\cS_{p,\delta}} \E\left[\norm{\grad u_3(\bz)}_2^2 \right]= 0.
\end{equation}

What remains is the term $\Var(u_4(\bz))^{1/2}$. However, this can be bounded naively as
\begin{align*}
\Var(u_4(\bz)) &\le \frac1{36\nu^4}\E\left[u_4(\bz)^2 \right] \\
&\le \frac{m^4}{36\nu^4} \E\left[\sup_{i\neq j} \left| \btheta_\up{i}^\sT \bz \sigma'(\bw_i^\sT \bz) 
\widetilde \btheta_{j,i}^\sT \bz \rho_{i,j}^3 (\bw_i^\sT \bz)^3 
\sigma^\up{4}(v_{ij}(\bz))\right|^2 \right] \\
&\le C_{18} \frac{m^4}{\nu^4} \left(
\E\left[\sup_{i\in[m]}\left| \btheta_\up{i}^\sT \bz\right|^2
\sup_{i\neq j}\left| \widetilde\btheta_{j,i}^\sT \bz\right|^2
\sup_{i\in[m]}\left| \bw_i^\sT \bz\right|^6
\right]
\sup_{i\neq j} \left|\rho_{i,j}\right|^6 \right)\\
&\stackrel{(a)}{\le}
C_{19} \frac{m^4}{\nu^4}
\left(
\frac{\log m}{m}\frac{\log m}{m} (\log m)^3\left(\frac{\log m}{d}\right)^3
\right)\\
&\le
C_{20} \frac{m^2}{\nu^4}
\frac{(\log m)^8}{d^3},
\end{align*}
where $(a)$ follows from an application of H\"older's and Lemma~\ref{lemma:op_norm_bounds}.
Hence we have
\begin{equation}
 \lim_{n\to\infty}\sup_{\btheta\in\cS_{p,\delta}}  \Var(u_4(\bz)) = 0.
\end{equation}
Combining this with 
 \eqref{eq:u_1_var_asymp}, 
 \eqref{eq:u_2_var_asymp}, and
 \eqref{eq:u_3_var_asymp} gives
\begin{equation}
 \lim_{n\to\infty} \sup_{\btheta\in\cS_{p,\delta}} \Var(U)= 0.
\end{equation}

Therefore, we can control~\eqref{eq:first_term_ntk} as
\begin{align*}
   \lim_{n\to\infty} \sup_{\btheta\in\cS_{p,\sfk}} \left|\E\left[(U-1)\chi'\left(\frac{\btheta^\sT \bx}{\nu}\right)\right]\right|
    &\le \norm{\chi'}_\infty
    \lim_{n\to\infty} \sup_{\btheta\in\cS_{p,\sfk}} \left( \Var(U)^{1/2} + |\E[U - 1]|  \right)\\
    &= 0
\end{align*}
by the previous display and the computation showing $\E[U] = 1$.

\end{proof}

\subsubsection{Bounding the second term in Eq.~\eqref{eq:decomposition}}
\label{section:second_term_ntk}
\begin{lemma}
\label{lemma:second_term_ntk}
For any $\delta>0$, we have
\begin{equation}
\nonumber
    \lim_{n\to\infty}\sup_{\btheta\in\cS_{p,\delta}} 
    \bigg|
    \E\bigg[
    \frac{1}{\nu}\sum_{i=1}^m
    \btheta_\up{i}^\sT\bz\sigma'(\bw_i^\sT\bz)
    \bigg(\chi\Big(\frac{\btheta^\sT\bx}{\nu}\Big)
    -
    \chi\Big(\frac{\btheta^\sT\bx}{\nu} - \Delta_i\Big)
    - \Delta_i\chi'\Big(\frac{\btheta^\sT\bx}{\nu}\Big)
    \bigg)
    \bigg]
    \bigg|= 0.
\end{equation}
\end{lemma}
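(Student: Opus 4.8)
\noindent\textbf{Proof proposal for Lemma~\ref{lemma:second_term_ntk}.}
The plan is to Taylor-expand $\chi$ to second order, which replaces the bracketed expression by a quantity controlled by $\Delta_i^2$, and then to reduce the claim to a second-moment estimate on $\Delta_i$ of exactly the type already carried out for $\Var(U)$ in the proof of Lemma~\ref{lemma:first_term_ntk}. First I would recall that, since $\varphi$ is bounded and differentiable with bounded derivative, the Stein solution $\chi=\chi_\varphi$ of~\eqref{eq:stein_solution} is twice differentiable with $\norm{\chi''}_\infty\le C\norm{\varphi'}_\infty<\infty$ (see~\cite{chen2011normal}). A second-order Taylor expansion with integral remainder then gives, for every $i\in[m]$,
\begin{equation}
\nonumber
\left|\chi\left(\frac{\btheta^\sT\bx}{\nu}\right)-\chi\left(\frac{\btheta^\sT\bx}{\nu}-\Delta_i\right)-\Delta_i\chi'\left(\frac{\btheta^\sT\bx}{\nu}\right)\right|\le\frac12\norm{\chi''}_\infty\,\Delta_i^2.
\end{equation}
Since $\norm{\sigma'}_\infty<\infty$ and $\nu^2>\delta$ on $\cS_{p,\delta}$, this reduces the statement to proving
\begin{equation}
\nonumber
\lim_{n\to\infty}\sup_{\btheta\in\cS_{p,\delta}}\sum_{i=1}^m\E\left[\left|\btheta_\up{i}^\sT\bz\right|\Delta_i^2\right]=0.
\end{equation}

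Next I would substitute the Taylor expansion~\eqref{eq:delta_i_expansion} of $\Delta_i$. Writing $\Delta_i=\sum_{l=1}^4\Delta_i^\up{l}$ for its four lines and using $\Delta_i^2\le4\sum_{l=1}^4(\Delta_i^\up{l})^2$, it is enough to show $\sum_{i=1}^m\E[|\btheta_\up{i}^\sT\bz|(\Delta_i^\up{l})^2]\to0$ uniformly on $\cS_{p,\delta}$ for $l=1,2,3,4$. For $l=1$ this is immediate from $(\Delta_i^\up{1})^2\le\norm{\sigma'}_\infty^2(\btheta_\up{i}^\sT\bz)^2/\nu^2$ and $\sum_i\norm{\btheta_\up{i}}_2^3\le(\sup_i\norm{\btheta_\up{i}}_2)\norm{\btheta}_2^2\le\sfR^3/\sqrt d$. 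For $l=2,3$ I would rewrite the finite sum $\sum_{i=1}^m|\btheta_\up{i}^\sT\bz|(\Delta_i^\up{l})^2$ as an expression in the matrices $\bM=\diag\{\bW^\sT\bz\}$, $\widetilde\bM=\diag\{\bT_\btheta^\sT\bz\}$, $\bD_1,\dots,\bD_4$, $\bW$, $\bT_\btheta$, $\bA$, $\bR$, $\bN$, exactly along the lines of the manipulations leading to~\eqref{eq:grad_u_2} and~\eqref{eq:big_eq_1}--\eqref{eq:big_eq_6}; pulling the scalar factors out as operator norms ($\sup_i|\btheta_\up{i}^\sT\bz|=\norm{\widetilde\bM}_\op$, $\sup_i|\bw_i^\sT\bz|^k=\norm{\bM}_\op^k$, $\norm{\bD_l}_\op\le\norm{\sigma^\up{l}}_\infty$) and applying H\"older's inequality together with the moment bounds of Lemma~\ref{lemma:op_norm_bounds}, each such term is $O((\log m)^c/m^{1/2})$ or $O((\log m)^c/d^{1/2})$ uniformly over $\cS_{p,\delta}$ (using $\nu^2>\delta$), hence vanishes. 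This is the same bookkeeping as in~\eqref{eq:u_1_var_asymp}--\eqref{eq:u_3_var_asymp}; the decisive smallness comes from $\norm{\bT_\btheta}_\op\le\sfR/\sqrt d$, $\norm{\bA}_\op=O(1/\sqrt m)$, $\norm{\widetilde\bM}_\op=O(\sqrt{\log m}/\sqrt m)$ and $\norm{\bN\odot\bR}_\op=O(\sqrt{\log m}/\sqrt m)$, together with $m\asymp d$.

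The main obstacle is the last piece, $\Delta_i^\up{4}=\frac{1}{6\nu}\sum_{j\neq i}\widetilde\btheta_{j,i}^\sT\bz\,\rho_{ij}^3(\bw_i^\sT\bz)^3\sigma^\up{4}(v_{ij}(\bz))$, where the intermediate point $v_{ij}(\bz)$ blocks any clean matrix representation. Here I would follow the crude route used for $\Var(u_4(\bz))$: bound $\sum_{i=1}^m|\btheta_\up{i}^\sT\bz|(\Delta_i^\up{4})^2$ by $m^3$ times a supremum over $i\neq j$, and then use $\norm{\sigma^\up{4}}_\infty<\infty$, the bound $|\rho_{ij}|\le C(\log m/d)^{1/2}$ valid on $\cB$, and the sub-Gaussian maximal estimates $\E[\sup_i|\bw_i^\sT\bz|^k]\le C(\log m)^{k/2}$, $\E[\sup_{i,j}|\widetilde\btheta_{j,i}^\sT\bz|^k]=O((\log m)^{k/2}/m^{k/2})$ and $\E[\norm{\widetilde\bM}_\op^k]=O((\log m)^{k/2}/m^{k/2})$ from Lemma~\ref{lemma:subgaussian_max} (plus H\"older). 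Since $m\asymp d$, the factor $m^3(\log m/d)^3$ is only of logarithmic order in $m$, which is crushed by the negative powers of $m$ coming from $\norm{\widetilde\bM}_\op$ and $\sup_{i,j}|\widetilde\btheta_{j,i}^\sT\bz|$, so the bound is $o(1)$ uniformly on $\cS_{p,\delta}$. Verifying that this crude estimate genuinely vanishes, and --- more tediously --- that all the matrix-norm estimates for $l=1,2,3$ are uniform over $\cS_{p,\delta}$, is the bulk of the work; the rest is routine. Combined with Lemma~\ref{lemma:first_term_ntk} and the decomposition~\eqref{eq:decomposition}, this yields~\eqref{eq:stein_equiv} and hence Lemma~\ref{lemma:ntk_bl}.
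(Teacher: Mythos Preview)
Your proposal is correct and follows the same overall strategy as the paper: Taylor-expand $\chi$ to second order (using $\norm{\chi''}_\infty<\infty$), reduce to controlling $\sum_i\E\big[|\btheta_\up{i}^\sT\bz|\,\Delta_i^2\big]$, decompose $\Delta_i$ via a Taylor expansion of $\sigma'$, and bound each piece with the matrix-norm estimates of Lemma~\ref{lemma:op_norm_bounds}.

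There are two organizational differences worth noting. First, the paper does not reuse the four-term expansion~\eqref{eq:delta_i_expansion}; it writes a fresh, shorter expansion $\Delta_i=d_{1,i}+d_{2,i}+d_{3,i}+d_{4,i}$ where the remainder $d_{4,i}$ already carries $\rho_{ij}^2$ (not $\rho_{ij}^3$). Your choice to reuse~\eqref{eq:delta_i_expansion} also works --- your crude count for $l=4$ goes through since $m^3\cdot(\log m/d)^3\cdot m^{-3/2}\cdot\mathrm{polylog}(m)=o(1)$ when $m\asymp d$ --- but the paper's shorter expansion makes the remainder estimate slightly less delicate. Second, and more substantively, the paper first restricts to a high-probability event
\[
\cA=\Big\{\sup_i\big|\btheta_\up{i}^\sT\bz\big|/\|\btheta_\up{i}\|_2,\ \sup_i|\bw_i^\sT\bz|,\ \sup_{i\neq j}\big|\widetilde\btheta_{j,i}^\sT\bz\big|/\|\widetilde\btheta_{j,i}\|_2\le(\log m)^{50}\Big\},
\]
shows the contribution of $\cA^c$ is negligible, and on $\cA$ pulls out the factor $|\btheta_\up{i}^\sT\bz|\le(\log m)^{50}\sfR/\sqrt d$ \emph{deterministically}. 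This reduces the task to bounding $\frac{(\log m)^{50}}{\sqrt m}\sum_i\E[d_{k,i}^2\one_\cA]$, which needs only second moments of the $d_{k,i}$. Your route --- bounding $|\btheta_\up{i}^\sT\bz|\le\|\widetilde\bM\|_\op$ inside the expectation and separating factors by H\"older --- is equally valid but forces you to control higher moments of $\|\widetilde\bM\|_\op$, $\|\bM\|_\op$, $\|\bN\odot\bR\|_\op$, etc.; fortunately Lemma~\ref{lemma:op_norm_bounds} supplies all of these. Either bookkeeping device closes the argument.
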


\begin{proof}
Let us define the event 
\begin{align*}
\cA&:= \Big\{
\sup_{i\in[m]} \Big| {\|{\btheta_\up{i}}\|^{-1}} \btheta_\up{i}^\sT\bz\Big| \le \left(\log m\right)^{50} \Big\}
\bigcap\Big\{
\sup_{i\in[m]} \Big| \bw_i^\sT\bz\Big| \le \left(\log m\right)^{50} \Big\}\\
&\hspace{40mm}\bigcap
\Big\{\sup_{\{(i,j)\in[m]^2 : i\neq j\}}
 \big|{\|\widetilde\btheta_{j,i}}_2 \|^{-1}
 \widetilde\btheta_{j,i}^\sT\bz\big|
 \le \left(\log m\right)^{50} 
\Big\}
\end{align*}

Using that for $v_i$, not necessarily independent, subgaussian with subgaussian norm $1$ 
\begin{equation*}
\P\left(\sup_{i\in[m]}|v_i| > \sqrt{2\log m} + t  \right)  \le\exp\left\{-\frac{t^2}{2\sfK_v^2}\right\},
\end{equation*}
we obtain
\begin{align*}
\P\left(
\cA^c
\right) \le  
3\exp\left\{-\frac{c_0(\log m)^{99}}{2}\right\}
\end{align*}
for some universal constant $c_0\in(0,\infty)$.
Hence, it is sufficient to establish the desired bound on the set $\cA$. Indeed, suppose 

\begin{equation}
    \label{eq:second_term_ntk_target}
    \lim_{n\to\infty}\sup_{\btheta\in\cS_{p,\delta}}
    \bigg|
    \E\bigg[
    \frac{1}{\nu}\sum_{i=1}^m
    \btheta_\up{i}^\sT\bz\sigma'(\bw_i^\sT\bz)
    \bigg(\chi\Big(\frac{\btheta^\sT\bx}{\nu}\Big)
    -
    \chi\Big(\frac{\btheta^\sT\bx}{\nu} - \Delta_i\Big)
    - \Delta_i\chi'\Big(\frac{\btheta^\sT\bx}{\nu}\Big)
    \bigg)
    \one_{\cA}
    \bigg]
    \bigg|= 0,
\end{equation}
then 
\begin{align*}
    &\lim_{n\to\infty}\sup_{\btheta\in\cS_{p,\delta}} 
    \bigg|
    \E\bigg[
    \frac{1}{\nu}\sum_{i=1}^m
    \btheta_\up{i}^\sT\bz\sigma'(\bw_i^\sT\bz)
    \Big(\chi\Big(\frac{\btheta^\sT\bx}{\nu}\Big)
    -
    \chi\Big(\frac{\btheta^\sT\bx}{\nu} - \Delta_i\Big)
    - \Delta_i\chi'\Big(\frac{\btheta^\sT\bx}{\nu}\Big)
    \Big)
    \bigg]
    \bigg|  \\
    &\hspace{10mm}\stackrel{(a)}{\le }
    \lim_{n\to\infty}\sup_{\btheta\in\cS_{p,\delta}} 
    \frac{C_1 m}{\nu}
    \left(\norm{\chi}_\infty
    \vee\norm{\chi'}_\infty\right)
    \sup_{i\in[m]}\norm{\btheta_\up{i}}_2
    \E\left[
    \norm{\bz}_2
    \Big( 2+ \sup_{i\in[m]}|\Delta_i|\Big)
    \one_{\cA^c}
    \right]\\
    &\hspace{10mm}\le
    \lim_{n\to\infty}\sup_{\btheta\in\cS_{p,\delta}} 
    \frac{C_1 m}{\nu^2}
    \left(\norm{\chi}_\infty
    \vee\norm{\chi'}_\infty\right)
    \sup_{i\in[m]}\norm{\btheta_\up{i}}_2\dots\\
    &\hspace{40mm}\dots \E\left[
    \norm{\bz}_2
    \Big( 2\nu + \norm{\btheta}_2\norm{\bx}_2 + C_2 m \sup_{j\in[m]}\norm{\btheta_\up{j}}_2\norm{\bz}_2\Big)
    \one_{\cA^c}
    \right]\\
    &\hspace{10mm}\stackrel{(b)}{\le }
    \lim_{n\to\infty}\sup_{\btheta\in\cS_{p,\delta}} 
    C_4(\nu) m^2 \exp\left\{-\frac{c_0(\log m)^{99}}{2}\right\}\\
    &\hspace{10mm}{\le }
    \lim_{n\to\infty} C_4(\delta) m^2 \exp\left\{-\frac{c_0(\log m)^{99}}{2}\right\}\\
    &\hspace{10mm}=0.
\end{align*}
where $(a)$ follows by a naive bound on $\Delta_i$, while $(b)$ follows by an application of H\"older's inequality.
Hence, throughout we work on the event $\cA$.\\

By Lemma 2.4 of~\cite{chen2011normal}, $\chi' = \chi'_\varphi$ is differentiable and
 $\norm{\chi''}_\infty \le  C_0$ since $\varphi$ is assumed to be differentiable with bounded derivative.
Hence,
\begin{align}
\label{eq:2nd_term_taylor}
\left|\chi\left(\frac{\btheta^\sT\bx}{\nu}\right) -\chi\left( \frac{\btheta^\sT\bx}{\nu} 
- \Delta_i\right)  - \Delta_i \chi' \left( \frac{\btheta^\sT\bx}{\nu}\right)\right| 
&\le C_0 \left|\Delta_i \right|^2.
\end{align}
Using this in~\eqref{eq:second_term_ntk_target} we obtain
\begin{align}
    &\Big|
    \E\Big[
    \frac{1}{\nu}\sum_{i=1}^m
    \btheta_\up{i}^\sT\bz\sigma'(\bw_i^\sT\bz)
    \Big(\chi\Big(\frac{\btheta^\sT\bx}{\nu}\Big) 
    -
    \chi\Big(\frac{\btheta^\sT\bx}{\nu} - \Delta_i\Big) 
    - \Delta_i\chi'\Big(\frac{\btheta^\sT\bx}{\nu}\Big)
    \Big)
    \one_{\cA}
    \Big]
    \Big|    
\nonumber
    \\
    &\hspace{50mm}\stackrel{(a)}{\le}
    C_0
    \E\Big[
    \frac{1}{\nu}\sum_{i=1}^m
    \Big|\btheta_\up{i}^\sT\bz\sigma'(\bw_i^\sT\bz)\Big|
    \Delta_i^2 \one_\cA
    \Big]
    \nonumber
    \\
    &\hspace{50mm}\stackrel{(b)}{\le}
    \frac{C_1}{\nu}
    \E\Big[\sup_{i\in[m]}\Big|\frac{1}{\norm{\btheta_\up{i}}_2} \btheta_\up{i}^\sT\bz\Big|
    \sum_{i=1}^m \norm{\btheta_\up{i}}_2
     \Delta_i^2\one_{\cA}
    \Big]\nonumber\\
    &\hspace{50mm}\stackrel{(c)}{\le} \frac{C_2}{\nu} \frac{(\log m)^{50}}{m^{1/2}} 
    \sum_{i=1}^m \E\left[
     \Delta_i^2\one_{\cA}
    \right],
    \label{eq:delta_i_square_bound}
\end{align}
where $(a)$ follows from~\eqref{eq:2nd_term_taylor}, $(b)$ follows from 
boundedness of $\norm{\sigma'}_\infty$, and $(c)$ follows from $\norm{\btheta_\up{i}}_2 \le \sfR/\sqrt{d}$ and the definition of $\cA$. 
Now recall the form of $\Delta_i$ introduced in Eq.~\eqref{eq:Delta_i_new_form} and let us again Taylor expand $\sigma'$ to write
\begin{align}
    \Delta_i &= \frac{1}{\nu} \btheta_\up{i}^\sT \bz \sigma'(\bw_i^\sT \bz) 
    + \frac1{\nu}\sum_{j: j\neq i} \btheta_\up{j}^\sT  \bw_i  
\bw_i^\sT \bz \sigma'(\bw_j^\sT \bz)\nonumber\\
    &\hspace{10mm}+ \frac1{\nu} \sum_{j: j\neq i}\widetilde\btheta_{j,i}^\sT\bz 
     \rho_{ij} \bw_i^\sT \bz \sigma''(\bw_j^\sT \bz)-
     \frac1{\nu}\sum_{j:j\neq i}
     \widetilde \btheta_{j,i}^\sT \bz
     \rho_{ij}^2(\bw_i^\sT \bz)^2 \sigma'''(v_{j,i}(\bz))\nonumber\\
     &=: d_{1,i} + d_{2,i} + d_{3,i} + d_{4,i}
     \label{eq:delta_i_to_d_k_i}
\end{align}
for some $v_{j,i}(\bz)$ between $\bw_j^\sT \bz$ and $\bw_j^\sT\bz - \rho_{ij}\bw_i^\sT\bz$. We show that for each $k\in [4]$,
\begin{equation}
    \label{eq:d_ki_bounds}
    \lim_{n\to\infty} \sup_{\btheta\in\cS_{p,\delta}} \frac{1}{\nu}
    \frac{(\log m)^{50}}{m^{1/2}} \sum_{i=1}^m\E\left[d_{k,i}^2\one_\cA\right]=0.
\end{equation} 
For the contributions of $d_{1,i}$, we have
\begin{align*}
\frac{1}{\nu}\frac{(\log m)^{50}}{m^{1/2}}\sum_{i=1}^m \E\left[ d_{1,i}^2\one_\cA\right]
&\le C_3\frac1{\nu^3} \frac{(\log m)^{50}}{m^{1/2}}
\sum_{i=1}^m\E\left[
(\btheta_\up{i}^\sT \bz)^2 \sigma'(\bw_i^\sT \bz)^2\one_\cA
\right]\\
&\le C_4\frac1{\nu^3} \frac{(\log m)^{50}}{m^{1/2}} \E\left[\norm{\bT_\btheta ^\sT \bz}_2^2 \right]^{1/2}\\
&\le C_4\frac1{\nu^3}\frac{(\log m)^{50}}{m^{1/2}}\norm{\bT_\btheta}_\op^2 \E\left[\norm{\bz}_2^2\right]\\
&\le C_5(\delta)\left( \frac{(\log m)^{50} }{m^{1/2}}\right)
\end{align*}
uniformly over $\btheta\in\cS_{p,\delta}$. Taking supremum over $\btheta\in\cS_{p,\delta}$ and sending $n\to\infty$ proves~\eqref{eq:d_ki_bounds} 
for $k=1$. 

For $d_{2,i}$,
\begin{align*}
    \frac{1}{\nu}\frac{(\log m)^{50}}{m^{1/2}}\sum_{i=1}^m \E\left[d_{2,i}^2\one_{\cA} \right]
    &\le \frac{C_{6}}{\nu^3} \frac{(\log m)^{50}}{m^{1/2}}\E\Big[ \sum_{i=1}^m( \bw_i^\sT \bz)^2  
\Big(\sum_{j\neq i} \btheta_\up{j}^\sT  \bw_i \sigma'(\bw_j^\sT \bz)  
\Big)^2 \one_{\cA}\Big]
\\
    &\stackrel{(a)}\le \frac{C_{7}}{\nu^3} \frac{ (\log m)^{150}}{m^{1/2}}\E\left[\norm{ 
    \bA^\sT \bsigma'(\bW^\sT \bz)
    }_2^2 
    \right]
    \\
    &\le C_{8}(\delta)\frac{ (\log m)^{150}}{m^{1/2}}
\end{align*}
uniformly over $\cS_{p,\delta}$,
where $(a)$ holds by the definition of $\cA$.
Sending $n\to\infty$ shows~\eqref{eq:d_ki_bounds} for $k=2$. 

Similarly, we have
\begin{align*}
   \frac{1}{\nu}\frac{(\log m)^{50}}{m^{1/2}} \sum_{i=1}^m \E\Big[d_{3,i}^2\one_\cA\Big]
   &\le \frac{C_{9}}{\nu^3}\frac{(\log m)^{50}}{m^{1/2}} \E\Big[\sum_{i=1}^m (\bw_i^\sT \bz)^2 
\Big(\sum_{j\neq i} \widetilde\btheta_{j,i}^\sT \bz \rho_{ij} \sigma''(\bw_j^\sT \bz) 
\Big)^2 \one_\cA\Big]\\
   &\le \frac{C_{9}}{\nu^3}\frac{(\log m)^{150}}{m^{1/2}}
   \E\Big[\sum_{i=1}^m 
\Big(\sum_{j\neq i} \widetilde\btheta_{j,i}^\sT \bz \rho_{ij} \sigma''(\bw_j^\sT \bz) 
\Big)^2 \one_\cA\Big]\\
   &= \frac{C_{9}}{\nu^3}\frac{ (\log m)^{150}}{m^{1/2}} \E\Big[
    \norm{(\bN \odot \bR) \bsigma''(\bW^\sT \bz)}^2
   \Big]\\
   &\le C_{10}(\delta)\frac{(\log m)^{151}}{m^{1/2}}
\end{align*}
uniformly over $\btheta\in\cS_{p,\delta}$, establishing~\eqref{eq:d_ki_bounds} for $k=3$.

Finally, $d_{4,i}$ can be bounded almost surely on $\cA$:
\begin{align*} 
|d_{4,i}|\one_{\cA} &\le \frac{C_{11} m}{\nu}  
\sup_{ i\neq j}
 \left|
  \frac{1}
 {\norm{\widetilde\btheta_{j,i}}_2 }\widetilde\btheta_{j,i}^\sT\bz
 \right| \sup_{i\neq j}\norm{\widetilde \btheta_{j,i}}_2
\sup_{i\neq j}\rho_{ij}^2 \sup_{i\in [m]}(\bw_i^\sT \bz)^2 \one_\cA\\
&\stackrel{(a)}{\le} \frac{C_{12} (\log m)^{50} }{\nu} \sup_{i\in[m]}\norm{\bP_i^\perp}_\op \sup_{j\in[m]}\norm{\btheta_\up{j}}_2\\
&\stackrel{(b)}{\le} C_{13}(\delta) \frac{(\log m)^{50}}{d^{1/2}}
\end{align*}
uniformly over $\cS_{p,\delta}$,
where $(a)$ follows from the definition of the event $\cA$ and $(b)$ follows because $\bP_i^\perp$ is a projection matrix for all $i$
and that $\norm{\btheta_\up{j}}_2 \le \sfR/\sqrt{d}$.
Therefore, we have
\begin{equation}
\nonumber
    \lim_{n\to\infty}\sup_{\btheta\in\cS_{p,\delta}}
    \frac1\nu
    \frac{(\log m)^{50}}{m^{1/2}}\sum_{i=1}^m \E[d_{4,i}^2\one_{\cA}] = 0,
\end{equation}
establishing~\eqref{eq:d_ki_bounds} for $k=4$. 

Hence we showed
\begin{align*}
    &\lim_{n\to\infty}\sup_{\btheta\in\cS_{p,\delta}}\Big|
    \E\Big[
    \frac{1}{\nu}\sum_{i=1}^m
    \btheta_\up{i}^\sT\bz\sigma'(\bw_i^\sT\bz)
    \Big(\chi\Big(\frac{\btheta^\sT\bx}{\nu}\Big) 
    -
    \chi\Big(\frac{\btheta^\sT\bx}{\nu} - \Delta_i\Big) 
    - \Delta_i\chi'\Big(\frac{\btheta^\sT\bx}{\nu}\Big)
    \Big)
    \one_{\cA}
    \Big]
    \Big|\\
    &\hspace{10mm}\stackrel{(a)}{\le}
    \lim_{n\to\infty}\sup_{\btheta\in\cS_{p,\delta}}
    \frac{C_2}{\nu} \frac{(\log m)^{50}}{m^{1/2}} 
    \sum_{i=1}^m \E\left[
     \Delta_i^2\one_{\cA}
    \right]\\
    &\hspace{10mm}\stackrel{(b)}\le
    \lim_{n\to\infty}\sup_{\btheta\in\cS_{p,\delta}}\frac{C_{14}}{\nu} \frac{(\log{m})^{50}}{m^{1/2}} 
    \sum_{k=1}^4\sum_{i=1}^m \E\left[
     d_{k,i}^2\one_{\cA}
    \right]\\
    &\hspace{10mm}\stackrel{(c)}=0,
\end{align*}
where $(a)$ follows from~\eqref{eq:delta_i_square_bound}, $(b)$ follows from~\eqref{eq:delta_i_to_d_k_i} and $(c)$ follows 
from~\eqref{eq:d_ki_bounds} holding for $k\in[4]$. Hence, we have shown~\eqref{eq:second_term_ntk_target} and completed
the proof.
\end{proof}

\subsubsection{Proof of Lemma~\ref{lemma:ntk_bl-1}}
\label{section:ntk_actual_proof}

\begin{proof}
Recall the definition of $\Delta_i$ in~\eqref{eq:Delta_i_def}
and note that for all $i\in[m]$, 
\begin{equation}\nonumber
\frac1\nu\btheta^\sT\bx - \Delta_i = \frac1\nu \sum_{j:j\neq i} \btheta_\up{j}^\sT \bP_i^\perp 
\bz \sigma'(\bw_j^\sT \bz - \rho_{i,j} \bw_i^\sT \bz).
\end{equation}
Since $\bz$ is Gaussian, $\bw_i^\sT \bz$ is independent of any function of $\bw_j^\sT\bz - \rho_{i,j}\bw_i^\sT\bz$ and hence is independent of
$\btheta^\sT \bx/\nu - \Delta_i$. Therefore, we have
\begin{align}
 \E\left[ \btheta_\up{i}^\sT \bz \sigma'(\bw_i^\sT \bz)
 \chi\left( 
 \frac{\btheta^\sT \bx}{\nu} - \Delta_i
\right)\right]
&= \E\left[ 
\btheta_\up{i}^\sT\bw_i \bw_i^\sT \bz \sigma'(\bw_i^\sT \bz)
\chi\left( 
 \frac{\btheta^\sT \bx}{\nu} - \Delta_i
\right)
\right]\\
&\hspace{10mm}+ 
\E\left[  \btheta_\up{i}^\sT \bP^\perp_i \bz \sigma'(\bw_i^\sT \bz)
\chi\left( 
 \frac{\btheta^\sT \bx}{\nu} - \Delta_i
\right)
\right]
\nonumber
\\
&=  
\btheta_\up{i}^\sT\bw_i 
\E\left[
\bw_i^\sT \bz \sigma'(\bw_i^\sT \bz)
\right]
\E\left[
\chi\left( 
 \frac{\btheta^\sT \bx}{\nu} - \Delta_i
\right)
\right]\nonumber\\
&\hspace{10mm}+ 
\E\left[  \btheta_\up{i}^\sT \bP^\perp_i \bz 
\chi\left( 
 \frac{\btheta^\sT \bx}{\nu} - \Delta_i
\right)
\right]
\E\left[\sigma'(\bw_i^\sT \bz) \right]
\nonumber
\\
&\stackrel{(a)}{=} 0 \label{eq:cross_term_0}
\end{align}
where $(a)$ follows from the assumption that $\E[\sigma'(G)] = \E[G\sigma'(G)] = 0$ for a standard normal $G$.
Hence, we can write
\begin{align}
  &\left|\E\left[ \varphi\left(\frac{\btheta^\sT \bx}{\nu}\right) - \varphi\left(\frac{\btheta^\sT \bg}{\nu}\right)\right]\right|\nonumber\\
  &\hspace{15mm}\stackrel{(a)}{=}
    \left|\E\left[\frac{\btheta^\sT\bx}{\nu}\chi\left(\frac{\btheta^\sT\bx}{\nu}\right)   
    -\chi'\left(\frac{\btheta^\sT\bx}{\nu}\right)
    \right]\right|  \nonumber\\
&\hspace{15mm}=
    \Bigg|
    \E\left[\left(\frac{1}{\nu}\sum_{i=1}^m \btheta_\up{i}^\sT\bz 
\sigma'(\bw_i^\sT\bz)\Delta_i 
- 1 
    \right)\chi'\left(\frac{\btheta^\sT\bx}{\nu}\right)\right]\nonumber\\
    &\hspace{15mm}+
    \E\left[
    \frac{1}{\nu}\sum_{i=1}^m
    \btheta_\up{i}^\sT\bz\sigma'(\bw_i^\sT\bz)
    \left(\chi\left(\frac{\btheta^\sT\bx}{\nu}\right) 
    -
    \chi\left(\frac{\btheta^\sT\bx}{\nu} - \Delta_i\right) 
    - \Delta_i\chi'\left(\frac{\btheta^\sT\bx}{\nu}\right)
    \right)
    \right]\nonumber\\
   &\hspace{15mm}+ 
   \E\left[\frac{1}{\nu}\sum_{i=1}^m \btheta_\up{i}^\sT \bz \sigma'(\bw_i^\sT \bz)
 \chi\left( 
 \frac{\btheta^\sT \bx}{\nu} - \Delta_i
\right)\right]
    \Bigg|\nonumber\\
    &\hspace{15mm}
     \stackrel{(b)}\le\left|
    \E\left[\left(\frac{1}{\nu}\sum_{i=1}^m \btheta_\up{i}^\sT\bz 
\sigma'(\bw_i^\sT\bz)\Delta_i 
- 1 
    \right)\chi'\left(\frac{\btheta^\sT\bx}{\nu}\right)\right]
    \right|\label{eq:term1_ntk}\\
    &+
    \left|
    \E\left[
    \frac{1}{\nu}\sum_{i=1}^m
    \btheta_\up{i}^\sT\bz\sigma'(\bw_i^\sT\bz)
    \left(\chi\left(\frac{\btheta^\sT\bx}{\nu}\right) 
    -
    \chi\left(\frac{\btheta^\sT\bx}{\nu} - \Delta_i\right) 
    - \Delta_i\chi'\left(\frac{\btheta^\sT\bx}{\nu}\right)
    \right)
    \right]
    \right|\label{eq:term2_ntk}
\end{align}
where $(a)$ follows by Eq.~\eqref{eq:stein_solution} and $(b)$ follows by Eq.~\eqref{eq:cross_term_0}. 
Taking the supremum over $\btheta\in\cS_{p,\delta}$ then $n\to\infty$ and applying
Lemmas~\ref{lemma:first_term_ntk} and~\ref{lemma:second_term_ntk} completes the proof.
\end{proof}

\subsection{Truncation}
Let us define $\cG := \left\{ \norm{\bz}_2 \le 2 \sqrt{d}\right\}$ and the 
random variable $\bar \bx := \bx \one_\cG.$  The following Lemma establishes the subgaussianity condition of Assumption~\ref{ass:X} for $\bar \bx$.

\begin{lemma}
\label{lemma:bar_x_subgaussian}
Conditional on $\bW\in\cB$ we have
\begin{equation}
\nonumber
\sup_{\btheta\in\cS_{p}}  \norm{\bar\bx^\sT \btheta}_{\psi_2} \le C 
\end{equation}
for some constant $C>0$.
\end{lemma}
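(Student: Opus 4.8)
The plan is to realize $\bar\bx^\sT\btheta$ as (the restriction to $\cG$ of) a globally Lipschitz function of the Gaussian vector $\bz$, apply Gaussian concentration, and separately control the centering constant. Write
\begin{equation}
\nonumber
f(\bz):=\btheta^\sT\bx=\sum_{i=1}^m(\btheta_\up{i}^\sT\bz)\,\sigma'(\bw_i^\sT\bz),
\end{equation}
so that $\bar\bx^\sT\btheta=f(\bz)\one_\cG$ with $\cG=\{\norm{\bz}_2\le 2\sqrt d\}$. Differentiating, $\grad f(\bz)=\bT_\btheta\bsigma'(\bW^\sT\bz)+\bW\bD_2\bT_\btheta^\sT\bz$ with $\bD_2=\diag\{\bsigma''(\bW^\sT\bz)\}$; using $\norm{\bT_\btheta}_\op\le\sfR/\sqrt d$ for $\btheta\in\cS_p$, $\norm{\bsigma'(\bW^\sT\bz)}_2\le\sqrt m\,\norm{\sigma'}_\infty$, $\norm{\bD_2}_\op\le\norm{\sigma''}_\infty$, $\norm{\bW}_\op\le C'$ on $\cB$, and $m/d\to\widetilde\sgamma$, one gets $\sup_{\norm{\bz}_2\le 2\sqrt d}\norm{\grad f(\bz)}_2\le C_0$ for a constant $C_0$ depending only on $\sOmega$.

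I would then compose $f$ with the radial projection $\pi:\R^d\to B_2^d(2\sqrt d)$ onto the Euclidean ball of radius $2\sqrt d$. Since $\pi$ is $1$-Lipschitz (projection onto a convex set) and $f$ restricted to that ball is $C_0$-Lipschitz, $\widetilde f:=f\circ\pi$ is $C_0$-Lipschitz on all of $\R^d$ and agrees with $f$ on $\cG$; hence $\bar\bx^\sT\btheta=\widetilde f(\bz)\one_\cG$, so $|\bar\bx^\sT\btheta|\le|\widetilde f(\bz)|$ pointwise and therefore, conditionally on $\bW$, $\norm{\bar\bx^\sT\btheta}_{\psi_2}\le\norm{\widetilde f(\bz)}_{\psi_2}$. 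By the Gaussian concentration inequality for Lipschitz functions, $\norm{\widetilde f(\bz)-\E[\widetilde f(\bz)\mid\bW]}_{\psi_2}\le c\,C_0$ for a universal $c$, so it remains only to bound the conditional mean $\E[\widetilde f(\bz)\mid\bW]$ uniformly over $\btheta\in\cS_p$.

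For the mean I would use two facts. First, $\E[f(\bz)]=0$: indeed $\E[\btheta_\up{i}^\sT\bz\,\sigma'(\bw_i^\sT\bz)]=(\btheta_\up{i}^\sT\bw_i)\,\E[G\sigma'(G)]$ (decomposing $\btheta_\up{i}^\sT\bz$ into its component along $\bw_i^\sT\bz\sim\cN(0,1)$ plus an independent part), and $\E[G\sigma'(G)]=0$ by hypothesis. Second, $\P(\cG^c)\le e^{-c_1 d}$ by a $\chi^2_d$ tail bound, while the crude bound $|f(\bz)|\le\norm{\bT_\btheta^\sT\bz}_2\,\norm{\bsigma'(\bW^\sT\bz)}_2\le\sfR\norm{\sigma'}_\infty\sqrt{m/d}\,\norm{\bz}_2$ gives both $\E[f(\bz)^2]\le\sfR^2\norm{\sigma'}_\infty^2 m$ and $\sup_{\cG^c}|\widetilde f|\le 2\sfR\norm{\sigma'}_\infty\sqrt m$. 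Writing $\E[\widetilde f(\bz)\mid\bW]=-\E[f(\bz)\one_{\cG^c}\mid\bW]+\E[\widetilde f(\bz)\one_{\cG^c}\mid\bW]$ and applying Cauchy–Schwarz to the first term, $|\E[\widetilde f(\bz)\mid\bW]|\le\sfR\norm{\sigma'}_\infty\sqrt m\,(e^{-c_1 d/2}+2e^{-c_1 d})$, which is bounded uniformly in $n$ (and vanishes as $n\to\infty$, since $d\to\infty$). Combining the three estimates yields $\norm{\bar\bx^\sT\btheta}_{\psi_2}\le C$ with $C$ depending only on $\sOmega$.

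The main obstacle is exactly this last step: the natural Cauchy–Schwarz bound on $|\btheta^\sT\bx|$ overshoots the true $O(1)$ size of its fluctuations by a factor $\sqrt m$, so it cannot be used to control $\E[\widetilde f]$ directly; the resolution is that the moment condition $\E[G\sigma'(G)]=0$ forces $\E[\btheta^\sT\bx]=0$ exactly, so the loose $\sqrt m$ bound is only ever paired against the super-polynomially small $\P(\cG^c)$. The gradient computation, the projection device for removing the indicator, and the monotonicity of $\norm{\cdot}_{\psi_2}$ under pointwise domination are all routine.
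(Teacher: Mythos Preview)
Your argument is correct and follows essentially the same strategy as the paper: build a globally Lipschitz function of the Gaussian $\bz$ that dominates $|\bar\bx^\sT\btheta|$, then invoke Gaussian concentration. The only cosmetic difference is the extension device: the paper multiplies $\btheta^\sT\bx$ by a soft radial cutoff $u(\norm{\bz}_2/\sqrt d)$ that is $1$ on $\{\norm{\bz}_2\le 2\sqrt d\}$ and vanishes beyond $3\sqrt d$, whereas you precompose with the $1$-Lipschitz radial projection $\pi$ onto $B_2^d(2\sqrt d)$. Both produce a $C_0$-Lipschitz function that equals $\btheta^\sT\bx$ on $\cG$ and hence pointwise dominates $|\bar\bx^\sT\btheta|$.

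One point where your write-up is actually more careful than the paper's: you explicitly bound the conditional mean $\E[\widetilde f(\bz)\mid\bW]$ (using $\E[G\sigma'(G)]=0$ to get $\E[\btheta^\sT\bx\mid\bW]=0$, then paying the exponentially small $\P(\cG^c)$ against the crude $\sqrt m$ bound). The paper's proof jumps directly from ``$f$ is Lipschitz'' to ``$f(\bz)$ is subgaussian,'' leaving the centering implicit; your treatment makes this step honest. So there is nothing to fix.
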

\begin{proof}
Take arbitrary $\btheta\in\cS_p$. Let 
\begin{equation}
\nonumber
     u(t):= \begin{cases}
                        1 & t \le 2\\
                        3 - t & t\in(2,3]\\
                        0 & t > 3
                \end{cases},
\end{equation}
then consider the function $f(\bz) := \bz^\sT \bT_\btheta  
\bsigma'(\bW^\sT \bz) u\left(\norm{\bz}_2/\sqrt{d}\right)$. 
Note that $f$ is continuous and differentiable almost everywhere with gradient 
\begin{equation}
\nonumber
\grad f(\bz)  = \Big(\bT_\btheta \bsigma'(\bW^\sT \bz) 
+ \bW\diag\Big\{ \bsigma''(\bw_i^\sT \bz) \Big\}  \bT_\btheta^\sT  \bz\Big)u\bigg(\frac{\norm{\bz}_2}{\sqrt{d}}\bigg)
+ u'\left(\frac{\norm{\bz}_2}{\sqrt{d}}\right) \frac{\bz}{\sqrt{d}\norm{\bz}} f(\bz)
\end{equation}
almost everywhere.
Noting that $u'(t) = u'(t) \one_{t \le 3}$ and  $u(t) \le \one_{t\le 3}$ we can bound
\begin{align} 
 \norm{\grad f(\bz)}_2  &\le \left(\norm{\bT_\btheta}_\op \norm{\bsigma'\left(\bW^\sT \bz\right)}_2 + 
\norm{\bW}_\op \sup_{i\in[m]}\sigma''\left(\bw_i^\sT \bz\right) \norm{\bT_\btheta}_\op \norm{\bz}_2\right)\one_{\norm{\bz}_2 \le 3\sqrt{d}}\nonumber\\
&\hspace{15mm}+ u'\left(\frac{\norm{\bz}_2}{\sqrt{d}}\right) \frac{\norm{\bz}_2}{\sqrt{d}}\norm{\bT_\btheta}\norm{\bsigma'\left(\bW^\sT \bz\right)}_2\one_{\norm{\bz}_2 \le 3\sqrt{d}}\nonumber\\
&\stackrel{(a)}{\le}  C_0 \nonumber
\end{align}
almost everywhere, where $C_0>0$ is a constant. In $(a)$ we used that $\norm{\bT_\btheta}_\op \le \sfR/\sqrt{d}$ for $\btheta\in\cS_p$. 
Hence, $\norm{f}_\Lip \le C_0$ so that $f(\bz)$ is subgaussian with constant subgaussian norm. This implies that
\begin{align}
\nonumber
 \P\left( \left| \bar\bx^\sT \btheta  \right| \ge t \right) &\stackrel{(a)}\le
 \P\left(  \left| f(\bz)\right| \ge t \right) 
 \le C_2\exp\left\{ -{c_0t^2}\right\}.
\end{align}
where $(a)$ follows by nothing that $\one_{t \le 2} \le u(t)$. This shows that $\bar\bx^\sT \btheta$ is subgaussian with subgaussian norm constant in $n$ and $\btheta$. Since $\btheta\in\cS_p$ was arbitrary, this proves the claim.

\end{proof}

Now, let us show that the condition of Eq.~\eqref{eq:condition_bounded_lipschitz_single} holds for the truncated variables $\bar\bx$.
\begin{lemma}
\label{lemma:bar_x_bl}
For any bounded Lipschitz function $\varphi :\R\to\R$, we have
\begin{equation}
\nonumber
\lim_{n\to\infty}  \sup_{\btheta\in \cS_p} \left|  
\E \left[\left(\varphi(\bar \bx^\sT \btheta) - \varphi(\bg^\sT \btheta)\right) \one_\cB 
\big| \bW \right] 
\right|=0.
\end{equation}
\end{lemma}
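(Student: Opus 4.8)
The plan is to deduce Lemma~\ref{lemma:bar_x_bl} from the already-established Lemma~\ref{lemma:bounded_lip_ntk_no_truncation} by showing that truncating $\bx$ to $\bar\bx = \bx\one_{\cG}$ (with $\cG = \{\norm{\bz}_2 \le 2\sqrt d\}$) changes the relevant expectation negligibly. First I would fix a bounded Lipschitz $\varphi$ and an arbitrary $\btheta\in\cS_p$, and write the triangle bound
\begin{align}
\nonumber
\Big|\E\big[\big(\varphi(\bar\bx^\sT\btheta) - \varphi(\bg^\sT\btheta)\big)\one_\cB\,\big|\,\bW\big]\Big|
&\le
\Big|\E\big[\big(\varphi(\bx^\sT\btheta) - \varphi(\bg^\sT\btheta)\big)\one_\cB\,\big|\,\bW\big]\Big|\\
\nonumber
&\hspace{5mm}+ \Big|\E\big[\big(\varphi(\bar\bx^\sT\btheta) - \varphi(\bx^\sT\btheta)\big)\one_\cB\,\big|\,\bW\big]\Big|.
\end{align}
The first term goes to $0$ uniformly over $\cS_p$ by Lemma~\ref{lemma:bounded_lip_ntk_no_truncation}. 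For the second term, note that $\bar\bx^\sT\btheta$ and $\bx^\sT\btheta$ differ only on $\cG^c$, and $\varphi$ is bounded by $\norm{\varphi}_\infty$, so this term is at most $2\norm{\varphi}_\infty\,\P(\cG^c\,|\,\bW)$ (the $\one_\cB$ only helps).

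The key estimate is therefore a tail bound $\P(\norm{\bz}_2 > 2\sqrt d) \to 0$, and in fact exponentially fast: since $\bz\sim\cN(0,\bI_d)$, $\norm{\bz}_2^2$ is a $\chi^2_d$ variable, and standard concentration (e.g. the Laurent--Massart bound, or a direct subexponential-norm argument) gives $\P(\norm{\bz}_2^2 > 4d) \le e^{-cd}$ for a universal constant $c>0$. Since $d = d(n)\to\infty$ (recall $m/d\to\widetilde\sgamma$ and $p = md\to\infty$), this probability vanishes, and crucially the bound is uniform in $\btheta$ because $\cG$ does not depend on $\btheta$. Hence $\sup_{\btheta\in\cS_p}\big|\E[(\varphi(\bar\bx^\sT\btheta) - \varphi(\bx^\sT\btheta))\one_\cB\,|\,\bW]\big| \le 2\norm{\varphi}_\infty e^{-cd} \to 0$.

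Combining the two pieces, taking $\sup_{\btheta\in\cS_p}$ and then $n\to\infty$ yields the claim for bounded Lipschitz $\varphi$; no separate differentiable-approximation step is needed here since Lemma~\ref{lemma:bounded_lip_ntk_no_truncation} is already stated for all bounded Lipschitz functions. The proof is essentially a two-line argument once the tail bound on $\norm{\bz}_2$ is invoked; there is no real obstacle, the only thing to be slightly careful about is that the truncation term is controlled purely by $\norm{\varphi}_\infty$ and the $\btheta$-independent event $\cG^c$, so that uniformity over $\cS_p$ is preserved, and that $\one_\cB$ can only decrease the expectation in absolute value (or one simply bounds $|\cdots|\one_\cB \le |\cdots|$ pointwise before taking expectations).
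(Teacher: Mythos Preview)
Your proposal is correct and follows the same overall structure as the paper's proof: both decompose via the triangle inequality, invoke Lemma~\ref{lemma:bounded_lip_ntk_no_truncation} for the untruncated term, and control the truncation error using the Gaussian tail bound $\P(\cG^c)\le e^{-cd}$.

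The one genuine difference is in how you bound the truncation term $\big|\E[(\varphi(\bar\bx^\sT\btheta)-\varphi(\bx^\sT\btheta))\one_\cB\,|\,\bW]\big|$. You observe that $\bar\bx^\sT\btheta$ and $\bx^\sT\btheta$ coincide on $\cG$ and use boundedness of $\varphi$ to get $2\norm{\varphi}_\infty\,\P(\cG^c)$ directly. The paper instead uses the Lipschitz property, writing $|\varphi(\bar\bx^\sT\btheta)-\varphi(\bx^\sT\btheta)|\le\norm{\varphi}_\Lip|\bx^\sT\btheta|\one_{\cG^c}$, and then applies Cauchy--Schwarz to obtain $\norm{\varphi}_\Lip\,\E[(\bx^\sT\btheta)^2]^{1/2}\P(\cG^c)^{1/2}$, which requires an additional uniform-in-$\btheta$ second-moment bound on $\bx^\sT\btheta$. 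Your route is more elementary and avoids this extra step; the paper's route would be needed if $\varphi$ were only Lipschitz and not bounded, but here both hypotheses are available.
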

\begin{proof}[Proof]
Let us use the notation $\E[(\cdot) ] := \E[(\cdot)\one_{\cB}| 
\bW].$ We have
\begin{align*}
 \left|  
\E \left[\left(\varphi(\bar \bx^\sT \btheta) - \varphi(\bx^\sT \btheta)\right) 
\right] 
\right| 
&\le \norm{\varphi}_\Lip \E\left[\left|\bx^\sT \btheta \right|\one_{\cG^c}
\right]\\
&\le\norm{\varphi}_\Lip   \E\left[\left(\bx^\sT \btheta\right)^2\right]^{1/2} 
\P\left(\cG^c\right).
\end{align*}
Recalling that $\P\left(\cG^c\right)\le \exp\{-c_0 d\}$ since $\bz$ is Gaussian, we can write
\begin{align*}
 \lim_{n\to\infty} \sup_{\btheta\in\cS_p}\big|  
\E \big[\big(\varphi(\bar \bx^\sT \btheta) &- \varphi(\bg^\sT \btheta)\big) 
 \big] 
\big| \le 
\lim_{n\to\infty}\sup_{\btheta\in\cS_p} \left|  
\E \left[\left(\varphi(\bar \bx^\sT \btheta) - \varphi(\bx^\sT \btheta)\right) 
 \right] 
\right|\\
&\hspace{25mm}+ 
 \lim_{n\to\infty}\sup_{\btheta\in\cS_p}\left|  
\E \left[\left(\varphi(\bx^\sT \btheta) - \varphi(\bg^\sT \btheta)\right)  
\right] 
\right|\\
&\stackrel{(a)}{\le}  \lim_{n\to\infty}\sup_{\btheta\in\cS_p} \norm{\varphi}_{\Lip}
\E\left[\left(\bx^\sT \btheta\right)^2\right]^{1/2} e^{-c_0 d}\\
&\le
\lim_{n\to\infty}\sup_{\btheta\in\cS_p}
\norm{\varphi}_{\Lip} \E\left[\norm{\bz}^2_2\norm{\bT_\btheta}_\op^2 \norm{\bsigma'(\bW^\sT \bz)}_2^2\right]e^{-c_0 d}\\
& = 0.
\end{align*}
\end{proof}

\subsection{Proof of Theorem~\ref{cor:ntk_universality}}

\begin{proof}
Let $\cG_i := \{\norm{\bz_i}_2 \le 2 \sqrt{d}\}$ where $\bz_i$ is the Gaussian vector defining the $i$th sample $\bx_i$ of the neural tangent model. Now let 
Let $\bar\bX := (\bar \bx_1,\dots,\bar \bx_n)^\sT$  where $\bar\bx_i := \bx_i \one_{\cG_i}$.
Take any compact $\cC_p\subseteq \cS_p$ and let
$\widehat R_n^\star\left(\cdot\right)$ be the optimal empirical risk 
for a choice of $L_F,\eta,\btheta^\star,\epsilon,r$
satisfying the assumptions of the theorem.
Since $\bar\bx$ verifies Eq.~\eqref{eq:condition_bounded_lipschitz_single} and Assumption~\ref{ass:X} for $\bW\in\cB$
by Lemmas~\ref{lemma:bar_x_subgaussian} and~\ref{lemma:bar_x_bl},
then Theorem~\ref{thm:main_empirical_univ} can be applied to $\bar\bx$ to conclude that for 
for any bounded Lipschitz $\psi$
\begin{equation}
\label{eq:bar_x_universality}
\lim_{n\to\infty}\left|\E\left[
\psi\left(
\widehat R_n^\star\left(\bar \bX\right) \right)\one_\cB - 
\psi\left(
\widehat R_n^\star\left(\bG\right) \right)\one_\cB
\Big| \bW \right]  \right|= 0
\end{equation}

Now, note that we have for some $C_0,c_0 >0$,
\begin{align}
\label{eq:G_i_union_bound}
 \P\left(\bigcup_{i\in[n]}\cG^c_i\right) \le n\P\left(\norm{\bz}_2 > 2\sqrt{n}\right) \le C_0n\exp\{-c_0 d\} \to 0
\end{align}
as $n\to\infty$, so that 
\begin{align}
\label{eq:risk_bar_x_diff}
\lim_{n\to\infty}\left|\E\left[
\psi\left(\widehat R_n^\star(\bX)\right) - \psi\left(\widehat R^\star_n(\bar\bX)\right) \right]\right|
&\le 2\norm{\psi}_\infty\lim_{n\to\infty} \P\left( \bigcup_{i\in[n]}\cG_i^c \right) = 0.
\end{align}
Meanwhile, 
\begin{align}
\label{eq:risk_bar_g_diff}
\left|\E\left[
\psi\left(\widehat R^\star_n(\bar\bX)\right)- \psi\left(\widehat R^\star_n(\bG)\right)
\right]\right| 
&\le
\left|\E\left[\E\left[
\left(\psi\left(\widehat R^\star_n(\bar\bX)\right)- \psi\left(\widehat R^\star_n(\bG)\right) \right)\one_{\cB}\Big| \bW\right]\right]\right| \nonumber\\
&\hspace{10mm}+ 
2\norm{\psi}_\infty\P\left(\cB^c \right).
\end{align}
Combining the displays~\eqref{eq:bar_x_universality},~\eqref{eq:risk_bar_x_diff} and~\eqref{eq:risk_bar_g_diff} gives

\begin{align}
\lim_{n\to\infty}\big|\E\big[
\psi\big(\widehat R^\star_n(\bX)\big) - \psi\big(\widehat R^\star_n(\bG)\big) \big]\big|
&\le
\lim_{n\to\infty} \big|\E\big[\E\big[
\big(\psi\big(\widehat R^\star_n(\bar\bX)\big)- \psi\big(\widehat R^\star_n(\bG)\big) \big)\one_{\cB}\big| \bW\big]\big]\big| \nonumber\\
&\hspace{5mm}+ C_1\norm{\psi}_\infty\big(\lim_{n\to\infty}  n\exp\{-c_0 d\}
+ \lim_{n\to\infty} \P(\cB^c)\big)\nonumber\\
&\stackrel{(a)}\le
\E\big[\lim_{n\to\infty}\big| \E\big[
\big(\psi\big(\widehat R^\star_n(\bar\bX)\big)- \psi\big(\widehat R^\star_n(\bG)\big) \big)\one_{\cB}\big| \bW\big]\big|\big] \nonumber\\
&= 0\nonumber
\end{align}
where $(a)$ follows by dominated convergence.
\end{proof}

\subsection{Auxiliary lemmas}
 \label{section:aux_lemmas_ntK}
We include the following auxiliary lemmas for the sake of completeness.
\begin{lemma}
\label{lemma:subgaussian_max}
Let $V_i$ be mean zero subgaussian random variables with $\sup_{i\in[m]}\norm{V_i}_{\psi_2}\le \sfK$. 
We have for all integer $k \ge 1$,
\begin{equation}
\nonumber
\E\left[ \sup_{i\in[m]}  |V_i|^k \right] \le \left( C k \sfK^2 \log{m}\right)^{k/2}
\end{equation}
for some universal constant $C >0$. 
\end{lemma}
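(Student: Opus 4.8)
The plan is to pass from the maximum to a high-order moment of a single variable, carrying out the union bound at the level of $L^q$-norms rather than pointwise. The one external fact I will invoke is the standard equivalent description of the subgaussian norm: there is a universal constant $C_0$ such that every subgaussian random variable $V$ obeys $\E[|V|^q]^{1/q}\le C_0\sqrt{q}\,\norm{V}_{\psi_2}$ for all real $q\ge 1$ (see, e.g., \cite{vershynin2018high}, Prop.~2.5.2). Only this one-sided inequality is used, and the mean-zero hypothesis plays no role. We may also assume $m\ge 2$, since the lemma is only ever applied with $m=m(n)\to\infty$ (for $m=1$ the right-hand side degenerates, so the statement should be read with $\log m$ replaced by $1\vee\log m$).

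First I would fix the auxiliary exponent $q:=\max\{k,\log(2m)\}$, which satisfies $q\ge k\ge 1$. Since $t\mapsto t^q$ is increasing on $[0,\infty)$ and $\max_i t_i^q\le\sum_i t_i^q$,
\[
\E\Big[\sup_{i\in[m]}|V_i|^q\Big]\;\le\;\sum_{i=1}^m\E\big[|V_i|^q\big]\;\le\;m\,\big(C_0\sfK\sqrt{q}\big)^q ,
\]
using $\sup_{i\in[m]}\norm{V_i}_{\psi_2}\le\sfK$. Taking $q$-th roots gives $\big\|\sup_{i\in[m]}|V_i|\big\|_q\le m^{1/q}C_0\sfK\sqrt{q}$, and since $q\ge\log(2m)>\log m$ we have $m^{1/q}<e$, hence $\big\|\sup_{i\in[m]}|V_i|\big\|_q\le eC_0\sfK\sqrt{q}$.

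Next, because $\sup_i|V_i|\ge 0$ and $q\ge k$, monotonicity of $L^p$-norms on a probability space gives $\big\|\sup_i|V_i|\big\|_k\le\big\|\sup_i|V_i|\big\|_q\le eC_0\sfK\sqrt{q}$. It then only remains to unwind $q$: for $m\ge 2$ one has $1\le\log(2m)\le 2\log m$, so $\max\{k,\log(2m)\}\le k\log(2m)\le 2k\log m$, and therefore $\big\|\sup_i|V_i|\big\|_k\le\sqrt{2}\,eC_0\,\sfK\sqrt{k\log m}$. Raising to the $k$-th power yields $\E[\sup_{i\in[m]}|V_i|^k]\le(Ck\sfK^2\log m)^{k/2}$ with $C:=2e^2C_0^2$, which is universal.

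I do not anticipate a genuine obstacle here; the only point requiring care is the choice of the interpolating exponent $q$ — it must dominate both $k$ and $\log m$ so that the union-bound factor $m^{1/q}$ stays bounded while the resulting $\sqrt{q}$ contributes exactly the $\sqrt{k\log m}$ scaling — together with the harmless restriction $m\ge 2$. An alternative route would be to integrate the union-bounded tail $\P(\sup_i|V_i|\ge t)\le 2m\exp(-t^2/(c\sfK^2))$ against $\mathrm{d}(t^k)$, splitting at $t\asymp\sfK\sqrt{k\log m}$; this gives the same estimate but forces one to bound an incomplete Gamma integral, so the $L^q$ argument above is the cleaner option.
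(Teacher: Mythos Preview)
Your argument is correct. It differs from the paper's route: the paper integrates the centered tail bound $\P\bigl(\sup_{i}|V_i|\ge \sqrt{2\sfK^2\log m}+t\bigr)\le C_1\exp\{-t^2/(2\sfK^2)\}$ against $\mathrm{d}(t^k)$, which is precisely the alternative you sketch in your final paragraph. Your $L^q$-interpolation method is more self-contained---it replaces the tail integration and the implicit incomplete-Gamma estimate by a single union bound in $L^q$ followed by Jensen---and makes the constant explicit. The paper's approach, on the other hand, needs only the elementary subgaussian tail and avoids citing the moment characterization of $\norm{\cdot}_{\psi_2}$. Both are standard; your version is tidier for this statement, while the paper's tail bound is reused elsewhere (e.g.\ in controlling $\P(\cA^c)$ in Lemma~\ref{lemma:second_term_ntk}), which may explain why the authors chose that formulation.
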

\begin{proof}
This follows by integrating the bound
\begin{equation}
\nonumber
\P\left(\sup_{i\in [m]} |V_i| \ge \sqrt{2 \sfK^2 \log m} + t \right)  \le C_1\exp\left\{- \frac{t^2}{2\sfK^2}\right\} 
\end{equation}
holding for $V_i$ subgaussian. 

\end{proof}
\begin{lemma}
\label{lemma:B_tail_bound}
There exist constants $C,C' \in(0,\infty)$ depending only on $\widetilde \sgamma_\NT$ such that
\begin{equation}
\nonumber
    \lim_{n\to\infty}\P\left(\left\{\sup_{\{i,j \in [m] : i\neq j\}} \left|\bw_i^\sT\bw_j\right| > 
\frac{C(\log m)^{1/2}}{d^{1/2}}\right\} \bigcup 
    \left\{
    \norm{\bW }_\op
    > C'
    \right\}\right) =0.
\end{equation}
\end{lemma}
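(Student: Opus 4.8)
The plan is to bound the two events in the union separately, using $\P(A\cup B)\le\P(A)+\P(B)$, and to show that each probability tends to $0$; the displayed probability is then precisely $\P(\cB^c)$, which is what the lemma (and its use in Corollary~\ref{cor:ntk_universality}) requires.

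\textbf{Near-orthogonality.} For fixed $i\neq j$, conditioning on $\bw_j$ and using rotational invariance, the inner product $\bw_i^\sT\bw_j$ has the same law as the first coordinate of a uniform point on $\S^{d-1}$. I will invoke the standard concentration estimate $\P(|\bw_i^\sT\bw_j|\ge t)\le 2\exp\{-cdt^2\}$, valid for all $t\in[0,1]$ and a universal constant $c>0$; this follows either from the explicit density proportional to $(1-s^2)^{(d-3)/2}$ on $[-1,1]$, or from writing $\bw=\bg/\norm{\bg}_2$ with $\bg\sim\normal(0,\bI_d)$ and combining a Gaussian tail bound on $g_1$ with $\chi^2$-concentration of $\norm{\bg}_2^2$. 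A union bound over the at most $m^2$ pairs gives $\P(\sup_{i\neq j}|\bw_i^\sT\bw_j|>t)\le 2m^2\exp\{-cdt^2\}$. Since $m/d\to\widetilde\sgamma\in(0,\infty)$ forces $\log m=\log d+O(1)$, choosing $t=C(\log m/d)^{1/2}$ with $C$ large enough that $cC^2>2$ yields $2m^{2-cC^2}\to 0$; here $C$ depends only on the universal constant $c$.

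\textbf{Operator norm.} Here I will pass to the Gaussian representation $\bw_j=\bg_j/\norm{\bg_j}_2$ with $\bg_j\simiid\normal(0,\bI_d)$, so that $\bW=\bG\bD^{-1}$ where $\bG=(\bg_1,\dots,\bg_m)$ has i.i.d.\ standard Gaussian entries and $\bD=\diag(\norm{\bg_1}_2,\dots,\norm{\bg_m}_2)$, whence $\norm{\bW}_\op\le\norm{\bG}_\op\cdot\max_{j\in[m]}\norm{\bg_j}_2^{-1}$. The standard nonasymptotic bound $\norm{\bG}_\op\le\sqrt d+\sqrt m+s$ with probability $\ge 1-2e^{-s^2/2}$ (take $s=\sqrt d$) controls the first factor by $(2+\sqrt{\widetilde\sgamma}+o(1))\sqrt d$, and $\chi^2$-concentration together with a union bound over $j\in[m]$ gives $\min_{j\in[m]}\norm{\bg_j}_2^2\ge d/2$ outside a set of probability $\le me^{-c'd}$. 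On the intersection of these events $\norm{\bW}_\op\le C'$ for a constant $C'$ depending only on $\widetilde\sgamma$, while the excluded probability is at most $2e^{-d/2}+me^{-c'd}\to 0$.

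Adding the two estimates proves the lemma. There is no genuine obstacle: every step is routine sub-Gaussian and $\chi^2$ concentration followed by a union bound. The only point worth flagging is that one should \emph{not} attempt to deduce $\norm{\bW}_\op=O(1)$ from the entrywise control of $\bW^\sT\bW-\bI_m$ obtained in the first step, since summing $m$ off-diagonal entries of size $(\log m/d)^{1/2}$ per row gives only $\norm{\bW^\sT\bW-\bI_m}_\op\lesssim m(\log m/d)^{1/2}$, which diverges because $m\asymp d$; this is exactly why the Gaussian representation $\bW=\bG\bD^{-1}$ (equivalently, a covering-number argument applied to $\bW$ directly) is used for the operator-norm event.
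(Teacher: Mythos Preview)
Your proof is correct and follows essentially the same approach as the paper: subgaussian concentration for $\bw_i^{\sT}\bw_j$ plus a union bound for the first event, and standard matrix concentration for the second. The only cosmetic difference is that the paper obtains the subgaussian tail via a conditional MGF bound and disposes of $\|\bW\|_{\op}$ by citing Vershynin's Theorem~4.6.1, whereas you invoke rotational invariance and spell out the Gaussian representation $\bW=\bG\bD^{-1}$; both routes are standard and equivalent.
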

\begin{proof}
Let $V_{i,j} = \bw_i^\sT \bw_j$ for $i,j\in[m],i\neq j$. Note that $V_{i,j}$ are subgaussian with subgaussian norm $C_1/\sqrt{d}$ for some universal constant $C_1$. Indeed, we have for $\lambda \in\R$,
\begin{equation}
\nonumber
    \E\left[  
    \exp\{\lambda V_{i,j}\}
    \right] = 
    \E\left[   \E\left[ 
    \exp\{\lambda \bw_i^\sT \bw_j\}
    | \bw_i\right]
    \right]  \le  \exp\left\{C_1\frac{\lambda^2}{d}\right\},
\end{equation}
where we used that $\bw_i$ and $\bw_j$ are independent for $i\neq j$, $\norm{\bw_i}=1$ and that $\bw_{j}$ is 
 subgaussian with subgaussian norm $C_0/\sqrt{d}$. Hence, we have
 \begin{equation}
 \nonumber
    \P\left(
    \sup_{i\neq j}  \left|V_{i,j} \right| > 4C_0\left(\frac{ \log m}{d}\right)^{1/2} 
    \right)  \le C_2 \exp\left\{ - 2\log m\right\}.
 \end{equation} 
 This proves the existence of the constant $C$ in the statement of the lemma. Meanwhile the existence of $C'$ is a consequence of Theorem 4.6.1 in~\cite{vershynin2018high}.
\end{proof}


\section{The random features model: Proof of Corollary~\ref{cor:rf_universality}}
\label{proof:rf_universality}
We recall the definitions and assumptions introduced in Section~\ref{section:rf_example}. 
Recall the activation function $\sigma$ assumed to be a three times differentiable function with bounded derivatives satisfying
$\E[\sigma(G)] =0$ for $G\sim\cN(0,1)$, the covariates 
 $\{\bz_{i}\}_{i\le [n]}~\simiid\cN(0,\bI_{d})$ and the matrix $\bW$ whose columns are the weights $\{\bw_{j}\}_{j\le[p]}\stackrel{i.i.d.}{\sim}\textsf{Unif}(\S^{d-1}(1))$.
We assume $d/p \to \widetilde\sgamma_\RF$. Now recall the definition of the feature vectors in~\eqref{eq:rf_covariates}:
$\bx:= \left(\sigma\left(\bw_1^\sT \bz\right),\dots,\sigma\left(\bw_p^\sT\bz\right)\right)$ and the set in~\eqref{eq:rf_set}:
 $\cS_p = B_\infty^p\left(\sfR/\sqrt{p}\right)$. 
 
 Define the event
\begin{equation}
\nonumber
    \cB := \left\{\sup_{i,j\in[m]: i\neq j} \left|\bw_i^\sT\bw_j \right| \le 
C\left(\frac{\log d}{d}\right)^{1/2}\right\} \bigcap 
    \left\{
    \norm{\bW }_\op
    \le C'
    \right\}
\end{equation}
for some $C,C'>0$ universal constants so that $\P(\cB^c) \to 0$ as $d\to\infty$ (see Lemma~\ref{lemma:B_tail_bound} for the existence of such $C,C'$.)

The following lemma is a direct consequence of Theorem 2 and Lemma 8
from~\cite{hu2020universality}.

\begin{lemma}
\label{lemma:rf_ass_x}
Let 
$\bSigma_\bW := \E\left[\bx\bx^\sT | \bW\right]$ and 
$\bg\big| \bW \sim\cN(0,\bSigma_\bW).$
For any bounded differentiable Lipschitz function $\varphi$ we have 
\begin{equation}
 \lim_{p\to\infty}
 \sup_{\btheta\in\cS_p}
 \left|
 \E\left[
 \left(\varphi\left(\bx^\sT \btheta \right) - \varphi\left( \bg^\sT \btheta \right)
 \right) \one_\cB \big| \bW
 \right]
 \right| = 0.\label{eq:rf_bl}
\end{equation}
Furthermore, conditional on $\bW \in\cB$, $\bx$ is subgaussian with subgaussian norm constant in $n$.
\end{lemma}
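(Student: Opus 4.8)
Lemma~\ref{lemma:rf_ass_x} has two parts --- the Gaussian comparison~\eqref{eq:rf_bl} and the conditional subgaussianity of $\bx$ --- and together they are precisely Assumption~\ref{ass:X} for the random feature vectors, conditionally on $\bW\in\cB$. The proof imports the random-features analysis of~\cite{hu2020universality} after checking that our constraint set $\cS_p=B_\infty^p(\sfR/\sqrt p)$, cf.~\eqref{eq:rf_set}, and our event $\cB$ are of the form treated there. First I would record, via Lemma~\ref{lemma:B_tail_bound}, that $\P(\cB^c)\to 0$ and that on $\cB$ one has $\norm{\bW}_\op\le C'$ together with the off-diagonal decorrelation $\sup_{i\neq j}|\bw_i^\sT\bw_j|\le C(\log d/d)^{1/2}$. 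Conditioning on such a $\bW$ leaves $\bz\sim\cN(0,\bI_d)$, so $\bx^\sT\btheta=\sum_{j=1}^p\theta_j\sigma(\bw_j^\sT\bz)$ is a smooth function of a standard Gaussian vector, which is the setting in which both displayed claims can be established.

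\textbf{Part (i): the Gaussian comparison~\eqref{eq:rf_bl}.} For $\nu_\btheta^2:=\btheta^\sT\bSigma_\bW\btheta$ and a differentiable bounded Lipschitz $\varphi$, \cite{hu2020universality} (their Theorem~2, with the moment estimates of their Lemma~8) solve Stein's equation for the normalized projection $\bx^\sT\btheta/\nu_\btheta$ and bound the resulting error uniformly over the $\ell_\infty$-ball, using precisely $\norm{\bW}_\op\le C'$, the decorrelation bound on $\bw_i^\sT\bw_j$, and the boundedness of $\sigma',\sigma'',\sigma'''$; this yields asymptotic Gaussianity of $\bx^\sT\btheta$, uniformly over the non-degenerate part $\cS_{p,\delta}:=\{\btheta\in\cS_p:\nu_\btheta^2>\delta\}$. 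To pass from $\cS_{p,\delta}$ to all of $\cS_p$ I would repeat the two-step reduction of Lemmas~\ref{lemma:ntk_bl}--\ref{lemma:bounded_lip_ntk_no_truncation}: on $\cS_p\setminus\cS_{p,\delta}$ both $\bx^\sT\btheta$ and $\bg^\sT\btheta$ have variance at most $\delta$ and mean zero, so $\big|\E[\varphi(\bx^\sT\btheta)\one_\cB\,|\,\bW]-\E[\varphi(\bg^\sT\btheta)\one_\cB\,|\,\bW]\big|\le 2\norm{\varphi'}_\infty\sqrt\delta$ as in~\eqref{eq:ntk_bl_delta_bound}; sending $p\to\infty$ and then $\delta\to 0$ gives~\eqref{eq:rf_bl}.

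\textbf{Part (ii): conditional subgaussianity.} Fix $\btheta\in\cS_p$ and view $h(\bz):=\bx^\sT\btheta=\sum_{j=1}^p\theta_j\sigma(\bw_j^\sT\bz)$ as a function of $\bz\sim\cN(0,\bI_d)$. Its gradient is $\grad h(\bz)=\sum_{j=1}^p\theta_j\sigma'(\bw_j^\sT\bz)\bw_j=\bW\big(\theta_j\sigma'(\bw_j^\sT\bz)\big)_{j\in[p]}$, hence $\norm{\grad h(\bz)}_2\le\norm{\bW}_\op\norm{\sigma'}_\infty\norm{\btheta}_2$; on $\cB$ this is at most $C'\norm{\sigma'}_\infty\sfR$, since $\norm{\btheta}_2\le\sqrt p\cdot\sfR/\sqrt p=\sfR$. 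Thus $h$ is Lipschitz with constant depending only on $\sOmega$, and since it is centered ($\E[\sigma(G)]=0$ for $G\sim\cN(0,1)$ and $\norm{\bw_j}_2=1$), Gaussian Lipschitz concentration gives $\norm{\bx^\sT\btheta}_{\psi_2}\le C$ uniformly over $\btheta\in\cS_p$ on $\cB$; in particular $\norm{\bSigma_\bW^{1/2}\btheta}_2^2=\Var(\bx^\sT\btheta)\le C^2$, so both moment conditions of Assumption~\ref{ass:X} are verified.

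\textbf{Main obstacle.} The only substantive ingredient is Part (i), and its hard core --- the uniform Stein/CLT estimate over the $\ell_\infty$-ball --- is exactly what~\cite{hu2020universality} prove, so it is invoked rather than re-derived; the residual work is bookkeeping (matching $\cS_p$ and $\cB$ to their hypotheses and disposing of the degenerate directions via the $\delta$-split). A self-contained proof would instead mimic, with $\sigma$ in place of $\sigma'$, the Stein-method computation carried out for the neural tangent model in Section~\ref{section:ntk_actual_proof}; this is in fact easier here, because the random features $\sigma(\bw_j^\sT\bz)$ have a far more benign covariance structure than the neural tangent features $\bz\,\sigma'(\bw_j^\sT\bz)$.
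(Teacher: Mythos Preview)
Your proposal is correct and follows the same basic approach as the paper: both invoke Theorem~2 and Lemma~8 of~\cite{hu2020universality} as the substantive input, with the remaining work being bookkeeping to match the slightly different setting (activation assumed to satisfy $\E[\sigma(G)]=0$ rather than odd, weights on the sphere, and $\bg\,|\,\bW\sim\cN(0,\bSigma_\bW)$ rather than the linearized surrogate $c_1\bW^\sT\bz+c_2\bh$).

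Two places where you go a bit further than the paper's treatment: first, you make explicit the $\delta$-split into $\cS_{p,\delta}$ and its complement, borrowing the argument of Lemma~\ref{lemma:bounded_lip_ntk_no_truncation}; the paper simply cites~\cite{hu2020universality} without spelling this out for the random features case. Second, for subgaussianity you give a self-contained proof via Gaussian Lipschitz concentration (bounding $\norm{\grad h(\bz)}_2\le\norm{\bW}_\op\norm{\sigma'}_\infty\norm{\btheta}_2$), whereas the paper just points to Lemma~8 of~\cite{hu2020universality}. Your argument here is cleaner and more elementary, and avoids importing a second black box; it also makes transparent why the $\ell_\infty$-ball $\cS_p$ suffices (only $\norm{\btheta}_2\le\sfR$ is used at this step). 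Both routes are valid.
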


\begin{remark}
We remark that the setting of~\cite{hu2020universality} differs slightly from the one considered above. Indeed, they take
\begin{enumerate}
\item the activation function to be odd and the weight vectors to be $\{\bw_j\}_{j\le[p]} \simiid \cN(0,\bI_d/d)$, and
\item the ``asymptotically equivalent'' Gaussian vectors to be $\widetilde\bg := c_1 \bW^\sT \bz + c_2 \bh$ for $\bh\sim\cN(0,\bI_p)$ instead of $\bg$, where
$c_1$ and $c_2$ are defined so that 
 \begin{equation}
 \label{label:asymptotic_cov_rf}
\lim_{p\to\infty}\norm{\E\left[\widetilde \bg \widetilde \bg^\sT \one_\cB|\bW\right] - \E\left[\bx\bx^\sT \one_\cB |\bW\right]}_\op = 0.
 \end{equation}
\end{enumerate}
However, an examination of their proofs reveals that their results hold when $\sigma$ is assumed to satisfy $\E[\sigma(G)]=0$ for $G\sim\cN(0,1)$ instead of being odd,
and $\{\bw_j\}_{j\le [p]}\simiid {\normalfont\textsf{Unif}(\S^{d-1}(1))}$, provided $\widetilde \bg$ is replaced with $\bg$.
Indeed, the only part where the odd assumption on $\sigma$ is used in their proofs, other than to ensure that $\E\left[\sigma\big(\bw_j^\sT \bz\big)\big|\bW\right]=0$, is in showing 
that~\eqref{label:asymptotic_cov_rf} holds for their setting of $c_1$ and $c_2$ (Lemma 5 of~\cite{hu2020universality}). We circumvent this by our choice of $\bg$.
\end{remark}

\begin{remark}
Theorem 2 of~\cite{hu2020universality} prove a more general result than the one stated here for their setting. 
Additionally, they give bounds for the rate of convergence for a fixed $\btheta$
in terms of $\norm{\btheta}_2,\norm{\btheta}_\infty$ and $\norm{\varphi}_\Lip$ (and other parameters irrelevant to our setting.). However, here we are only interested in the consequence given above.
\end{remark}

\begin{proof}[Proof of Corollary~\ref{cor:rf_universality}]

First note that via a standard argument uniformly approximating Lipschitz functions wtih differentiable Lipschitz functions, 
Lemma~\ref{lemma:rf_ass_x} can be extended to hold for $\varphi$ that are bounded Lipschitz.

Now note that $\cS_p$ as defined in~\eqref{eq:rf_set} is symmetric, convex and a subset of $B_2^p(\sfR)$. Let $\cC_p$ be any compact subset of $\cS_p$ and let
$\widehat R_n^\star\left(\bX,\by(\bX)\right)$ be the minimum of the empirical risk over $\cC_p$, where the empirical risk is defined with 
 a choice of $L_F,\eta,\btheta^\star, \epsilon$ and $r$
satisfying the assumptions of the corollary.
By Lemma~\ref{lemma:rf_ass_x}, $\bx$ is subgaussian conditional on $\bW$ and hence satisfies the subgaussianity condition of Assumption~\ref{ass:X}. 
Furthermore, conditional on $\bW \in\cB$, $\bx$ satisfies the condition in~\eqref{eq:condition_bounded_lipschitz_single} for the given $\bg$, therefore, 
Theorem~\ref{thm:main_empirical_univ} implies that
for any bounded Lipschitz 
$\psi$ 
\begin{equation}
\label{eq:rf_conditional_universality}
\lim_{n\to\infty}\left|\E\left[
\psi\left(
\widehat R_n^\star\left(\bX,\by(\bX)\right) \right)\one_\cB - 
\psi\left(
\widehat R_n^\star\left(\bG,\by(\bG)\right) \right)\one_\cB
\Big| \bW \right]  \right|= 0
\end{equation}

\noindent Hence, we can write
\begin{align*}
&\lim_{n\to\infty}\left| 
\E\left[ 
\psi\left(
\widehat R_n^\star(\bX,\by(\bX))\right) -\psi\left( \widehat R_n^\star(\bG,\by(\bG)) 
\right)
\right]
\right|\\
&\hspace{20mm}\le 
\lim_{p\to\infty}\left| 
\E\left[\E\left[ 
\left(\psi\left(
\widehat R_n^\star(\bX,\by(\bX))\right) - \left(\widehat R_n^\star(\bG,\by(\bG)) 
\right)\right)\one_\cB
\Big| \bW\right]\right]
\right|\\
&\hspace{30mm}+
2\norm{\psi}_\infty  \lim_{p\to\infty}\P\left( \cB^c\right)\\
&\hspace{20mm}\stackrel{(a)}{\le} 
\E\left[
\lim_{p\to\infty}\left|
\E\left[ 
\left(\psi\left(
\widehat R_n^\star(\bX,\by(\bX))\right) - \left(\widehat R_n^\star(\bG,\by(\bG)) 
\right)\right)\one_\cB
\Big| \bW\right]\right|\right]
\\
&\hspace{20mm}\stackrel{(b)}{=} 0
\end{align*}
where $(a)$ follows by the dominated convergence theorem and $(b)$ follows from Eq.~\eqref{eq:rf_conditional_universality}.

\end{proof}


\section{Deferred proofs}
\label{section:deferred_proofs}

\subsection{Proof of non-universality in Example~\ref{ex:non_univ}}
\label{section:non-univ_proof}
Let $\cN_\alpha$ be a minimal $\alpha-$net of $B_2^p(1)$ so that $|\cN_\alpha| \le C(\alpha)^p$.
It is easy to show that for $\bg$ centered isotropic Gaussian,
\begin{equation}
\nonumber
\min_{\btheta \in\cN_\alpha} \E\left[\ell(\btheta^\sT\bg)\right] \ge 4 \Delta
\end{equation}
for some $\Delta >0$. 
Let 
\begin{equation}
\nonumber
\Opt_\alpha^n(\bG) := \min_{\btheta\in\cN_\alpha} \frac1n \sum_{i=1}^n \ell(\btheta^\sT\bg_i).
\end{equation}
Define the event $\cB := \{\norm{\bG}_\op \le C_0 \sqrt{n}\}$ and recall that $\P(\cB^c) \le 2e^{-c_0n}$ 
for some $C_0,c_0 >0$ (see for example~\cite[Theorem 4.4.5]{vershynin2018high}).
%
By an argument similar to that in the proof of Lemma~\ref{lemma:Rn_lipschitz_bound},
one can show that
\begin{equation}
\nonumber
 \left|\hR_n^\star(\bG) - \Opt_\alpha^n(\bG) \right| \le \frac{C_1}{\sqrt{n}} \norm{\bG}_\op \alpha 
 \le C_2 \alpha
\end{equation}
for some constants $C_1,C_2 > 0$, where the last inequality holds on $\cB$. 
(A similar argument was carried out in the proof of Lemma~\ref{lemma:Rn_lipschitz_bound}.)

Choose $\alpha \le \Delta/C_2$.
By union bound over $\cN_{\alpha}$,
for sufficiently large $n$, the following holds with probability at least $1-\delta$:
\begin{align}
\nonumber
\left|\Opt_\alpha^n(\bG) - \min_{\btheta\in\cN_\alpha}\E\left[\ell(\btheta^\sT\bg)\right] \right|
&\le \left( \frac{\log{(2|\cN_\alpha|)}}{2n} + \frac{\log{(1/\delta)}}{2} \right)^{1/2}\\
\nonumber
&\le  \left( \frac{C_1(\alpha)}{\sgamma} \right)^{1/2} + \left( \frac{\log{(1/\delta)}}{2} \right)^{1/2}\,.
\end{align}
 Let $\cA_\delta$ be the event that this inequality holds.
Having chosen $\alpha$, choose $\sgamma>0$ to satisfy $(C_1(\alpha)/\sgamma)^{1/2} < \Delta$
and $\delta = e^{-2\Delta^2} < 1$
so that we have
\begin{equation}
\nonumber
 \left| \hR_n^\star(\bG) - \min_{\btheta \in\cN_\alpha} \E\left[\ell(\btheta^\sT \bg)\right]\right| \le 3 \Delta
\end{equation}
on $\cA_{e^{-\Delta^2}} \cap \cB$. 
Since $\P\left(\cB\right) \to 1$ as $n\to\infty$,
we have
\begin{equation}
\label{eq:LimInfGaussian}
\liminf_{n\to\infty}\P\left(\hR_n^\star(\bG) > 2\Delta\right) \ge 1-e^{-\Delta^2} > 0.
\end{equation}
Finally notice that, for any two matrices $\bG$, $\widetilde\bG$,
\begin{align*}
 \big|\hR_n^\star(\bG)- \hR_n^\star(\widetilde\bG)\big|
 &\le \sup_{\|\btheta\|_2\le 1}\Big|\frac1n \sum_{i=1}^n \ell(\btheta^\sT\bg_{i})-
 \frac1n \sum_{i=1}^n \ell(\btheta^\sT\widetilde\bg_{i})\Big| \\
 &\le \sup_{\|\btheta\|_2\le 1}\frac1n \sum_{i=1}^n \big|\<\btheta,\bg_{i}-\widetilde\bg_{i}\>\big|\\
 &\le \frac{1}{\sqrt{n}}\|\bG-\widetilde\bG\|_F\, .
  \end{align*}

  Hence, by Gaussian concentration,  
\begin{align*}
  \P\Big(\big|\hR_n^\star(\bG)-\E\hR_n^\star(\bG)\big|\ge  \Delta\Big)\, \le 2
  \, e^{-n\Delta^2/2}\, .
\end{align*}
In conjunction with Eq.~\eqref{eq:LimInfGaussian}, this proves the claim of 
Eq.~\eqref{eq:ClaimExample}.

\subsection{Proof of Proposition~\ref{prop:overparameterized_iid}}
\label{section:outline_overparameterized_iid}
The claim of the proposition is a direct corollary of the following lemma.
\begin{lemma}\label{lemma:KeyLemmaForOverparametrized}
Assume $p/n \ge (1+\delta)$ and that the feature vectors
$\bx_i$ have either of the following distributions:
\begin{enumerate}[(1.)]
    \item 
    i.i.d. mean $0$, unit variance and subgaussian entries; or
    \item a random features distributions as defined by Section~\ref{section:rf_example}.
\end{enumerate}
Fix $\alpha < 1/8$. Then the following holds with 
high probability:
For any $\btheta$,
 there exists $ \bu =\bu (\btheta)$ such that $\bX\bu = \bX\btheta$ 
 satisfying 
 $$\norm{\bu}_\infty \le 2 \norm{\btheta}_2 p^{-\alpha} \textrm{\; and \;}
   \norm{\bu}_2 \le \norm{\btheta}_2 \left( 1  + C\right)$$
   for some $C>0$ depending only on $\sOmega$.
\end{lemma}
Note that this lemma assumes $\bSigma=\id_p$ while Proposition~\ref{prop:overparameterized_iid} is stated for more general $\bSigma$. The statement of Proposition 
\ref{prop:overparameterized_iid} follows by noting that, under the assumptions
of the proposition, we have  $\bX\btheta = 
\obX (\bSigma^{-1/2}\btheta)$ where the entries of $\obX$ are independent.
Therefore, Lemma \ref{lemma:KeyLemmaForOverparametrized} implies the existence of 
global empirical risk minimizer $\widehat\bu$ satisfying
$\norm{\widehat\bu}_\infty \le 2 \norm{\bSigma}^{-1/2}_{\infty\to\infty} \big\|\bSigma^{1/2}\widehat\btheta\big\|_2  p^{-\alpha}$
and 
$\big\|\widehat\bu\big\|_2 \le (C+1)\norm{\bSigma^{-1/2}}_\op \big\|\bSigma^{1/2}\widehat\btheta\big\|_2$ 
where $\widehat\btheta$ is a minimizer from~\eqref{eq:minimizer_whp}.
The claim of the proposition then follows by the assumptions on $\bSigma$.

\begin{proof}[Proof of Lemma \ref{lemma:KeyLemmaForOverparametrized}]
For $A\subseteq [p]$, and $\bv\in\reals^p$, we denote by $\bv_A := (v_i:\; i\in A)$ the
vector comprising the entries of $\bv$ with indices in $A$, and 
$\bX_A := (\bX_{\cdot,i}:\; i\in A)$ the submatrix of $\bX$ with columns indexed by $A$.

Let $m = \lceil p^{2\alpha}\rceil$ and denote by $L=L(\btheta)\subseteq [p]$ the set of indices
corresponding to the $m$ entries in $\btheta$ with largest absolute value.
Namely if $|\theta_{i(1)}|\ge |\theta_{i(2)}|\ge \cdots\ge |\theta_{i(p)}|$,
then we let $L:=\{i(1),\dots,i(m)\}$ (ties are broken arbitrarily).
We also let $S=S(\btheta):= [p]\setminus L(\btheta)$ denote the set of indices of `small'
entries.

Note that $m\,|\theta_{i(m)}|^2\le \|\btheta\|_2^2$, whence
\begin{align}
\max_{i\in S}|\theta_i|\le \frac{1}{\sqrt{m}}\|\btheta\|_2\le \|\btheta\|_2 p^{-\alpha}\, .
\end{align}
%

%

We claim that the following holds with 
with high probability:
for any $\btheta \in\reals^p$, there exists
 $\bfeta =\bfeta(\btheta)$ such that $\supp(\bfeta)\subseteq S(\btheta)$,
$\norm{\bfeta}_\infty \le \norm{\btheta}_\infty p^{-\alpha}$,
 $\norm{\bfeta}_2 \le C \norm{\btheta}_2$, and
\begin{equation}
 \bX \bfeta = \bX_L\btheta_L.\label{eq:EtaEquality}
\end{equation}
Postponing the proof of this claim, we define  $\bu= \bu(\btheta)$ by
\begin{equation}
\nonumber
 u_j := 
 \begin{cases}
 \theta_j  + \eta_j & j \in S\\
 0 & j \in L\, ,
 \end{cases},
\end{equation}
whence
\begin{equation*}
\bX\bu =
\bX_{S}\btheta_{S}
+
 \bX_{S} \bfeta_S
=
 \bX_{S} \btheta_{S}+
\bX_L \btheta_L 
= \bX \btheta\, .
\end{equation*}
Further
$\norm{\bu}_\infty \le (\norm{\btheta}_2 + \norm{\btheta}_\infty) p^{-\alpha}$,  and
$\norm{\bu}_2 \le (1 + C)\norm{\btheta}_2$, thus proving the lemma, once the claim is shown.

We are left with the task of proving the existence of $\bfeta=\bfeta(\btheta)$ with the properties 
stated above.
We construct $\bfeta$ by setting $\bfeta_L=\bfzero$ and 
\begin{align*}
\bfeta_S&:= \argmin_{\bxi\in\reals^S}\left\{\norm{\bxi}_2^2 : \bX_{S} \bxi = \bX_L\btheta_L\right\}
= \bX_S^{\sT}(\bX_S\bX_S^{\sT})\bX_L\btheta_L\, .
\end{align*}
This vector satisfies the condition \eqref{eq:EtaEquality} by construction,
and we are therefore left with the task of proving that it satisfies the 
norm constraints, with the claimed probability.

Recalling that $m = \lceil p^{2\alpha}\rceil$, we define the  
\begin{align*}
 \cA & := \left\{\norm{\bX_Q}_\op \le C_1\sqrt{p}\; \textrm{ for all } Q\subseteq[p] \textrm{ with }  |Q|=m \right\},\\
  \cB & := \left\{ \sigma_{\min}\left(\bX_{R} \bX_{R}^\sT\right) \ge \frac{p}{C_1}
  \textrm{ for all } R\subseteq[p] \textrm{ with } |R|=p-m  \right\}\, ,\\
 \cB_{*} & := \left\{ \sigma_{\min}\left(\bX_{R\setminus s} \bX_{R\setminus s}^\sT\right) \ge \frac{p}{C_1}
   \textrm{ for all }  R\subseteq[p] \textrm{ with } |R|=p-m
  ,\textrm{ and } s\in R
   \right\}\, ,\\
    \cD & := \bigg\{ \max_{l\in Q}\left| \bX_s^\sT \left(\bX_{R\setminus s} \bX_{R\setminus s}^\sT\right)^{-1}  \bX_l\right|\le \frac{p^{-3\alpha}}{2C_1}
   \textrm{ for all } Q,R\subseteq[p]\\
   &\hspace{75mm}\textrm{ with } |Q|=m ,  R=[p]\setminus Q \textrm{ and } s\in R\bigg\}\, ,
\end{align*}
where $\bX_s = \bX_{\{s\}}$ is the $s$-th column of $\bX$.

Here $C_1$ is a constant that will be specified below. On the intersection of these events,
we have
 \begin{align*}
  \norm{\bfeta}_2^2 
    &\le \norm{\bX_S}_\op\norm{\bX_L}_\op \Big\|{\left(\bX_{S}\bX_{S}^\sT\right)^{-1}}\Big\|_\op \norm{\btheta_L}_2^2 \\
    &\le C_1^3 \norm{\btheta}_2^2 ,
 \end{align*}
 which verifies the $\ell_{2}$ bound on $\bfeta$.
  
In order to bound the $\ell_{\infty}$ norm of $\bfeta$, 
note that for $s\in S$, 
\begin{equation*}
  \eta_{s} := \frac{\bX_s^\sT \left(\bX_{S/s} \bX_{S/s}^\sT\right)^{-1}  \bX_L \btheta_L}{ 1 + \bX_s^\sT \left(\bX_{S/s} \bX_{S/s}^\sT\right)^{-1} \bX_s }\, ,
\end{equation*}
where $\bX_s = \bX_{\{s\}}$ is the $s$-th column of $\bX$.
We therefore have, on the event $\cA\cap\cB\cap \cB_*\cap \cD$, 
\begin{align*}
|\eta_{s}| &\le \sum_{l\in L}\left| \bX_s^\sT \left(\bX_{S/s} \bX_{S/s}^\sT\right)^{-1}  \bX_l \right|
\left|\theta_l\right|\\
&\le \lceil p^{2\alpha}\rceil \norm{\btheta}_\infty \max_{l\in L}\left| \bX_s^\sT \left(\bX_{S/s} \bX_{S/s}^\sT\right)^{-1}  \bx_l \right|\, ,\\
&\le 2C_1\,  p^{2\alpha} \norm{\btheta}_\infty\cdot p^{-3\alpha}/(2C_1) \\  
&\le  p^{-\alpha} \norm{\btheta}_\infty\, .
\end{align*}

In order to conclude the proof of the lemma, we need to prove that each of events 
$\cA$, $\cB$, $\cB_*$, $\cD$ holds with 
high probability
for a suitable choice of $C_1$. 

We split the analysis into two cases depending on the feature distribution.

\noindent\textbf{(1.) Independent features.}
For event $\cA$, note that $\|\bX_Q\|_{\op}\le \|\bX\|_{\op}\le 2(\sqrt{p}+\sqrt{n})$
with probability at least $1- C\exp(-p/C)$, see \cite[Theorem 4.4.5]{vershynin2018high},
and hence the claimed probability bound follows.

For event $\cB$, by Theorem 1.1 of~\cite{rudelson2009smallest},
for any set $R$, $|R|=p-m$, we have
\begin{equation*}
\P\left(\sigma_n(\bX_{R}) \le \epsilon \left(\sqrt{p-m - 1} - \sqrt{n-1}\right)\right)  
\le \left(C_3 \epsilon\right)^{p-m - n } + e^{-c_3\left(p-m\right)},
\end{equation*}
for $C_3,c_3>0$ and any $\epsilon >0$, where $\sigma_n$ is the $n$-th largest singular value.
Hence, for a suitable choice of $C_1$, $\sigma_{\min}(\bX_{R}\bX_R^{\sT})\ge 2p/C_1$
with probability at least $1-c_0\exp(-c_0'p)$. The claim follows by taking a union bound over the
$\binom{p}{m} =  \exp(O(p^{2\alpha}\log p))$ choices of set $R$. 

For event $\cB_*$, the bound follows in the same way (the only difference being that 
the union bound  is over $m\binom{p}{m}$ terms). 

Finally, for event $\cD$, let 
\begin{equation}
\cB_{**}
:= 
\left\{
\Big\|\left(\bX_{R\setminus s} \bX_{R\setminus s}^\sT\right)^{-1}\bx_s\Big\|_2\le C/\sqrt{p}
\textrm{ for all } R \subseteq [p] \textrm{ with } |R| = p-m, \textrm{ and } s\in R
\right\}.
\end{equation} 

It is immediate to see that
$\P(\cB_{**})\ge 1-c\exp(-c'p)$ for some constants $c,c''$, because of
the lower bound on the probability of event $\cB_*$ and $\|\bx_s\|_2\le c''\sqrt{n}$
with similar probability since $\bx_s$ is subgaussian.

Next note that, defining $\bv_{R,s}:=\left(\bX_{R\setminus s} \bX_{R\setminus s}^\sT\right)^{-1}\bx_s$,
we have
\begin{align*}
\P(\cD^c )  &\le \P(\cD^c\cap\cB_{**} ) +\P(\cB_{**}^c)\\
&\le \sum_{R:|R| = p-m} \sum_{s\in R}\sum_{l\in Q=[p]\setminus R}
\P\left(\left\{|\bv_{R,s}^{\sT}\bx_l|\ge
p^{-3\alpha}/(2C_1)\right\}\cap\cB_{**}\right)+ \P(\cB_{**}^c) \\
& \stackrel{(a)}{\le} 2 m(p-m)\binom{p}{m} \exp\Big\{-\frac{C''p}{(p^{-3\alpha})^2}\Big\} + c\, e^{-c'p}\\
& \le C\exp(-p^{1-6\alpha}/C)\, ,
\end{align*}
where the inequality $(a)$ follows because of the previous bound on
$\P(\cB_{**}^c)$ and because $\bx_l$ is a subgaussian vector with subgaussian norm of order one,
independent of $\bv_{R,s}$ and of $\cB_{**}$. 

\noindent\textbf{(2.) Random Features.}
Recall that under the random features model considered, 
 $\bZ \in \R^{n \times d}$ has i.i.d. entries $\normal(0, 1)$. Let $(\bz_i)_{i\le n}$,
$\bz_i\in\reals^d$ denote the rows of $\bZ$.

Lemma 4 of~\cite{montanari2023universality} states that
there are universal constants $c, C > 0$ such that the following holds:
For every $n, p, d$, the event 
	\begin{align*}
		\Omega_{\bZ} = \Big\{\norm{\bZ}_\op \le C(\sqrt{d}+\sqrt{n}),~~\max_{i\le n} \big|&\norm{\bz_i}_2-\sqrt{d}\big|
        \le C\sqrt{\log n}, \\
			&\max_{i, j\le n, i \neq j}|\langle \bz_i, \bz_j\rangle| \le C\sqrt{d}\log n\Big\}
	\end{align*}
satisfies the following three properties:  
\begin{enumerate}[(i)]
\item $\Omega_{\bZ}$ has probability at least $1-n^{-1} - e^{-cd}$. 	
\item Conditional on $\bZ$, the columns of $\bX$ are i.i.d.
	subgaussian with parameter $\nu^2(\bZ)$ (depending on $\bZ$). Furthemore, on the event $\Omega_{\bZ}$,
	\begin{equation*}
		\nu(\bZ) \le C (1+\sqrt{p/d}) \norm{\sigma}_{{\rm Lip}}.
	\end{equation*} 
\item Let $\bSigma_{\bZ}\in \reals^{n\times n}$ denote the conditional covariance if the columns of $\bX$ given $\bZ$.
	Then on the event $\Omega_{\bZ}$,
	\begin{equation*}
		\mu_2 - \delta_n 
			\le \lambda_{\min}(\bSigma_{\bZ}) \le \lambda_{\max}(\bSigma_{\bZ}) \le C\mu_1 \cdot (1+\sqrt{p/d}) + \mu_2 
			+ \delta_n
	\end{equation*}
	where $|\delta_n| \le \frac{C}{\sqrt{n}}((1+\sqrt{p/d})^4 + \log^4 n)$, for some $\mu_1,\mu_2 > 0$, independent of $n$.
\end{enumerate}

With this lemma in hand, showing that the event $\cA$ has high probability is straightforward.

For $R\subseteq [p]$, let $\bSigma_{\bZ,R}$ denote the covariance of the columns of $\bX_{R}$ conditional on $\bZ$ (note that they are i.i.d. conditional on $\bZ$).
The above implies that for sufficiently large $n$,
	\begin{equation*}
		 c_4
			\le \lambda_{\min}(\bSigma_{\bZ, R}) \le \lambda_{\max}(\bSigma_{\bZ, R}) \le C_4
	\end{equation*}
    uniformly over all choices of $R$ with $|R| = p-m$.

This implies the existence of constants $c_5,c_6>0$ such that
for any $R\subseteq[p]$ with $|R| = p - m$, the following bound holds on the event $\Omega_{\bZ}$:
\begin{align}
\label{eqn:key-bound-high-probability-two}
	&\P\left(\inf_{\bu \in \R^{n}, \norm{\bu}_2 = 1} \norm{\bX^{\sT}_R \bu}_2 \ge \frac{c_5}{4}\sqrt{p}
   \; \Big|\bZ\right) \ge 1-e^{-c_6(p-m)}. 
\end{align} 

Now we can apply an argument similar to the one applied for the independent features model, that $\cB,\cB_{*}$ and $\cB_{**}$ are high probability sets conditional on $\bZ$ in $\Omega_\bZ$. Then since the columns of $\bX$ are once again independent conditional on $\bZ$, we can once again deduce that $\cD$ is a high probability set from our conclusion about $\cB_{**}$.
We conclude the proof by noting that $\Omega_\bZ$ is a high probability set.

\end{proof}

\subsection{Proof of universality in Ex.~\ref{ex:non-convex-nn}}
\label{proof:ex-non-convex-nn}

Let us denote $\bX = \bsigma_1(\bW^\sT\bZ)$ (where $\sigma_1$ is applied element-wise).
By assumption on $\sigma_2$, for any $\bTheta$, there exists some $\bS(\bTheta,\bX)\in\R^{\sfk\times n}$ with $\|\bS\|_\infty \le C$ such that
\begin{equation}
\bS = \sigma_2(\bTheta^\sT \bX).
\end{equation}

So there exists some $\widehat\bS = (\widehat\bs_1,\dots,\widehat\bs_n)$, $\widehat\bs_i \in\R^{\sfk}$ such that
\begin{align}
\min_{\bTheta} \widehat R_n(\bTheta) &=  \frac1n \sum_{i=1}^n \log \left\{ 1 + \exp\left[-y_i \ba^\sT \widehat\bs_i\right] \right\}
\end{align}
satisfying $\|\widehat\bs_i\|_\infty \le C.$ Let $\widehat\bV \in\R^{\sfk\times n}$ be any choice of matrix satisfying 
\begin{equation}
    \widehat\bS = \sigma_2(\widehat\bV).
\end{equation}
Then for $j\in [\sfk],$ with $\widehat\bv_j$ denoting the $j$th column of $\widehat\bV$, choose
\begin{equation}
    \widehat\btheta_{j} := \argmin_{\bX \widehat \btheta_j = \widehat\bv_j} \|\hat \btheta_j\|_2.
\end{equation}
We then have for any $j$, 
\begin{equation}
    \|\hat \btheta_j\|_2 = \|\bX^\sT(\bX\bX^\sT)^{-1} \widehat \bv_j\|_2 \le \frac1{\sigma_{\min}(\bX)} \|\hat\bv_j\|_2.
\end{equation}
In the proof of Proposition~\ref{prop:overparameterized_iid}, we have already obtained a high probability bound of the form $\sigma_{\min}(\bX)\ge c \sqrt{p}$  for $\bX$ following the random features distribution.
A union bound, along with the $\ell^\infty$ bound on $\|\widehat\bv_j\|$ then gives that
$\|\widehat \bTheta\|_F \le C$ for some $C>0$ independent of $n$, with high probability, where $\widehat\bTheta := (\widehat\btheta_1,\dots,\widehat\btheta_\sfk).$ Finally, from the definition, it is clear that $\widehat R_n(\widehat \bTheta) = \min_{\bTheta} \widehat R_n(\bTheta).$ Invoking Proposition~\ref{prop:overparameterized_iid} concludes the proof of universality for the setting of the example.

\subsection{Proof of Proposition~\ref{prop:univ_m_estimation}}
\label{proof:prop_univ_m_estimation}
Fix $D>0$ and $\btheta$ with $\|\btheta\|_2\le D$.
First, note that for any $m\in[p]$, there exists some $S\subseteq [p]$ with $|S| = m$
satisfying $\min_{j \in S}|\theta_j| \ge \max_{j\in S^c} |\theta_j|$. Writing
$\btheta^\sT = (\btheta_S^\sT, \btheta_{S^c}^\sT)$, we have
\begin{equation}
\nonumber
\norm{\btheta_{S^c}}_{\infty} \le
\min_{j\in S} |\theta_j| \le \left(\frac1m \|\btheta_S\|^2 \right)^{1/2}
\le  \frac{D}{\sqrt{m}}.
\end{equation}

So for any $m$ and $D>0$, there exists some $S$ with $|S|=m$ so that
\begin{align}
    \{\|\btheta\|_2\le D\}
    &\subseteq
    \{\btheta= (\btheta_S,\btheta_{S^c}):  \|\btheta_S\|_2,\|\btheta_{S^c}\|_2\le D, \|\btheta_{S^c}\|_\infty \le Dm^{-1/2}\}.
\end{align}

With
\begin{equation}
    \widehat R_n(\btheta;\bX) := \frac1n \sum_{i=1}^n \ell(\bx_i^\sT\btheta, \bx_i^\sT\btheta^\star, \eps_i)  + r(\btheta),
\end{equation}
 for any $m$, we have the bound
\begin{align}
\label{eq:bound_for_general_k}
 \min_{\|\btheta\|_2 \le D} \widehat R_{n}(\btheta; \bX)
  &\ge \min_{\substack{
  |S| = m
  \\\|\btheta_S\|_2 \le D } } 
  \min_{\substack{\|\btheta_{S^c}\|_2 \le D \\ \|\btheta_{S^c}\|_\infty \le D m^{-1/2} }}
  \widehat R_{n}((\btheta_S, \btheta_{S^c}); \bX)\\
  &=
  \min_{\substack{
  |S| = m
  \\\|\bu\|_2 \le D } } 
  \min_{\substack{\|\bbeta\|_2 \le D \\ \|\bbeta\|_\infty \le D m^{-1/2} }}
  \widehat K_{S,n}(\bbeta, \bu; \bX, \bX) + r(\bu).
\end{align}

Now, by inspecting the proof of Theorem~\ref{thm:main_empirical_univ}, one can see that for any $t \in \R,  \delta >0$ and sequence $m_n\to \infty$ with $m_n/n \to 0$ as $n\to \infty$,

\begin{align*}
&\lim_{n\to\infty}\P\left(
     \min_{\substack{
  |S| = m
  \\\|\bu\|_2 \le D } } 
  \min_{\substack{\|\bbeta\|_2 \le D \\ \|\bbeta\|_\infty \le D m^{-1/2} }}
  \widehat K_{S,n}(\bbeta, \bu; \bX, \bX) + r(\bu)
< t - \delta
\right)\\
&\le
\lim_{n\to\infty}\P\left(
     \min_{\substack{
  |S| = m
  \\\|\bu\|_2 \le D } } 
  \min_{\substack{\|\bbeta\|_2 \le D \\ \|\bbeta\|_\infty \le D m^{-1/2} }}
  \widehat K_{S,n}(\bbeta, \bu; \bX, \bG) + r(\bu)
< t - \delta/2
\right).
\end{align*}

Indeed, one can modify the proof of Theorem~\ref{thm:main_empirical_univ} to only interpolate between the coordinates of $\bX$ and $\bG$ with indices in $S^c$, and follow through with the argument, since $\bX$ and $\bG$ have independent entries. Since $m_n\to\infty$, the delocalization condition allows us to replace the non-Gaussian samples corresponding to indices in $S^c$ with Gaussian ones.

Combining this with the above bound we have
\begin{align*}
\lim_{n\to\infty}&\P\left(
\min_{\norm{\btheta}_2 \le D } \widehat R_n(\btheta; \bX)
< R_{\infty}^\star - \delta
\right)\\
&\le
\lim_{n\to\infty}\P\left(
     \min_{\substack{
  |S| = m
  \\\|\bu\|_2 \le D } } 
  \min_{\substack{\|\bbeta\|_2 \le D \\ \|\bbeta\|_\infty \le D m^{-1/2} }}
  \widehat K_{S,n}(\bbeta, \bu; \bX, \bG) + r(\bu)
< R_{\infty}^\star - \delta
\right)\\
&{\le}
\lim_{n\to\infty}\P\left(
\min_{\|\bu\|_2 \le D, |S| = m_n} \widehat K_{S,n}^\star(\bu; \bX, \bG, D)
+ r(\btheta_S)
< R_{\infty}^\star - \delta/2
\right)\\
&
=0,
\end{align*}
where in the last step we used Eq.~\eqref{eq:R_thry_lower_bound}, the assumption of the proposition.

\noindent
Now let us give a matching upper bound. We have by Theorem~\ref{thm:main_empirical_univ} and the upper bound on the Gaussian limit in~\eqref{eq:gaussian_upper_bound} that, for any $\eps_n\to\infty,$
\begin{align*}
\lim_{n\to\infty}&\P\left(
\min_{\norm{\btheta}_2 \le D } \widehat R_n(\btheta; \bX)
> R_{\infty}^\star + \delta
\right)\\
&\le
\lim_{n\to\infty}\P\left(
\min_{\norm{\btheta}_2 \le D, \|\btheta\|_\infty < \eps_n } \widehat R_n(\btheta; \bX)
> R_{\infty}^\star + \delta
\right)\\
&\le
\lim_{n\to\infty}\P\left(
\min_{\norm{\btheta}_2 \le D, \|\btheta\|_\infty < \eps_n } \widehat R_n(\btheta; \bG)
> R_{\infty}^\star + \delta/2
\right) \\
&=0.
\end{align*}
This concludes the proof of the statement.

\subsection{Details for Example~\ref{ex:robust_m_estimation}}
%
\label{sec:ex_robust_m_estimation_details}
Let $\bx\in\R^{m}$ be a vector with i.i.d. entries with the same distribution as the entries of $\bx_i$.
For $\bu\in\R^{m}$, $r\ge 0,s>0, \alpha\in[-1,1]$, define
\begin{align*}
   H_m(\bu; r,\alpha, s):=&\;
   \E\left[ 
   M_\ell\Big(\bx^\sT\bu + r\sqrt{1-\alpha^2}G_1 + r\alpha G_0 ,\kappa G_0, \eps;s\Big)
   \right] \\
   &- \frac{\sgamma}{2}\frac{r^2(1-\alpha^2)}{s} +\frac{\lambda}{2}r^2+  \frac{\lambda}{2} \|\bu\|_2^2\, ,
\end{align*}
where the expectation is over $G_0,G_1 \stackrel{i.i.d.}\sim\cN(0,1),\eps,\bx$, mutually independent, and
we defined the Moreau envelope of $\ell(v,v^\star,\eps)$ with respect to its first argument:
\begin{equation}
\nonumber
 M_{\ell}(t, v^{\star}, \eps;s) := \min_{v\in\reals} \left\{
    \ell( v, v^{\star},\eps)
    + \frac{1}{2 s} \left( v - t\right)^2
   \right\}    \, .
\end{equation}

Define 
\begin{equation}
\nonumber
    R^\star_\infty := \min_{\substack{r \in[0,D]\\ \alpha\in [-1,1]}} \sup_{s>0} H_1(0; r,\alpha, s).
\end{equation}
Fix a sequence $m_n \to \infty$ with $(m_n\log(n))/n \to 0$.
A by-now standard analysis via Gordon's Gaussian comparison inequality,
and an $\eps$-net argument over $\bu\in B_2^{m_n}(D)$, yields lower bound
\begin{align*}
&\P\Bigg(\;
\bigg\{
\min_{\substack{|S| = m_n \\
\|\bu\|_2 \le D}
} \widehat K_{S,n}^\star(\bu; \bX, \bG,D)
< R_{\infty}^\star - \delta
\bigg\} \cap \cG_n
\Bigg)\\
&\le 2\, \P\Bigg(\;
\bigg\{
\min_{
\|\bu\|_2 \le D,
\substack{r \in[0,D]\\ \alpha\in [-1,1]}} \sup_{s>0} H_{m_n}(\bu; r,\alpha, s)
< R_{\infty}^\star - \delta/2\bigg\} 
\cap \cG_n
\Bigg)
\end{align*}
where $\cG_n$ is high a probability event, i.e. $\lim_{n\to\infty}\P(\cG_n)=1$.
We note that this lower bound does not require convexity of $\ell$.

To conclude the proof of condition \eqref{eq:R_thry_lower_bound}, it is sufficient to show that 
$(i)$~$R_\infty^\star$ satisfies Eq.~\eqref{eq:gaussian_limit}, and $(ii)$~for all $r\ge 0,s>0, \alpha\in[-1,1]$ and $m$,
\begin{equation}
\label{eq:required_ineq}
   \min_{\|\bu\|_2\le D} H_m(\bu; r,\alpha, s ) \ge H_m(\bzero; r,\alpha, s) =  H_1(0; r,\alpha, s)\, .
\end{equation}

%
Under strictly convex $\ell$,
\begin{enumerate}
    \item[$(i)$] Eq.~\eqref{eq:gaussian_limit}  holds by Theorem 4 of~\cite{asgari2025local}.
    \item[$(ii)$] The Moreau envelope $t\mapsto  M_{\ell}(t, v^{\star}, \eps;s)$
 is convex in its first argument. Hence,  for any fixed $m$ and $\bu$, Jensen's inequality implies
\begin{align*}
&\E\Big[ \E\Big[
 M_\ell\Big(\bx^\sT\bu + r\sqrt{1-\alpha^2}G_1 + r\alpha G_0 ,\kappa G_0, \eps;s\Big)
   \big| G_1,G_0,\eps\Big] \Big]\\
&\quad\quad\quad\ge
\E\Big[
 M_\ell\Big( r\sqrt{1-\alpha^2}G_1 + r\alpha G_0 ,\kappa G_0, \eps;s\Big)
 \Big] \, .
\end{align*}
So we indeed have the inequality of Eq.~\eqref{eq:required_ineq}.
\end{enumerate}

\subsection{Proof of Lemma~\ref{lemma:op_norm_x}}
\label{proof:op_norm_x}
The proof is a standard argument following the argument for bounding 
$\E\left[\norm{\bZ}_\op\right]$ for a matrix $\bZ$ with i.i.d. subgaussian rows (see for example~\cite{vershynin2018high}, Lemma 4.6.1). Note, however, that such a bound, or subgaussian matrix deviation bounds
such as Theorem 9.1.1 of~\cite{vershynin2018high} that assume that the rows of $\bX$, are subgaussian are not directly applicable in our case, since
the projections of $\bX$ are subgaussian only along the directions of $\cS_p$.
Indeed, we are interested in cases such as the example in Section~\ref{section:ntk_example} where the feature vectors $\bx_i$ are not subgaussian.
Although the statement of Lemma~\ref{lemma:op_norm_x} is a direct extension of such results, we include its proof here for the sake of completeness.

We only need to prove the bound for $\bX$; indeed, $\bG$ itself satisfies Assumption~\ref{ass:X}. 
Furthermore, let $\overline \bX := \bX - \E[\bX]$, and recall the definition
\begin{equation}
\nonumber
 \norm{\bX}_{\cS_p} := \sup_{\left\{\btheta\in\cS_p : \norm{\btheta}_2 \le 1\right\}} \norm{\bX 
\btheta}_2.
\end{equation}

We begin with the following lemma.
\begin{lemma}
\label{lemma:X_sp_norm_tail_bound}
Assume $\bX$ satisfies Assumption~\ref{ass:X}, and $\cC_p\subseteq\cS_p$.
There exist constants $C,\widetilde C,c >0$
such that for all $t>0$,
\begin{equation*}
\P\left(\norm{\overline\bX}_{\cS_p}^2 \ge n \widetilde C 
\left((\delta_t^2 \vee \delta_t)+ 1\right) \right)
\le
2e^{-c  t^2}\\
\end{equation*}
where 
\begin{equation*}
\delta_t := C \frac{\sqrt {p}}{\sqrt n} + \frac{t}{\sqrt n}.
\end{equation*}
\end{lemma}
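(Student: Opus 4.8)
\textbf{Proof plan for Lemma~\ref{lemma:X_sp_norm_tail_bound}.}

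The plan is to run a standard $\varepsilon$-net plus Bernstein argument, being careful that subgaussianity of $\bX$ is only available along directions in $\cS_p$. First I would reduce the operator norm over the restricted set to a supremum over a net: since $\norm{\bX}_{\cS_p} = \sup_{\btheta \in \cS_p \cap B_2^p(1)}\norm{\bX\btheta}_2$ and, by the variational formula for the $\ell_2$ norm, $\norm{\bX\btheta}_2 = \sup_{\bv \in S^{n-1}}\inner{\bX\btheta,\bv}$, I would fix a $1/4$-net $\cN_{1/4}$ of $S^{n-1}$ of cardinality at most $9^n$ and use the standard estimate $\norm{\bX}_{\cS_p} \le 2\sup_{\btheta \in \cS_p\cap B_2^p(1)}\sup_{\bv\in\cN_{1/4}}\inner{\bX\btheta,\bv}$. (The $\btheta$ variable I do \emph{not} net — I keep it as a supremum because the quantity I will bound next is convex in $\btheta$ on a convex set, or alternatively because I will take expectations before worrying about the $\btheta$-supremum; see the next paragraph.) Actually the cleaner route, matching Lemma~\ref{lemma:op_norm_x}, is: for each fixed $\bv \in \cN_{1/4}$, the random variable $\btheta \mapsto \inner{\bX\btheta,\bv} = \sum_{i=1}^n v_i \bx_i^\sT\btheta$ is, for $\btheta\in\cS_p$ with $\norm{\btheta}_2\le 1$, a sum of independent (over $i$) subgaussian terms; its squared subgaussian norm is at most $\sum_i v_i^2 \sfK^2 = \sfK^2$ by Assumption~\ref{ass:X}. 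So $\sup_{\btheta}\inner{\bX\btheta,\bv}$ over the convex symmetric set $\cS_p\cap B_2^p(1)$ is a supremum of a subgaussian process, and I would bound its deviation above its mean and the mean itself.

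The key steps, in order: (1) Net reduction for $\bv$ as above, costing a factor $2$ and a union bound over $|\cN_{1/4}|\le 9^n$. (2) For each fixed $\bv$, control $Z_\bv := \sup_{\btheta\in\cS_p\cap B_2^p(1)}\inner{\bX\btheta,\bv}$. Write $Z_\bv^2$ and note that $\E[Z_\bv^2]$ is bounded: by Assumption~\ref{ass:X}, $\E[(\bx^\sT\btheta)^2]\le \norm{\bx^\sT\btheta}_{\psi_2}^2 \lesssim \sfK^2$ for each $\btheta$, but the supremum over $\btheta$ requires a chaining / Dudley bound over the set $\cS_p\cap B_2^p(1)\subseteq B_2^p(\sfR)\cap B_2^p(1)$, which has $\log$-covering numbers $O(p\log(1/\epsilon))$, yielding $\E[Z_\bv]\le C\sqrt p$ (this is where the $\sqrt p$ in $\delta_t$ comes from). (3) Subgaussian concentration of $Z_\bv$ around its mean: since $\btheta\mapsto\inner{\bX\btheta,\bv}$ has subgaussian increments with parameter controlled by $\norm{\btheta-\btheta'}_2$, $Z_\bv$ concentrates with $\P(Z_\bv \ge \E[Z_\bv] + s\sqrt n) \le 2e^{-cs^2}$ (the $\sqrt n$ scaling appears because of the $v_i$ weights; more precisely the relevant variance proxy is $\sum_i v_i^2\sfK^2 = \sfK^2$, giving $\P(Z_\bv\ge \E Z_\bv + u)\le 2e^{-cu^2/\sfK^2}$ — I will need to track the right normalization here carefully so the final statement scales as $n(\delta_t^2\vee\delta_t + 1)$). (4) Union bound over $\cN_{1/4}$: choosing $u = \sfK(C'\sqrt n + t)$ absorbs the $9^n = e^{n\log 9}$ entropy into the exponent, leaving $2e^{-ct^2}$. (5) Assemble: $\norm{\bX}_{\cS_p} \le 2\max_{\bv}Z_\bv \le 2(\E Z_\bv + u) \le 2(C\sqrt p + \sqrt n\,\delta_t \cdot(\text{const}))$ on the good event, so $\norm{\bX}_{\cS_p}^2 \le \widetilde C n((\delta_t^2\vee\delta_t)+1)$ after squaring and collecting terms, with $\delta_t = C\sqrt p/\sqrt n + t/\sqrt n$ as claimed.

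The main obstacle I expect is \textbf{step (2)}, namely getting the bound $\E[Z_\bv]\le C\sqrt p$ cleanly. One cannot simply invoke a matrix-deviation inequality (e.g. Theorem~9.1.1 of~\cite{vershynin2018high}) because those assume the rows $\bx_i$ are genuinely subgaussian as vectors in $\R^p$, whereas here we only have subgaussianity of the one-dimensional marginals $\bx_i^\sT\btheta$ for $\btheta\in\cS_p$ — exactly the point flagged in the paragraph preceding the lemma (the neural tangent example has non-subgaussian feature vectors). So the chaining has to be done by hand over $\cS_p\cap B_2^p(1)$ using only the hypothesis $\sup_{\btheta\in\cS_p,\norm\btheta_2\le1}\norm{\bx^\sT\btheta}_{\psi_2}\le\sfK$: the increments $\bx^\sT\btheta - \bx^\sT\btheta' = \bx^\sT(\btheta-\btheta')$ are subgaussian with parameter $\sfK\norm{\btheta-\btheta'}_2$ \emph{provided} $(\btheta-\btheta')/\norm{\btheta-\btheta'}_2\in\cS_p$, which holds because $\cS_p$ is a symmetric convex set (so differences of unit-norm elements, rescaled, stay in $\cS_p$ up to the radius bound — here I use $\cS_p\subseteq B_2^p(\sfR)$ to keep everything within a fixed-radius ball). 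With the subgaussian-increment property in hand, Dudley's entropy integral over a subset of a Euclidean ball of radius $O(1)$ in $\R^p$ gives $\E[\sup]\le C\sfK\sqrt p$. I would cite a generic chaining bound (Dudley) rather than reproving it. Everything else — the $\bv$-net, Bernstein/Hoeffding for the fixed-$\bv$ concentration, the union bound — is routine.
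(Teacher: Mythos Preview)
Your chaining step has a real gap. You claim the increments satisfy $\norm{\bx^\sT(\btheta-\btheta')}_{\psi_2}\le \sfK\norm{\btheta-\btheta'}_2$ ``provided $(\btheta-\btheta')/\norm{\btheta-\btheta'}_2\in\cS_p$, which holds because $\cS_p$ is a symmetric convex set.'' That implication is false. Symmetry and convexity only give $(\btheta-\btheta')/2\in\cS_p$, hence a \emph{constant} bound $\norm{\bx^\sT(\btheta-\btheta')}_{\psi_2}\le 2\sfK$, not one proportional to $\norm{\btheta-\btheta'}_2$. Concretely, take the random-features set $\cS_p=B_\infty^p(\sfR/\sqrt p)$, $\btheta=0$, $\btheta'=\epsilon\, \be_1$ with $0<\epsilon<\sfR/\sqrt p$: then $(\btheta-\btheta')/\norm{\btheta-\btheta'}_2=-\be_1\notin\cS_p$, and Assumption~\ref{ass:X} says nothing about $\bx^\sT\be_1$. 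Without the increment bound, Dudley over $\cS_p\cap B_2^p(1)$ does not yield $\E[Z_\bv]\le C\sqrt p$, and the whole architecture of your argument (net over $\bv\in S^{n-1}$, chain over $\btheta$) collapses. This is exactly the obstruction flagged in the remark preceding the lemma: matrix-deviation results requiring genuine row subgaussianity are unavailable here.

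The paper's proof takes the opposite route and avoids chaining entirely: it nets over $\btheta$ in $\cS_p$ (size $\le C(\alpha,\sfR)^p$), not over $\bv\in S^{n-1}$. For each fixed $\btheta$ in the net, $\norm{\bX\btheta}_2^2=\sum_{i=1}^n(\bx_i^\sT\btheta)^2$ is a sum of i.i.d.\ sub-exponential variables (squares of subgaussians, since $\btheta\in\cS_p$), and Bernstein gives $\P(\sum_i(\bx_i^\sT\btheta)^2\ge n\sfK(s+1))\le 2e^{-cn(s^2\wedge s)}$. A union bound over the $\btheta$-net of cardinality $C^p$ is absorbed by choosing $s=\delta_t^2\vee\delta_t$ with $\delta_t=C\sqrt{p/n}+t/\sqrt n$ and $C$ large enough that $cC^2\ge\log C(\alpha,\sfR)$; a standard net-to-sup comparison over the convex symmetric set $\cS_p$ finishes. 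The point is that by netting in $\btheta$ you only ever evaluate $\bx^\sT\btheta$ at points of $\cS_p$, so the restricted subgaussianity hypothesis applies directly and no increment control is needed.
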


\begin{proof}
Letting $\overline\bx_i$ be the rows of $\overline \bX$,
note that by Lemma 2.6.8 of~\cite{vershynin2018high} we have
\begin{equation*}
 \norm{\btheta^\sT\overline\bx_i}_{\psi_2} \le C\norm{\btheta^\sT \bx_i}_{\psi_2}.
\end{equation*}
Recall that $\cS_p$ $\subseteq B_2^p(\sfR)$, 
and hence there exists an
$\alpha$-net $\cN_{\alpha}$ of $\cS_p$ of size $|\cN_{\alpha}| \le C(\sfR,\alpha)^p$ for 
some constant 
depending only on $\sfR,\alpha$. Fix $\btheta\in\cN_{\alpha}$ and 
note that 
\begin{equation*}
 \norm{\obX\btheta}_2^2 = \sum_{i=1}^n (\overline\bx_i^\sT \btheta)^2.
\end{equation*}
By Assumption~\ref{ass:X},
 $(\obx_i^\sT \btheta)^2$ are squares of i.i.d subgaussian random variables
with subgaussian norm $K_\btheta \le \sfK$ uniformly in $\btheta$, and with means
\begin{equation*}
 \E\left[\left(\obx_i^\sT \btheta\right)^2\right] =
\norm{\E\left[\obx\obx^\sT \right]^{1/2}\btheta}_2^2  =: V_\btheta \le \sfK,
\end{equation*}
where the last inequality holds uniformly over $\btheta$ (see Proposition 2.5.2 of~\cite{vershynin2018high} for the properties of subgaussian variables). 
Hence, via Bernstein's inequality (2.8.3 of~\cite{vershynin2018high}), we have for any $s >0$,
\begin{align*}
\P\Bigg(\sum_{i=1}^n (\obx_i^\sT \btheta)^2 \ge & n (s + 1)\sfK \Bigg) =
\P\left(\sum_{i=1}^n (\obx_i^\sT \btheta)^2 - n V_\btheta \ge n (s +1)\sfK
- n V_\btheta
\right)\\
&\le 2\exp\left\{- c n\min\left\{  
\left(\frac{(s+1)\sfK - V_\btheta}{K_\btheta}\right)^2,
\left(\frac{(s+1)\sfK - V_\btheta}{K_\btheta}\right)
\right\} \right\}\\
&\stackrel{(a)}{\le} 2 e^{ -c n(s^2\vee s )}
\end{align*}
where for $(a)$ we used that $\sup_\btheta V_\btheta \le \sfK$ and 
$\sup_\btheta K_\btheta \le \sfK$. 
Taking $C \ge ( \log C(\sfR ,\alpha)/ c )^{1/2}$
and $s = \delta_t^2 \vee \delta_t$,
we have via a union bound over $\cN_\alpha$
\begin{align}
\P\left(\sup_{\btheta\in\cN_\alpha}\sum_{i=1}^n (\obx_i^\sT \btheta)^2 \ge  \sfK n
\left((\delta_t^2 \vee \delta_t) + 1\right) \right)
&{\le} 2C(\sfR,\alpha)^p e^{-c n (s^2 \vee s)}\nonumber\\
&\stackrel{(a)}{=}
2C(\sfR,\alpha)^pe^{-c n \delta_t^2}\nonumber\\
&\stackrel{(b)}{\le}
2C(\sfR,\alpha)^pe^{-c  \left(C^2 p + t^2 \right)}\nonumber\\
&\stackrel{(c)}{\le}
2e^{-c  t^2}.\label{eq:epsilon_not_op_bound}
\end{align}
where for $(a)$ we used that $s^2\vee s = \delta_t^2$, for $(b)$ we used the 
definition of $\delta_t$, and for $(c)$ that $C \ge (\log C(\sfR,\alpha) / c)^{1/2}.$ 
Now via a standard epsilon net argument (see for example the proof of Theorem 4.6.1 in~\cite{vershynin2018high}), one can show that
\begin{equation}\nonumber
 \sup_{\btheta\in\cS_p}\norm{\obX\btheta}_2^2 \le C_0(\sfR,\alpha)
\sup_{\btheta\in\cN_{\alpha}}\norm{\obX\btheta}_2^2
\end{equation}
for some $C_0$ depending only on $\sfR$ and $\alpha$. Combining this with~\eqref{eq:epsilon_not_op_bound}
 gives the desired result.
\end{proof}

\begin{lemma}
\label{lemma:simpler_tail_bound_X_sp_norm}
Assume $\bX$ satisfies Assumption~\ref{ass:X}.
There exist constants $C,c>0$ such that for all 
$t >0$,
\begin{equation}
\nonumber
    \P\left(\norm{\obX}_{\cS_p} > C\left(\sqrt n + \sqrt 
p + t\right)  \right)\le  2e^{-ct^2}.
\end{equation}
\end{lemma}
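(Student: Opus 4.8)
The plan is to obtain this cleaner statement as an immediate corollary of Lemma~\ref{lemma:X_sp_norm_tail_bound} by absorbing the somewhat awkward quantity $n\widetilde{C}\big((\delta_t^2\vee\delta_t)+1\big)$ into a multiple of $n+p+t^2$. Recall $\delta_t=(C\sqrt p+t)/\sqrt n$, so that
\begin{equation}
\nonumber
n\delta_t^2=(C\sqrt p+t)^2\le 2C^2p+2t^2,\qquad n\delta_t=\sqrt n\,(C\sqrt p+t)=C\sqrt{np}+t\sqrt n\le\tfrac{C}{2}(n+p)+\tfrac12(t^2+n),
\end{equation}
where the last bound uses $\sqrt{np}\le(n+p)/2$ and $t\sqrt n\le(t^2+n)/2$. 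Since $\delta_t^2\vee\delta_t\le\delta_t^2+\delta_t$, adding the two displayed inequalities together with the trailing $n$ gives $n\widetilde{C}\big((\delta_t^2\vee\delta_t)+1\big)\le C_1(n+p+t^2)$ for a constant $C_1$ depending only on $\sOmega$. Hence Lemma~\ref{lemma:X_sp_norm_tail_bound} already yields $\P\big(\norm{\bX}_{\cS_p}^2\ge C_1(n+p+t^2)\big)\le 2e^{-ct^2}$ for all $t>0$.

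It then remains to pass from $\norm{\bX}_{\cS_p}^2$ to $\norm{\bX}_{\cS_p}$. Set $C:=\sqrt{C_1}$. Using $(\sqrt n+\sqrt p+t)^2\ge n+p+t^2$ (all cross terms are nonnegative for $t>0$), the event $\{\norm{\bX}_{\cS_p}>C(\sqrt n+\sqrt p+t)\}$ is contained in $\{\norm{\bX}_{\cS_p}^2>C_1(\sqrt n+\sqrt p+t)^2\}\subseteq\{\norm{\bX}_{\cS_p}^2\ge C_1(n+p+t^2)\}$, so its probability is at most $2e^{-ct^2}$, which is the claim.

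Since this is purely a bookkeeping simplification of an already-proved tail bound, there is no genuine obstacle; the only thing to watch is that the composite constants $C_1$ (and hence $C$) and $c$ depend on the data only through $\sOmega$, which is immediate from Lemma~\ref{lemma:X_sp_norm_tail_bound} together with the fact that the manipulations above introduce only universal numerical factors. Note in particular that no use is made here of the proportionality $p/n\to\sgamma$: the bound holds with $n$ and $p$ decoupled, as stated.
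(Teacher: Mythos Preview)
Your proof is correct and is actually a bit cleaner than the paper's own argument. Both proofs reduce to Lemma~\ref{lemma:X_sp_norm_tail_bound}, but the paper takes a more indirect path: it defines the high-probability event $\cA=\{\norm{\bX}_{\cS_p}^2/(C_0^2 n)-1\le\delta_t^2\vee\delta_t\}$, splits $\cA$ according to whether $\norm{\bX}_{\cS_p}\le C_0\sqrt n$ or not, and on the complementary piece uses the elementary inequality $\max\{(a-b)^2,|a-b|\}\le|a^2-b^2|$ (valid for $a,b>0$ with $a+b\ge1$) to convert the quadratic deviation into a linear one. Your approach avoids this case split entirely by directly upper-bounding the threshold $n\widetilde C((\delta_t^2\vee\delta_t)+1)$ by $C_1(n+p+t^2)$ via AM--GM, then passing to $\norm{\bX}_{\cS_p}$ using $(\sqrt n+\sqrt p+t)^2\ge n+p+t^2$. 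The paper's route is the one familiar from standard subgaussian-matrix arguments (cf.\ Vershynin, Theorem~4.6.1), while yours is a one-line threshold comparison; both yield constants depending only on $\sOmega$, and neither uses the proportional asymptotics $p/n\to\sgamma$.
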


\begin{proof}
Let $\cA$ be the high probability event of Lemma~\ref{lemma:X_sp_norm_tail_bound}, i.e.
\begin{equation}
\nonumber
\cA := \left\{ \frac{\norm{\obX}^2_{\cS_p}}{C_0^2 n}  - 1 \le
(\delta_t^2 \vee \delta_t)\right\}
\end{equation}
where $C_0 := \sqrt{\widetilde C}$ for the constant $\widetilde C$ appearing in the statement of the lemma.
Next define the event
\begin{equation}
\nonumber
    \cG := \left\{\frac{\norm{\obX}_{\cS_p}}{C_0\sqrt{ n}} \le 1\right\}.
\end{equation}

We have on $\cG^c\cap \cA$,
\begin{align*}
\max\left\{
\left(\frac{\norm{\obX}_{\cS_p}}{C_0\sqrt{n}} -  1
\right)^2, 
\left|\frac{\norm{\obX}_{\cS_p}}{C_0\sqrt{n}}- 
1\right|
\right\}
&\stackrel{(a)}\le 
    \left|\left(\frac{\norm{\obX}_{\cS_p}}{C_0\sqrt{n}}\right)^2 - 
1\right|\\
    &\stackrel{(b)}{=}
    \left(\frac{\norm{\obX}_{\cS_p}}{C_0\sqrt{ n}}\right)^2 - 
1\\
    &\stackrel{(c)}{\le} \delta_t^2 \vee \delta_t,
\end{align*}
where $(a)$ follows from
\begin{equation}
\nonumber
    \max\left\{(a-b)^2,|a-b|\right\} \le |a^2 -b^2|,
\end{equation}
holding for $a,b >0$, $a+b \ge 1$. Meanwhile, $(b)$ holds on 
$\cG^c$ and $(c)$ is from the definition of $\cA$. Hence, by the definition of
$\delta_t$ in Lemma~\ref{lemma:X_sp_norm_tail_bound} we have
\begin{equation}
\nonumber
\cA\cap\cG^c
\subseteq 
    \left\{ \left|\frac{\norm{\obX}_{\cS_p}}{C_0\sqrt{n}} - 1 \right| 
\le C \sqrt{\frac{p}{n}} + \frac{t}{\sqrt n} \right\}
    \subseteq 
\left\{\norm{\obX}_{\cS_p}  \le C_1 \left(\sqrt{n} + \sqrt{p} +  t\right)\right\}.
\end{equation}
Meanwhile, from the definition of $\cG$, we directly have
\begin{equation}
\nonumber
\cA \cap \cG \subseteq \left\{ \norm{\obX}_{\cS_p} \le  \sqrt{n} C_0\right\},
\end{equation}
so for some $C_2$ we have 
\begin{equation}
\nonumber
\cA \subseteq \left\{ \norm{\obX}_{\cS_p} \le C_2\left( \sqrt{n} + \sqrt{p} 
+ t\right)\right\},
\end{equation}
implying that 
\begin{align}
\nonumber
    \P\left(\norm{\obX}_{\cS_p} > C\left(\sqrt n + \sqrt 
p  + t\right) \right) &\le \P(\cA^c) \le 2e^{-ct^2}
\end{align}
by Lemma~\ref{lemma:X_sp_norm_tail_bound}.
\end{proof}

Finally, we prove Lemma~\ref{lemma:op_norm_x}.

\begin{proof}[Proof of Lemma~\ref{lemma:op_norm_x}]

By an application of Lemma~\ref{lemma:simpler_tail_bound_X_sp_norm}
with $t := \sqrt{s}/ C - \sqrt{n} -
\sqrt{p}$, 
we have for all $s  > C^2(\sqrt n + \sqrt p)^2,$ 

\begin{equation}
\nonumber
    \P\left( \norm{\obX}_{\cS_p} \ge \sqrt{s} \right)
    \le 2 \exp\left\{
    -c \left(\frac{\sqrt{s}}{C} - \sqrt{n} - \sqrt{p}\right)^2
    \right\}.
\end{equation}
Hence, we can bound the desired expectation as  
\begin{align*}
    \E\left[\norm{\obX}_{\cS_p}^2 \right]
    &= \int_{0}^\infty \P\left(
    \norm{\obX}_{\cS_p}^2 > s \right)\de s\\
    &= \int_{0}^{ C^2\left(\sqrt{n} + \sqrt{p}\right)^2}
    \P\left( \norm{\obX}_\op \ge \sqrt{s}  \right)\de s + \int_{C^2 \left(\sqrt{n} 
+ \sqrt{p}\right)^2}^\infty \P\left( \norm{\obX}_\op \ge \sqrt{s} \right)\de s\\
    &\le C^2 \left(\sqrt{n} +  \sqrt{p} \right)^2 + 2\int_{C^2 ( \sqrt{n} +   
\sqrt{p})^2}^\infty  \exp\left\{ -c\left(\frac{\sqrt{s}}{C} - \sqrt{n} -  
\sqrt{p}\right)^2\right\}\de s\\
    &\le 
    C_0 (n+ p) + C_1 (\sqrt{n} + \sqrt{p}) \\
    &\le C_2 p
\end{align*}
for some sufficiently large $C_2 >0$ since $\lim_{n\to\infty} p(n)/n = \sgamma.$ 
Using that $\bx_i^\sT\btheta$ are i.i.d. subgaussian for $\btheta\in\cS_p$, we have
\begin{equation*}
 \norm{\E\left[\bX\right]}^2_{\cS_p} = \sup_{\btheta\in\cS_p, \norm{\btheta}_2\le 1} \sum_{i=1}^n \E\left[\bx_i^\sT\btheta\right]^2 \le  C_3 p,
\end{equation*}
and hence
\begin{align*}
\E\left[\norm{\bX}_{\cS_p}^2 \right]
&\le 
2\E\left[\norm{\obX}_{\cS_p}^2 \right]+
2\norm{\E\left[\bX\right]}_{\cS_p}^2 \\
&\le C_4 p
\end{align*}
for some $C_3,C_4 >0$.
\end{proof}

\subsection{Proof of Lemma~\ref{lemma:Rn_lipschitz_bound}}
\label{proof:Rn_lipschitz_bound}
We prove the bound for the model with $\bX$. 
Let us define $L_n(\bTheta;\bX,\beps):= \sum_{i=1}^n \ell(\bTheta^\sT 
\bx_i;
\bTheta^{\star\sT}
\bx_i,
\eps_i)/n$ so that $\widehat R_n(\bTheta;\bX,\by) = L_n(\bTheta; \bX,\beps) + r(\bTheta).$  We have
\begin{align}
\left|\widehat R_n(\bTheta;\bX,\beps) - 
\widehat R_n(\widetilde\bTheta;\bX,\beps) \right| 
&\le
\left|L_n\left(\bTheta;\bX,\beps\right) 
- L_n\left(\widetilde\bTheta;\bX,\beps\right)  \right|
+ \left|r(\bTheta) - r\left(\widetilde\bTheta_k\right)\right|
\nonumber
\\
&\stackrel{(a)}\le  \sup_{\bTheta'\subseteq 
\cS_p^{\sfk}} \left| 
\inner{\grad_\bTheta L_n(\bTheta'; \bX, \beps), \left(\bTheta - \widetilde \bTheta 
\right)}_F\right|\nonumber\\
&\hspace{10mm}+ \sfK_r\left(\sqrt{\sfk}\sfR\right) \norm{\bTheta - \widetilde \bTheta}_F,\label{eq:lipschitz_bound_Rn}
\end{align}
where in $(a)$ we used that the regulizer $r$ is assumed to be locally Lipschitz in Frobenius norm and that $\norm{\bTheta}_F \le \sqrt{\sfk}\sfR$ for $\bTheta\in\cS_p^\sfk$.
Now using $\partial_k$ to denote the partial derivative with respect to the $k$th entry, we compute the gradient
\begin{align}
\grad_{\bTheta}
L_n\left(\bTheta; \bX, \beps\right)
&=\frac1n \sum_{i=1}^n \sum_{k=1}^\sfk \partial_k \ell(\bTheta^\sT \bx_i;\eps_i)
\grad_\bTheta \left(\btheta_k^\sT\bx_i\right)
\nonumber\\
&=\frac1n \bX \bD(\bTheta,\bX,\beps)\label{eq:grad_Ln_computation}
\end{align}
where we defined
\begin{equation}
\nonumber
    \bD(\bTheta,\bX,\beps) := (\bd_1(\bTheta,\bX,\beps), \dots,\bd_\sfk (\bTheta,\bX,\beps))\in\R^{n\times \sfk}
\end{equation}
for
 $\bd_k(\bTheta,\bX,\beps) :=  \left(\partial_k \ell\left(\bTheta^\sT \bx_i,\eps_i \right) 
\right)_{i\in[n]} \in \R^n$. Before applying Cauchy-Schwarz, let us bound the norm
$\norm{\bD(\bTheta,\bX,\beps)}_F$. 
Recall the condition on the gradient of
the loss in Assumption~\hyperref[ass:loss_labeling_prime]{5''}, which implies
\begin{equation}
\nonumber
 \norm{\grad\ell(\bTheta^\sT\bx_i, \bTheta^{\star\sT}\bx_i, \eps_i)}_2^2 \le  C_1 \left(\|\bTheta^\sT  \bx_i\|_2^2 + \|\bTheta^{\star\sT} \bx_i\|_2^2  + |\eps_i|^2  + 1\right)
\end{equation}
for all $i\in[n]$ and some $C_1 > 0$.
Hence, we 
have
\begin{align}
 \norm{\bD(\bTheta,\bX,\beps)}_F^2
&=
\sum_{i=1}^n
 \norm{\grad \ell(\bTheta^\sT \bx_i,\bTheta^{\star\sT} \bx_i, \eps_i) }_2^2\nonumber\\
&\le  
C_1 \sum_{i=1}^n\left(\|\bTheta^\sT  \bx_i\|_2^2 + \|\bTheta^{\star\sT} \bx_i\|_2^2  + |\eps_i|^2  + 1\right)\nonumber\\
&= 
C_1 \left(\|\bX\bTheta\|_F^2 + \|\bX\bTheta^\star\|_F^2 + \|\beps\|_2^2 + n\right)\nonumber\\
&\le C_2 
\left(\|\bX\bTheta\|_F + \|\bX\bTheta^\star\|_F + \|\beps\|_2 + \sqrt{n}\right)^2\nonumber\\
&\le C_2 
\left(\|\bX\|_{\cS_p}\|\bTheta\|_F + \|\bX\|_{\cS_p}\|\bTheta^\star\|_F + \|\beps\|_2 + \sqrt{n}\right)^2
\label{eq:bound_on_norm_d}.
\end{align}
 Combining 
equations~\eqref{eq:grad_Ln_computation} and~\eqref{eq:bound_on_norm_d} allows us to 
bound the first term in~\eqref{eq:lipschitz_bound_Rn} as
\begin{align*}
&\sup_{\bTheta'\in 
\cS_p^{\sfk}} \left| 
\inner{\grad_{\bTheta}L_n(\bTheta', \bX, \beps),\left(\bTheta - \widetilde \bTheta 
\right)}_F\right|\\
&\hspace{10mm}\stackrel{(a)}{=}
\frac1n  \sup_{\bTheta'\in\cS_p^\sfk} \left|
\inner{\bD(\bTheta',\bX,\beps),\bX^\sT\left(\bTheta - \widetilde \bTheta 
\right)}_F\right|\\
&\hspace{10mm}\stackrel{(b)}\le 
\frac1n  \sup_{\bTheta'\in\cS_p^\sfk}\norm{\bD(\bTheta',\bX,\beps)}_F \norm{\bX}_{\cS_p} \norm{\bTheta - \widetilde\bTheta}_F\\
&\hspace{10mm}\stackrel{(c)}{\le}
\frac{C_2}{n} \sup_{\bTheta'\in\cS_p^\sfk}
\left(\|\bX\|_{\cS_p}\|\bTheta'\|_F + \|\bX\|_{\cS_p}\|\bTheta^\star\|_F + \|\beps\|_2 + \sqrt{n}\right)
 \norm{\bX}_{\cS_p}  \norm{\bTheta - \widetilde \bTheta}_F,
\end{align*}
where $(a)$ follows from Eq.~\eqref{eq:grad_Ln_computation}, $(b)$ follows from the 
assumption that $\cS_p$ is symmetric and convex, and $(c)$ follows from~\eqref{eq:bound_on_norm_d}.
Finally, combining with~\eqref{eq:lipschitz_bound_Rn} we obtain 

\begin{align}
\nonumber
\left|\widehat R_n(\bTheta,\bX,\beps) - 
\widehat R_n(\widetilde\bTheta,\bX,\beps) \right| 
&\le C_3 \left( \frac{\norm{\bX}^2_{\cS_p}}{n} + 
\frac{\norm{\bX}_{\cS_p}\norm{\beps}_2}{n}+ 1 \right) \norm{\bTheta -\widetilde \bTheta}_F
\end{align}
for some constant $C_6>0$. This 
concludes the proof.

\subsection{Auxiliary Lemmas for the proof of Lemma~\ref{lemma:poly_approx}}
\label{section:proof_poly_approx_details}

\begin{lemma}
\label{lemma:P_G_c}
    For any $B>K$, we have constants $C,C' >0$ such that
\begin{equation}
\nonumber
\sup_{\bTheta \in\cS_p^\sfk}
\P\left(
\cG_{\bTheta,B}^c
\right)
\le 
C e^{-C' B^2}.
\end{equation}
\end{lemma}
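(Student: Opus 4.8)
The plan is to prove Lemma~\ref{lemma:P_G_c} by a union bound over the finitely many scalar events that make up $\cG_{\bTheta,B}$, after observing that each of the underlying scalar random variables is subgaussian with a subgaussian norm controlled by a constant depending only on $\sOmega$, uniformly over $\bTheta \in \cS_p^\sfk$.

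First I would collect the relevant uniform subgaussianity. Since $\cS_p \subseteq B_2^p(\sfR)$, Assumption~\ref{ass:X} gives $\norm{\btheta^\sT\bx_1}_{\psi_2} \le \sfR\sfK$ and $\norm{\bSigma_\bg^{1/2}\btheta}_2 \le \sfR\sfK$, hence $\norm{\btheta^\sT\bg_1}_{\psi_2} \le c_0\sfR\sfK$, for every $\btheta \in \cS_p$, with $c_0$ universal. Because $\bu_{t,1} = \sin(t)\bx_1 + \cos(t)\bg_1$ and $\widetilde\bu_{t,1} = \cos(t)\bx_1 - \sin(t)\bg_1$ with $t\in[0,\pi/2]$ (see~\eqref{eq:slepian}), the triangle inequality for $\norm{\cdot}_{\psi_2}$ yields $\norm{\btheta^\sT\bu_{t,1}}_{\psi_2} \vee \norm{\btheta^\sT\widetilde\bu_{t,1}}_{\psi_2} \le C_1$ for all $\btheta\in\cS_p$ and all $t$, with $C_1$ depending only on $\sOmega$ --- this is the bound already recorded in Section~\ref{section:proof_softmin_univ}. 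Applying it with $\btheta = \btheta_k$ ($k\in[\sfk]$, valid since $\bTheta\in\cS_p^\sfk$) and with $\btheta = \btheta_k^\star$ ($k\in[\sfk^\star]$, valid by Assumption~\ref{ass:ThetaStar}), together with $\norm{\epsilon_1}_{\psi_2} \le \sfK$ from Assumption~\ref{ass:noise}, I obtain a single constant $C_2$ depending only on $\sOmega$ that bounds the subgaussian norm of every scalar variable appearing in $\cG_{\bTheta,B}$, uniformly in $\bTheta$ and in $t$.

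Then I would finish by the elementary subgaussian tail estimate: for $Z$ with $\norm{Z}_{\psi_2}\le C_2$ one has $\P(|Z| > B) \le 2\exp\{-cB^2/C_2^2\}$ for a universal $c>0$ (centering $Z$ first if needed, at the cost of a universal constant, exactly as done in~\cite{vershynin2018high} and Section~\ref{proof:op_norm_x}). There are $N := 2(\sfk+\sfk^\star) + 1$ such scalar events, and $N$ is a constant determined by $\sOmega$; a union bound, uniform over $\bTheta\in\cS_p^\sfk$, gives
\begin{equation}\nonumber
\sup_{\bTheta\in\cS_p^\sfk}\P\left(\cG_{\bTheta,B}^c\right) \le 2N\exp\left\{-\frac{cB^2}{C_2^2}\right\},
\end{equation}
which is of the claimed form $Ce^{-C'B^2}$ with $C = 2N$ and $C' = c/C_2^2$, both depending only on $\sOmega$. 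There is no genuine obstacle here; the only point requiring care is that the subgaussian norm bounds --- and hence the final estimate --- be uniform over $\cS_p^\sfk$ and over $t\in[0,\pi/2]$, which is immediate from Assumption~\ref{ass:X} and the containment $\cS_p\subseteq B_2^p(\sfR)$.
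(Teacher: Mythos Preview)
Your proposal is correct and follows essentially the same approach as the paper's proof: both establish uniform subgaussian norm bounds for the projections $\btheta^\sT\bu_{t,1}$, $\btheta^\sT\widetilde\bu_{t,1}$ (from Assumption~\ref{ass:X} and~\eqref{eq:slepian}) and for $\epsilon_1$ (Assumption~\ref{ass:noise}), then apply a union bound over the $2(\sfk+\sfk^\star)+1$ scalar events defining $\cG_{\bTheta,B}^c$.
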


\begin{proof}
From the definition of $\bu_{t,1}$ along with Assumption~\ref{ass:X}, we have
$\norm{\btheta^\sT\bu_{t,i}}_{\psi_2} \le 2  \sfR\sfK$, and similarly for $\widetilde \bu_{t,1}.$ 
Furthermore, the assumption on the noise asserts that $\norm{\epsilon_1}_{\psi_2} \le \sfK$. 
So a union bound directly gives
\begin{align}
\P(\cG_{\bTheta,B}^c)
&\le \sum_{k\le \sfk} \left(\P\left(|\btheta_k^\sT \bu_1| > B\right)
+
\P\left(|\btheta_k^\sT \widetilde\bu_1| > B\right)\right)\nonumber\\
&\hspace{10mm}+
 \sum_{k\le \sfk^*}\left( \P\left(|\btheta_k^{*\sT} \bu_1| > B\right) 
 +\P\left(|\btheta_k^{*\sT} \widetilde\bu_1| > B\right)\right)
 +
\P\Big(|\epsilon_1| 
> B\Big)
\nonumber\\
&\stackrel{(a)}{\le} C_0(\sfk + \sfk^* +1)
\exp\left\{-\frac{C_1 B^2}{(2\sfR + 1)^2\sfK^2}\right\}
\label{eq:P_G_c_2}
\end{align}
for some universal constants $C_0,C_1\in(0,\infty).$
\end{proof}

\begin{lemma}
\label{lemma:P_M}
For $M > 0$,  we have
\begin{enumerate}[(i)]
    \item \label{item:P_M_12} $R_M(x) = (1-x)^{M+1}/{x}$ for $x\neq 0$;
    \item $R_M(x)^2$ is convex on $(0,1]$; \label{item:P_M_4-1} 
    \item For any $s\in(0,1)$ and $\delta>0$, there exists $M>0$ such that
        $\sup_{t\in[s,1]}\left|R_M(t)\right| < \delta$.
    \label{item:P_M_5-1}
\end{enumerate}
\end{lemma}

\begin{proof}
For~$\ref{item:P_M_12}$, we write for $x >0$, 
\begin{align*}
    R_M(x) &= \frac1{x} \left(1 - \Big(1 - (1-x)\Big)\sum_{l=0}^M(1-x)^l \right)\\
    &= \frac1{x} \left(1 - \sum_{l=0}^M(1-x)^l + \sum_{l=1}^{M+1}(1-x)^{l} \right)\\
    &=\frac{(1-x)^{M+1}}{x}
\end{align*}
as desired. 

For~$\ref{item:P_M_4-1}$, the convexity of $R_M(x)^2$ can be shown by noting 
that~$\ref{item:P_M_12}$ gives
\begin{equation}
\nonumber
    \frac{\de^2}{\de x^2}\left(R_M(x)\right)^2= 
    \frac
    {2(1-x)^{2M}\left(M(2M -1)x^2 + (4M -2)x  + 3\right)}
    {x^4}
     \ge 0 
\end{equation}
for all $x\in(0,1]$ and  $M>0$.

Finally,~$\ref{item:P_M_5-1}$ can be shown by verifying that $P_M$ is indeed the power series 
of $1/x$ with a radius of convergence of $1$.

\end{proof}

\subsection{Proof of Lemma~\ref{lemma:gaussian_approx}}
\label{section:proof_gaussian_approx}
This section is dedicated to proving Lemma~\ref{lemma:gaussian_approx}. 
The first step is extending Eq.~\eqref{eq:condition_bounded_lipschitz_single} as follows.
\begin{proof}[Proof of Lemma~\ref{lemma:proj}]

Fix $\bH = (\btheta_1,\dots,\btheta_K) \in\cS_p^K$ be arbitrary. Let $M=3K$ and define
\begin{equation}
\label{eq:concat_def}
\widetilde \bH:=\left(\begin{array}{@{}ccc@{}}
    \bH
  & \bigzero &\bigzero
  \vspace{2mm}
  \\
  \bigzero &
  \bH &\bigzero
  \vspace{2mm}
  \\
  \bigzero & \bigzero & \bH
\end{array}\right) \in\R^{3p\times M}, 
\quad
\bv:= \left(\bx^\sT ,\widetilde \bg^\sT, \bmu_\bg^\sT \right)^\sT \in\R^{3p}, \quad
\bh:= \left(\bg^\sT ,\widetilde \bg^\sT, \bmu_\bg^\sT \right)^\sT\in\R^{3p},
\end{equation}
so that $\left(\bH^\sT \bx, \bH^\sT \widetilde \bg,\bH^\sT\bmu_\bg\right) = \widetilde \bH^\sT \bv$ 
and $\left(\bH^\sT \bg, \bH^\sT \widetilde \bg,\bH^\sT\bmu_\bg\right) = \widetilde \bH^\sT \bh$. Consider any bounded Lipschitz function $\varphi:\R^M\to\R$, and define
 $\balpha \sim \cN(0,\delta^2 \bI_{M})$ for  $\delta >0$.
We can decompose
\begin{align}
&\left|\E\left[\varphi\left(\bH^\sT \bx,\bH^\sT\widetilde \bg,\bH^\sT\bmu_\bg \right)\right] 
- \E\left[\varphi \left(\bH^\sT \bg, \bH^\sT \widetilde \bg, \bH^\sT\bmu_\bg\right)\right]\right|\nonumber\\
&\hspace{8mm}=
\left|\E\left[\varphi\left(\widetilde\bH^\sT \bv\right)\right] 
- \E\left[\varphi\left(\widetilde\bH^\sT \bh\right)\right]\right|\nonumber\\
&\hspace{8mm}\le 
\left|\E\left[\varphi\left( \widetilde\bH^\sT \bv+ \balpha\right)\right]
-
\E\left[\varphi\left( \widetilde\bH^\sT \bv
\right)\right]\right|
+
\left|
\E\left[\varphi\left( \widetilde\bH^\sT \bh + \balpha\right)\right]
-\E\left[\varphi\left(\widetilde\bH^\sT \bh\right)\right]
\right|
\label{eq:first_term_proj}
\\
&\hspace{18mm}+
\left|
\E\left[\varphi\left(\widetilde\bH^\sT \bv + \balpha\right)\right]
-\E\left[\varphi\left( \widetilde\bH^\sT \bh + \alpha\right)\right]
\right|.
\label{eq:third_term_proj}
\end{align}

Both terms on the right hand side on line~\eqref{eq:first_term_proj} are 
similar and can be bounded in an analogous manner. Namely,
we can write for the first of these
\begin{align}
\left|\E\left[\varphi\left(\widetilde\bH^\sT \bh + \balpha \right) - \varphi\left( \widetilde\bH^\sT\bh \right)\right]\right|
&\le \E\left[\left|\varphi\left(\widetilde\bH^\sT\bh + \balpha \right) - \varphi\left(\widetilde\bH^\sT \bh\right)\right|\right] \nonumber\\
&\le \norm{\varphi}_\Lip \E \norm{\balpha}_2\nonumber\\
&\le \sqrt{M} \norm{\varphi}_\Lip\delta
\label{eq:projections_first_bound}
\end{align}
and similarly for the second term. Now for the term on line~\eqref{eq:third_term_proj}, we have for any random variable $\bw \in\R^{M}$,

\begin{align}
\nonumber
   \E\left[\varphi(\bw + \balpha)\right]
   &= \frac{1}{(2\pi)^M}\int\int \varphi(\bs)\exp\left\{i \bt^\sT \bs- 
\delta^2\frac{\norm{\bt}_2^2}{2}\right\} 
   \phi_\bw(\bt)
   \de\bt \de\bs,
\end{align}
where $\phi_\bw(\bt) := \int \exp\left\{-i \bt^\sT \by\right\} \P_\bw(\de \by)$ is the (reflected) characteristic function of $\bw$. Using this representation 
and
denoting the characteristic functions of $\widetilde\bH^\sT\bv$ and $\widetilde\bH^\sT\bh$ by $\phi_{\bv,\bH}$ and $\phi_{\bh,\bH}$ 
respectively,
we have
\begin{align}
&\left|
\E\left[\varphi\left(\widetilde\bH^\sT\bv + \balpha\right)\right]
-\E\left[\varphi\left(\widetilde\bH^\sT \bh+ \balpha\right)\right]
\right|
\nonumber
\\
&= \left|\frac{1}{(2\pi)^M}\int\int \varphi(\bs)e^{i\bt^\sT \bs - 
\delta^2\norm{\bt}^2/2}\big(\phi_{\bv,\bH}(\bt) - 
\phi_{\bh,\bH}(\bt) 
\big)\de\bt \de\bs \right|
\nonumber
\\
&\le \frac{1}{(2\pi)^M}\int \left|\varphi(\bs)\right|\Bigg(\int e^{2i\bt^\sT \bs - 
\delta^2\norm{\bt}^2/2}
 \de\bt
 \cdot
 \int
\big(\phi_{\bv,\bH}(\bt) - 
\phi_{\bh,\bH}(\bt) 
\big)^2e^{-\delta^2{\norm{\bt}^2}/{2}}\de\bt\Bigg)^{1/2} \de\bs 
\nonumber
\\
&\stackrel{(a)}{=} \frac{1}{(\delta^2)^{M/4} (2\pi)^{3M/4}}\int \left|\varphi(\bs)\right| 
e^{- 
{\norm{\bs}^2}/{\delta^2} 
}d\bs
\left(\int\left(\phi_{\bv,\bH}(\bt) - 
\phi_{\bh,\bH}(\bt) 
\right)^2e^{-\delta^2{\norm{\bt}^2}/{2}}\de \bt \right)^{1/2} \nonumber\\
&= \frac{1}{2^{M/2}}\left(\frac{\delta^2}{2\pi}\right)^{M/4}\E\left[\left|\varphi\left(\frac{\balpha}{\sqrt2}\right)\right|\right] \left(
\int\left(\phi_{\bv,\bH}(\bt) - 
\phi_{\bh,\bH}(\bt) 
\right)^2e^{-\delta^2{\norm{\bt}^2}/{2}}\de\bt \right)^{1/2}\nonumber\\
&\le \frac{\norm{\varphi}_\infty}{2^{M/2}}  
\left(
\left(\frac{\delta^2}{2\pi}\right)^{M/2}
\int\left(\phi_{\bv,\bH}(\bt) - 
\phi_{\bh,\bH}(\bt) 
\right)^2e^{-\delta^2{\norm{\bt}^2}/{2}}\de \bt \right)^{1/2}\nonumber\\
&=  \frac{\norm{\varphi}_\infty }{2^{M/2}}  
\E\left[
\Big(\phi_{\bv,\bH}(\btau_\delta) -  \phi_{\bh,\bH}(\btau_\delta)\Big)^2
\right]^{1/2},
\label{eq:projections_second_bound}
\end{align}
where $\btau_\delta \sim\cN(0,\bI_M/\delta^2)$.
Note that in $(a)$ we used
\begin{align}
\nonumber
\int \exp\left\{2i\bt^\sT \bs - 
\delta^2\frac{\norm{\bt}^2}{2}\right\}
 \de \bt
 &= \left(\frac{2\pi}{\delta^2}\right)^{M/2}  \exp\left\{-2 \frac{\norm{\bs}_2^2}{\delta^2} 
\right\}.
\end{align}

Fix $\bs\in \R^K$ such that $\bs\neq0$. We have for any
$\bH = (\btheta_1,\dots,\btheta_K) \in \cS_p^K$, 
\begin{equation}
\nonumber
\frac{\bH\bs}{\norm{\bs}_1} = \sum_{j=1}^K  \frac{|s_j|}{\norm{\bs}_1}\sign\{s_j\} \btheta_j.
\end{equation}
Recalling that $\cS_p$ is symmetric, we see that $\sign\{s_j\}\btheta_j \in\cS_p$ for all $j\in[K]$,
and then the convexity of $\cS_p$ implies that $\bH\bs/\norm{\bs}_1\in\cS_p$ for $\bs \neq 0$. 
Letting $\phi_{\bx,\bH},\phi_{\bg,\bH}$ be the 
characteristic functions of $\bH^\sT\bx, \bH^\sT\bg$ respectively and fixing  $\bt = (\bs,\widetilde \bs, \bs') \in \R^M$, we have if $\bs\neq 0$,
\begin{align}
   \label{eq:s_equal_0}
  &\limsup_{p\to\infty}\sup_{\bH\in\cS_p^K}\left|\phi_{\bv,\bH}(\bt) - \phi_{\bh,\bH}(\bt) \right|^2\\
  &\stackrel{(a)}{=}
   \limsup_{p\to\infty}\sup_{\bH\in\cS_p^K} \left|\phi_{\bg,\bH}(\widetilde\bs) e^{-i\bs'^\sT \bH^\sT \bmu_\bg}\right|^2
   \left|\phi_{\bx,\bH}(\bs) - \phi_{\bg,\bH}(\bs) \right|^2\nonumber\\
   &\stackrel{(b)}{\le}
   2
   \limsup_{p\to\infty}\sup_{\bH\in\cS_p^K}
   \left|\phi_{\bx,\bH}(\bs) - \phi_{\bg,\bH}(\bs) \right|
   \nonumber\\
   &\le2 \limsup_{p\to\infty}\sup_{\bH\in\cS_p^K}\left|\E\left[ \exp\{-i{\bx^\sT\bH\bs} \} \right] -  
   \E\left[ \exp\{-i{\bg^\sT \bH\bs} \} \right] 
   \right|\nonumber\\
&= 
\limsup_{p\to\infty}\sup_{\bH \in \cS_p^K}\left|\E\left[ 
\exp\left\{-i\norm{\bs}_1  \bx^\sT \left(\frac{\bH\bs}{\norm{\bs}_1}\right)\right\} \right] -  
   \E\left[ \exp\left\{-i
   \norm{\bs}_1  \bg^\sT \left(\frac{\bH\bs}{\norm{\bs}_1}\right)\right\}\right] 
   \right|\nonumber\\
   &\stackrel{(c)}{\le}
\limsup_{p\to\infty}\sup_{\btheta \in \cS_p}\left|\E\left[ 
\exp\left\{-i\norm{\bs}_1  \bx^\sT \btheta\right\} \right] -  
   \E\left[ \exp\left\{-i
   \norm{\bs}_1  \bg^\sT\btheta\right\}\right] 
   \right|
\nonumber
   \\
   \nonumber
   &\stackrel{(d)}=0
\end{align}
where $(a)$ holds because of the independence of $\widetilde \bg$ and $(\bx,\bg)$, $(b)$ holds because $|\phi(\bs)| \in[0,1]$ for all $\bs\in\R^M$, $(c)$ holds
since $\bH \bs/\norm{\bs}_1 \in\cS_p$ and $(d)$ holds by Eq.~\eqref{eq:condition_bounded_lipschitz_single} since $x\mapsto \exp({i\norm{\bs}_1x})$ is bounded Lipschitz for fixed $\bs \in\R^M, \in\R$. Further, for $\bs =0$, by the equality on line~\eqref{eq:s_equal_0}
we immediately have $\left|\phi_{\bv,\bH}(\bt) - \phi_{\bh,\bH}(\bt) \right|^2 =0$ and hence for any fixed $\bt\in\R^M,$ 
\begin{equation}
\label{eq:lim_equal_zero_char}
  \lim_{p\to\infty}\sup_{\bH\in\cS_p^K}\left|\phi_{\bv,\bH}(\bt) - \phi_{\bh,\bH}(\bt) \right|^2 = 0.
\end{equation}

In conclusion, we have
\begin{align*}
&\lim_{p\to\infty}\sup_{\bH\in\cS_p^K}\left|\E\left[\varphi\left(\bH^\sT\bv\right)\right] 
- \E\left[\varphi(\bH^\sT \bh)\right]\right| 
\\
&\hspace{30mm}\stackrel{(a)}{\le} 2\sqrt{M}\norm{\varphi}_\Lip \delta  + 
\frac{\norm{\varphi}_\infty}{2^{M/2}} 
\lim_{p\to\infty}\sup_{\bH\in\cS_p^K}
\E\left[
\Big(\phi_{\bv,\bH}(\btau_\delta) -  \phi_{\bh,\bH}(\btau_\delta)\Big)^2
\right]^{1/2}\\
&\hspace{30mm}\le 2\sqrt{M}\norm{\varphi}_\Lip \delta  +
\frac{\norm{\varphi}_\infty}{2^{M/2}}
\lim_{p\to\infty}
\E\left[
\sup_{\bH\in\cS_p^K}
\Big(
\phi_{\bv,\bH}(\btau_\delta) -  \phi_{\bh,\bH}(\btau_\delta)\Big)^2
\right]^{1/2}\\
&\hspace{30mm}\stackrel{(b)}= 2\sqrt{M} \norm{\varphi}_\Lip \delta,
\end{align*}
where $(a)$ follows from the decomposition in~\eqref{eq:first_term_proj}
and the
bounds in~\eqref{eq:projections_first_bound} 
and~\eqref{eq:projections_second_bound},
and $(b)$ follows from the dominated convergence theorem along with the
limit in~\eqref{eq:lim_equal_zero_char} and domination of the integrand
$ \sup_{\bH\in\cS_p^M} \left(\phi_{\bv,\bH}
(\bt) - \phi_{\bh,\bH}(\bt) 
\right)^2 \le 2$.
Sending $\delta \to 0$ completes the proof.
\end{proof}

Now, via a truncation argument, we show that this can be extended to square integrable locally Lipschitz functions.
\begin{lemma}
\label{lemma:proj_locally_lip}
Let $\cS_p$ be as in Definition~\ref{def:DoG}. Let $K>0$ be a fixed integer and $\widetilde \bg \sim\cN(\bmu_\bg,\bSigma_\bg)$ an independent copy of $\bg$,
and let $\varphi:\R^{3K} \to\R$ be a locally Lipschitz function satisfying
    \begin{align}
        \sup_{p\in \Z_{>0}}\sup_{\bH=(\btheta_1,\dots,\btheta_K)\in\cS_p^K}\E\left[\left| \varphi\left(\bH^\sT \bx,\bH^\sT \widetilde \bg, \bH^\sT \bmu_\bg\right) \right|^2\right] &< \infty,\textrm{ and}\nonumber\\
        \sup_{p\in \Z_{>0}}\sup_{\bH=(\btheta_1,\dots,\btheta_K)\in\cS_p^K}\E\left[\left| \varphi\left(\bH^\sT \bg, \bH^\sT \widetilde \bg, \bH^\sT\bmu_\bg\right)\right|^2\right] &< \infty.
    \label{eq:square_integrability_varphi}
    \end{align}
    Then
\begin{equation}
\nonumber
    \lim_{p\to\infty} \sup_{\bH\in \cS_p^K} 
    \left|\E\left[ \varphi\left(\bH^\sT \bx,\bH^\sT \widetilde \bg, \bH^\sT\bmu_\bg\right) \right] - 
          \E\left[ \varphi\left(\bH^\sT \bg, \bH^\sT \widetilde \bg, \bH^\sT\bmu_\bg\right) \right] \right| = 0.
\end{equation}
\end{lemma}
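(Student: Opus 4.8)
The plan is to reduce to the bounded-Lipschitz case already treated in Lemma~\ref{lemma:proj} by a truncation argument, and to control the truncation error \emph{uniformly} in $p$ and in $\bH\in\cS_p^M$ using the uniform $L^2$ bound \eqref{eq:square_integrability_varphi} together with uniform subgaussian tail bounds on the relevant projections. Write $\bW:=(\bH^\sT\bx,\bH^\sT\widetilde\bg)$ and $\bW_G:=(\bH^\sT\bg,\bH^\sT\widetilde\bg)$. For $R>0$, let $\zeta_R$ be a cutoff function valued in $[0,1]$, $1/R$-Lipschitz, equal to $1$ on $\{\|\bu\|_2\le R\}$ and vanishing on $\{\|\bu\|_2\ge 2R\}$, and set $\varphi_R:=\varphi\,\zeta_R$. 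Since $\varphi$ is locally Lipschitz it is continuous, hence bounded on the closed ball of radius $2R$; therefore $\varphi_R$ is bounded, and — being a product of bounded Lipschitz functions on that ball and identically zero outside it — it is globally Lipschitz, with Lipschitz modulus depending on $R$. Consequently, for each fixed $R$, Lemma~\ref{lemma:proj} applies to $\varphi_R$ and yields
\[
\lim_{p\to\infty}\ \sup_{\bH\in\cS_p^M}\Big|\E[\varphi_R(\bW)]-\E[\varphi_R(\bW_G)]\Big|=0 .
\]

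The main step is the uniform control of $\E[\varphi-\varphi_R]$. Because $\varphi-\varphi_R=\varphi(1-\zeta_R)$ is supported on $\{\|\bu\|_2>R\}$, Cauchy--Schwarz gives
\[
\big|\E[\varphi(\bW)]-\E[\varphi_R(\bW)]\big|\ \le\ \E\big[|\varphi(\bW)|^2\big]^{1/2}\,\P\big(\|\bW\|_2>R\big)^{1/2},
\]
and the analogous bound holds with $\bW_G$ in place of $\bW$. By \eqref{eq:square_integrability_varphi} the first factor is at most a finite constant $C_\varphi$, uniformly in $p$ and $\bH$. For the second factor, Assumption~\ref{ass:X} together with $\cS_p\subseteq B_2^p(\sfR)$ and the convexity of $\cS_p$ implies $\sup_{\btheta\in\cS_p}\|\bx^\sT\btheta\|_{\psi_2}\le\sfR\sfK$, and similarly $\sup_{\btheta\in\cS_p}\|\bg^\sT\btheta\|_{\psi_2}\le C\sfR\sfK$ and $\sup_{\btheta\in\cS_p}\|\widetilde\bg^\sT\btheta\|_{\psi_2}\le C\sfR\sfK$ (the last two using that $\bg,\widetilde\bg$ are centered Gaussian with $\|\bSigma_\bg^{1/2}\btheta\|_2\le\sfR\sfK$). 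Since $M$ is a fixed integer, a union bound over the $2M$ coordinates of $\bW$ (resp.\ $\bW_G$) gives $\sup_{p}\sup_{\bH\in\cS_p^M}\P(\|\bW\|_2>R)\le g(R)$ for some $g(R)\to0$ as $R\to\infty$, and the same for $\bW_G$.

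Combining the pieces, for every fixed $R$,
\begin{align*}
\limsup_{p\to\infty}\ \sup_{\bH\in\cS_p^M}\Big|\E[\varphi(\bW)]-\E[\varphi(\bW_G)]\Big|
&\le 2\,C_\varphi\,g(R)^{1/2}+\limsup_{p\to\infty}\ \sup_{\bH\in\cS_p^M}\Big|\E[\varphi_R(\bW)]-\E[\varphi_R(\bW_G)]\Big|\\
&= 2\,C_\varphi\,g(R)^{1/2}.
\end{align*}
Letting $R\to\infty$ forces the left-hand side to vanish, which is the claim. The only point requiring care is this last uniformity: the truncation error must be made small simultaneously for all $p$ and all $\bH\in\cS_p^M$, which is exactly what the subgaussian half of Assumption~\ref{ass:X} (the bounds on $\|\bx^\sT\btheta\|_{\psi_2}$ and $\|\bSigma_\bg^{1/2}\btheta\|_2$ over $\btheta\in\cS_p$) and the hypothesis \eqref{eq:square_integrability_varphi} are there to provide; everything else is a routine truncation.
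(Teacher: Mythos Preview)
Your proof is correct and follows essentially the same approach as the paper: a truncation of $\varphi$ by a Lipschitz cutoff reduces to the bounded-Lipschitz case handled by Lemma~\ref{lemma:proj}, and the truncation error is controlled uniformly in $p$ and $\bH$ via Cauchy--Schwarz, the $L^2$ hypothesis~\eqref{eq:square_integrability_varphi}, and subgaussian tail bounds on the coordinate projections. The only cosmetic differences are the choice of cutoff (the paper uses a piecewise-linear ramp on $[B-1,B]$ rather than $[R,2R]$) and notation.
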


\begin{proof}[Proof of Lemma~\ref{lemma:proj_locally_lip}]

Fix $\bH = (\btheta_1,\dots,\btheta_K) \in\cS_p^K$ be arbitrary. Let $K=3M$ and again define
$\widetilde \bH \in\R^{3p\times M}, \bv \in \R^{3p}, \bh\in\R^{3p}$ as 
in~\eqref{eq:concat_def}.
First, we bound the probability of the tail event $\left\{ \norm{\bH^\sT \bu}_2 >B\right\}$ for 
$B \ge 2\sqrt{M}\left(\sfR\norm{\bmu_\bg}_2 \vee \sup_{\btheta\in\cS_p}|\bx^\sT\btheta| \right)$. We 
have
\begin{align}
   \P\left(\norm{\widetilde\bH^\sT \bv}_2 > B\right) 
   &\le \P\left(\norm{\widetilde\bH^\sT \bv}_\infty > \frac{B}{\sqrt{M}} \right)\nonumber \\
   &\stackrel{(a)}{\le} \sum_{m=1}^K\P\left(\left|\bx^\sT \btheta_m\right| >\frac{B}{\sqrt{M}}\right) + \P\left(\left|\widetilde \bg^\sT \btheta_m\right| >\frac{B}{\sqrt{M}}\right) 
   \nonumber\\
   &\stackrel{(b)}{\le} C_0 M \exp\left\{-\frac{c_0 
B^2}{M}\right\}\label{eq:tail_bound_bwx}
\end{align}
for some universal constants $c_0,C_0 \in (0,\infty)$. Here in $(a)$ we used that
$|\mu_\bg^\sT\btheta_m | \le B/(2\sqrt{M})$,
and in $(b)$ we used that $\bg$ and $\bx$ are subgaussian with constant subgaussian norm
and that $\E\left[\bx^\sT\btheta_m\right] \vee \E\left[\bg^\sT\btheta_m\right] \le
B/(2\sqrt{M})$. An analogous argument then shows
\begin{equation}
\nonumber
\P\left(\norm{\widetilde\bH^\sT \bh}_2 > B \right) \le C_0 M \exp\left\{ -\frac{c_1 B^2}{M}\right\}\label{eq:tail_bound_bwg}
\end{equation}
for some $c_1>0$. 

Now fix such a $B$ arbitrary and let 
\begin{equation}
\nonumber
  u_{B}(t) := \begin{cases}
  1 & t < B \\
  B+1-t & t\in[B,B+1)\\
  0 & t \ge B+1
   \end{cases}.
\end{equation}
and define $\varphi_B(\bs) := \varphi(\bs) u_{B}\left(\norm{\bs}_2\right)$. 
Noting that $\one_{\{\norm{\bs}_2 \le B-1\}} \le u_{B}\left(\norm{\bs}_2 \right) \le \one_{\{\norm{\bs}_2 \le B\}}$ and that $h_{B}$ is Lipschitz, we see that $\varphi_B$ is bounded and Lipschitz.
To see that it is indeed Lipschitz, take $\bs,\bt$ with $\norm{\bt}_2 \le \norm{\bs}_2$,
\begin{align}
    \left|
   \varphi_{B}(\bt) - \varphi_B(\bs)
    \right|
    &\le |\varphi(\bt)|\one_{\left\{\norm{s}_2\le B+1\right\}}|u_B(\norm{\bs}_2) - u_B(\norm{\bt}_2)|\nonumber\\
    &\hspace{10mm}+ u_B(\norm{\bs}_2)\one_{\{\norm{s}_2\le B+1\}} |\varphi(\bt) - \varphi(\bs)|\nonumber\\
    &\le
    C_1(B) \norm{ \bt - \bs}_2 + C_2(B)\norm{\bt - \bs}_2
    \label{eq:soft_lip_truncation}
\end{align}
for $C_1,C_2$ depending only on $B$ since $\varphi$ is locally Lipschitz. 
We can now write
\begin{align*}
&\lim_{p\to\infty}\sup_{\bH\in\cS_p^K} \left|\E \left[\varphi\left(\widetilde\bH^\sT \bv\right) - \varphi\left(\widetilde\bH^\sT \bh\right) \right]\right|\\
&\hspace{15mm}\le
\lim_{p\to\infty}\sup_{\bH\in\cS_p^K} \left|\E \left[\left(\varphi_B\left(\widetilde\bH^\sT \bv\right) - \varphi_B\left(\widetilde\bH^\sT \bh\right) \right)
\right]\right|\\
&\hspace{30mm}+ 
\lim_{p\to\infty} \sup_{\bH\in\cS_p^K} \E\left[\left|\varphi\left(\widetilde \bH^\sT\bv\right)\left(1-u_B\left(\big\|\widetilde \bH^\sT\bv\big\|_2\right)\right)\right|\right] \\
&\hspace{45mm}+ 
\lim_{p\to\infty} \sup_{\bH\in\cS_p^K} \E\left[\left|\varphi\left(\widetilde \bH^\sT\bv\right)\left(1-u_B\left(\big\|\widetilde \bH^\sT\bv\big\|_2\right)\right)\right|\right] 
\\
&\hspace{15mm} \stackrel{(a)}\le
C_3\lim_{p\to\infty}\sup_{\bH\in\cS_p^K}
\E\left[\left|\varphi\left(\widetilde\bH^\sT\bv\right)\right|^2\right]^{1/2}\P\left(\big\|\widetilde\bH^\sT\bv\big\|_2 > B\right)^{1/2} \\
&\hspace{30mm}+
C_3\lim_{p\to\infty}\sup_{\bH\in\cS_p^K}
\E\left[\left|\varphi\left(\widetilde\bH^\sT\bh\right)\right|^2\right]^{1/2} 
\hspace{-2mm}\P\left(\big\|\widetilde\bH^\sT \bh\big\|_2 > B \right)^{1/2}
\\
&\hspace{15mm}\stackrel{(b)}\le C_4 M \exp\left\{ \frac{-c_2 B^2}{M}\right\}
\end{align*}
for some $C_3,C_4,c_2> 0$. Here, $(a)$ follows from Lemma~\ref{lemma:proj}
and that $ 0 \le 1-u_B(t) \le \one_{t > B}$,
and $(b)$ follows from the tail bounds in equations~\eqref{eq:tail_bound_bwx} 
and~\eqref{eq:tail_bound_bwg} along with the square integrability assumption of $\varphi$. Sending $B\to\infty$ completes the proof.
\end{proof}

Now, we establish Lemma~\ref{lemma:gaussian_approx}.

\begin{proof}[Proof of Lemma~\ref{lemma:gaussian_approx}] 

Recall equations~\eqref{eq:slepian} and~\eqref{eq:bd_def} 
defining $\bu_{t,1},\widetilde\bu_{t,1}$ 
and $\widehat\bd_{t,1}$, respectively, 
in terms of $\bx_1$ and $\bg_1$. Further, recall the definitions of $\widetilde \bg_1,\bw_{t,1},\widetilde \bw_{t,1},\widetilde \epsilon_1$ and $\widehat \bq_{t,1}$ in the statement of the lemma.
Define $\bH := (\bTheta^\star,\bTheta_0,\bTheta_1,\dots,\bTheta_J)$ and the function $\varphi$
\begin{align}
   &\varphi\left( 
   \bH^\sT\bx_1,
    \bH^\sT\bg_1, \bH^\sT \bmu_\bg
   \right) \\
   &\qquad\qquad:= 
   \E\left[
\widetilde \bu_{t,1}^\sT \widehat \bd_{t,1}(\bTheta_0)
\exp\left\{-\beta \sum_{l=0}^J \ell\left(\bTheta_l^\sT \bu_{t,1}; \eta\left(\bTheta^{\star\sT}\bu_{t,1},\epsilon_1 \right)\right)\right\}
\Bigg| \bx_1,\bg_1
\right],
\nonumber
\end{align}
i.e., the expectation is with respect to $\epsilon_1$. Since $\widetilde \epsilon_1$ has the same distribution as $\epsilon_1$, we have
\begin{align}
   &\varphi\left( 
   \bH^\sT\widetilde\bg_1,
    \bH^\sT\bg_1, \bH^\sT \bmu_\bg
   \right) \\
   &\qquad= 
   \E\left[
\widetilde \bw_{t,1}^\sT \widehat \bq_{t,1}(\bTheta_0)
\exp\left\{-\beta \sum_{l=0}^J \ell\left(\bTheta_l^\sT \bw_{t,1}; \eta\left(\bTheta^{\star\sT}\bw_{t,1},\widetilde\epsilon_1 \right)\right)\right\}
\Bigg| \widetilde \bg_1, \bg_1
\right].
\nonumber
\end{align}

Now note that $\varphi$ is locally Lipschitz, 
by the locally Lipschitz assumption on the derivative of $\ell$ in Assumption~\hyperref[ass:loss_labeling_prime]{5''}. 
Additionally,
\begin{equation}
\nonumber
    \sup_{\bH \in\cS_p^{\sfk^\star + (J+1)\sfk}}\E\left[\left| \varphi\left(\bH^\sT \bx_1,\bH^\sT \bg_1, \bH^\sT \bmu_\bg\right) \right|^2\right] \le 
   \sup_{\bTheta^\star\in\cS_p^{\sfk^\star}, \bTheta\in\cS_p^\sfk}\E\left[\left(
\widetilde \bu_{t,1}^\sT \widehat \bd_{t,1}(\bTheta)\right)^2 \right]
\le C_1
\end{equation}
for some $C_1>0$
by Lemma~\ref{lemma:dct_bound} and the nonnegativity of $\ell$ and $\beta$. 
Furthermore, since by the conditions on $\bmu_\bg$ and $\bSigma_\bg$ in 
Assumption~\ref{ass:X}, we have
\begin{equation}
\sup_{\btheta\in\cS_p,\norm{\btheta}_2\le 1}\norm{\widetilde \bg^\sT\btheta}_{\psi_2} \le 
\sup_{\btheta\in\cS_p,\norm{\btheta}_2\le 1}\norm{(\widetilde \bg-\bmu_\bg)^\sT\btheta}_{\psi_2} + \norm{\bmu_\bg}_2 \le \norm{\bSigma_\bg}_{\cS_p} + \norm{\bmu_\bg}_2 \le 2\sfK,
\end{equation}
the distribution $\widetilde \bg_1$
itself satisfies the statement of
Assumption~\ref{ass:X} (in place of $\bx_1$). Hence we similarly have
\begin{equation}
\nonumber
 \sup_{\bH \in\cS_p^{\sfk^\star + (J+1)\sfk}}\E\left[\left| \varphi\left(\bH^\sT \widetilde\bg_1,\bH^\sT \bg_1,\bH^\sT \bmu_\bg\right) \right|^2\right]  \le C_2
\end{equation}
for some $C_2>0$. Therefore, $\varphi$ satisfies the square integrability condition in~\eqref{eq:square_integrability_varphi} of Lemma~\ref{lemma:proj_locally_lip}.
An application of this lemma then yields the claim of Lemma~\ref{lemma:gaussian_approx}.
\end{proof}

\end{document}